\documentclass[12pt,a4]{amsart}
\pdfoutput=1

\usepackage{a4wide}
\usepackage{slashbox}
\usepackage[all,cmtip]{xy}
\usepackage{amssymb}
\usepackage{enumitem}   
\usepackage{colortbl}
\usepackage{hyperref}
\usepackage[font=small,labelfont=bf]{caption} 
\usepackage{tkz-graph}
\tikzset{node distance=2cm, auto}
\usepackage{pdfpages}

\newtheorem{theorem}{Theorem}[section]
\newtheorem{corollary}[theorem]{Corollary}
\newtheorem{lemma}[theorem]{Lemma}
\newtheorem{proposition}[theorem]{Proposition}
 
\newtheorem{example}[theorem]{Example} 
\newtheorem{notation}[theorem]{Notation} 
\newtheorem{definition}[theorem]{Definition}
\newtheorem{note}[theorem]{Note}
\newtheorem{open}{Open Problem}

\newtheorem{remark}[theorem]{Remark}

\numberwithin{equation}{section}

\begin{document}
\title[Homogeneity of Inverse Semigroups	]%
{Homogeneity of Inverse Semigroups}

\author{Thomas Quinn-Gregson}
\email{tdqg500@york.ac.uk}
\address{Department of Mathematics\\University
  of York\\York YO10 5DD\\UK}

\subjclass[2010]{Primary  20M18; Secondary 03C10 }



\keywords{Homogeneous, inverse semigroups, strong amalgamation}

\thanks{This work forms part of my PhD at the University of York, supervised by Professor Victoria Gould, and funded by EPSRC}

\begin{abstract}
An inverse semigroup $S$ is a semigroup in which every element has a unique inverse in the sense of semigroup theory, that is, if $a \in S$ then there exists a unique $b\in S$ such that $a = aba$ and $b = bab$. We say that an inverse semigroup $S$ is a homogeneous (inverse) semigroup if any isomorphism between finitely generated (inverse) subsemigroups of $S$ extends to an automorphism of $S$. In this paper, we consider both these concepts of homogeneity for inverse semigroups, and show when they are equivalent. We also obtain certain classifications of homogeneous inverse semigroups, in particular periodic commutative inverse semigroups.  Our results may be seen as extending both the classification of homogeneous semilattices and the classification of certain classes of homogeneous groups, in particular the homogeneous abelian groups and homogeneous finite groups.  
\end{abstract}

\maketitle

\section{Introduction} \bigskip 
 
A \textit{structure} is a set $M$ together with a collection of finitary operations and relations defined on $M$. A countable structure $M$ is \textit{homogeneous} if any isomorphism between finitely generated (f.g.) substructures extends to an automorphism of $M$. There are strong links between homogeneity and model theoretic concepts such as $\aleph_0$-categoricity and quantifier elimination (see, for example, \cite[Theorem 6.4.1]{Hodges97}). As a consequence, homogeneity has been studied by several authors, and complete classifications have been obtained for a number of structures including graphs \cite{Lachlan80}, partially ordered sets \cite{Schmerl79} and semilattices \cite{Truss99}. There has also been much progress in the classification of homogeneous groups and rings (see, for example, \cite{Cherlin93}, \cite{Saracino84}) and the classification has been completed for finite groups in \cite{Cherlin2000} and \cite{Li99} and solvable groups in \cite{Cherlin91}. Homogeneous idempotent semigroups (bands)  have been classified by the author in \cite{Quinn}. In \cite{Dolinka}, a generalization of Hall's group for inverse semigroups is constructed. However, it is worth noting that the inverse semigroup obtained in \cite{Dolinka} is not a homogeneous inverse semigroup. 

 An inverse semigroup $S$ is a semigroup in which every element has a unique inverse in the sense of semigroup theory, that is, if $a \in S$ then there exists a unique $b \in S$ such that $a = aba$ and $b = bab$. We denote the inverse of $a$ as $a^{-1}$. It is clear that groups are inverse semigroups, as indeed are semilattices with binary operation of meet. The aim of this paper is to consider the homogeneity of inverse semigroups. Since an inverse semigroup can be viewed as either a semigroup or as a unary semigroup (a semigroup equipped with a basic unary operation), we have two concepts of homogeneity for inverse semigroups: as  semigroups and as inverse semigroups. 

This paper proceeds as follows: in Section 2 the theory of inverse semigroups required for this paper is given, and used in Section 3 to consider the substructure of a homogeneous inverse semigroup (HIS). In particular, we show that a HIS is either Clifford or bisimple. In Section 4 the homogeneity of Clifford semigroups is considered and, in the case where every element has finite order, is shown to depend only on Clifford semigroups with either surjective or trivial connecting morphisms. These results are then applied to inverse semigroups with finite maximal subgroups and, in Section 5, to commutative inverse semigroups, where a partial classification is obtained. Finally, in Section 6, we consider non-periodic inverse semigroups that are homogeneous as semigroups, showing that they are necessarily groups. In particular we find exactly when the two concepts of homogeneity for inverse semigroups intersect.   
All structures will be assumed to be countable.
 
\section{Basics of inverse semigroups} 

In this section we give a brief outline of the required theory of inverse semigroups (see \cite[Chapter 5]{Howie94} for a more complete overview). Throughout this section, we let $S$  denote an inverse semigroup. 

\begin{lemma}\cite[Proposition 5.1.3 (1)]{Howie94} \label{inverse rules} For any $a,b\in S$ we have $(a^{-1})^{-1}=a$ and $(ab)^{-1}=b^{-1}a^{-1}$.
\end{lemma}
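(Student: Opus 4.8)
The plan is to derive both identities from the single hypothesis we are given --- uniqueness of inverses --- supplemented by the commutativity of idempotents, which I regard as the real kernel of the statement. Part (i) needs no work at all: the two defining equations $a = a a^{-1} a$ and $a^{-1} = a^{-1} a a^{-1}$ are symmetric under interchanging $a$ and $a^{-1}$, so read the other way round they assert that $a$ is an inverse of $a^{-1}$. Since inverses are unique, $(a^{-1})^{-1} = a$. Part (ii) I would handle by the standard ``verify-then-invoke-uniqueness'' template: set $c := b^{-1}a^{-1}$, confirm that $c$ satisfies $(ab)\,c\,(ab) = ab$ and $c\,(ab)\,c = c$, and then conclude $c = (ab)^{-1}$ by uniqueness. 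Expanding $(ab)(b^{-1}a^{-1})(ab)$ gives $a(bb^{-1})(a^{-1}a)b$; the two middle factors $bb^{-1}$ and $a^{-1}a$ are idempotents, and once their order can be swapped this collapses via $aa^{-1}a = a$ and $bb^{-1}b = b$ to $ab$, with the companion identity for $c\,(ab)\,c = c$ behaving identically. Thus the whole of part (ii) reduces to one fact: idempotents commute.

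Establishing commutativity of idempotents is where the effort goes, and this I would prove directly from uniqueness. First, any idempotent $e$ is self-inverse: from $e = e\cdot e\cdot e$ and uniqueness we get $e^{-1} = e$. Now fix idempotents $e,f$ and put $x := (ef)^{-1}$. A short check shows $fxe$ is also an inverse of $ef$ (using $ee=e$, $ff=f$, and $x(ef)x = x$), so uniqueness forces $x = fxe$, and hence $fx = x = xe$. Then $x^2 = (xe)(fx) = x(ef)x = x$, so $x$ is idempotent and therefore self-inverse. But $x$ being an inverse of $ef$ means, by the symmetry of the defining equations, that $ef$ is an inverse of $x$; since the unique inverse of the idempotent $x$ is $x$ itself, we get $ef = x$, so $ef$ is idempotent, and by symmetry so is $fe$. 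Finally a one-line calculation shows $fe$ is an inverse of the idempotent $ef$, whose unique inverse is $ef$; hence $fe = ef$.

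The main obstacle is precisely this commutativity of idempotents: every other step is a mechanical use of the symmetry of the inverse equations together with the uniqueness clause. Once commutativity is secured, both displayed identities drop out immediately from the verification above, and part (ii) follows by uniqueness exactly as in part (i).
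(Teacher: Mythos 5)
Your proof is correct, and it is essentially the argument in the source the paper cites for this lemma (the paper itself gives no proof, deferring to Howie's Proposition 5.1.3): part (i) by symmetry of the defining equations plus uniqueness, part (ii) by the verify-then-invoke-uniqueness computation, with commutativity of idempotents as the key ingredient. Your derivation of that commutativity directly from uniqueness of inverses (via $x=(ef)^{-1}=fxe$, then $x$ idempotent, then $ef=x$ and finally $fe=ef$) is exactly the standard route, so the two approaches coincide.
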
 

The following equivalence relations on $S$, known as the \textit{Green's relations}, are fundamental to the study of semigroups:
\[ a \, \mathcal{R} \, b \Leftrightarrow aa^{-1}=bb^{-1} \quad a\, \mathcal{L}\, b  \Leftrightarrow a^{-1}a=b^{-1}b, \quad a\, \mathcal{D} \,b \Leftrightarrow c\in S  \text{ such that  } a\, \mathcal{R}\, c\, \mathcal{L}\, b.
\]
Furthermore, we let $\mathcal{H}=\mathcal{R}\cap \mathcal{L}$. For $a\in S$ we let $R_a$ denote the $\mathcal{R}$-class containing $a$, and similarly for $L_a, H_a, D_a$. An element $e\in S$ is an \textit{idempotent} if $e^2=e$, and we denote the set of idempotents of $S$ by $E(S)$. If $e\in E(S)$ then $H_e$ is the maximal subgroup of $S$ with identity $e$  \cite[Corollary 2.2.6]{Howie94}. A commutative semigroup of idempotents is called an (algebraic) \textit{semilattice}. Every semilattice $Y$ comes equipped with a partial order, called the \textit{natural order on} $Y$, by $e\leq f$ if and only if $ef=e$.  
 A \textit{lower semilattice} is a poset in which the meet (denoted $\wedge$) of any pair of elements exists. The close link between algebraic semilattices and lower semilattices is highlighted in the following result. 
 
 \begin{proposition} Let $Y$ be an algebraic semilattice. Then $(Y,\leq)$ is a lower semilattice, where $\leq$ is the natural order on $Y$ and the meet of $a$ and $b$ in $Y$ is their product $ab$. Conversely, suppose $(Y,\leq)$ is a lower semilattice. Then $Y$ is an algebraic semilattice under the operation of meet, and $a\leq b$ if and only if $a\wedge b=a$ for each $a,b\in Y$.  
 \end{proposition}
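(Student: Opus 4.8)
The plan is to verify each direction by elementary manipulation with the defining axioms, checking first that the proposed order (respectively, operation) has the required structural properties and then that it satisfies the stated universal property.

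For the forward direction, I would first confirm that the natural order $\leq$ is genuinely a partial order on the algebraic semilattice $Y$. Reflexivity is immediate from idempotency ($ee=e$), antisymmetry follows from commutativity (if $ef=e$ and $fe=f$ then $e=ef=fe=f$), and transitivity from associativity (if $ef=e$ and $fg=f$ then $eg=(ef)g=e(fg)=ef=e$). Having established the poset structure, I would then show that $ab$ is the meet of $a$ and $b$. That $ab$ is a lower bound reduces to the computation $(ab)a=a(ab)=(aa)b=ab$ using commutativity and idempotency, and symmetrically $(ab)b=ab$. To see it is the \emph{greatest} lower bound, suppose $c\leq a$ and $c\leq b$, so $ca=c=cb$; then $c(ab)=(ca)b=cb=c$, giving $c\leq ab$. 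This establishes that $(Y,\leq)$ is a lower semilattice with $a\wedge b=ab$.

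For the converse, given a lower semilattice $(Y,\leq)$ I would define the product by $a\cdot b:=a\wedge b$ and verify the three semilattice axioms. Idempotency and commutativity are immediate from the definition of meet. For associativity I would exploit the characterisation of the meet by its universal property, showing that both $(a\wedge b)\wedge c$ and $a\wedge(b\wedge c)$ coincide with the infimum of the three-element set $\{a,b,c\}$. Indeed, $(a\wedge b)\wedge c$ is a lower bound of all three elements, and any common lower bound $d$ of $a,b,c$ satisfies $d\leq a\wedge b$ and $d\leq c$, hence $d\leq(a\wedge b)\wedge c$; the symmetric argument handles the other bracketing, so the two agree.

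The remaining step is the order characterisation $a\leq b\Leftrightarrow a\wedge b=a$, which closes the loop: if $a\leq b$ then $a$ is itself the greatest lower bound of $\{a,b\}$, so $a\wedge b=a$, while conversely $a\wedge b=a$ forces $a=a\wedge b\leq b$. I do not anticipate a genuine obstacle here, as every step is a direct unwinding of definitions; the only point requiring a moment's care is the associativity argument, where one must resist computing directly and instead appeal to the uniqueness of infima for the triple $\{a,b,c\}$.
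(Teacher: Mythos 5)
Your proof is correct and complete: the verification that the natural order is a partial order, that $ab$ serves as the greatest lower bound, and the converse construction via the universal property of meets (including the uniqueness-of-infima argument for associativity) are all sound. Note that the paper itself states this proposition without proof, treating it as a classical fact of semilattice theory; your argument is precisely the standard one that would be given, so there is nothing to reconcile between the two.
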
 
 
If $e,f\in E(S)$ with $e\not\geq f$ and $f\not\geq e$ then we say that \textit{$e$ and $f$ are incomparable}, and denote this by $e\perp f$. 
 The set of idempotents of any inverse semigroup forms a semilattice \cite[Theorem 5.1.1]{Howie94}. We may define a partial order on $S$, called the \textit{natural order on} $S$, given by $a\leq b$ if there exists $e\in E(S)$ such that $a=eb$. Note that $\leq$ is compatible with the multiplication $S$, that is, if $a\leq b$ and $c\in S$ then $ac\leq bc$ and $ca\leq cb$. Moreover, $\leq$ restricts to the natural order on $E(S)$. 
   
 Given a subset $A$ of $S$, we denote $\langle A \rangle$ and $\langle A \rangle_I$ as the subsemigroup and inverse subsemigroup, respectively, generated by $A$. If $A=\{a_1,\dots , a_n\}$, then it follows from Lemma \ref{inverse rules} that 
\[ \langle A \rangle_I= \langle a_1,\dots,a_n,a^{-1}_1,\dots,a_n^{-1} \rangle. 
\]
Hence all f.g. inverse semigroups are also f.g. semigroups, and thus every inverse  homogeneous semigroup is a HIS (since we are required to consider isomorphisms between all f.g. subsemigroups,  not only the inverse ones). 

Given an inverse subsemigroup $T$ of $S$, we call $T$ \textit{$n$-generated} ($n\in \mathbb{N}$) if there exists $t_1,\dots,t_n\in S$ such that $T=\langle t_1,\dots,t_n \rangle_I$. 

The \textit{order} of an element $a$ of $S$, denoted $o(a)$, is the cardinality of the monogenic inverse subsemigroup  $\langle a \rangle_I$. If all elements of $S$ are of finite order then $S$ is called \textit{periodic}, otherwise we call $S$ \textit{non-periodic}.

A semigroup is \textit{bisimple} if it has a single $\mathcal{D}$-class. A fundamental example of a bisimple inverse semigroup is the \textit{bicylic monoid}, the inverse monoid presented as an inverse monoid by $\langle x:xx^{-1}>x^{-1}x\rangle$, with identity element $xx^{-1}$ \cite{Clif&Pres61}. In particular, bicyclic monoids are non-periodic inverse semigroups with chain of idempotents $xx^{-1}>x^{-1}x>x^2x^{-2}>x^3x^{-3}>\cdots$. 
 
 The inverse semigroup $S$ is \textit{completely semisimple} if no distinct $\mathcal{D}$-related idempotents are related under the natural order on $E(S)$. This is equivalent to $S$ not containing a copy of the bicyclic monoid. Indeed, if $e,f\in E(S)$ is such that $e>f$ and $e \, \mathcal{D} \, f$ then there exists $x\in S$ with $xx^{-1}=e$ and $x^{-1}x=f$, and it follows that $\langle x \rangle_I$ is isomorphic to the bicyclic monoid (see \cite{Goberstein06}, for example). The converse is immediate.

We say that $S$ is \textit{Clifford} if $xx^{-1}=x^{-1} x$ for all $x\in S$ or, equivalently by \cite[Theorem 4.2.1]{Howie94}, if each $\mathcal{D}$-class is a group. In this case the idempotents of $S$ are central and $S/\mathcal{D}$ forms a semilattice.


\section{Properties of Homogeneity} 

Our methods for proving homogeneity comes in two forms: either we prove it directly from certain isomorphism theorems or we use the general method of Fra\"iss\'e. We shall now outline the later method. Here we apply this only to inverse semigroups, and for the general case we refer to \cite[Chapter 7]{Hodges93}. 

Let $\mathcal{K}$ be a class of f.g. inverse semigroups. Then we say   
\begin{enumerate} [label=(\arabic*)] 
\item $\mathcal{K}$ is \textit{countable} if  it contains only countably many isomorphism types.
\item  $\mathcal{K} $ is \textit{closed under isomorphism} if whenever $A\in \mathcal{K}$ and $B\cong A$ then $B\in \mathcal{K}$.  
\item $\mathcal{K}$ has the \textit{hereditary property} (HP) if given $A\in \mathcal{K}$ and $B$ a f.g. inverse subsemigroup of $A$ then $B\in \mathcal{K}$. 
\item  $\mathcal{K}$ has the \textit{joint embedding property} (JEP) if given $B_1,B_2\in \mathcal{K}$, then there exists $C\in \mathcal{K}$ and embeddings $f_i:B_i\rightarrow C$ ($i=1,2$).
\item  $\mathcal{K}$ has the \textit{amalgamation property}\footnote{This is also known as the \textit{weak amalgamation property}.}  (AP) if given  $A, B_1, B_2\in \mathcal{K}$, where $A$ is non-empty, and embeddings $f_i:A\rightarrow B_i$ ($i=1,2$), then there exists $D\in \mathcal{K}$ and embeddings $g_i: B_i \rightarrow D$  such that 
\[     f_1 \circ g_1 = f_2 \circ g_2. 
\] 
\end{enumerate} 

The \textit{age} of an inverse semigroup $S$ is the class $\mathcal{K}$ of all f.g. inverse semigroups which can be embedded in $S$. We may now apply Fra\"iss\'e's Theorem \cite{Fraisse} to the case of inverse semigroups. 

\begin{theorem}[Fra\"iss\'e's Theorem for inverse semigroups] Let  $\mathcal{K}$ be a non-empty countable class of f.g. inverse semigroups which  is closed under isomorphism and satisfies HP, JEP and AP. Then there exists a unique, up to isomorphism, countable HIS $S$ such that $\mathcal{K}$ is the age of $S$. Conversely, the age of a countable HIS is closed under isomorphism, is countable and satisfies HP, JEP and AP. 
\end{theorem}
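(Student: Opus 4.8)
The plan is to adapt the standard Fra\"iss\'e construction to the category of inverse semigroups, verifying that nothing in the classical argument depends essentially on the signature beyond the facts we have already recorded: that finitely generated inverse subsemigroups are finitely generated as algebras (so that ``finitely generated substructure'' is the correct notion here), and that isomorphisms of inverse semigroups are exactly the structure-preserving bijections. Because inverse semigroups form a variety (an equational class in the signature with multiplication and the unary inverse), the general model-theoretic machinery for relational or functional structures applies verbatim; the work is therefore organizational rather than conceptual, and I would be careful to phrase each step so that ``substructure'' always means ``inverse subsemigroup.''

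For the forward direction, I would first build the \emph{Fra\"iss\'e limit}. Given $\mathcal{K}$ satisfying the four hypotheses, I would construct $S$ as the union of a chain $A_0 \hookrightarrow A_1 \hookrightarrow A_2 \hookrightarrow \cdots$ of members of $\mathcal{K}$ assembled by a back-and-forth bookkeeping argument. Concretely, I would enumerate (using countability of $\mathcal{K}$ up to isomorphism, together with countability of the finitely generated pieces) all triples consisting of a stage $A_n$, a finitely generated inverse subsemigroup $B \leq A_n$, and an embedding $B \hookrightarrow C$ with $C \in \mathcal{K}$, and at each step use AP to amalgamate $C$ over $B$ into the next stage $A_{n+1}$, while JEP guarantees we can start and keep the chain inside $\mathcal{K}$. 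The countable union $S = \bigcup_n A_n$ is then a countable inverse semigroup. I would then verify two properties: that the age of $S$ is exactly $\mathcal{K}$ (HP and the construction give both inclusions), and that $S$ is \emph{weakly homogeneous}, i.e.\ every isomorphism between finitely generated inverse subsemigroups extends to an embedding defined on any one further generator. The back-and-forth principle upgrades weak homogeneity to full homogeneity: given an isomorphism $\phi\colon U \to V$ between finitely generated inverse subsemigroups of $S$, I alternately extend $\phi$ and $\phi^{-1}$ one generator at a time, at each stage appealing to weak homogeneity and to the fact that adjoining one element keeps the subsemigroup finitely generated, so that in the limit $\phi$ extends to an automorphism of $S$.

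For uniqueness, I would take two countable HISs $S, S'$ with the same age $\mathcal{K}$ and run a back-and-forth argument directly between them: using homogeneity of each and the fact that every finitely generated inverse subsemigroup of either lies in $\mathcal{K}$ and hence embeds in the other, I would build an increasing chain of partial isomorphisms whose union is an isomorphism $S \to S'$. For the converse direction, I would take a countable HIS $S$, let $\mathcal{K}$ be its age, and check the four properties one at a time. HP is immediate since a finitely generated inverse subsemigroup of a finitely generated inverse subsemigroup of $S$ is again such. Countability follows because $S$ is countable and each member of the age is generated by a finite subset of $S$, of which there are only countably many. JEP and AP are where homogeneity is genuinely used: given $B_1, B_2$ (and $A$ with its two embeddings, for AP) inside $S$, I would realize copies of them inside $S$, then use the automorphisms supplied by homogeneity to glue the two copies along the common substructure $A$ — the homogeneity of $S$ is exactly what lets an isomorphism between the two images of $A$ be extended to an automorphism of $S$, producing the required amalgam $D$ inside $S$.

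The main obstacle I anticipate is the bookkeeping in the limit construction: one must enumerate pending amalgamation tasks so that every finitely generated substructure together with every one-point extension eventually gets processed, and simultaneously ensure the resulting structure is a genuine inverse semigroup rather than merely a direct limit of embeddings — though the latter is automatic because inverse semigroups form a variety and directed colimits of embeddings of algebras in a variety are again algebras in that variety, with the embeddings jointly surjective onto the union. A secondary point requiring care is that AP as stated permits $f_1 \circ g_1 = f_2 \circ g_2$ only as a \emph{weak} amalgam (the images of $A$ may properly overlap), so in the homogeneity-to-AP direction I must not assume strong amalgamation; conversely, in building the limit, the weak form is all that the back-and-forth upgrade requires, so the two halves of the theorem match up cleanly.
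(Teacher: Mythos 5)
Your proposal is correct, but it takes a different route from the paper in the sense that the paper does not prove this theorem at all: it simply invokes Fra\"iss\'e's general theorem for countable structures in a (possibly functional) signature, citing Fra\"iss\'e's original paper and referring the reader to Hodges' treatment of the general case, the only implicit content being that inverse semigroups are algebras in the signature $\{\cdot,{}^{-1}\}$ whose finitely generated substructures are exactly the finitely generated inverse subsemigroups. What you have written is, in effect, the proof behind that citation: the chain construction with countable bookkeeping, the back-and-forth upgrade from weak homogeneity, uniqueness, and the verification of HP, countability, JEP and AP for the age of a countable HIS. Your version buys self-containedness, and you correctly isolate the two points where the functional signature could cause trouble and where a careless reader might worry: the bookkeeping stays countable because an embedding of a finitely generated inverse semigroup into a countable one is determined by the images of finitely many generators, and unions of chains of inverse semigroups are inverse semigroups because the class is a variety (note the paper needs the unary operation in the signature for this; closure of f.g.\ inverse subsemigroups under $^{-1}$ is exactly why f.g.\ here behaves well). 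The paper's citation buys brevity and defers these checks to the standard literature. One small slip in your discussion of amalgamation: in the weak amalgamation property it is the images of $B_1$ and $B_2$ in $D$ that may overlap beyond the image of $A$ (strong amalgamation would forbid this), not ``the images of $A$''; your mathematical use of the distinction — not assuming strong amalgamation when deriving AP from homogeneity, and only needing weak AP for the limit construction — is nevertheless exactly right, and indeed the paper's footnote flags the same terminological point.
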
 


We call $S$ the \textit{Fra\"iss\'e limit} of $\mathcal{K}$. 

\begin{example} {\em Let $\mathcal{K}$ be a  Fra\"iss\'e class of commutative inverse semigroups. Then the Fra\"iss\'e limit $S$ of $\mathcal{K}$ is also commutative inverse, for if $a,b\in S$ then $\langle a,b\rangle_I\in \mathcal{K}$, and so $ab=ba$ (this may be generalised to arbitrary varieties of inverse semigroups).  }
\end{example} 

\begin{example}{\em The class of all finite semilattices forms a Fra\"iss\'e class, with Fra\"iss\'e limit the \textit{universal semilattice} (see \cite{Hall75}, for example). }
\end{example}

\begin{example} {\em Let $\mathcal{K}$ be the class of all f.g. Clifford semigroups. Then $\mathcal{K}$ is closed under both substructure and (finite) direct product, and thus has JEP. However it was shown in \cite{Hall75} that AP does not hold. Similarly, the class of all f.g. inverse semigroups is not a Fra\"iss\'e class.  }
\end{example}

Fra\"iss\'e's Theorem will be of particular use when we consider the homogeneity of commutative inverse semigroups in Section 5. We now consider the substructure of a HIS.

Given a structure $M$, we  call a subset $N$ \textit{characteristic} it is invariant under each automorphism of $M$, that is, $N\phi=N$ for each $\phi \in \text{Aut}(M)$. 

 It is easily shown that every characteristic subset generates a characteristic substructure. Moreover, the homogeneity of a structure will pass to characteristic substructures (the result for groups is given in \cite[Lemma 1]{Cherlin91}). However we shall view a larger class of substructures:
 
 \begin{definition}{\em Let $M$ be a structure with substructure $A$ such that if $\phi$ is an automorphism of $M$ and there exists  $a,b \in A$ with $a\phi = b$, then $\phi|_A$ is an automorphism of $A$. Then we call $A$ a \textit{quasi-characteristic substructure of $M$}. }
 \end{definition} 

\begin{lemma}\label{rel-char equiv} Let $A$ be a substructure of $M$. Then the following are equivalent: 
\begin{enumerate}[label=(\roman*), font=\normalfont]
\item  $A$ is a quasi-characteristic substructure of $M$; 
 \item if $\phi\in \text{Aut}(M)$ is such that there exists $a,b \in A$  with $a\phi = b$, then $x\phi\in A$ for all $x\in A$.
  \end{enumerate}
\end{lemma}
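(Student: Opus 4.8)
The plan is to prove the equivalence of the two conditions in Lemma \ref{rel-char equiv} by showing each implies the other, using the definition of quasi-characteristic substructure directly. Condition (i) asserts that whenever an automorphism $\phi$ of $M$ maps some element of $A$ to another element of $A$, the restriction $\phi|_A$ is an automorphism of $A$; condition (ii) asserts the weaker-sounding statement that under the same hypothesis, $\phi$ merely maps $A$ into itself. The key observation is that these are in fact equivalent once one exploits that $\phi$ is an automorphism of the whole structure $M$, and hence is injective, together with a symmetry argument applied to $\phi^{-1}$.

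First I would prove that (i) implies (ii), which is the straightforward direction. Suppose $A$ is quasi-characteristic and $\phi\in\text{Aut}(M)$ satisfies $a\phi=b$ for some $a,b\in A$. By (i), $\phi|_A$ is an automorphism of $A$; in particular $\phi|_A$ is a well-defined map from $A$ to $A$, so $x\phi\in A$ for every $x\in A$, which is exactly (ii).

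The substantive direction is (ii) implies (i). Assume (ii) holds, and let $\phi\in\text{Aut}(M)$ with $a\phi=b$ for some $a,b\in A$. By (ii), $\phi$ maps $A$ into $A$, so $\phi|_A:A\to A$ is a well-defined homomorphism; it is automatically injective since $\phi$ is. The crux is surjectivity of $\phi|_A$ onto $A$, and this is where I expect the main obstacle to lie. The idea is to apply the hypothesis to the inverse automorphism $\phi^{-1}\in\text{Aut}(M)$: since $a\phi=b$ with $a,b\in A$, we also have $b\phi^{-1}=a$ with $a,b\in A$, so $\phi^{-1}$ satisfies the hypothesis of (ii) as well. Hence $\phi^{-1}$ maps $A$ into $A$, meaning that for every $x\in A$ there is $y=x\phi^{-1}\in A$ with $y\phi=x$; thus $\phi|_A$ is surjective onto $A$. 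Combining injectivity and surjectivity, $\phi|_A$ is an automorphism of $A$, giving (i).

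The only delicacy to handle carefully is confirming that $\phi^{-1}$ genuinely satisfies the premise of (ii) — namely that it sends \emph{some} element of $A$ into $A$ — which is guaranteed by the single witnessing pair $a,b\in A$ and the relation $b\phi^{-1}=a$. Once this symmetric use of $\phi$ and $\phi^{-1}$ is in place, the argument is purely formal and requires no structure-specific computation, since it relies only on the fact that automorphisms of $M$ are bijective homomorphisms and that the hypothesis is preserved under passing to inverses.
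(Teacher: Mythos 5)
Your proposal is correct and follows essentially the same argument as the paper: the nontrivial direction uses injectivity of $\phi|_A$ (inherited from $\phi$) together with applying the hypothesis to $\phi^{-1}$ (witnessed by $b\phi^{-1}=a$) to get surjectivity onto $A$.
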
 

\begin{proof} (i) $\Rightarrow$ (ii). Trivial. 

(ii) $ \Rightarrow $ (i). Let $\phi\in \text{Aut}(M)$ be such that there exists $a,b\in A$ with $a\phi =b$. Then by our hypothesis $\phi|_A$ is a map from $A$ to $A$ and, being a restriction of an automorphism, is an injective morphism. Moreover, as $b\phi^{-1}=a$ and $\phi^{-1}\in \text{Aut}(M)$, we have that $x\phi^{-1}\in A$ for all $x\in A$. Hence $\phi|_{A}$ is surjective, and thus an automorphism.  
\end{proof}   

\begin{example} \label{Y char}{\em Let $S$ be an inverse semigroup with semilattice of idempotents $Y$. Then for any $\phi \in \text{Aut}(S)$ and $e\in Y$ we have 
\[ (e\phi)^2=e^2\phi=e\phi
\]
and it follows that $Y$ is a characteristic subsemigroup, and thus quasi-characteristic. }
\end{example}

\begin{example} \label{G rel char}{\em Let $S$ be an inverse semigroup and $e$ an idempotent of $S$. Then the maximal subgroup $H_e$ of $S$ is quasi-characteristic. Indeed, note that $\mathcal{H}$ (and indeed each Green's relation) is preserved by morphisms of $S$, since if $\theta:S\rightarrow T$ is a morphism to an inverse semigroup $T$ then, for $a,b\in S$,
\begin{align*}
a \, \mathcal{H} \, b & \Rightarrow aa^{-1}=bb^{-1} \text{ and } a^{-1}a=b^{-1}b, \\
& \Rightarrow (aa^{-1})\theta= (bb^{-1})\theta \text{ and } (a^{-1}a)\theta = (b^{-1}b)\theta, \\
& \Rightarrow (a\theta)(a\theta)^{-1} = (b\theta)(b\theta)^{-1}  \text{ and } (a\theta)^{-1}(a\theta)=(b\theta)^{-1}(b\theta) \\
& \Rightarrow a\theta \, \mathcal{H} \, b\theta,
\end{align*}
since $(a\theta)^{-1}=a^{-1}\theta$. Hence, if  $\phi$ is an automorphism of $S$ and $a\phi=b$ for some $a,b\in H_e$, then for any $x\in H_e$ we have
\[ x \, \mathcal{H} \, a \Rightarrow x\phi \, \mathcal{H} \, a\phi=b
\] 
and so $x\phi \in H_e$ as required. }
\end{example} 

\begin{lemma}\label{rel char homog} Let $M$ be a homogeneous structure with a quasi-characteristic substructure $A$. Then $A$ is homogeneous. 
\end{lemma}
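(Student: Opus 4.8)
The plan is to lift any isomorphism between finitely generated substructures of $A$ to an automorphism of the ambient structure $M$ using the homogeneity of $M$, and then to push it back down to an automorphism of $A$ using precisely the quasi-characteristic hypothesis.

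First I would take finitely generated substructures $B$ and $C$ of $A$ together with an isomorphism $\psi : B \to C$; the aim is to extend $\psi$ to an automorphism of $A$. Since $A$ is itself a substructure of $M$, both $B$ and $C$ are finitely generated substructures of $M$, so the homogeneity of $M$ supplies an automorphism $\phi \in \text{Aut}(M)$ with $\phi|_B = \psi$.

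The key step is then to show that $\phi|_A$ is an automorphism of $A$. Assuming $B$ is non-empty, I pick any $a \in B \subseteq A$; then $b := a\phi = a\psi \in C \subseteq A$, so there exist $a, b \in A$ with $a\phi = b$. The quasi-characteristic hypothesis on $A$ now applies directly and gives that $\phi|_A \in \text{Aut}(A)$. Since $\phi$ extends $\psi$ and $B \subseteq A$, the restriction $\phi|_A$ also extends $\psi$, which completes the argument. I expect the substantive content to lie entirely in the definition of quasi-characteristic substructure: via Lemma \ref{rel-char equiv}, this condition is exactly strong enough to guarantee that an ambient automorphism moving one element of $A$ into $A$ must restrict to a self-map, and hence to an automorphism, of $A$.

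The only point requiring care, though it is not really an obstacle, is the degenerate case in which $B$ (and hence $C$) is empty, where the quasi-characteristic definition cannot be invoked because there are no witnessing elements $a, b$ with $a\phi = b$. In that case $\psi$ is the empty map, which is extended by the identity automorphism of $A$, so homogeneity of $A$ holds trivially.
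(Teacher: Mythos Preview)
Your argument is correct and is essentially the same as the paper's: lift the isomorphism to an automorphism of $M$ by homogeneity, then invoke the quasi-characteristic hypothesis to restrict it to an automorphism of $A$. The paper's version is terser (it does not explicitly pick witnessing elements or discuss the empty case), but the content is identical.
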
 

\begin{proof} Let $\phi$ be an isomorphism between f.g. substructures $N$ and $N'$ of $A$. Then as $N$ and $N'$ are  f.g. substructures of $M$, we may extend $\phi$ to $\bar{\phi}\in \text{Aut}(M)$. Since  $N\bar{\phi}=N'$ and $A$ is quasi-characteristic we have $\bar{\phi}|_A\in \text{Aut}(A)$, and so $A$ is homogeneous. 
\end{proof} 
 
 Given a subset $T$ of $S$, we say that Aut($S$) \textit{acts transitively} on $T$ if for any $a,b\in T$, there exists an automorphism $\phi$ of $S$ such that $a\phi=b$. 
 
\begin{lemma}\label{trans on Y} If $S$ is a HIS then $\text{Aut}(S)$ acts transitively on $E(S)$. 
\end{lemma}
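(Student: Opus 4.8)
The plan is to exploit the fact that each idempotent generates a \emph{trivial} inverse subsemigroup, so that any bijection between two such singletons is automatically an inverse-semigroup isomorphism; homogeneity will then hand us the required automorphism. Concretely, I would fix arbitrary $e,f\in E(S)$ and aim to produce $\phi\in\text{Aut}(S)$ with $e\phi=f$.

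First I would observe that in an inverse semigroup every idempotent is its own inverse: from $e^2=e$ we get $e\cdot e\cdot e=e$, so $e$ satisfies the defining relations $e=eee$ and $e=eee$ of an inverse of itself, whence $e^{-1}=e$ by uniqueness of inverses. Consequently $\langle e\rangle_I=\langle e,e^{-1}\rangle=\langle e\rangle=\{e\}$, and likewise $\langle f\rangle_I=\{f\}$. Thus $\{e\}$ and $\{f\}$ are one-generated, and hence finitely generated, inverse subsemigroups of $S$. Next, the map sending $e$ to $f$ is an isomorphism between these two one-element inverse subsemigroups: it is a bijection, it respects the product (as $ee=e\mapsto f=ff$) and it respects inversion (as $e^{-1}=e\mapsto f=f^{-1}$).

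Since $S$ is a HIS, this isomorphism between finitely generated inverse subsemigroups extends to an automorphism $\phi$ of $S$, which satisfies $e\phi=f$ by construction. As $e,f$ were arbitrary, $\text{Aut}(S)$ acts transitively on $E(S)$. There is no genuine obstacle in this argument: the whole content is the recognition that a singleton idempotent is a finitely generated inverse subsemigroup, so that the definition of homogeneity applies directly. The only point needing a line of justification is the identity $e^{-1}=e$, which is immediate from uniqueness of inverses.
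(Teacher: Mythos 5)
Your proposal is correct and is essentially identical to the paper's proof: both observe that $\langle e\rangle_I=\{e\}$ and $\langle f\rangle_I=\{f\}$ are isomorphic finitely generated inverse subsemigroups and invoke homogeneity to extend the obvious isomorphism to an automorphism of $S$. You simply spell out the justification (via $e^{-1}=e$) that the paper leaves implicit.
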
 

\begin{proof} Given $e,f\in E(S)$, we have $\langle e \rangle_I=\{e\}\cong \{f\}=\langle f \rangle_I$, and so the result follows by the homogeneity of $S$. 
\end{proof} 

\begin{corollary}\label{iso max subgrps} The maximal subgroups of a HIS are isomorphic. 
\end{corollary}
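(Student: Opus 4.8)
The plan is to combine the transitivity of $\text{Aut}(S)$ on $E(S)$ (Lemma~\ref{trans on Y}) with the fact, recorded in Example~\ref{G rel char}, that every morphism of an inverse semigroup preserves the relation $\mathcal{H}$. Fix idempotents $e, f \in E(S)$; the goal is to exhibit an isomorphism between the maximal subgroups $H_e$ and $H_f$, since these range over all maximal subgroups of $S$ as $e$ ranges over $E(S)$.

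First I would use Lemma~\ref{trans on Y} to produce an automorphism $\phi \in \text{Aut}(S)$ with $e\phi = f$. Next I would verify that $\phi$ restricts to a map $H_e \to H_f$: as $\phi$ is in particular a morphism, the calculation of Example~\ref{G rel char} gives that $x \, \mathcal{H} \, e$ implies $x\phi \, \mathcal{H} \, e\phi = f$, so that $x \in H_e$ forces $x\phi \in H_f$. Being the restriction of an injective homomorphism, $\phi|_{H_e} \colon H_e \to H_f$ is itself an injective homomorphism.

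The one point requiring a moment's care is surjectivity, and it is handled purely by symmetry: applying the identical argument to $\phi^{-1} \in \text{Aut}(S)$, which satisfies $f\phi^{-1} = e$, shows that $\phi^{-1}$ restricts to a homomorphism $H_f \to H_e$, necessarily inverse to $\phi|_{H_e}$. Hence $\phi|_{H_e}$ is a bijective homomorphism, that is, an isomorphism, and $H_e \cong H_f$. I do not anticipate any real obstacle here, as the statement is a direct consequence of Lemma~\ref{trans on Y} together with the preservation of Green's relations; the only thing to watch is that $\phi$ sends $H_e$ to the \emph{different} subgroup $H_f$, so one cannot merely quote the quasi-characteristic property of $H_e$ from Example~\ref{G rel char} but must instead use the restrictions of $\phi$ and of $\phi^{-1}$ directly.
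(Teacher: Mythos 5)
Your proposal is correct and follows essentially the same route as the paper: both use Lemma~\ref{trans on Y} to obtain an automorphism $\phi$ with $e\phi = f$ and then conclude $H_e\phi = H_f$ from the preservation of $\mathcal{H}$ under automorphisms. Your write-up merely spells out the injectivity and surjectivity (via $\phi^{-1}$) details that the paper leaves implicit.
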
 

\begin{proof} Let $H_e$ and $H_f$ be maximal subgroups of a HIS $S$ for some idempotents $e,f$. Then by Lemma \ref{trans on Y} there exists an automorphism $\phi$ of $S$ such that $e\theta=f$. Hence $H_e \phi = H_f$ since $\mathcal{H}$ is preserved by automorphisms. 
\end{proof} 
 
We now consider the property of homogeneity as an inverse semigroup for two key classes of inverse semigroups: groups and semilattices. Note that, for a group, the two notions of an inverse of an element coincide. Moreover, since a finitely generated inverse subsemigroup of a group will contain a unique idempotent, it will be a subgroup \cite[page 62]{Howie94}. Hence a group is a HIS if and only if it is a homogeneous group. 

 Homogeneity of lower semilattices was first considered by Droste in \cite{Droste85} and, together with Truss and Kuske, in \cite{Truss99}. Note that both articles consider the homogeneity of the corresponding structure $(Y,\leq,\wedge)$. Since any morphism of the corresponding (algebraic) semilattice $(Y,\wedge)$ preserves $\leq$, their work effectively considers homogeneity of (algebraic) semilattices.
Moreover, for each $e\in Y$, as $e^{-1}=e$ we have $\langle e_1,\dots,e_n \rangle_I=\langle e_1,\dots,e_n \rangle$ for any $e_1,\dots,e_n\in Y$. Hence a semilattice is a HIS if and only if it is a homogeneous semigroup.

 We shall therefore refer simply to {\em homogeneous semilattices} without ambiguity. Note however, that a homogeneous semilattice need not be homogeneous as a poset.

Every linearly ordered set is a semilattice, where the meet function $\wedge$ is the minimum of two elements. The unique, up to isomorphism, non-trivial homogeneous linear order is $(\mathbb{Q},\leq)$ (where $\leq$ is the natural order on $\mathbb{Q}$). Moreover, $(\mathbb{Q},\leq)$ is the unique homogeneous semilattice also being homogeneous as a poset. 

A semilattice $Y$ with associated partial order $\leq$ is called a \textit{semilinear order} if $(Y,\leq)$ is non-linear and, for all $\alpha\in Y$, the set $\{\beta\in Y:\beta\leq \alpha\}$ is linearly ordered. This is equivalent to $Y$ not containing a \textit{diamond}, that is, distinct $\delta,\alpha,\gamma,\beta\in Y$ such that $\delta>\{\alpha,\gamma\}>\beta$ and $\alpha \perp \gamma$ with $\alpha\gamma=\beta$.  In \cite{Droste85}, every homogeneous semilattice forming a semilinear order was constructed, which lead to the following classification. 

\begin{proposition}[{{\cite{Droste85, Truss99}}}]  A non-trivial homogeneous semilattice is isomorphic to either $(\mathbb{Q},\leq)$, the universal semilattice, or a semilinear order. 
\end{proposition}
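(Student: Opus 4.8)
The plan is to argue by a double dichotomy --- on linearity, and then on the presence of a diamond --- reducing everything to a single genuinely hard case which is recognised as the universal semilattice via Fra\"iss\'e's Theorem. Throughout, write $Y$ for the given non-trivial homogeneous semilattice, and recall that $Y$ is countable and that, as a semilattice, it is a HIS. Homogeneity applied to singletons shows at once that $\text{Aut}(Y)$ acts transitively on $Y$ (as in Lemma \ref{trans on Y}), and this transitivity, together with the extendibility of isomorphisms between larger finite subsemilattices, is the engine for everything that follows. First I would dispose of the case in which $(Y,\leq)$ is a \emph{linear} order: a finite subset of a chain is a subsemilattice (it is closed under $\wedge=\min$), a semilattice isomorphism between finite chains is exactly an order isomorphism, and a semilattice automorphism of $Y$ is exactly an order automorphism. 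Hence $Y$ is homogeneous as a semilattice if and only if it is homogeneous as a linear order, and invoking the stated fact that the unique non-trivial countable homogeneous linear order is $(\mathbb{Q},\leq)$ gives $Y\cong(\mathbb{Q},\leq)$.

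Now suppose $(Y,\leq)$ is non-linear, so that there exist incomparable $\alpha\perp\gamma$, and split on whether $Y$ contains a diamond. If $Y$ contains no diamond then, by the equivalence recorded before the statement, every down-set $\{\beta:\beta\leq\alpha\}$ is a chain, so $Y$ is by definition a semilinear order and we are in one of the permitted cases. The substantial case is therefore that $Y$ is non-linear and \emph{does} contain a diamond $\delta>\{\alpha,\gamma\}>\beta$ with $\alpha\perp\gamma$ and $\alpha\gamma=\beta$, and here the goal is to prove that $Y$ is the universal semilattice.

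The key step is a \emph{propagation lemma}: every pair of incomparable elements has a common upper bound. Indeed, given incomparable $\alpha'\perp\gamma'$ with meet $\beta'=\alpha'\gamma'$, the three-element subsemilattices $\{\alpha,\gamma,\beta\}$ and $\{\alpha',\gamma',\beta'\}$ are isomorphic via $\alpha\mapsto\alpha'$, $\gamma\mapsto\gamma'$, $\beta\mapsto\beta'$; extending this to $\phi\in\text{Aut}(Y)$ and using that automorphisms preserve $\leq$, the element $\delta\phi$ is a common upper bound of $\alpha'$ and $\gamma'$. Combined with transitivity (each element equals $\delta\psi$ for some automorphism $\psi$, hence sits above an incomparable pair, and also equals $\beta\psi$ for some automorphism, hence lies below incomparable elements), this shows that $Y$ branches richly both upwards and downwards. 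To finish, I would show that the age of $Y$ is the class of \emph{all} finite semilattices; equivalently, since f.g. subsemilattices are automatically finite, that every finite semilattice embeds in $Y$. As that class is a Fra\"iss\'e class with limit the universal semilattice, the uniqueness clause of Fra\"iss\'e's Theorem then forces $Y$ to be the universal semilattice.

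For the embedding claim I would induct on $|F|$: every finite semilattice $F$ has a maximal element $m$, and $F\setminus\{m\}$ is again a subsemilattice (for $a,b\neq m$ one cannot have $a\wedge b=m$, as that would give $m<a$), so it suffices to realize, for each finite subsemilattice already embedded in $Y$, every one-point extension adjoining a new \emph{maximal} element with prescribed meets. This is the step I expect to be the main obstacle. The diamond and the propagation lemma supply only a common upper bound for a \emph{single} incomparable pair, whereas a general one-point extension demands an element lying above an entire prescribed down-set (which may contain several incomparable elements) while remaining incomparable, with exactly the prescribed meets, to all the remaining elements. Realizing such a point requires an amalgamation-style argument in which one iterates homogeneity and the propagation lemma to assemble the required upper bounds and to separate the new element correctly, taking care that no unwanted comparabilities or meets are created. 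Once this extension property is in hand, the identification of $Y$ with the universal semilattice is immediate.
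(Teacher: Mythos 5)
Your reduction is sound as far as it goes: the linear case correctly collapses to the classification of homogeneous linear orders; the non-linear, diamond-free case is handled by the equivalence (recorded in the paper) between semilinearity and the absence of diamonds; the propagation lemma is correct, and it does extend by an easy induction to give a common upper bound for any finite set; and the Fra\"iss\'e endgame is valid, since finitely generated semilattices are finite and two countable homogeneous structures with the same age are isomorphic. Note also that the paper itself gives no proof of this proposition --- it is quoted from \cite{Droste85, Truss99} --- so your attempt is being measured against those sources rather than against an argument in the text.

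There is, however, a genuine gap, and you have located it yourself: the claim that a homogeneous semilattice containing a diamond embeds every finite semilattice is asserted, not proven. Your induction correctly reduces this to realizing one-point extensions by a new maximal element with a prescribed down-set $D$ in a finite subsemilattice $F$ and prescribed meets with the elements of $F\setminus D$, but for this step you offer only the remark that an ``amalgamation-style argument'' iterating homogeneity and the propagation lemma should work. That step is precisely where the entire difficulty of the theorem is concentrated. The propagation lemma produces common upper bounds only; it gives no control over meets, and no mechanism for forcing the new point to be incomparable to every element of $F\setminus D$ while meeting each such $a$ exactly in the prescribed element $\max\{d\in D: d\leq a\}$. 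Even the simplest configurations beyond upper bounds --- for instance three pairwise incomparable elements whose pairwise meets all coincide, or an element above $\alpha$ whose meet with $\gamma$ is exactly $\beta$ in a given diamond $\delta>\{\alpha,\gamma\}>\beta$ --- are not addressed, and none of them follow formally from transitivity together with the existence of upper bounds. Establishing this extension property from the mere presence of one diamond is the substantive content of the cited classification, so as it stands your argument proves only that $Y$ is $(\mathbb{Q},\leq)$, a semilinear order, or some homogeneous semilattice whose age contains the diamond; the identification of the last case with the universal semilattice remains open.
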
 

 Note that not every semilinear order is a homogeneous semilattice. Moreover, a non-trivial homogeneous semilattice is \textit{dense}, that is, for any $\alpha>\beta$, there exists $\delta$ such that $\alpha>\delta>\beta$. We will use this property of homogeneous semilattices throughout the paper without reference. The universal semilattice is the unique homogeneous semilattice which contains a diamond, and we have the following consequence of its universal property. 

\begin{lemma}\label{U fact} Let $Y$ be the universal semilattice and $Z$ a finite subsemilattice of $Y$. Then for any finite semilattice $Z'$ in which $Z$ embeds, there exists $X\subset Y\setminus Z$ such that $Z\cup X\cong Z'$. 
\end{lemma}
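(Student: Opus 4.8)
The plan is to exploit the defining feature of $Y$ as the Fra\"iss\'e limit of the class of all finite semilattices: namely, $Y$ is homogeneous and its age consists of \emph{every} finite semilattice. From these two facts one recovers the extension (or ``richness'') property of Fra\"iss\'e limits, which is precisely what the lemma asserts in concrete form. Thus I would view the statement as a packaged instance of the extension property and simply unwind it.

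First I would reduce to a clean setting. Let $j\colon Z\to Z'$ denote the given embedding of $Z$ into $Z'$. Since $Z'$ is a finite semilattice it lies in the age of $Y$, so there is an embedding $h\colon Z'\to Y$. Now $Z(jh)$ and $Z$ are both finite (hence finitely generated) subsemilattices of $Y$, and they are isomorphic via the map $z(jh)\mapsto z$ for $z\in Z$; this is an isomorphism precisely because $jh$ is injective.

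Next I would apply homogeneity. By homogeneity of $Y$ the isomorphism $Z(jh)\to Z$ extends to an automorphism $\phi$ of $Y$. Put $g=h\phi$. Then $g\colon Z'\to Y$ is an embedding, and for each $z\in Z$ we have $(zj)g=((zj)h)\phi=z$, so $g$ carries the subsemilattice $Zj$ of $Z'$ onto $Z$. Setting $X=Z'g\setminus Z=(Z'\setminus Zj)g$, the injectivity of $g$ together with $(Zj)g=Z$ gives $X\subseteq Y\setminus Z$, while $Z\cup X=Z'g\cong Z'$, as required.

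The only real care needed is bookkeeping: keeping the composition order straight (the paper writes maps on the right) and confirming that the adjoined points genuinely avoid $Z$, which is immediate from injectivity of $g$. There is no serious obstacle; the entire content is carried by the two standard properties of the Fra\"iss\'e limit---homogeneity and the fact that its age is all of the class---so the main ``work'' is recognizing the lemma as an instance of the extension property and setting up the right pair of isomorphic copies of $Z$ inside $Y$ to which homogeneity can be applied.
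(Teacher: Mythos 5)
Your proposal is correct and follows essentially the same route as the paper: embed $Z'$ into $Y$ by universality, use homogeneity to extend the resulting isomorphism between the two copies of $Z$ to an automorphism, and then transport the embedded copy of $Z'$ so that it contains $Z$. The only cosmetic difference is that you extend the inverse isomorphism $Z(jh)\to Z$ directly, whereas the paper extends $Z\to Z\theta\phi$ and then applies the inverse automorphism.
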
  

\begin{proof} Let $Z'$ be a finite semilattice and $\theta:Z\rightarrow Z'$ an embedding.  Since $Y$ is universal, there exists an embedding $\phi:Z'\rightarrow Y$. Hence $\theta\phi$ is an isomorphism between $Z$ and $Z\theta\phi$, which we can extend to an automorphism $\chi$ of $Y$, since the universal semilattice is homogeneous. Then $Z$ is a subsemilattice of $(Z'\phi)\chi^{-1}$ since 
\[ Z\chi = Z\theta\phi \subseteq Z'\phi. 
\] 
Moreover, $(Z'\phi)\chi^{-1}$ is isomorphic to $Z'$, and the result follows by taking $X=(Z'\phi)\chi^{-1} \setminus Z$.
\end{proof}

We summarise our findings on an arbitrary HIS.  
 
\begin{corollary}\label{substructure} Let $S$ be a HIS with semilattice of idempotents $Y$. Then $Y$ is a homogeneous semilattice and the maximal subgroups of $S$ are pairwise isomorphic homogeneous groups. 
\end{corollary}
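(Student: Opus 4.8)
The plan is to assemble this directly from the quasi-characteristic machinery already developed, together with the coincidence results for groups and semilattices. The only real content is the observation that the two substructures in question — the semilattice of idempotents and each maximal subgroup — are both quasi-characteristic, so that homogeneity descends to them via Lemma \ref{rel char homog}.

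First I would treat $Y$. By Example \ref{Y char} the semilattice $Y=E(S)$ is a characteristic, hence quasi-characteristic, subsemigroup of $S$. Since $S$ is a HIS, Lemma \ref{rel char homog} immediately yields that $Y$ is homogeneous as an inverse semigroup. To conclude that $Y$ is a homogeneous semilattice in the sense required, I would invoke the earlier observation that for a semilattice the two notions of homogeneity coincide: for idempotents $e_1,\dots,e_n$ one has $\langle e_1,\dots,e_n\rangle_I=\langle e_1,\dots,e_n\rangle$, so a semilattice is a HIS precisely when it is homogeneous as a semigroup. Thus $Y$ is a homogeneous semilattice.

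Next I would treat the maximal subgroups. Pairwise isomorphism is exactly the content of Corollary \ref{iso max subgrps}. For homogeneity, fix an idempotent $e$; by Example \ref{G rel char} the maximal subgroup $H_e$ is quasi-characteristic, so Lemma \ref{rel char homog} shows that $H_e$ is homogeneous as an inverse semigroup, i.e.\ a HIS. Finally, as noted earlier, a finitely generated inverse subsemigroup of a group is itself a subgroup, so a group is a HIS if and only if it is a homogeneous group; hence each $H_e$ is a homogeneous group, and the proof is complete.

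I anticipate no genuine obstacle, since every ingredient has already been established; the only point requiring care is keeping straight the distinction between homogeneity as an inverse semigroup (which Lemma \ref{rel char homog} delivers for quasi-characteristic substructures) and homogeneity as a semigroup (the sense in which \emph{homogeneous group} and \emph{homogeneous semilattice} are meant), and then invoking the two coincidence results to bridge the two for the special cases of $Y$ and $H_e$.
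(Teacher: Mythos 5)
Your proof is correct and follows exactly the route the paper intends: the corollary is stated as a summary of the preceding results, namely Example \ref{Y char} and Example \ref{G rel char} combined with Lemma \ref{rel char homog}, Corollary \ref{iso max subgrps}, and the two coincidence observations (a group is a HIS iff it is a homogeneous group; a semilattice is a HIS iff it is a homogeneous semigroup). Your care in distinguishing homogeneity as an inverse semigroup from homogeneity of the group/semilattice is precisely the bridging the paper relies on.
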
 

 Since a homogeneous finite semilattice is trivial, and an inverse semigroup with a unique idempotent is a group we obtain: 

\begin{corollary} Let $S$ be a finite inverse semigroup. Then $S$ is a HIS if and only if it is a homogeneous group. 
\end{corollary}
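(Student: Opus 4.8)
The plan is to prove both directions of the biconditional, leaning on \Cref{substructure} to handle the forward direction quickly. For the forward implication, suppose $S$ is a finite HIS. Then \Cref{substructure} tells me that the semilattice of idempotents $Y$ is a homogeneous semilattice. Since $Y$ is finite, I would invoke the stated fact that a homogeneous finite semilattice is trivial (i.e.\ a single point), so $|Y|=1$. An inverse semigroup with exactly one idempotent has a single maximal subgroup whose identity is that idempotent, and \textbf{every} element lies in this subgroup: for $a\in S$, both $aa^{-1}$ and $a^{-1}a$ are idempotents and hence equal to the unique idempotent $e$, so $a\in H_e=S$, making $S$ a group. Finally, by the remark in Section 2 that a group is a HIS if and only if it is a homogeneous group, $S$ is a homogeneous group.

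For the converse, suppose $S$ is a finite homogeneous group. Again by the observation that the two notions of homogeneity coincide for groups, $S$ is a HIS. This direction requires essentially no work beyond citing that equivalence.

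The main (and only) subtlety I anticipate is justifying the claim that a finite homogeneous semilattice must be trivial, which the excerpt invokes as known just before the corollary. I would verify this from the classification in \Cref{substructure}'s underlying proposition: a non-trivial homogeneous semilattice is either $(\mathbb{Q},\leq)$, the universal semilattice, or a semilinear order, and each of these is infinite — indeed each is \emph{dense}, as noted in the text, which immediately rules out any finite non-trivial example since a finite nonempty order with more than one element cannot be dense. Hence finiteness forces triviality of $Y$. With that fact in hand, the argument is a short chain of reductions and the proof is essentially immediate.
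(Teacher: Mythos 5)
Your proof is correct and takes essentially the same route as the paper: the paper's own argument is exactly the one-sentence observation preceding the corollary, namely that a finite homogeneous semilattice is trivial (via Corollary \ref{substructure}) and an inverse semigroup with a unique idempotent is a group, combined with the earlier remark that a group is a HIS if and only if it is a homogeneous group. Your additional details (the density argument ruling out finite non-trivial homogeneous semilattices, and the verification that $aa^{-1}=a^{-1}a=e$ forces $S=H_e$) simply make explicit the facts the paper invokes without proof.
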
 

\begin{theorem}\label{cliff of bisimple} Let $S$ be a HIS. If $S$ is completely semisimple then it is Clifford, otherwise $S$ is bisimple. 
\end{theorem}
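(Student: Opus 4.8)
The plan is to prove the two implications separately, in each case using homogeneity to superimpose one pair of idempotents on another and then transporting $\mathcal{D}$ along the resulting automorphism. Throughout I use that each Green's relation is preserved by automorphisms (as in Example \ref{G rel char}), that $\text{Aut}(S)$ acts transitively on $E(S)=Y$ (Lemma \ref{trans on Y}), that $Y$ is a homogeneous semilattice (Corollary \ref{substructure}), and that for idempotents a finitely generated inverse subsemigroup is simply the subsemilattice they generate.

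The easier implication is that if $S$ is not completely semisimple then it is bisimple. Not completely semisimple gives idempotents $e>f$ with $e\,\mathcal{D}\,f$. For any comparable pair $a>b$ in $Y$, the two-element chains $\{e,f\}$ and $\{a,b\}$ are isomorphic subsemilattices, so homogeneity extends $e\mapsto a,\ f\mapsto b$ to an automorphism, whence $a\,\mathcal{D}\,b$. For an arbitrary pair $a,b\in Y$ I would apply this to the comparable pairs $a\geq ab$ and $ab\leq b$ (both strict when $a\perp b$) and use transitivity of $\mathcal{D}$ to get $a\,\mathcal{D}\,b$. Thus all idempotents are $\mathcal{D}$-related; since every $x$ satisfies $x\,\mathcal{R}\,xx^{-1}$, every element is $\mathcal{D}$-related to an idempotent, so $S$ has a single $\mathcal{D}$-class.

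For the harder implication — $S$ completely semisimple $\Rightarrow$ Clifford — I argue by contradiction. If $S$ is not Clifford then some $x$ has $xx^{-1}\neq x^{-1}x$, yielding distinct $\mathcal{D}$-related idempotents $e=xx^{-1}$, $f=x^{-1}x$, which complete semisimplicity forces to be incomparable, $e\perp f$. The key observation is that then \emph{every} incomparable pair of idempotents is $\mathcal{D}$-related: for any $a\perp b$ the three-element subsemilattices $\{e,f,ef\}$ and $\{a,b,ab\}$ are isomorphic, so homogeneity gives an automorphism carrying $(e,f)$ to $(a,b)$ and transports $e\,\mathcal{D}\,f$ to $a\,\mathcal{D}\,b$. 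Since $e\perp f$ exists, $Y$ is non-linear, and by the classification of homogeneous semilattices $Y$ is either the universal semilattice or a semilinear order. In either case the aim is to produce a comparable pair forced to be $\mathcal{D}$-related, contradicting complete semisimplicity: writing $m=ef<e$, it suffices to find an idempotent $q$ incomparable to both $e$ and $m$, for then $q\,\mathcal{D}\,e$ and $q\,\mathcal{D}\,m$ give $e\,\mathcal{D}\,m$ with $e>m$ distinct.

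The remaining, and main, obstacle is producing this $q$, where the two shapes of $Y$ behave differently. If $Y$ is a semilinear order I would use transitivity of $\text{Aut}(S)$ on $Y$ to send $e$ to $m$; the image of $f$ is then incomparable to $m$, and since $\{i:i\leq e\}$ is a chain, any idempotent incomparable to $m\,(<e)$ is automatically incomparable to $e$, so it serves as $q$. If $Y$ is the universal semilattice this chain argument fails, so instead I would invoke universality (or Lemma \ref{U fact}) to embed the finite tree on $\{w<m<e,\ w<q\}$ with $q$ incomparable to both $m$ and $e$, which directly supplies $q$. Either way the contradiction follows. I expect the bookkeeping separating the universal and semilinear cases to be the only genuinely delicate point; everything else reduces to the same mechanism of identifying isomorphic small subsemilattices and transporting $\mathcal{D}$ across the extending automorphism.
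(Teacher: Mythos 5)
Your proof is correct, and your treatment of the non-completely-semisimple case (extending isomorphisms of two-element chains and splitting an incomparable pair $a,b$ through $ab$) is essentially identical to the paper's. Where you genuinely diverge is in the direction ``completely semisimple $\Rightarrow$ Clifford.'' You first promote the single incomparable $\mathcal{D}$-related pair $e\perp f$ to the statement that \emph{every} incomparable pair of idempotents is $\mathcal{D}$-related (via the three-element semilattice isomorphism $\{e,f,ef\}\cong\{a,b,ab\}$), and then reach a contradiction by manufacturing an idempotent $q$ incomparable to both $e$ and $m=ef$; producing $q$ forces you to invoke the Droste--Kuske--Truss classification of homogeneous semilattices and split into the semilinear case (transitivity of $\mathrm{Aut}(S)$ on $E(S)$ plus the chain condition below $e$) and the universal case (Lemma \ref{U fact}). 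Both of your case arguments check out. The paper avoids this machinery entirely: from Lemma \ref{trans on Y} it deduces that all $\mathcal{D}$-classes contain the same number of idempotents, so $D_{ef}$ contains some $g\neq ef$; the same three-element trick you use then rules out $e\perp g$ and $g>e$, forcing $e>g$ and $f>g$, hence $ef\geq g$ by compatibility of the order with multiplication, and complete semisimplicity forces $g=ef$, a contradiction. The trade-off is clear: the paper's counting argument is self-contained and needs no structural information about $Y$ beyond homogeneity, whereas your route is conceptually transparent (reduce everything to finding one idempotent incomparable to a comparable pair) but imports the full classification of homogeneous semilattices as a black box, together with a two-case analysis that the paper never has to make.
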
 

\begin{proof} Let $S$ be completely semisimple HIS. Suppose for contradiction that there exist distinct $\mathcal{D}$-related idempotents $e,f$, so that $e \, \perp \, f$. Since $\mathcal{D}$ is preserved by automorphisms of $S$, it follows by Lemma \ref{trans on Y} that each $\mathcal{D}$-class contains the same number of idempotents. Hence there exists $g\in E(S)$ with $g \, \mathcal{D} \, ef$ and $g \neq ef$. Then $e>ef$ as $e \, \perp f$, and so $D_e\neq D_g$ by the semisimplicity of $S$.  

We claim that $e>g$. Indeed, if $g>e$ then $g>ef$, contradicting $S$ being completely semisimple. If $e \perp g$ then then there exists an isomorphism between $\langle e,f \rangle_I = \{e,f,ef\}$ and $\langle e,g \rangle_I=\{e,g,eg\}$, which fixes $e$ and sends $f$ to $g$. Extending to an automorphism $\phi$ of $S$, we have
\[  D_{e}\phi=D_{e} \neq D_g = D_{f}\phi, 
\] 
contradicting $D_e=D_f$, and the claim follows. Similarly, $f>g$, so that $e,f>ef \geq g$ as $\leq$ is compatible with multiplication, and so $ef=g$, a contradiction. Hence $e=f$ and $S$ is Clifford. 

Suppose instead that $S$ is not completely semisimple, so that there exists $\mathcal{D}$-related idempotents $e',f'$ with $e'>f'$. Let $h,k\in E(S)$. If $h>k$ or $k>h$ then $\{h,k\}\cong \{e',f'\}$ and so $h \, \mathcal{D} \, k$ by homogeneity. On the other hand, if $h\perp k$ then $\{h,hk\}\cong \{e',f'\} \cong \{k,hk\}$ yields $h \, \mathcal{D} \, hk \, \mathcal{D} \, k$. Thus $S$ is bisimple.  
\end{proof}

\begin{proposition}\label{bisimple inf} Let $S$ be a bisimple HIS. Then each maximal subgroup of $S$ is infinite and $\mathcal{H}$ is not a congruence. 
\end{proposition}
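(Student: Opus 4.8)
The plan is to first nail down the ambient structure and then treat the two assertions together through the conjugation action of a maximal subgroup on the idempotents beneath its identity. A bisimple HIS that happened to be a group would have a single idempotent, hence be completely semisimple; so the relevant case of Theorem \ref{cliff of bisimple} tells us $S$ is \emph{not} completely semisimple and therefore contains a copy of the bicyclic monoid $B$. Thus there is $x\in S$ with $e:=xx^{-1}>f:=x^{-1}x$ and $\langle x\rangle_I\cong B$, so in particular $o(x)=\infty$. By Corollary \ref{substructure} the semilattice $Y=E(S)$ is a nontrivial homogeneous semilattice, hence infinite and dense, and all maximal subgroups are pairwise isomorphic homogeneous groups; set $G:=H_e$. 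I would use freely that $\text{Aut}(S)$ is transitive on $E(S)$ (Lemma \ref{trans on Y}) and that each $H_g$ is quasi-characteristic (Example \ref{G rel char}), so any automorphism fixing an idempotent $g$ restricts to an automorphism of $H_g$.

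For the congruence assertion I would first isolate the obstruction by testing the congruence property coordinatewise. If $a\,\mathcal H\,b$ then $aa^{-1}=bb^{-1}$ and $a^{-1}a=b^{-1}b$, and a one-line computation gives $(ac)^{-1}(ac)=(bc)^{-1}(bc)$ and $(ca)(ca)^{-1}=(cb)(cb)^{-1}$ for every $c$; the only coordinates that can fail are $(ac)(ac)^{-1}=a\,cc^{-1}a^{-1}$ and $(ca)^{-1}(ca)=a^{-1}c^{-1}c\,a$. Letting $cc^{-1}$ and $c^{-1}c$ range over all idempotents, and specialising $b$ to the identity of $H_g$, this yields the clean criterion: $\mathcal H$ is a congruence on $S$ if and only if, for every idempotent $g$ and every $a\in H_g$, conjugation by $a$ fixes each idempotent $i\le g$. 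Hence it suffices to exhibit one maximal subgroup element that moves an idempotent below its identity.

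To produce such an element, and simultaneously to force $|G|=\infty$, I would analyse the connecting map $\Theta\colon g\mapsto xgx^{-1}$, the key homogeneity input being that $x$ admits roots of every order: for each $n$ the monogenic inverse subsemigroups $\langle x^{n}\rangle_I$ and $\langle x\rangle_I$ are both isomorphic to $B$, so $x^{n}\mapsto x$ extends to some $\phi_n\in\text{Aut}(S)$ and $w_n:=x\phi_n$ satisfies $w_n^{\,n}=x$, inserting $n-1$ idempotents strictly between $e$ and $f$ and realising the density of $Y$ internally. Arguing by contradiction, I would assume $\mathcal H$ is a congruence; then by the criterion $G$ fixes $f$, so $\Theta$ lands in $G$ and is an endomorphism of the homogeneous group $G$. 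Applying the root structure (to $w_n$ in place of $x$) I would aim to show that $\Theta$ can neither collapse two elements nor be surjective, i.e.\ that it is a \emph{proper injective} endomorphism of $G$; since no finite group admits such a map this already gives $|G|=\infty$, while pressing the argument further should contradict homogeneity outright, so that $\mathcal H$ is in fact not a congruence and some $a\in G$ moves an idempotent $i\le e$.

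The main obstacle is precisely this quantitative step: converting the qualitative self-similarity supplied by the embedded bicyclic monoid together with the density of $Y$ into the sharp statement that the connecting endomorphism is proper and injective (equivalently, that the conjugation action of $G$ on the dense ideal $\{i\in Y:i\le e\}$ has infinite image with trivial collapse). The guiding example to keep in mind is the Munn semigroup of the rational chain, where the maximal subgroup is the automorphism group of a dense ideal: it is visibly infinite and moves idempotents, which is exactly the behaviour I expect homogeneity to force in general, but turning this heuristic into a proof that rules out a finite (in particular trivial) $G$ is where the real work lies.
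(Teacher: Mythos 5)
Your preparatory material is sound: the embedded bicyclic monoid, the (necessity direction of the) criterion that a congruence $\mathcal{H}$ forces every $a\in H_g$ to fix each idempotent $i\le g$ under conjugation, and the existence of $n$-th roots $w_n$ of $x$ all check out. But the proof has a genuine gap at exactly the point you flag yourself: the assertion that $\Theta$ is a proper injective endomorphism of $G$ (equivalently, that the congruence assumption can be contradicted at all) is never established, only announced as an aim. Moreover, the root elements $w_n$ cannot supply the needed witness: each $\langle w_n\rangle_I$ is itself a bicyclic monoid, hence $\mathcal{H}$-trivial, so it meets $H_e$ only in $e$ and misses $H_x$ entirely (indeed $w_nw_n^{-1}=xx^{-1}$ but $w_n^{-1}w_n>x^{-1}x$). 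The underlying reason is that your homogeneity input --- extending $x^n\mapsto x$ --- leaves $x^{-1}x$ uncontrolled, so it never pins the image of $x$ inside a prescribed $\mathcal{H}$-class, and therefore produces no non-idempotent elements of any maximal subgroup and no candidate element moving an idempotent. The paper's proof succeeds by extending a different finite substructure: the unique isomorphism between the three-element chains of idempotents $\{xx^{-1},x^{-1}x,x^{-2}x^2\}$ and $\{xx^{-1},x^{-1}x,x^{-n}x^n\}$. The extending automorphism $\theta_n$ fixes both $xx^{-1}$ and $x^{-1}x$, so $y_n:=x\theta_n$ lies in $H_x$, while $y_n^{-2}y_n^2=x^{-n}x^n$ makes the $y_n$ pairwise distinct; thus $H_x$ is infinite, hence so is every $\mathcal{H}$-class of the bisimple semigroup $S$, and taking $n=3$ gives the congruence contradiction: if $\mathcal{H}$ were a congruence then $x\,\mathcal{H}\,y_3$ would force $x^{-2}x^2=(x^2)^{-1}x^2=(y_3^2)^{-1}y_3^2=x^{-3}x^3$.

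There is also a structural flaw independent of the missing step. In your plan $\Theta$ maps $G$ into $G$, and is a homomorphism, \emph{only under} the hypothesis that $\mathcal{H}$ is a congruence; hence your conclusion $|G|=\aleph_0$ is derived only under that hypothesis, and the endpoint of your own argument is that the hypothesis is false. Once the contradiction is reached, the conditional proof of infiniteness evaporates and the first assertion of the proposition remains unproved. You would need a separate, unconditional argument for the infiniteness of the maximal subgroups; the paper's $y_n$ construction is unconditional and delivers both assertions simultaneously, which is exactly what makes it work.
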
 

\begin{proof} Since $S$ is bisimple, there exists $x\in S$ with $\langle x \rangle_I$ isomorphic to the bicyclic monoid, with chain of idempotents $xx^{-1}>x^{-1}x> x^{-2}x^2>x^{-3}x^3>\cdots$. 
For each $n>2$, let $\theta_n$ be an automorphism of $S$ extending the unique isomorphism between the chain of idempotents $\{xx^{-1},x^{-1}x,x^{-2}x^2\}$ and $\{xx^{-1},x^{-1}x,x^{-n}x^n\}$ (such a list of automorphisms exist by the homogeneity of $S$). Let $x\theta_n=y_n$.  Then $(xx^{-1})\theta_n=y_n y_n^{-1}=xx^{-1}$ and similarly $y_n^{-1}y_n=x^{-1}x$, so that $x \, \mathcal{H} \, y_n$ for each $n$. Furthermore, 
\begin{equation} \label{bic iso} (x^{-2}x^2)\theta_n=y_n^{-2}y_n^2=x^{-n}x^n,
\end{equation} 
 so that if $y_n=y_m$ then $x^{-n}x^n=x^{-m}x^m$, and so $n=m$. Hence $\{y_n:n>2\}$ is an infinite subset of $H_x$, and thus each $\mathcal{H}$-class (and in particular each maximal subgroup) is infinite by \cite[Lemma 2.2.3]{Howie94}. 

Suppose for contradiction that $\mathcal{H}$ is a congruence on $S$. Then as $x \, \mathcal{H} \, y_3$ we have  $x^2 \, \mathcal{H} \, y_3^2$, and so by \eqref{bic iso}
 \[ x^{-3}x^3=y_3^{-2}y_3^2=x^{-2}x^2,
 \] 
a contradiction.
\end{proof}

\begin{open} Is a bisimple HIS a group? 
\end{open}

\section{The Clifford case}

In this section we consider the homogeneity of Clifford semigroups. The following construction is taken from \cite[Chapter 4]{Howie94}. 

Let $Y$ be a semilattice. To each $\alpha \in Y$ associate a group $G_{\alpha}$ with identity element $e_{\alpha}$, and assume that $G_{\alpha} \cap G_{\beta} = \emptyset$ if $\alpha \neq \beta$. For each pair $\alpha, \beta \in Y$ with $\alpha \geq \beta$, let $\psi_{\alpha, \beta}: G_{\alpha} \rightarrow G_{\beta}$ be a homomorphism, and assume that the following conditions hold: 
\begin{align} & \text{for all } \alpha \in Y,  \text{ } \psi_{\alpha, \alpha} = 1_{G_{\alpha}}, \textit{the identity automorphism of } G_{\alpha}, \\
& \text{for all } \alpha, \beta, \gamma\in Y \text{ such that } \alpha \geq \beta \geq \gamma, 
\end{align} 
\[  \psi_{\alpha, \beta} \psi_{\beta, \gamma} = \psi_{\alpha, \gamma}.  \] 
On the set $S=\bigcup_{\alpha \in Y} G_{\alpha}$ define a multiplication by 
\[ a * b = (a \psi_{\alpha, \alpha \beta})(b \psi_{\beta, \alpha \beta})
\] 
for $a\in G_{\alpha}, b \in G_{\beta}$, and denote the resulting structure by  $S=[Y;G_{\alpha}; \psi_{\alpha, \beta}]$. Then $S$ is a semigroup, and is called a \textit{strong semilattice $Y$ of groups $G_{\alpha}$} ($\alpha\in Y$).  

We follow the usual convention of denoting element an element $a$ of $G_{\alpha}$ as $a_{\alpha}$. By \cite[Lemma IV.1.7]{Petrich99}, the natural order $\leq$ on $S$ is equivalent to 
\[ a_{\alpha} \geq b_{\beta} \text{ if and only if } \alpha \geq \beta \text{ and } a_{\alpha}\psi_{\alpha,\beta}=b_{\beta} 
\] 
for each $a_{\alpha},b_{\beta}\in S$. It follows immediately that the natural order on $S$ restricts to equality on maximal subgroups. 

\begin{theorem}\cite[Theorem 4.2.1]{Howie94} A semigroup is Clifford if and only if it is isomorphic to a strong semilattice of groups. 
\end{theorem}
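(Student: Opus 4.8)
The plan is to prove the two implications separately, with essentially all of the work residing in the ``only if'' direction.

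For the ``if'' direction, suppose $S=[Y;G_\alpha;\psi_{\alpha,\beta}]$ is a strong semilattice of groups. I would first identify the idempotents: since the only idempotent of a group $G_\alpha$ is its identity $e_\alpha$, and $e_\alpha * e_\alpha = e_\alpha$, the idempotents of $S$ are exactly $\{e_\alpha:\alpha\in Y\}$, and $\alpha\mapsto e_\alpha$ is an isomorphism of $Y$ onto $E(S)$. For $a_\alpha\in G_\alpha$ with group inverse $a_\alpha^{-1}\in G_\alpha$, the condition $\psi_{\alpha,\alpha}=1_{G_\alpha}$ gives $a_\alpha * a_\alpha^{-1}=e_\alpha=a_\alpha^{-1}* a_\alpha$; a routine check of the inverse axioms together with uniqueness shows $S$ is inverse, and the same computation yields $a_\alpha a_\alpha^{-1}=e_\alpha=a_\alpha^{-1}a_\alpha$, so $S$ is Clifford.

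For the converse, let $S$ be a Clifford (inverse) semigroup and set $Y=E(S)$, which is a semilattice in which, as noted in the excerpt, every idempotent is central. To each $e\in Y$ I associate the maximal subgroup $G_e=H_e$ with identity $e$. The first step is to realise $S$ as the disjoint union $\bigcup_{e\in Y}H_e$: given $a\in S$, the Clifford identity $aa^{-1}=a^{-1}a$ gives $a\,\mathcal{R}\,aa^{-1}$ and $a\,\mathcal{L}\,aa^{-1}$, so $a\in H_{aa^{-1}}$, while distinct idempotents lie in distinct (hence disjoint) $\mathcal{H}$-classes.

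The core of the argument is the construction and verification of the connecting morphisms. For $e\geq f$ in $Y$ I would define $\psi_{e,f}\colon H_e\to H_f$ by $a\psi_{e,f}=af$. Using centrality of idempotents throughout, one checks that $af\in H_f$ (since $(af)(af)^{-1}=afa^{-1}=faa^{-1}=fe=f$), that $a\psi_{e,e}=ae=a$, that $\psi_{e,f}$ is a homomorphism (as $(af)(bf)=abf$), and that $\psi_{e,f}\psi_{f,g}=\psi_{e,g}$ for $e\geq f\geq g$ (since $afg=ag$). It then remains to confirm that the ambient multiplication agrees with the strong-semilattice product: for $a\in H_e$ and $b\in H_f$ one has $(ab)(ab)^{-1}=abb^{-1}a^{-1}=afa^{-1}=ef$, so $ab\in H_{ef}$ and hence $ab=ab(ef)=(a\,ef)(b\,ef)=(a\psi_{e,ef})(b\psi_{f,ef})$, which is exactly $a*b$. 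Thus the identity map exhibits $S$ as $[Y;H_e;\psi_{e,f}]$.

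The main obstacle is not any isolated hard step but the pervasive reliance on the centrality of idempotents: it is precisely this property that simultaneously forces $\psi_{e,f}$ to land in $H_f$, makes it multiplicative, enforces the composition law, and makes the two multiplications coincide. I would therefore record centrality explicitly at the outset and cite it at each of the verifications above, so that what might otherwise look like a string of coincidences is seen to follow from a single structural fact.
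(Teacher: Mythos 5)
The paper offers no proof of this statement at all --- it is imported verbatim from \cite[Theorem 4.2.1]{Howie94} --- so your proposal can only be measured against the standard argument, and structurally it is that argument: the ``if'' direction is routine verification, and the ``only if'' direction realises $S$ as the disjoint union of its maximal subgroups $H_e$ (your use of the Clifford identity $aa^{-1}=a^{-1}a$ to place $a$ in $H_{aa^{-1}}$ is correct) with connecting morphisms $a\psi_{e,f}=af$. All of your displayed computations are correct \emph{given} centrality of idempotents. (A minor point in the ``if'' direction: the cleanest route to ``inverse'' is to note that the idempotents $e_\alpha$ commute, since $e_\alpha * e_\beta = e_{\alpha\beta} = e_\beta * e_\alpha$, and a regular semigroup with commuting idempotents is inverse by \cite[Theorem 5.1.1]{Howie94}.)

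The genuine gap is precisely the point you flag but do not close: centrality of idempotents. You propose to ``record centrality explicitly at the outset'' by citing the paper; but the paper's sentence ``in this case the idempotents of $S$ are central'' is itself an unproved assertion about Clifford semigroups whose justification in the literature is the very theorem you are proving --- in Howie's Theorem 4.2.1, ``regular with central idempotents'' is one of the equivalent conditions being established. As written, then, the ``only if'' direction is circular, since every one of its verifications invokes centrality. The repair is short and must be included. Let $a\in S$, $e\in E(S)$, and $f=aa^{-1}=a^{-1}a$; recall that idempotents of any inverse semigroup commute with each other (\cite[Theorem 5.1.1]{Howie94}, independent of the present theorem). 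First, $(ae)^{-1}(ae)=ea^{-1}ae=efe=ef$, so by the inverse property $ae=(ae)\bigl((ae)^{-1}(ae)\bigr)=(ae)(ef)=aef$. Second, $(ea)(ea)^{-1}=eaa^{-1}e=efe=ef$, so the Clifford identity applied to the element $ea$ gives $a^{-1}ea=(ea)^{-1}(ea)=(ea)(ea)^{-1}=ef$; multiplying on the left by $a$ yields $fea=aef$, and since $fe=ef$ and $fa=a$ the left-hand side equals $ea$, whence $ea=aef$. Combining the two displays gives $ae=aef=ea$, which is centrality. With this lemma inserted at the outset, the remainder of your verification stands as written.
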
 

 To understand homogeneity of Clifford semigroups, we require a better understanding of their f.g. inverse subsemigroups. Since the class of all Clifford semigroups forms a variety of inverse semigroups \cite{Howie94}, the inverse subsemigroups are also Clifford. 

\begin{lemma} \label{sub clifford} Let $T$ be an inverse subsemigroup of a Clifford semigroup $S=[Y;G_{\alpha};\psi_{\alpha,\beta}]$. Then there exists a subsemilattice $Y'$ of $Y$, and subgroups $H_{\alpha}$ of $G_{\alpha}$ for each $\alpha\in Y'$ such that 
\[ T=[Y';H_{\alpha};\psi_{\alpha,\beta}|_{H_{\alpha}}].
\]
\end{lemma}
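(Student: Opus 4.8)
The plan is to read off the constituent semilattice, groups, and connecting maps of $T$ directly from the ambient data of $S$, and then to check that these reassemble into $T$ with the correct multiplication. First I would set $Y' = E(T)$. Since $T$ is an inverse subsemigroup it is closed under products and inverses, so $E(T) = T \cap E(S)$ is a subsemilattice of $E(S)$, which we identify with a subsemilattice $Y'$ of $Y$ via $e_\alpha \leftrightarrow \alpha$. For each $\alpha \in Y'$ set $H_\alpha = T \cap G_\alpha$. Because the $\mathcal{H}$-class of $e_\alpha$ in $S$ is exactly $G_\alpha$, the set $H_\alpha$ is an inverse subsemigroup of $S$ containing the single idempotent $e_\alpha$, hence a subgroup of $G_\alpha$. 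Set-theoretically one then has $T = \bigcup_{\alpha \in Y'} H_\alpha$: each $H_\alpha \subseteq T$ by definition, while conversely any $t \in T$ lies in some $G_\alpha$, so that $tt^{-1} = e_\alpha \in E(T)$ forces $\alpha \in Y'$ and $t \in H_\alpha$.

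The crucial step, and the one I expect to be the main obstacle, is to show that the restricted maps $\psi_{\alpha,\beta}|_{H_\alpha}$ actually take values in $H_\beta$, so that the triple $(Y'; H_\alpha; \psi_{\alpha,\beta}|_{H_\alpha})$ is genuinely a strong semilattice of groups rather than merely a formal restriction. Here I would exploit that the idempotent $e_\beta$ lies in $T$ whenever $\beta \in Y'$. Given $\alpha \geq \beta$ in $Y'$ and $a \in H_\alpha$, the product $a_\alpha * e_\beta$ lies in $T$; computing it with the strong semilattice multiplication and using $\alpha\beta = \beta$, $\psi_{\beta,\beta} = 1_{G_\beta}$, and that $e_\beta$ is the identity of $G_\beta$, one obtains $a_\alpha * e_\beta = a\psi_{\alpha,\beta}$. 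Hence $a\psi_{\alpha,\beta} \in T \cap G_\beta = H_\beta$, as required, and $\psi_{\alpha,\beta}|_{H_\alpha}$ is indeed a homomorphism $H_\alpha \to H_\beta$.

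Finally I would verify that $[Y'; H_\alpha; \psi_{\alpha,\beta}|_{H_\alpha}]$ coincides with $T$ as a semigroup. The structure-map axioms $\psi_{\alpha,\alpha}|_{H_\alpha} = 1_{H_\alpha}$ and $\psi_{\alpha,\beta}|_{H_\alpha}\,\psi_{\beta,\gamma}|_{H_\beta} = \psi_{\alpha,\gamma}|_{H_\alpha}$ are immediate restrictions of the corresponding identities in $S$. For the multiplication, given $a \in H_\alpha$ and $b \in H_\beta$, the product prescribed by the restricted data is $(a\,\psi_{\alpha,\alpha\beta}|_{H_\alpha})(b\,\psi_{\beta,\alpha\beta}|_{H_\beta})$, which is literally the value of $a_\alpha * b_\beta$ computed in $S$; since $\alpha\beta \in Y'$ this product lies in $H_{\alpha\beta} \subseteq T$, so the multiplication of $T$ inherited from $S$ agrees with that of the reassembled strong semilattice. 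Combined with the set equality $T = \bigcup_{\alpha \in Y'} H_\alpha$, this yields $T = [Y'; H_\alpha; \psi_{\alpha,\beta}|_{H_\alpha}]$. The only genuinely non-formal point in the argument is the middle paragraph, and I would make sure to state the product computation $a_\alpha * e_\beta = a\psi_{\alpha,\beta}$ explicitly, as everything else reduces to closure bookkeeping and restriction of known identities.
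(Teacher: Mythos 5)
Your proposal is correct and follows essentially the same route as the paper: take $Y'$ corresponding to $E(T)$, let $H_\alpha$ be the group of elements of $T$ lying in $G_\alpha$, and use the key computation $a_\alpha e_\beta = a\psi_{\alpha,\beta} \in T$ to see that the restricted connecting morphisms land in $H_\beta$. The only difference is that you spell out the final reassembly (set equality and agreement of multiplications), which the paper leaves implicit with ``the result follows.''
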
 

\begin{proof} Since $T$ is inverse, there exists a subsemilattice $Y'$ of $Y$ such that $E(T)=\{e_{\alpha}:\alpha\in Y'\}$. If $g_{\alpha},h_{\alpha}\in T$ then $h_{\alpha}^{-1}\in T$ since $T$ is inverse, so $g_{\alpha}h_{\alpha}^{-1}\in T$. It follows that the maximal subgroup of $T$ containing $e_{\alpha}$ is a subgroup $H_{\alpha}$ of $G_{\alpha}$. Moreover, if $\alpha>\beta$ in $Y'$ then  $e_{\beta}\in T$ and so if $g_{\alpha}\in T$ then 
\[ g_{\alpha}e_{\beta}=(g_{\alpha}\psi_{\alpha,\beta})(e_{\beta}\psi_{\beta,\beta})=(g_{\alpha}\psi_{\alpha,\beta})(e_{\beta})=g_{\alpha}\psi_{\alpha,\beta}\in T,
\]
 and so $H_{\alpha}\psi_{\alpha,\beta}\subseteq H_{\beta}$. Hence the homomorphism $\psi_{\alpha,\beta}|_{H_{\alpha}}:H_{\alpha}\rightarrow H_{\beta} $ is well defined, and the result follows. 
\end{proof} 

The following isomorphism theorem for Clifford semigroups is a simple consequence of \cite[Lemma IV.1.8]{Petrich99}.

\begin{theorem}\label{iso clifford} Let $S=[Y; G_{\alpha}; \psi_{\alpha, \beta}]$ and $T=[Z; H_{\gamma}; \varphi_{\gamma, \delta}]$ be a pair of Clifford semigroups. Let $\pi: Y\rightarrow Z$ be an isomorphism and, for each $\alpha \in Y$, let $\theta_{\alpha}: G_{\alpha}\rightarrow H_{\alpha \pi}$ be an isomorphism. Assume also that for any $\alpha \geq \beta$, the diagram  
\begin{align} \label{1} \xymatrix{
G_{\alpha} \ar[d]^{\psi_{\alpha, \beta}} \ar[r]^{\theta_{\alpha}} &H_{\alpha \pi} \ar[d]^{\varphi_{\alpha \pi, \beta \pi}} \\\
G_{\beta} \ar[r]^{\theta_{\beta}} &H_{\beta \pi}}
\end{align}  
 commutes. Define a map $\theta$ on $S$ by 
 \begin{align*} 
  \theta: a \mapsto a \theta_{\alpha} \text{    if } a\in G_\alpha.  
 \end{align*} 
 Then $\theta$ is an isomorphism from $S$ into $T$, denoted $\theta=[\theta_{\alpha},\pi]_{\alpha\in Y}$. Conversely, every isomorphism from $S$ to $T$ can be so constructed.  
\end{theorem}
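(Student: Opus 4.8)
The plan is to verify directly that the glued map $\theta = [\theta_{\alpha},\pi]_{\alpha\in Y}$ is an isomorphism, and then to reverse-engineer the data $(\pi, \{\theta_\alpha\})$ from an arbitrary isomorphism. For the forward direction, I would first note that $\theta$ is a well-defined bijection: the groups $G_\alpha$ partition $S$ and the groups $H_{\alpha\pi}$ partition $T$ (the latter because $\pi$ is surjective onto $Z$), so gluing the bijections $\theta_\alpha$ produces a bijection $S\to T$. To see $\theta$ is a homomorphism, take $a\in G_\alpha$ and $b\in G_\beta$ and expand $(a*b)\theta$ using the product formula $a*b=(a\psi_{\alpha,\alpha\beta})(b\psi_{\beta,\alpha\beta})\in G_{\alpha\beta}$. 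Since $\theta_{\alpha\beta}$ is a group homomorphism, this equals $(a\psi_{\alpha,\alpha\beta}\theta_{\alpha\beta})(b\psi_{\beta,\alpha\beta}\theta_{\alpha\beta})$. Applying the commuting square \eqref{1} to the comparable pairs $\alpha\geq\alpha\beta$ and $\beta\geq\alpha\beta$ rewrites the two factors as $a\theta_\alpha\varphi_{\alpha\pi,(\alpha\beta)\pi}$ and $b\theta_\beta\varphi_{\beta\pi,(\alpha\beta)\pi}$; since $\pi$ is a semilattice isomorphism we have $(\alpha\beta)\pi=(\alpha\pi)(\beta\pi)$, and so the result is exactly $(a\theta)*(b\theta)$ as computed in $T$. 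Hence $\theta$ is an isomorphism.

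For the converse, let $\Theta\colon S\to T$ be an arbitrary isomorphism. Since isomorphisms carry idempotents to idempotents, $\Theta$ restricts to a semilattice isomorphism between $E(S)=\{e_\alpha:\alpha\in Y\}$ and $E(T)$; writing $e_\alpha\Theta=f_{\alpha\pi}$ defines an isomorphism $\pi\colon Y\to Z$. Because $\mathcal{H}$ is preserved by isomorphisms (as shown in Example \ref{G rel char}) and the $\mathcal{H}$-classes of a Clifford semigroup are precisely its maximal subgroups $G_\alpha$, the map $\Theta$ sends $G_\alpha$ onto the $\mathcal{H}$-class of $f_{\alpha\pi}$, which is $H_{\alpha\pi}$. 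I therefore set $\theta_\alpha:=\Theta|_{G_\alpha}\colon G_\alpha\to H_{\alpha\pi}$, a group isomorphism. It remains to check the commuting square. The key identity is that for $\alpha\geq\beta$ and $a\in G_\alpha$ one has $a e_\beta=a\psi_{\alpha,\beta}$; applying $\Theta$ and using that $a\theta_\alpha\in H_{\alpha\pi}$ together with $(a\theta_\alpha)f_{\beta\pi}=(a\theta_\alpha)\varphi_{\alpha\pi,\beta\pi}$ gives $(a\psi_{\alpha,\beta})\theta_\beta=(a\theta_\alpha)\varphi_{\alpha\pi,\beta\pi}$, which is exactly the commutativity of \eqref{1}. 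Thus $\Theta=[\theta_\alpha,\pi]_{\alpha\in Y}$.

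The routine part is the forward homomorphism computation, which is a direct unwinding of the product formula against the commuting diagram. The point requiring care, and the main obstacle, is the converse: one must justify that $\Theta$ genuinely respects the internal structure, namely that it induces a semilattice isomorphism on the idempotents and, crucially, that it maps each group $G_\alpha$ exactly onto $H_{\alpha\pi}$. This rests on the preservation of $\mathcal{H}$ together with the characterisation of the maximal subgroups of a Clifford semigroup as its $\mathcal{H}$-classes; once this is in place, recovering the connecting homomorphisms from multiplication by the idempotents $e_\beta$ is immediate.
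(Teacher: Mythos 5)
Your proof is correct, and it is worth pointing out that the paper does not actually prove Theorem \ref{iso clifford}: it is stated as a simple consequence of \cite[Lemma IV.1.8]{Petrich99}, the analogous isomorphism theorem for strong semilattices of semigroups. Your argument supplies that verification from first principles. The forward direction is the routine computation via the product formula and the squares \eqref{1} for $\alpha\geq\alpha\beta$ and $\beta\geq\alpha\beta$, together with $(\alpha\beta)\pi=(\alpha\pi)(\beta\pi)$. The converse correctly isolates the two structural inputs: idempotents map to idempotents (yielding the induced isomorphism $\pi$), and $\mathcal{H}$ is preserved by morphisms (Example \ref{G rel char}) combined with the fact that the $\mathcal{H}$-classes of a Clifford semigroup are exactly the maximal subgroups $G_{\alpha}$ (yielding the restrictions $\theta_{\alpha}$); the commuting square is then recovered from the identity $ae_{\beta}=a\psi_{\alpha,\beta}$. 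One small point to tighten: preservation of $\mathcal{H}$ under $\Theta$ gives only $G_{\alpha}\Theta\subseteq H_{\alpha\pi}$; to conclude that $\theta_{\alpha}$ is onto $H_{\alpha\pi}$ you should apply the same observation to $\Theta^{-1}$, which sends $H_{\alpha\pi}$ into $G_{\alpha}$. With that one-line addition the proof is complete. Compared with the paper, your route buys self-containedness and makes explicit exactly which properties of Clifford semigroups drive the theorem, whereas the citation to \cite{Petrich99} buys brevity and a statement valid for strong semilattices of arbitrary semigroups --- a generality the paper itself relies on when it notes that the same isomorphism theorem for strong semilattices of rectangular bands underlies the results quoted from \cite{Quinn}.
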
  

We denote the diagram (\ref{1}) by $[\alpha,\beta;\alpha\pi,\beta\pi]$, or  $[\alpha,\beta;\alpha\pi,\beta\pi]^S$ if the semigroup $S$ needs highlighting. The isomorphism $\pi$ is called the \textit{induced isomorphism} from $Y$ to $Z$. Note that the diagram $[\alpha,\alpha;\alpha\pi,\alpha\pi]$ commutes for any isomorphism $\pi:Y\rightarrow Z$ and isomorphisms $\theta_{\alpha}:G_{\alpha}\rightarrow H_{\alpha\pi}$ ($\alpha\in Y)$ since 
\[ \psi_{\alpha,\alpha}\theta_{\alpha}=1_{G_{\alpha}}\theta_{\alpha} =\theta_{\alpha}= \theta_{\alpha}1_{H_{\alpha\pi}}=\theta_{\alpha}\varphi_{\alpha\pi,\alpha\pi}. 
\] 
Consequently, we need only check that the diagram $ [\alpha,\beta;\alpha\pi,\beta\pi]$  commute for  each $\alpha>\beta$. 

\begin{remark}\label{remark 1} {\em If $S=[Y;G_{\alpha};\psi_{\alpha,\beta}]$ and $H_{\alpha}\cong G_{\alpha}$ for each $\alpha\in Y$ then  the isomorphism theorem above can be used to construct a Clifford semigroup isomorphic to $S$ with maximal subgroups $H_{\alpha}$. Formally, if $\theta_{\alpha}:G_{\alpha}\rightarrow H_{\alpha}$ is an isomorphism for each $\alpha\in Y$ then $\theta=[\theta_{\alpha},1_Y]_{\alpha\in Y}$ is an isomorphism from $S$ to $T=[Y;H_{\alpha};\varphi_{\alpha,\beta}]$, where 
\[ \varphi_{\alpha,\beta}=\theta_{\alpha}^{-1}\psi_{\alpha,\beta} \theta_{\beta}. 
\] 
In particular, maximal subgroups which can be written as a direct sum (d.s.) or a direct product (d.p.) of groups, can be considered as an internal or external d.s./d.p. without problems arising.

 We adopt a non standard notation by denoting the internal d.s. and internal d.p. of a pair of groups $H$ and $H'$ as $ H \oplus H'$ and $H \otimes H'$, respectively. We also denote the internal direct sum of $n$ copies of a group $H$ by $H^{n}$ , where $n\in \mathbb{N}^*=\mathbb{N}\cup \{\aleph_0 \}$. Unless stated otherwise, we assume that all d.s.'s of groups are internal. }
\end{remark} 

If $S=[Y;G_{\alpha};\psi_{\alpha,\beta}]$ is a HIS, then as the groups $G_{\alpha}$ are the maximal subgroups of $S$ and  $Y\cong E(S)$, Corollary  \ref{substructure} may be rewritten as: 

\begin{corollary}\label{structure clifford} If $S=[Y;G_{\alpha};\psi_{\alpha,\beta}]$ is a HIS then $Y$ is homogeneous and the groups $G_{\alpha}$ are pairwise isomorphic homogeneous groups. 
\end{corollary}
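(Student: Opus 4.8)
The plan is to recognise that this corollary is merely Corollary \ref{substructure} re-expressed in the language of strong semilattices of groups, so that the only work is to match the two descriptions of the idempotents and of the maximal subgroups of $S$. Once these correspondences are in place, the result is immediate.

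First I would pin down the semilattice of idempotents of $S=[Y;G_{\alpha};\psi_{\alpha,\beta}]$. An element $a_{\alpha}\in G_{\alpha}$ satisfies $a_{\alpha}*a_{\alpha}=(a_{\alpha}\psi_{\alpha,\alpha})(a_{\alpha}\psi_{\alpha,\alpha})=a_{\alpha}^{2}$, so it is idempotent precisely when $a_{\alpha}=e_{\alpha}$; hence $E(S)=\{e_{\alpha}:\alpha\in Y\}$. Moreover $e_{\alpha}*e_{\beta}=(e_{\alpha}\psi_{\alpha,\alpha\beta})(e_{\beta}\psi_{\beta,\alpha\beta})=e_{\alpha\beta}$, so $\alpha\mapsto e_{\alpha}$ is a semilattice isomorphism $Y\to E(S)$.

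Next I would identify the maximal subgroups. Since $S$ is Clifford, each $\mathcal{D}$-class is a group and hence $\mathcal{D}=\mathcal{H}$; in this representation the $\mathcal{H}$-class of $e_{\alpha}$ is exactly $G_{\alpha}$, so $H_{e_{\alpha}}=G_{\alpha}$ and the maximal subgroups of $S$ are precisely the groups $G_{\alpha}$. Applying Corollary \ref{substructure} to the HIS $S$ now gives that $E(S)$ is a homogeneous semilattice and that the maximal subgroups are pairwise isomorphic homogeneous groups; transporting this across the identifications $Y\cong E(S)$ and $G_{\alpha}=H_{e_{\alpha}}$ yields the claim. I expect no genuine obstacle here, since the statement is a translation of an already-established result; the only point needing care is the confirmation of the two correspondences above.
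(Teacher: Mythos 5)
Your proposal is correct and is exactly the paper's argument: the paper derives this corollary by noting that the $G_{\alpha}$ are the maximal subgroups of $S$ and $Y\cong E(S)$, and then rewriting Corollary \ref{substructure} accordingly. You merely spell out the routine verifications of these two identifications, which the paper takes as known facts about strong semilattices of groups.
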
 

Hence, if $S=[Y;G_{\alpha};\psi_{\alpha,\beta}]$ is a HIS and $G_{\alpha}\cong G$ then, by Remark \ref{remark 1}, we may consider each group $G_{\alpha}$ to be a labelling of $G$, and the morphisms $\psi_{\alpha,\beta}$ to be a labelling of an endomorphism of $G$. 

\begin{definition} {\em  Let $S=[Y;G_{\alpha};\psi_{\alpha,\beta}]$ be a Clifford semigroup. We may consider a stronger form of homogeneity by saying that $S$ is a \textit{structure-HIS} if, given any pair of f.g. inverse subsemigroups $A=[Z;H_{\gamma};\psi_{\gamma,\delta}^H]$ and $A'=[Z';H_{\gamma'}';\psi_{\gamma',\delta'}^{H'}]$ of $S$ and any isomorphism $\theta=[\theta_{\alpha},\pi]_{\alpha\in Z}$ from $A$ to $A'$, then for any automorphism $\hat{\pi}$ extending $\pi$, there exists an automorphism $\hat{\theta}=[\hat{\theta}_{\alpha},\hat{\pi}]_{\alpha\in Y}$ of $S$ extending $\theta$.} 
\end{definition} 

Note that if $S$ is a structure-HIS then for every automorphism $\pi$ of $Y$, there exists an automorphism of $S$ with induced automorphism $\pi$. Indeed, for any $\alpha\in Y$, the isomorphism $\phi$  between the trivial inverse subsemigroups $\{e_{\alpha}\}$ and $\{e_{\alpha\pi}\}$ has induced isomorphism $\pi|_{\{\alpha\}}$ (from $\{\alpha\}$ to $\{\alpha\pi\}$). The result is then immediate by the structure-homogeneity of $S$. 

This form of homogeneity will be particularly useful for the construction of a Clifford  HIS from particular Clifford semigroups. 
We first consider how the characteristic subgroups of the maximal subgroups of a Clifford semigroup give rise to characteristic inverse subsemigroups. 
 
A subset $T$ of a Clifford semigroup $S$ will be called \textit{order-characteristic} if whenever $T$ contains an element of order $n$, then every element of order $n$ in $S$ belongs to $T$.

Given a group $G$ with subset $A$, then we set 
\[ o(A):=\{n: \text{there exists } a\in A \text{ such that } o(a)=n\}. 
\]
Note that if $a_{\alpha} \in S$ then, as $a_{\alpha}$ is contained in the group $G_{\alpha}$, the inverse subsemigroup $\langle a_{\alpha} \rangle_I$ is a cyclic group. In particular, our definition of the order of an element intersects with the group theory definition, that is $o(a_{\alpha})$ is the minimal $n>1$ such that $a_{\alpha}^n=e_{\alpha}$. Hence, as cyclic groups of the same cardinality are isomorphic, the following generalization of \cite[Lemma 1]{Cherlin2000} and its corollary are easily verifiable: 

\begin{lemma} \label{order els} Let $S$ a Clifford HIS with characteristic subset $T$. Then $T$ is order-characteristic. 
\end{lemma}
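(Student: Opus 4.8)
The plan is to exploit the characteristic property of $T$ together with the homogeneity of $S$, reducing everything to the observation that two elements of the same finite order generate isomorphic monogenic inverse subsemigroups. The key input is the remark made just before the statement: since $S$ is Clifford, any element $a$ lies in a maximal subgroup $G_\alpha$, so $\langle a\rangle_I$ is a cyclic group whose cardinality $o(a)$ coincides with the group-theoretic order of $a$. This is precisely where the Clifford hypothesis is used, and it is what makes the whole argument go through uniformly.

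Concretely, I would fix an element $a\in T$ with $o(a)=n$, take an arbitrary $b\in S$ with $o(b)=n$, and aim to show $b\in T$. First I would observe that $\langle a\rangle_I$ and $\langle b\rangle_I$ are cyclic groups of the same cardinality $n$, hence abstractly isomorphic; moreover the assignment $a^k\mapsto b^k$ gives a concrete isomorphism $\phi\colon \langle a\rangle_I\to \langle b\rangle_I$ with $a\phi=b$ (well defined and bijective exactly because both groups are cyclic of order $n$, and automatically preserving inverses since it is a group isomorphism). Since $\langle a\rangle_I$ and $\langle b\rangle_I$ are finitely generated inverse subsemigroups of $S$, the homogeneity of $S$ lets me extend $\phi$ to an automorphism $\bar\phi\in\text{Aut}(S)$ satisfying $a\bar\phi=b$. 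Finally, because $T$ is characteristic we have $T\bar\phi=T$, and as $a\in T$ it follows that $b=a\bar\phi\in T\bar\phi=T$, as required.

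I do not anticipate a genuine obstacle here: the entire content sits in the preliminary identification of $\langle a\rangle_I$ with a cyclic group of order $n$, after which the characteristic and homogeneity hypotheses combine almost formally. The only point deserving a line of care is the degenerate case $n=1$, where $a$ and $b$ are idempotents and $\phi$ is an isomorphism of singletons; this is covered by the same extension argument (equivalently by transitivity of $\text{Aut}(S)$ on $E(S)$ established in Lemma~\ref{trans on Y}), so the proof needs no separate treatment. This uniformity is exactly why the lemma is stated as a generalization of \cite[Lemma 1]{Cherlin2000}.
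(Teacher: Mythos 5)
Your proof is correct and is exactly the argument the paper intends: the paper gives no explicit proof, declaring the lemma ``easily verifiable'' immediately after the same key remark you use, namely that in a Clifford semigroup each $\langle a\rangle_I$ is a cyclic group whose cardinality is $o(a)$, so elements of equal order generate isomorphic monogenic inverse subsemigroups, and homogeneity plus the characteristic property of $T$ finish the job. The only cosmetic point is to phrase the cyclic-group isomorphism so that it visibly covers $o(a)=\aleph_0$ (infinite cyclic) as well as the finite case, which your ``same cardinality'' formulation already does.
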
 

\begin{corollary} \label{order els2} Let $S$ and $S'$ be a pair of isomorphic Clifford HIS's with characteristic inverse subsemigroups $T$ and $T'$, respectively such that $o(T)=o(T')$. Then $T\cong T'$, and if $S=S'$ then $T=T'$.  
\end{corollary}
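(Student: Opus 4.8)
The plan is to observe that an order-characteristic subset of a Clifford HIS is determined entirely by its set of element-orders, and then to transport this description along the given isomorphism. First I would apply Lemma~\ref{order els} to conclude that $T$ and $T'$ are order-characteristic in $S$ and $S'$ respectively. The definition of $o(T)$ already gives $T\subseteq\{s\in S:o(s)\in o(T)\}$, while the order-characteristic property supplies the reverse inclusion, since if $o(s)\in o(T)$ then $T$ contains some element of order $o(s)$ and hence contains $s$. Thus
\[ T=\{s\in S:o(s)\in o(T)\}\qquad\text{and}\qquad T'=\{s'\in S':o(s')\in o(T')\}. \]
This identity, recovering each of $T$ and $T'$ as the full preimage of its order-set under $s\mapsto o(s)$, is the heart of the argument.

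Next I would record that isomorphisms preserve orders: if $\phi\colon S\to S'$ is an isomorphism and $a\in S$, then $\phi$ restricts to an isomorphism of $\langle a\rangle_I$ onto $\langle a\phi\rangle_I$, so $o(a)=o(a\phi)$. For the first claim I would fix an isomorphism $\phi\colon S\to S'$ and verify $T\phi=T'$ by double inclusion: for $s\in T$ we have $o(s\phi)=o(s)\in o(T)=o(T')$, so $s\phi\in T'$; conversely, writing $t'\in T'$ as $t'=s\phi$ gives $o(s)=o(t')\in o(T')=o(T)$, whence $s\in T$ and $t'\in T\phi$. Since $\phi$ is an isomorphism, $T\cong T\phi=T'$. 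For the second claim, when $S=S'$ the two displayed descriptions coincide verbatim because $o(T)=o(T')$, so $T=T'$ (equivalently, run the previous argument with $\phi=1_S$).

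The main obstacle lies upstream, in Lemma~\ref{order els}: once the order-characteristic property is established, this corollary is a short bookkeeping exercise, and I expect no genuine difficulty beyond carefully unwinding the definitions of $o(T)$ and of an order-characteristic set.
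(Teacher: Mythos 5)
Your proof is correct and takes essentially the approach the paper intends: the paper declares this corollary (together with Lemma \ref{order els}) ``easily verifiable'' rather than writing out details, and the intended argument is exactly what you give, namely that an order-characteristic subset is recovered as the full preimage of its order-set, combined with the fact that isomorphisms preserve orders of elements. Your double-inclusion verification that $T\phi=T'$ and the observation that the two descriptions coincide verbatim when $S=S'$ fill in the omitted bookkeeping faithfully.
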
 

\begin{lemma}\label{char sub his} Let $S=[Y;G_{\alpha};\psi_{\alpha,\beta}]$ be a Clifford semigroup. For each $\alpha\in Y$, let $H_{\alpha}$ be an order-characteristic subgroup of $G_{\alpha}$ such that $H_{\alpha}\cong H_{\beta}$ for all $\alpha,\beta \in Y$. Then 
\[ T=[Y;H_{\alpha};\psi_{\alpha,\beta}|_{H_{\alpha}}] 
\] 
is an order-characteristic inverse subsemigroup of $S$. In particular, if $S$ is a HIS then so is $T$. 
\end{lemma}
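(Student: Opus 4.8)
The plan is to verify in turn that $T$ is a well-defined inverse subsemigroup, that it is order-characteristic as a subset of $S$, and that homogeneity then passes to it. The content of the first point is that each connecting map restricts correctly, i.e.\ $H_\alpha\psi_{\alpha,\beta}\subseteq H_\beta$ whenever $\alpha\geq\beta$; granted this, $T=\bigcup_{\alpha}H_\alpha$ is closed under products and inverses and is a strong semilattice of groups, hence an inverse subsemigroup of $S$ of the form appearing in Lemma \ref{sub clifford}. To prove the inclusion I would argue through orders. Fix $a\in H_\alpha$ and put $b=a\psi_{\alpha,\beta}\in G_\beta$. As $\psi_{\alpha,\beta}$ is a group homomorphism, $o(b)$ divides $o(a)$; and as $a$ lies in a group, $H_\alpha$ contains an element of every order dividing $o(a)$, so $o(b)\in o(H_\alpha)$. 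Since $H_\alpha\cong H_\beta$ we have $o(H_\alpha)=o(H_\beta)$, so $H_\beta$ contains an element of order $o(b)$; being order-characteristic in $G_\beta$, it must then contain every element of $G_\beta$ of order $o(b)$, and in particular $b$. Thus $b\in H_\beta$, as required.

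For order-characteristicity of $T$ I would reuse this circle of ideas: if $T$ contains an element of order $n$, lying in some $H_\alpha$, then $n\in o(H_\alpha)=o(H_\beta)$ for every $\beta\in Y$, so each order-characteristic $H_\beta$ contains every order-$n$ element of $G_\beta$; letting $\beta$ range over $Y$ shows that $T$ contains every order-$n$ element of $S$. The final clause is then formal. Every automorphism of $S$ preserves the order of each element, since it restricts to an isomorphism on each $\langle s\rangle_I$, and so permutes each order class $\{s\in S:o(s)=n\}$ onto itself; since an order-characteristic $T$ is precisely the union of the order classes it meets, every automorphism fixes $T$ setwise. Hence $T$ is a characteristic, and in particular quasi-characteristic, substructure of $S$, and when $S$ is a HIS, Lemma \ref{rel char homog} shows that $T$ is homogeneous.

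The step I expect to cause the most trouble is the inclusion $H_\alpha\psi_{\alpha,\beta}\subseteq H_\beta$. The divisor argument above is transparent when $o(a)$ is finite, because $o(H_\alpha)$ is then closed under the orders of homomorphic images of its elements. The genuinely delicate situation is when $a$ has infinite order while $b=a\psi_{\alpha,\beta}$ has finite order $m$: here one cannot deduce $m\in o(H_\alpha)$ from divisor-closure alone, and must instead extract it from the order-characteristic hypothesis together with the isomorphism $H_\alpha\cong H_\beta$. I would isolate this case and expect it to absorb most of the effort, drawing as needed on the structural information about the groups $G_\alpha$ that is available at this stage of the paper.
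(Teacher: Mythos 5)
Your proposal follows the same route as the paper's own proof: establish $H_{\alpha}\psi_{\alpha,\beta}\subseteq H_{\beta}$ so that $T$ is a well-defined strong semilattice of groups (hence an inverse subsemigroup of the form in Lemma~\ref{sub clifford}), deduce that $T$ is order-characteristic from $o(H_{\alpha})=o(H_{\beta})$, and obtain homogeneity by noting that an order-characteristic subsemigroup is fixed setwise by every automorphism of $S$, so is characteristic, and applying Lemma~\ref{rel char homog}. Everything you actually prove is correct, and your last two steps spell out what the paper compresses into ``the result is then immediate''. For well-definedness the paper's entire argument is the assertion that $o(H_{\alpha}\psi_{\alpha,\beta})\subseteq o(H_{\beta})$, which is exactly your divisor argument when $o(a)$ is finite, and which receives no further justification there.

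The case you flag and leave open --- $a\in H_{\alpha}$ of infinite order whose image $b=a\psi_{\alpha,\beta}$ has finite order $m$ --- is therefore a genuine gap in your write-up, but it is the same gap that sits, unacknowledged, in the paper, and it cannot be closed at this level of generality: the lemma as stated is false. Take $Y=\{\alpha>\beta\}$ and $G_{\alpha}=G_{\beta}=D$, the infinite dihedral group $\langle r,s \mid s^{2}=1,\ srs=r^{-1}\rangle$, with $\psi_{\alpha,\beta}$ the endomorphism of $D$ determined by $r\mapsto s$, $s\mapsto 1$ (both defining relations are preserved). In $D$ every element outside $\langle r\rangle$ has order $2$ and every element of infinite order lies in $\langle r\rangle$, so $H_{\alpha}=\langle r\rangle=H_{\beta}$ are isomorphic order-characteristic subgroups; yet $H_{\alpha}\psi_{\alpha,\beta}=\{1,s\}\not\subseteq H_{\beta}$, and $H_{\alpha}\cup H_{\beta}$ is not even closed under the multiplication of $S$, since $r_{\alpha}e_{\beta}=s_{\beta}$. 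Here $o(H_{\alpha}\psi_{\alpha,\beta})=\{1,2\}\not\subseteq\{1,\aleph_{0}\}=o(H_{\beta})$, which is precisely the paper's unproved inclusion failing; note also that no structural information is available to rescue it, because at this point $S$ is an arbitrary Clifford semigroup, not yet assumed homogeneous. The statement and both proofs are fine whenever the $H_{\alpha}$ are periodic, and that covers essentially every use of the lemma in the paper (the coprime factors $S^{H}$ and $S^{K}$, the subgroup of all periodic elements in Lemma~\ref{absolute image trivial}, the $p$-components in Lemma~\ref{inf prod sum}); in the one genuinely non-periodic application, Lemma~\ref{subsemigroups}, the inclusion $H_{\alpha}\psi_{\alpha,\beta}\subseteq H_{\beta}$ is verified directly before the lemma is cited. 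So your instinct about where the difficulty lies was exactly right; the honest repair is to add periodicity of the $H_{\alpha}$, or well-definedness of $T$, as a hypothesis, rather than to expect the mixed case to be provable.
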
 

\begin{proof}  Notice that as each $H_{\alpha}$ are isomorphic order-characteristic subgroups, and as $o(H_{\alpha}\psi_{\alpha,\beta})\subseteq o(H_{\beta})$, it follows that  $H_{\alpha}\psi_{\alpha,\beta}\subseteq H_{\beta}$ for all $ \alpha\geq \beta$ in $Y$, and so $T$ is well defined. The result is then immediate. 
%
\end{proof}

In particular, if $S$ in Lemma \ref{char sub his} is a HIS, then the result holds if $H_{\alpha}$ is characteristic by Lemma \ref{order els}.

 A pair of subsets $A$ and $B$ of a group $G$ are of \textit{coprime order} if $o(A)\cap o(B)\subseteq \{1\}$. If $G$ is periodic then this is equivalent to being of \textit{relatively prime exponent}, defined in \cite{Cherlin91}, but does not require the theory of supernatural numbers. Note that if $G=A\otimes B$ where $A$ and $B$ are periodic of coprime order, then clearly $A$ and $B$ are order-characteristic subgroups of $G$, and so the lemma above may be used in this case.

 \begin{corollary} \label{coprime char subgroups} Let $G$ be a homogeneous group with characteristic subsets $H$ and $K$ such that $H\cap K\subseteq\{1\}$. Then $H$ and $K$ are of coprime order. 
\end{corollary}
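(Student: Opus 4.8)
The plan is to argue by contradiction. Suppose $H$ and $K$ are not of coprime order, so that there is some $n>1$ lying in $o(H)\cap o(K)$. Unwinding the definition of $o(\cdot)$, this furnishes elements $h\in H$ and $k\in K$ with $o(h)=o(k)=n$. The strategy is to use these two elements, together with the homogeneity of $G$ and the fact that $H$ is characteristic, to place $k$ inside $H$; since $k$ also lies in $K$ and $H\cap K\subseteq\{1\}$, this will force $o(k)=1$, contradicting $n>1$.

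First I would produce the required automorphism. Since $h$ and $k$ each have order $n$, the monogenic subgroups $\langle h\rangle$ and $\langle k\rangle$ are both cyclic of order $n$, and the assignment $h^{i}\mapsto k^{i}$ is a well-defined isomorphism $\langle h\rangle\to\langle k\rangle$ carrying $h$ to $k$ (recall that any element of order $n$ generates a cyclic group of order $n$). These are finitely generated subgroups of $G$, so by the homogeneity of $G$ this isomorphism extends to an automorphism $\phi\in\text{Aut}(G)$ with $h\phi=k$.

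The final step exploits that $H$ is characteristic: $\phi$ satisfies $H\phi=H$, whence $k=h\phi\in H$. But $k\in K$ as well, so $k\in H\cap K\subseteq\{1\}$, giving $k=1$ and therefore $o(k)=1$, contrary to $o(k)=n>1$. This contradiction shows no such $n$ exists, i.e.\ $o(H)\cap o(K)\subseteq\{1\}$, which is precisely the statement that $H$ and $K$ are of coprime order.

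I do not expect a serious obstacle here: the argument is short, and the one point deserving care is the passage from ``$H$ and $K$ share an element order'' to ``$H$ and $K$ share an actual element up to an automorphism''. This is exactly where homogeneity does the work, and it relies on the observation (already used implicitly in Lemma \ref{order els}) that finite cyclic groups of equal order are isomorphic via a generator-to-generator map, so that equality of element orders can always be realised by an automorphism of $G$.
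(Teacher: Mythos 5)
Your proof is correct and follows essentially the same route as the paper: the paper's own proof simply cites Lemma \ref{order els} (characteristic subsets of a Clifford HIS are order-characteristic) to conclude that $H$ and $K$ each contain \emph{all} elements of the shared order $n$, forcing $n=1$, and the omitted proof of that lemma is exactly your extend-the-cyclic-isomorphism argument. One small caution: since $o(H)\cap o(K)$ may contain $\aleph_0$, your closing remark about \emph{finite} cyclic groups should be read as including the infinite cyclic case --- your map $h^{i}\mapsto k^{i}$ handles it verbatim, so nothing breaks.
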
 

\begin{proof}
If $h\in H$ and $k\in K$ both have order $n\in \mathbb{N}^*$, then by Lemma \ref{order els} $H$ and $K$ both contain all elements of order $n$. Since $H$ and $K$ intersect trivially, it follows that $n=1$, and so the subgroups are coprime. 
\end{proof}
 
The subsequent pair of lemmas will be vital to our work, and proofs will be omitted. 

\begin{lemma} \label{extend isos} Let $G=H\otimes K$ be a group with $H$ and $K$ periodic of coprime order. Then, for each subgroup $G'$ of $G$, there exists subgroups $H'$ and $K'$ of $H$ and $K$, respectively, such that $G'=H'\otimes K'$. 
\end{lemma}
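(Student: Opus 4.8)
The plan is to show that the subgroup $G'$ decomposes as $G'=(G'\cap H)\otimes(G'\cap K)$, so that one may simply take $H'=G'\cap H$ and $K'=G'\cap K$. Since $H$ and $K$ are normal in $G=H\otimes K$, both $G'\cap H$ and $G'\cap K$ are normal in $G'$; they commute, being contained in $H$ and $K$ which commute; and $(G'\cap H)\cap(G'\cap K)\subseteq H\cap K=\{1\}$. Hence it suffices to establish the single inclusion $G'\subseteq (G'\cap H)(G'\cap K)$, as the reverse inclusion is immediate and the internal direct product structure then follows from the three conditions just noted.

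The crucial preliminary observation, and the step I expect to carry the real content, is that the coprime order hypothesis forces the \emph{individual} element orders to be coprime: for any $h\in H$ and $k\in K$ we have $\gcd(o(h),o(k))=1$. Indeed, if $d=\gcd(o(h),o(k))$ were greater than $1$, then $h^{o(h)/d}$ and $k^{o(k)/d}$ would be elements of order $d$ lying in $H$ and $K$ respectively, giving $d\in o(H)\cap o(K)$ with $d>1$, which contradicts $o(H)\cap o(K)\subseteq\{1\}$.

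With this in hand, I would take any $g\in G'$ and write $g=hk$ with $h\in H$, $k\in K$ (a unique such decomposition exists as $G=H\otimes K$), noting that $h$ and $k$ commute and are of finite order by periodicity. Setting $q=o(k)$, one computes $g^{q}=h^{q}k^{q}=h^{q}$. Since $\gcd(o(h),q)=1$ by the observation above, the map $x\mapsto x^{q}$ is an automorphism of the cyclic group $\langle h\rangle$, so $h^{q}$ generates $\langle h\rangle$ and therefore $h\in\langle g^{q}\rangle\subseteq G'$; symmetrically $k\in G'$. Thus $h\in G'\cap H$ and $k\in G'\cap K$, whence $g\in(G'\cap H)(G'\cap K)$.

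Combining these steps yields $G'=(G'\cap H)\otimes(G'\cap K)$, as required. The only genuine subtlety is the order-coprimality observation; once it is established, the remainder is the standard ``the components of $g$ already lie in $G'$'' argument, and periodicity is precisely what guarantees that $o(h)$ and $o(k)$ are finite, so that the coprimality of the exponents can be exploited.
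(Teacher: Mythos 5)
Your proof is correct. Note that the paper itself omits the proof of this lemma (it is stated as one of a ``pair of lemmas'' whose ``proofs will be omitted''), so there is nothing to compare against; your argument --- reducing element-order coprimality from the coprimality of $o(H)\cap o(K)$, then using $g^{o(k)}=h^{o(k)}$ together with $\gcd(o(h),o(k))=1$ to show the components $h,k$ of any $g=hk\in G'$ already lie in $G'$, giving $G'=(G'\cap H)\otimes(G'\cap K)$ --- is precisely the standard argument the authors evidently had in mind, and it correctly identifies periodicity as the hypothesis that makes the exponent manipulation possible.
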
 
 
\begin{lemma}\label{iso of direct prod} Let $G_1=H_1\otimes K_1$ and $G_2=H_2\otimes K_2$ be a pair of groups with the $H_i$ and $K_i$ periodic of coprime orders for each $i=1,2$, and $H_1\cong H_2$, $K_1\cong K_2$. Let $G_1'=H_1'\otimes K_1'$ and $G_2'=H_2'\otimes K_2'$ be subgroups of $G_1$ and $G_2$, respectively, and $\theta^H:H_1'\rightarrow H_2'$ and $\theta^K:K_1'\rightarrow K_2'$ be a pair of morphisms. Then the map $\theta$ given by
\[ (hk)\theta = (h\theta^H)(k\theta^K) \quad (h\in H_1',k\in K_2')
\] 
is a morphism from $G_1'$ to $G_2'$, and every morphism can be so constructed. 
\end{lemma}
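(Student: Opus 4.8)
The plan is to prove the two assertions separately, exploiting throughout that the coprimality of orders forces the two factors to commute elementwise and to be ``order‑disjoint''. I would begin by recording the basic structural facts. Since each $G_i=H_i\otimes K_i$ is an internal direct product with $H_i,K_i$ of coprime order, every element of $H_i$ commutes with every element of $K_i$, and each $g\in G_i$ has a unique factorisation $g=hk$ with $h\in H_i$, $k\in K_i$; the same holds in $G_i'=H_i'\otimes K_i'$. In particular $H_i'$ and $K_i'$ inherit coprime orders, since $o(H_i')\subseteq o(H_i)$ and $o(K_i')\subseteq o(K_i)$. This unique factorisation immediately shows that the prescribed $\theta$ is well defined.

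For the forward direction I would write $g_1=h_1k_1$ and $g_2=h_2k_2$ in $G_1'$ and use commutativity to obtain $g_1g_2=h_1h_2k_1k_2$, so that $(g_1g_2)\theta=(h_1h_2)\theta^H\,(k_1k_2)\theta^K=(h_1\theta^H)(h_2\theta^H)(k_1\theta^K)(k_2\theta^K)$. Comparing this with $(g_1\theta)(g_2\theta)=(h_1\theta^H)(k_1\theta^K)(h_2\theta^H)(k_2\theta^K)$, the two expressions agree precisely because $H_2'$ and $K_2'$ commute elementwise inside $G_2$, again by coprimality. Hence $\theta$ is a homomorphism into $G_2'$.

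The converse is the substantive part. Given an arbitrary morphism $\theta:G_1'\to G_2'$, I claim $\theta$ respects the decomposition, that is $H_1'\theta\subseteq H_2'$ and $K_1'\theta\subseteq K_2'$. Fix $h\in H_1'$ and write $h\theta=h'k'$ with $h'\in H_2'$, $k'\in K_2'$. Since $(h\theta)^{o(h\theta)}=1$ and the factorisation into $H_2'$‑ and $K_2'$‑parts is unique, $(k')^{o(h\theta)}=1$, so $o(k')$ divides $o(h\theta)$, which in turn divides $o(h)$. As $o(h)\in o(H_1)$, a suitable power of $h$ has order $o(k')$, giving $o(k')\in o(H_1)$; but also $o(k')\in o(K_2)=o(K_1)$ (since $K_1\cong K_2$), and $H_1,K_1$ are of coprime order, forcing $o(k')=1$ and hence $h\theta=h'\in H_2'$. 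The symmetric argument yields $K_1'\theta\subseteq K_2'$. Setting $\theta^H:=\theta|_{H_1'}$ and $\theta^K:=\theta|_{K_1'}$, for any $g=hk\in G_1'$ we recover $g\theta=(h\theta)(k\theta)=(h\theta^H)(k\theta^K)$, exactly the prescribed form.

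The main obstacle is precisely this converse: a priori a homomorphism could send $H_1'$ into $G_2'$ with a nontrivial $K_2'$‑component, thereby ``mixing'' the two factors. What rules this out is the coprimality hypothesis, entering through the elementary facts that $o(h\theta)$ divides $o(h)$ and that divisors of element orders are again element orders, together with the order‑disjointness of $H_1$ and $K_1$; without coprimality the statement would fail. Everything else reduces to routine verification using unique factorisation and elementwise commutativity.
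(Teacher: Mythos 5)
Your proof is correct. There is in fact nothing in the paper to compare it against: the author explicitly states that the proofs of this lemma and of Lemma \ref{extend isos} ``will be omitted,'' so your argument supplies exactly the verification the paper treats as routine. The forward direction (well-definedness via unique factorisation, multiplicativity via elementwise commutation of $H_2'$ with $K_2'$) is the obvious one, and your converse correctly isolates the only substantive point, namely that any morphism $\theta:G_1'\rightarrow G_2'$ must preserve the two factors. Your order-theoretic argument for this is sound: from $h\theta=h'k'$ you get $o(k')\mid o(h\theta)\mid o(h)$, hence $o(k')\in o(H_1)$ (powers of $h$ realise every divisor of $o(h)$), while $o(k')\in o(K_2)=o(K_1)$, and coprimality of $o(H_1)$ and $o(K_1)$ kills $k'$. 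Notably, this shows precisely where the hypotheses $K_1\cong K_2$ (resp.\ $H_1\cong H_2$) enter — to transfer order sets between the two ambient groups — and where periodicity enters (finiteness of $o(h)$), which is consistent with the paper's remark that the lemma fails in general without the periodicity assumption.
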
 

The homomorphism $\theta$ in the lemma above will often be denoted as $\theta^H\otimes \theta^K$. We observe that Lemmas \ref{extend isos} and \ref{iso of direct prod} fail in general if we drop the periodic condition. 

If $G=H \otimes K$ is a group with $H$ and $K$ periodic of coprime order then clearly $H$ and $K$ are characteristic subgroups. The following simplification of \cite[Lemma 1]{Cherlin91} then follows from the pair of lemmas above. 

\begin{corollary}\label{coprime homog}  Let $G=H \otimes K$ be a group with the $H$ and $K$ periodic of coprime order. Then $G$ is homogeneous if and only if $H$ and $K$ are homogeneous.
\end{corollary}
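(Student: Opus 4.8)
The plan is to prove both directions of the equivalence using the machinery of Lemmas~\ref{extend isos} and~\ref{iso of direct prod}, exploiting that $H$ and $K$ are characteristic subgroups of $G$ (which holds since they are periodic of coprime order, so that $o(H)\cap o(K)\subseteq\{1\}$ forces any automorphism to preserve each factor).

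First I would establish the forward direction, that homogeneity of $G$ passes to $H$ and $K$. Since $H$ and $K$ are characteristic subgroups, they are in particular quasi-characteristic substructures of $G$ viewed as a group. Homogeneity passes to quasi-characteristic (indeed characteristic) substructures by Lemma~\ref{rel char homog}; alternatively one invokes the remark, cited from \cite[Lemma 1]{Cherlin91}, that homogeneity descends to characteristic substructures. This disposes of the ``only if'' implication with essentially no computation.

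For the converse, assume $H$ and $K$ are homogeneous and let $\phi\colon G_1'\to G_2'$ be an isomorphism between finitely generated (hence finite, since $G$ is periodic) subgroups $G_1',G_2'$ of $G$. By Lemma~\ref{extend isos} I can write $G_i'=H_i'\otimes K_i'$ with $H_i'\leq H$ and $K_i'\leq K$. The key point is that, because $H$ and $K$ are of coprime order, the isomorphism $\phi$ must respect this decomposition: an element of $G_1'$ of order dividing $|H|$-type must map to an element of the same order, and coprimality forces $H_1'\phi=H_2'$ and $K_1'\phi=K_2'$. Thus by Lemma~\ref{iso of direct prod} I may write $\phi=\theta^H\otimes\theta^K$ for isomorphisms $\theta^H\colon H_1'\to H_2'$ and $\theta^K\colon K_1'\to K_2'$. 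Homogeneity of $H$ and of $K$ then extends $\theta^H$ to an automorphism $\bar\theta^H$ of $H$ and $\theta^K$ to an automorphism $\bar\theta^K$ of $K$, and Lemma~\ref{iso of direct prod} (applied with $G_1=G_2=G$) assembles these into an automorphism $\bar\theta^H\otimes\bar\theta^K$ of $G$ extending $\phi$. Hence $G$ is homogeneous.

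The main obstacle I anticipate is justifying rigorously that an arbitrary isomorphism $\phi$ between subgroups splits as a product respecting the $H$–$K$ decomposition, rather than merely that such a splitting exists abstractly; this is where the coprimality hypothesis does the real work, via the order-characteristic argument of Corollary~\ref{coprime char subgroups} applied inside the subgroups $G_i'$. Once that splitting is secured, the remainder is a routine assembly using the stated lemmas, which is precisely why the corollary can be described as a ``simplification'' of \cite[Lemma 1]{Cherlin91}.
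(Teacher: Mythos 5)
Your proposal is, in substance, exactly the proof the paper intends: the paper gives no argument beyond asserting that the corollary ``follows from the pair of lemmas above,'' i.e.\ Lemmas~\ref{extend isos} and~\ref{iso of direct prod}, together with the preceding observation that $H$ and $K$ are characteristic (so the forward direction is inheritance of homogeneity by characteristic substructures), and your write-up is precisely that assembly. One remark: your ``key point'' --- that $\phi$ carries $H_1'$ onto $H_2'$ and $K_1'$ onto $K_2'$, so that $\phi=\theta^H\otimes\theta^K$ --- is literally the converse half of Lemma~\ref{iso of direct prod} (``every morphism can be so constructed''), so you may simply cite it; your order-theoretic justification is nonetheless the correct one (isomorphisms preserve orders, $o(h_2k_2)=\mathrm{lcm}(o(h_2),o(k_2))$ in a direct product, and coprimality forces the $K$-component of the image of an element of $H_1'$ to be trivial). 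The appeal to Corollary~\ref{coprime char subgroups} in your last paragraph, by contrast, is misdirected: that corollary presupposes homogeneity of the ambient group, which in the backward direction is what you are trying to prove; it is not needed.

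There is one step that, as written, would fail: the parenthetical ``finitely generated (hence finite, since $G$ is periodic).'' Finitely generated periodic groups need not be finite (free Burnside groups, Golod--Shafarevich), so this inference is invalid under the stated hypotheses. It matters only because you must know that $H_i'$ and $K_i'$ are finitely generated before homogeneity of $H$ and $K$ can be applied to $\theta^H$ and $\theta^K$; the correct justification is that $H_i'\cong G_i'/K_i'$ and $K_i'\cong G_i'/H_i'$ are quotients of the finitely generated group $G_i'$, hence finitely generated. With that repair the argument is complete and coincides with the paper's.
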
 

Given a group $G=H\otimes K$ where $H$ and $K$ are periodic of coprime order, consider the Clifford semigroup $S=[Y;G_{\alpha};\psi_{\alpha,\beta}]$ with $G_{\alpha}\cong G$ for each $\alpha$. Then $G_{\alpha}=H_{\alpha}\otimes K_{\alpha}$ where $H_{\alpha}\cong H$ and $K_{\alpha}\cong K$, and by Lemma \ref{iso of direct prod} we may let $\psi_{\alpha,\beta}=\psi_{\alpha,\beta}^H \otimes \psi_{\alpha,\beta}^K$ where $\psi_{\alpha,\beta}^H: H_{\alpha} \rightarrow H_{\beta}$ and $\psi_{\alpha,\beta}^K: K_{\alpha} \rightarrow K_{\beta}$. Since $H$ and $K$ are order-characteristic subgroups of $G$, it follows that the sets 
\begin{equation*}
S^H:=[Y;H_{\alpha};\psi_{\alpha,\beta}^H]  \text{ and } S^K :=[Y;K_{\alpha};\psi_{\alpha,\beta}^K]
\end{equation*} 
are characteristic inverse subsemigroups of $S$ by Lemma \ref{char sub his}.

\begin{corollary}\label{auto of sum clifford} Let $S=[Y;H_{\alpha}\otimes K_{\alpha};\psi_{\alpha,\beta}]$ be a periodic Clifford semigroup, where each $H_{\alpha}$ and $K_{\alpha}$ are of coprime order. Let $\pi$ be an automorphism of $Y$ and $\theta^H=[\theta_{\alpha}^H,\pi]_{\alpha\in Y}$ and $\theta^K=[\theta_{\alpha}^K,\pi]_{\alpha\in Y}$ be automorphisms of $S^H$ and $S^K$, respectively. Letting $\theta_{\alpha}=\theta^H_{\alpha} \otimes \theta^K_{\alpha}$, then $\theta=[\theta_{\alpha},\pi]_{\alpha \in Y}$ is an automorphism of $S$, and all automorphisms of $S$ can be constructed in this way.  
\end{corollary}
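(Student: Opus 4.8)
The plan is to read both directions off Theorem \ref{iso clifford}, which reduces the verification of a morphism of Clifford semigroups to checking that certain squares of group homomorphisms commute. Throughout I will use that, by Lemma \ref{iso of direct prod}, the connecting morphisms split as $\psi_{\alpha,\beta}=\psi_{\alpha,\beta}^H\otimes\psi_{\alpha,\beta}^K$, and that $S^H$ and $S^K$ are characteristic inverse subsemigroups of $S$.

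For the first assertion, I would form $\theta_\alpha=\theta_\alpha^H\otimes\theta_\alpha^K$ from the given data $\theta^H=[\theta_\alpha^H,\pi]_{\alpha\in Y}$ and $\theta^K=[\theta_\alpha^K,\pi]_{\alpha\in Y}$. Since $\theta_\alpha^H:H_\alpha\rightarrow H_{\alpha\pi}$ and $\theta_\alpha^K:K_\alpha\rightarrow K_{\alpha\pi}$ are isomorphisms and $G_\alpha=H_\alpha\otimes K_\alpha$, $G_{\alpha\pi}=H_{\alpha\pi}\otimes K_{\alpha\pi}$, Lemma \ref{iso of direct prod} immediately gives that each $\theta_\alpha$ is an isomorphism $G_\alpha\rightarrow G_{\alpha\pi}$. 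It then remains to check that the square $[\alpha,\beta;\alpha\pi,\beta\pi]$ commutes for every $\alpha>\beta$, that is, $\psi_{\alpha,\beta}\theta_\beta=\theta_\alpha\psi_{\alpha\pi,\beta\pi}$. Here I would evaluate both composites on a general element $hk$ with $h\in H_\alpha$ and $k\in K_\alpha$; using the splittings of $\psi$ and of the $\theta_\gamma$, the $H$-coordinate of each side reduces to $h\psi_{\alpha,\beta}^H\theta_\beta^H$ against $h\theta_\alpha^H\psi_{\alpha\pi,\beta\pi}^H$, which agree because the square for $\theta^H$ commutes ($\theta^H$ being an automorphism of $S^H$), and symmetrically for the $K$-coordinate. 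Hence the square commutes and Theorem \ref{iso clifford} yields that $\theta=[\theta_\alpha,\pi]_{\alpha\in Y}$ is an automorphism of $S$.

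For the converse I would take an arbitrary automorphism $\theta$ of $S$, which by Theorem \ref{iso clifford} has the form $\theta=[\theta_\alpha,\pi]_{\alpha\in Y}$ for some automorphism $\pi$ of $Y$ and isomorphisms $\theta_\alpha:G_\alpha\rightarrow G_{\alpha\pi}$. Because $S^H$ and $S^K$ are characteristic, $\theta$ restricts to automorphisms of each; and since $E(S^H)=E(S^K)=E(S)=\{e_\alpha:\alpha\in Y\}$, the induced automorphism of each restriction is again $\pi$. Characteristicity forces $H_\alpha\theta_\alpha=H_{\alpha\pi}$ and $K_\alpha\theta_\alpha=K_{\alpha\pi}$, so Lemma \ref{iso of direct prod} expresses $\theta_\alpha=\theta_\alpha^H\otimes\theta_\alpha^K$ with $\theta_\alpha^H=\theta_\alpha|_{H_\alpha}$ and $\theta_\alpha^K=\theta_\alpha|_{K_\alpha}$. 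Setting $\theta^H=[\theta_\alpha^H,\pi]_{\alpha\in Y}$ and $\theta^K=[\theta_\alpha^K,\pi]_{\alpha\in Y}$---which are precisely the restrictions $\theta|_{S^H}$ and $\theta|_{S^K}$, and hence automorphisms---recovers $\theta$ in the stated form.

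The computations of the first part are routine once the splittings are in place, so I expect the only point demanding care to be the converse: justifying that a general $\theta_\alpha$ must respect the decomposition $G_\alpha=H_\alpha\otimes K_\alpha$. This is exactly where coprimality enters, since it is coprimality that makes $H_\alpha$ and $K_\alpha$ order-characteristic and hence, via Lemma \ref{char sub his}, makes $S^H$ and $S^K$ characteristic in $S$; without this hypothesis an automorphism of $S$ need not preserve the two factors and the decomposition of $\theta_\alpha$ could fail.
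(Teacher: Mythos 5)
Your proposal is correct and follows essentially the same route as the paper: the forward direction is the paper's coordinate-wise computation showing $[\alpha,\beta;\alpha\pi,\beta\pi]^S$ commutes because the corresponding squares for $S^H$ and $S^K$ do, and the converse is the paper's appeal to Theorem \ref{iso clifford} together with the fact that $S^H$ and $S^K$ are characteristic. The only difference is that you spell out the details of the converse (restrictions are automorphisms with the same induced $\pi$, and $\theta_\alpha$ splits via Lemma \ref{iso of direct prod}) which the paper leaves as a one-line remark; your closing observation about where coprimality is used is also accurate.
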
 

\begin{proof} We show first that $\theta$ is an automorphism of $S$. By Lemma \ref{iso of direct prod} each $\theta_{\alpha}$ is an isomorphism, so it remains to prove that the diagram $[\alpha,\beta;\alpha\pi,\beta\pi]$ commutes for any $\alpha> \beta$. Let $g_{\alpha}\in G_{\alpha}$, say, $g_{\alpha}=h_{\alpha}k_{\alpha}$ ($h_{\alpha}\in H_{\alpha}, k_{\alpha}\in K_{\alpha}$). Then 
\begin{align*}
 g_{\alpha}\theta_{\alpha}\psi_{\alpha\pi,\beta\pi} & =(h_{\alpha}\theta_{\alpha}^H\psi_{\alpha\pi,\beta\pi}^H)(k_{\alpha}\theta_{\alpha}^K \psi_{\alpha\pi, \beta\pi}^K) \\
& = (h_{\alpha}\psi_{\alpha,\beta}^{H}\theta_{\beta}^{H})(k_{\alpha}\psi_{\alpha,\beta}^{K}\theta_{\beta}^{K}) \\
& = g_{\alpha}\psi_{\alpha,\beta}\theta_{\beta} 
\end{align*}
since $[\alpha,\beta;\alpha\pi,\beta\pi]^{S^H}$ and $[\alpha,\beta;\alpha\pi,\beta\pi]^{S^K}$ commutes. Hence $[\alpha,\beta;\alpha\pi,\beta\pi]^S$ commutes and $\theta$ is an automorphism of $S$. 
The converse follows from Theorem \ref{iso clifford} and the fact that $S^H$ and $S^K$ are characteristic inverse subsemigroups of $S$. 
\end{proof}

\begin{proposition}\label{structure sums} Let $S=[Y;H_{\alpha}\otimes K_{\alpha};\psi_{\alpha,\beta}]$ be a periodic Clifford semigroup, where each $H_{\alpha}$ and $K_{\alpha}$ are of coprime order. Then $S$ is a structure-HIS if and only if $S^H$ and $S^K$ are structure-HIS.  
\end{proposition}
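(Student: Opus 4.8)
The plan is to prove both directions by exploiting the decomposition $S = S^H \cdot S^K$ (in the sense that every element $g_\alpha \in G_\alpha$ factors uniquely as $h_\alpha k_\alpha$ with $h_\alpha \in H_\alpha$, $k_\alpha \in K_\alpha$) together with Corollary \ref{auto of sum clifford}, which tells us precisely that automorphisms of $S$ are exactly the maps $\theta = [\theta_\alpha^H \otimes \theta_\alpha^K, \pi]$ assembled from automorphisms of $S^H$ and $S^K$ sharing a common induced automorphism $\pi$ of $Y$. The key structural input beyond that is Lemma \ref{extend isos} and Lemma \ref{iso of direct prod}, which let me split an arbitrary f.g.\ inverse subsemigroup $A = [Z; G'_\gamma; \psi|_{G'_\gamma}]$ of $S$ compatibly: since each $G'_\gamma = H'_\gamma \otimes K'_\gamma$, the subsemigroup $A$ decomposes as $A^H = [Z; H'_\gamma; \psi^H|]$ and $A^K = [Z; K'_\gamma; \psi^K|]$, which are f.g.\ inverse subsemigroups of $S^H$ and $S^K$ respectively (they are order-characteristic pieces of $A$). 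Crucially, any isomorphism $\theta = [\theta_\gamma, \pi]$ from $A$ to another such subsemigroup $A'$ likewise splits, by Lemma \ref{iso of direct prod}, as $\theta_\gamma = \theta_\gamma^H \otimes \theta_\gamma^K$, giving isomorphisms $\theta^H : A^H \to (A')^H$ and $\theta^K : A^K \to (A')^K$ \emph{with the same induced isomorphism $\pi$}.

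For the forward direction, suppose $S$ is a structure-HIS; I would show $S^H$ is a structure-HIS (the argument for $S^K$ is symmetric). Take f.g.\ inverse subsemigroups $B, B'$ of $S^H$ and an isomorphism $\sigma = [\sigma_\gamma, \pi]$ between them, together with an automorphism $\hat\pi$ of $Y$ extending $\pi$. Since $S$ is a structure-HIS and for it there exists an automorphism inducing any given automorphism of $Y$, I can pick any automorphism $\tau = [\tau^K_\alpha, \hat\pi]$ of $S^K$ inducing $\hat\pi$; then $B \cdot B'$-data combine: form the isomorphism $\sigma \otimes (\tau|)$ between the corresponding subsemigroups of $S$, extend it via the structure-homogeneity of $S$ to an automorphism $\hat\theta = [\hat\theta_\alpha, \hat\pi]$ of $S$, and read off its $S^H$-component using Corollary \ref{auto of sum clifford} to obtain the desired extension of $\sigma$ inducing $\hat\pi$. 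The converse is the more constructive direction: given that $S^H$ and $S^K$ are structure-HIS, an isomorphism $\theta = [\theta_\gamma, \pi]$ between f.g.\ inverse subsemigroups $A, A'$ of $S$ and an automorphism $\hat\pi$ of $Y$ extending $\pi$, I split $\theta = \theta^H \otimes \theta^K$ as above, apply structure-homogeneity of $S^H$ and of $S^K$ \emph{using the same $\hat\pi$} to extend each to automorphisms $\hat\theta^H = [\hat\theta^H_\alpha, \hat\pi]$ and $\hat\theta^K = [\hat\theta^K_\alpha, \hat\pi]$, and then reassemble $\hat\theta = [\hat\theta^H_\alpha \otimes \hat\theta^K_\alpha, \hat\pi]$, which is an automorphism of $S$ extending $\theta$ by Corollary \ref{auto of sum clifford}.

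The main obstacle is the bookkeeping around the shared induced automorphism $\hat\pi$: the whole point of the \emph{structure}-HIS notion (rather than plain homogeneity) is that it lets me prescribe $\hat\pi$ in advance, and both reassembly steps genuinely require that the two component automorphisms of $S^H$ and $S^K$ induce the \emph{same} automorphism of $Y$ — otherwise Corollary \ref{auto of sum clifford} does not apply and the pieces will not glue into a single automorphism of $S$. So the careful part of the converse is verifying that when I invoke structure-homogeneity of $S^H$ and $S^K$ I feed them a \emph{common} extension $\hat\pi$ of $\pi$ (which the definition of structure-HIS permits, since it quantifies over all automorphisms $\hat\pi$ extending the induced isomorphism), and in the forward direction the care is in choosing the auxiliary automorphism $\tau$ of $S^K$ to induce precisely that same $\hat\pi$, which is possible because a structure-HIS realizes every automorphism of $Y$ as an induced automorphism. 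The remaining verifications — that the split subsemigroups are f.g., inverse, and order-characteristic, and that the split/reassembled maps are the claimed morphisms — are routine consequences of Lemmas \ref{extend isos} and \ref{iso of direct prod} together with the fact that $H_\alpha$ and $K_\alpha$ are of coprime order.
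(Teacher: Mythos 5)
Your proposal is correct and follows essentially the same route as the paper: split f.g.\ inverse subsemigroups and isomorphisms into $H$- and $K$-components via Lemmas \ref{sub clifford}, \ref{extend isos} and \ref{iso of direct prod}, extend each component using structure-homogeneity with the \emph{same} prescribed $\hat\pi$, and reassemble by Corollary \ref{auto of sum clifford}; the forward direction in the paper is even shorter than yours, simply extending the given isomorphism directly inside $S$ and restricting to the characteristic subsemigroups $S^H$, $S^K$, so your auxiliary automorphism $\tau$ of $S^K$ is harmless but unnecessary.
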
 

\begin{proof} ($\Rightarrow$) Follows immediately as $S^H$ and $S^K$ are characteristic and have structure semilattice $Y$. 

($\Leftarrow$) Let $A$ and $B$ be a pair of f.g. inverse subsemigroups of $S$. It follows from Lemmas \ref{sub clifford},  \ref{extend isos} and \ref{iso of direct prod} that we may take 
\begin{align*}
& A=[Z';H_{\gamma}'\otimes K_{\gamma}'; \psi_{\gamma,\delta}^{H'}\otimes \psi_{\gamma,\delta}^{K'}] \\
& B= [Z'';H_{\tau}''\otimes K_{\tau}''; \psi_{\tau,\sigma}^{H''}\otimes \psi_{\tau,\sigma}^{K''}]
\end{align*}
where $H_{\gamma}'$ and $K_{\gamma}'$ are subgroups of   $H_{\gamma}$ and $K_{\gamma}$, respectively,  $\psi_{\gamma,\delta}^{H'}=\psi_{\gamma,\delta}|_{H_{\gamma}'}$ and $\psi_{\gamma,\delta}^{K'}=\psi_{\gamma,\delta}|_{K_{\gamma}'}$. Similarly for $B$. 

 Let $\theta=[\theta_{\gamma},\pi]_{\gamma\in Z'}$ be an isomorphism from $A$ to $B$, and $\hat{\pi}$ an automorphism of $Y$ which extends $\pi$. Then $\theta_{\gamma}= \theta_{\gamma}^{H'} \otimes \theta_{\gamma}^{K'}$ for some isomorphisms $\theta_{\gamma}^{H'}:H_{\gamma}'\rightarrow H_{\gamma\pi}''$ and $\theta_{\gamma}^{K'}:K_{\gamma}'\rightarrow K_{\gamma\pi}''$. Hence $\theta^{H'}=[\theta_{\gamma}^{H'},\pi]_{\gamma\in Z'}$ is an isomorphism from $ [Z';H_{\gamma}';{\psi}_{\gamma,\delta}^{H'}]$ onto $[Z'';H_{\tau}'';{\psi}_{\tau,\sigma}^{H''}]$ and similarly for $\theta^{K'}=[\theta_{\gamma}^{K'},\pi]_{\gamma\in Z'}$. 
  
Since $\theta^{H'}$ is an isomorphism between f.g. inverse subsemigroups of the structure-HIS $S^H$, we may extend $\theta^{H'}$ to an automorphism $[\theta_{\alpha}^{H},\hat{\pi}]_{\alpha\in Y}$  of $S^H$, and similarly extend $\theta^{K'}$ to an automorphism $[\theta_{\alpha}^K,\hat{\pi}]_{\alpha\in Y}$ of $S^K$. Hence $[\theta_{\alpha}^H\otimes \theta_{\alpha}^K,\hat{\pi}]_{\alpha\in Y}$ is an automorphism of $S$ by Corollary \ref{auto of sum clifford}, and extends $\theta$ as required. 
\end{proof} 

An simple adaptation of the proof above also gives the following result. 

\begin{proposition}\label{structure sums 2} Let $S=[Y;H_{\alpha}\otimes K_{\alpha};\psi_{\alpha,\beta}]$ be a periodic Clifford semigroup, where $H_{\alpha}$ and $K_{\alpha}$ are of coprime order, and $S^H$ is a structure-HIS. Then $S$ is a HIS if and only if $S^K$ is a HIS.   
\end{proposition}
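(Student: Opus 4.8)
The plan is to mimic the proof of Proposition \ref{structure sums}, exploiting the asymmetry between the two hypotheses: $S^H$ is assumed structure-homogeneous, which is strong enough to \emph{match a prescribed induced automorphism} of $Y$, whereas $S^K$ is only assumed (plainly) homogeneous. Recall that both $S^H$ and $S^K$ are characteristic inverse subsemigroups of $S$ with structure semilattice $Y$, as noted before Corollary \ref{auto of sum clifford}.

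For the forward direction I would argue directly: since $S^K$ is a characteristic, hence quasi-characteristic, inverse subsemigroup of the homogeneous inverse semigroup $S$, Lemma \ref{rel char homog} immediately yields that $S^K$ is homogeneous, i.e.\ a HIS.

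For the converse, suppose $S^K$ is a HIS and let $\theta = [\theta_\gamma, \pi]_{\gamma \in Z'}$ be an isomorphism between f.g. inverse subsemigroups $A$ and $B$ of $S$. As in Proposition \ref{structure sums}, Lemmas \ref{sub clifford}, \ref{extend isos} and \ref{iso of direct prod} let me take $A = [Z'; H'_\gamma \otimes K'_\gamma; \psi^{H'}_{\gamma,\delta} \otimes \psi^{K'}_{\gamma,\delta}]$ (and similarly for $B$) and split $\theta_\gamma = \theta^{H'}_\gamma \otimes \theta^{K'}_\gamma$, producing isomorphisms $\theta^{H'} = [\theta^{H'}_\gamma, \pi]_{\gamma \in Z'}$ and $\theta^{K'} = [\theta^{K'}_\gamma, \pi]_{\gamma \in Z'}$ between the corresponding f.g. inverse subsemigroups of $S^H$ and $S^K$. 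The one point where the argument departs from Proposition \ref{structure sums} is the order of extension. First I would use the homogeneity of $S^K$ to extend $\theta^{K'}$ to an automorphism $\hat{\theta}^K = [\hat{\theta}^K_\alpha, \hat{\pi}]_{\alpha \in Y}$ of $S^K$; since $\hat{\theta}^K$ extends $\theta^{K'}$ it agrees with it on the idempotents $e_\gamma$ ($\gamma \in Z'$), whence its induced automorphism $\hat{\pi}$ of $Y$ restricts to $\pi$ on $Z'$. Crucially, because $S^H$ is a structure-HIS, I may then extend $\theta^{H'}$ to an automorphism $\hat{\theta}^H = [\hat{\theta}^H_\alpha, \hat{\pi}]_{\alpha \in Y}$ of $S^H$ with this \emph{same} prescribed induced automorphism $\hat{\pi}$. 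Corollary \ref{auto of sum clifford} then recombines these into an automorphism $\hat{\theta} = [\hat{\theta}^H_\alpha \otimes \hat{\theta}^K_\alpha, \hat{\pi}]_{\alpha \in Y}$ of $S$, and a direct check on each $g_\gamma = h'_\gamma k'_\gamma \in H'_\gamma \otimes K'_\gamma$ shows $\hat{\theta}$ extends $\theta$.

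The main obstacle is precisely this matching of the induced automorphism on $Y$: plain homogeneity of $S^K$ produces some extension $\hat{\pi}$ over which we have no control, so we cannot symmetrically extend both coordinates as in Proposition \ref{structure sums}. The structure-homogeneity hypothesis on $S^H$ is exactly what resolves this, since it can realise whatever $\hat{\pi}$ the $K$-coordinate forces; this is why only the weaker (plain) homogeneity is required of $S^K$, while the stronger (structure) homogeneity is demanded of $S^H$.
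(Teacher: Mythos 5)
Your proof is correct and is precisely the ``simple adaptation'' of Proposition \ref{structure sums} that the paper intends: extend the $K$-coordinate first using plain homogeneity of $S^K$ (noting, as you do, that the resulting induced automorphism $\hat{\pi}$ must extend $\pi$ because the extension agrees with $\theta^{K'}$ on the idempotents $e_\gamma$), then invoke structure-homogeneity of $S^H$ to realise that same $\hat{\pi}$, and recombine via Corollary \ref{auto of sum clifford}. Your forward direction, via the characteristic (hence quasi-characteristic) subsemigroup $S^K$ and Lemma \ref{rel char homog}, likewise matches the paper's reasoning.
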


Given a Clifford semigroup $[Y;G_{\alpha};\psi_{\alpha,\beta}]$ then, for each $\alpha>\beta$, we let  
\begin{align*}  I_{\alpha,\beta} & :=  \text{Im }\psi_{\alpha,\beta}  =  \{a_{\beta}\in G_{\beta}:\exists a_{\alpha}\in G_{\alpha}, a_{\alpha}\psi_{\alpha,\beta}=a_{\beta}\}, \\
 K_{\alpha,\beta} & := \text{Ker }\psi_{\alpha,\beta} = \{a_{\alpha}\in G_{\alpha}:a_{\alpha}\psi_{\alpha,\beta}=e_{\beta}\},
\end{align*}  be the image and kernel of the connecting morphism $\psi_{\alpha,\beta}$, respectively. Given $\alpha>\gamma>\beta$ in $Y$ and $k_{\alpha}\in K_{\alpha,\gamma}$, then
\[ k_{\alpha}\psi_{\alpha,\beta}=(k_{\alpha}\psi_{\alpha,\gamma})\psi_{\gamma,\beta}=e_{\gamma}\psi_{\gamma,\beta}=e_{\beta},
\] 
and so $k_{\alpha}\in K_{\alpha,\beta}$. Thus $K_{\alpha,\gamma}\subseteq K_{\alpha,\beta}$, and similarly $I_{\alpha,\beta}\subseteq I_{\gamma,\beta}$.

We also define the  \textit{absolute image} $I^*_{\alpha}$ and the \textit{absolute kernel} $K_{\alpha}^*$ of $\alpha\in Y$ as the subsets of $G_{\alpha}$ given by 
\begin{align*}  I_{\alpha}^* & := \{g_{\alpha}\in I_{\alpha}: o(g_{\alpha})=o(g_{\alpha}\psi_{\alpha,\beta}) \text{ for all } \beta<\alpha\}, \\
 K_{\alpha}^* & := \{a_{\alpha}\in G_{\alpha}:a_{\alpha}\psi_{\alpha,\beta}=e_{\beta} \text{ for all } \beta<\alpha\} = \bigcap_{\beta<\alpha}K_{\alpha,\beta}.
\end{align*}

The set $K_{\alpha}^*$, being an intersection of subgroups of $G_{\alpha}$, forms a subgroup, while $I_{\alpha}^*$ may not. 

\begin{notation} {\em Throughout the remainder of this subsection we let $S=[Y; G_{\alpha}; \psi_{\alpha, \beta}]$ denote a Clifford HIS, so that $Y$ is a homogeneous semilattice and the $G_{\alpha}$ are pairwise isomorphic homogeneous groups.}
\end{notation} 

The following lemma will be vital in our understanding of the images and kernels of the connecting morphisms.

\begin{lemma} \label{iso bit}  Let $\alpha,\alpha',\beta\in Y$ be such that $\alpha,\alpha'>\beta$, and let $g_{\beta}, h_{\beta}\in G_{\beta}$ be of the same order. Then  the map 
\[ \phi: \langle e_{\alpha},g_{\beta} \rangle_I \rightarrow \langle e_{\alpha'},h_{\beta} \rangle_I 
\] 
given by $e_{\alpha}\phi=e_{\alpha'}$ and $g_{\beta}^z\phi=h_{\beta}^z$ for $z\in \mathbb{Z}$, is an isomorphism. 
\end{lemma}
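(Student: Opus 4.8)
The plan is to identify each of the two inverse subsemigroups as a Clifford semigroup over a two-element chain, and then to read off the isomorphism directly from the isomorphism theorem for Clifford semigroups (Theorem \ref{iso clifford}). First I would compute $\langle e_\alpha, g_\beta \rangle_I$ explicitly. Since $\alpha>\beta$ we have $\alpha\beta=\beta$, and $e_\alpha\psi_{\alpha,\beta}=e_\beta$ because $\psi_{\alpha,\beta}$ is a group homomorphism; hence the multiplication formula gives
\[ e_\alpha * g_\beta^{\,z}=(e_\alpha\psi_{\alpha,\beta})(g_\beta^{\,z}\psi_{\beta,\beta})=e_\beta\, g_\beta^{\,z}=g_\beta^{\,z} \]
for every $z\in\mathbb{Z}$, and symmetrically $g_\beta^{\,z}* e_\alpha=g_\beta^{\,z}$. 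Thus $e_\alpha$ absorbs into the cyclic subgroup $\langle g_\beta\rangle$ of $G_\beta$ and produces no new elements, so $\langle e_\alpha, g_\beta \rangle_I=\{e_\alpha\}\cup\langle g_\beta\rangle$. By Lemma \ref{sub clifford} this is precisely the Clifford semigroup over the chain $\{\alpha,\beta\}$ (with $\alpha>\beta$) whose maximal subgroup at $\alpha$ is the trivial group $\{e_\alpha\}$, whose maximal subgroup at $\beta$ is $\langle g_\beta\rangle$, and whose connecting morphism is the restriction of $\psi_{\alpha,\beta}$, namely the trivial map $e_\alpha\mapsto e_\beta$. The identical computation shows $\langle e_{\alpha'}, h_\beta\rangle_I=\{e_{\alpha'}\}\cup\langle h_\beta\rangle$ has the analogous structure over $\{\alpha',\beta\}$.

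Next I would assemble the data for Theorem \ref{iso clifford}. Since $\alpha,\alpha'>\beta$, both $\{\alpha,\beta\}$ and $\{\alpha',\beta\}$ are two-element chains, so the bijection $\pi$ sending $\alpha\mapsto\alpha'$ and $\beta\mapsto\beta$ is a semilattice isomorphism. Take $\theta_\alpha\colon\{e_\alpha\}\to\{e_{\alpha'}\}$ to be the trivial isomorphism, and $\theta_\beta\colon\langle g_\beta\rangle\to\langle h_\beta\rangle$ to be $g_\beta^{\,z}\mapsto h_\beta^{\,z}$. Because $g_\beta$ and $h_\beta$ have the same order, $\theta_\beta$ is well defined and bijective: the relation $g_\beta^{\,z}=g_\beta^{\,z'}$ holds exactly when $h_\beta^{\,z}=h_\beta^{\,z'}$, so $\theta_\beta$ is an isomorphism of cyclic groups.

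Finally I would check the single commuting square $[\alpha,\beta;\alpha',\beta]$. This is automatic because the group at the top vertex is trivial: both composites act on the unique element $e_\alpha$, with
\[ e_\alpha\,\psi_{\alpha,\beta}\,\theta_\beta=e_\beta\,\theta_\beta=e_\beta \qquad\text{and}\qquad e_\alpha\,\theta_\alpha\,\psi_{\alpha',\beta}=e_{\alpha'}\,\psi_{\alpha',\beta}=e_\beta, \]
so the diagram commutes. Hence by Theorem \ref{iso clifford} the map $[\{\theta_\alpha,\theta_\beta\},\pi]$ is an isomorphism from $\langle e_\alpha,g_\beta\rangle_I$ onto $\langle e_{\alpha'},h_\beta\rangle_I$, and since it sends $e_\alpha\mapsto e_{\alpha'}$ and $g_\beta^{\,z}\mapsto h_\beta^{\,z}$ it is exactly the map $\phi$ of the statement. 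I do not expect a genuine obstacle here: the only points requiring care are the explicit verification that $e_\alpha$ acts as an absorbing external identity on $\langle g_\beta\rangle$ (so that the subsemigroup has no unexpected elements) and the observation that the commuting square is trivially satisfied precisely because the top group is trivial; everything else is the standard cyclic-group isomorphism determined by equality of orders.
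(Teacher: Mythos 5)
Your proposal is correct and takes essentially the same approach as the paper: both proofs hinge on the computation $e_{\alpha}g_{\beta}^z=(e_{\alpha}\psi_{\alpha,\beta})(g_{\beta}^z\psi_{\beta,\beta})=g_{\beta}^z=g_{\beta}^z e_{\alpha}$, which shows $\langle e_{\alpha},g_{\beta}\rangle_I=\{e_{\alpha}\}\cup\langle g_{\beta}\rangle_I$, together with the fact that $\langle g_{\beta}\rangle_I$ and $\langle h_{\beta}\rangle_I$ are isomorphic cyclic groups. The only difference is that where the paper declares the final verification that $\phi$ is an isomorphism to be routine, you carry it out explicitly via Theorem \ref{iso clifford}, observing that the square $[\alpha,\beta;\alpha',\beta]$ commutes trivially because the top group is trivial; this is a sound and fully compatible way of filling in that detail.
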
 

\begin{proof}
Note that $\langle g_{\beta} \rangle_I$ and $\langle h_{\beta} \rangle_I$ are isomorphic cyclic groups. Moreover, $e_{\alpha}$ is the identity in $\langle e_{\alpha},g_{\beta} \rangle_I $ since $e_{\alpha}g_{\beta}=(e_{\alpha}\psi_{\alpha,\beta})(g_{\beta}\psi_{\beta,\beta})=e_{\beta}g_{\beta}=g_{\beta}=g_{\beta}e_{\alpha}$ and so 
\[ \langle e_{\alpha},g_{\beta} \rangle_I = \{ e_{\alpha} \} \cup \langle g_{\beta} \rangle_I.
\] 
 Similarly for $\langle e_{\alpha'},h_{\beta} \rangle_I$, and it is routine to check that $\phi$ is indeed an isomorphism. 
\end{proof}
 
\begin{lemma} \label{im same} Let $\alpha,\alpha',\beta\in Y$ be such that $\alpha,\alpha'>\beta$. Then $I_{\alpha,\beta}=I_{\alpha',\beta}$.
\end{lemma}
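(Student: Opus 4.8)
The plan is to prove the two inclusions $I_{\alpha,\beta}\subseteq I_{\alpha',\beta}$ and $I_{\alpha',\beta}\subseteq I_{\alpha,\beta}$ separately; since the hypotheses are symmetric in $\alpha$ and $\alpha'$, it suffices to establish the first. So I would fix $g_{\beta}\in I_{\alpha,\beta}$, which by definition means there is $a_{\alpha}\in G_{\alpha}$ with $a_{\alpha}\psi_{\alpha,\beta}=g_{\beta}$. Unwinding the strong-semilattice multiplication (using $\alpha\beta=\beta$ and that $e_{\beta}$ is the identity of $G_{\beta}$) this relation reads simply $a_{\alpha}e_{\beta}=g_{\beta}$. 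The goal is then to produce some $b_{\alpha'}\in G_{\alpha'}$ with $b_{\alpha'}\psi_{\alpha',\beta}=g_{\beta}$, i.e.\ to transport the existing preimage above $\alpha$ to one above $\alpha'$.

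The key step is to obtain the required transport from an automorphism supplied by homogeneity. Applying Lemma \ref{iso bit} with $h_{\beta}=g_{\beta}$ (trivially of the same order as $g_{\beta}$) yields an isomorphism
\[ \phi:\langle e_{\alpha},g_{\beta}\rangle_I\rightarrow\langle e_{\alpha'},g_{\beta}\rangle_I \]
that sends $e_{\alpha}$ to $e_{\alpha'}$ and fixes every power of $g_{\beta}$; in particular $\phi$ fixes $g_{\beta}$ itself and fixes $e_{\beta}=g_{\beta}^{o(g_{\beta})}$, since $e_{\beta}\in\langle g_{\beta}\rangle_I$. Both $\langle e_{\alpha},g_{\beta}\rangle_I$ and $\langle e_{\alpha'},g_{\beta}\rangle_I$ are $2$-generated, hence finitely generated inverse subsemigroups of $S$, so by the homogeneity of $S$ I may extend $\phi$ to an automorphism $\Phi$ of $S$.

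It then remains to read off the conclusion by applying $\Phi$ to $a_{\alpha}e_{\beta}=g_{\beta}$. Because $\Phi$ is an automorphism with $e_{\alpha}\Phi=e_{\alpha'}$ and automorphisms preserve $\mathcal{H}$ (Example \ref{G rel char}), $\Phi$ restricts to an isomorphism of the maximal subgroup $G_{\alpha}=H_{e_{\alpha}}$ onto $G_{\alpha'}=H_{e_{\alpha'}}$, so $a_{\alpha}\Phi\in G_{\alpha'}$; moreover $\Phi$ fixes $e_{\beta}$ and $g_{\beta}$. Thus $(a_{\alpha}\Phi)e_{\beta}=g_{\beta}$ with $a_{\alpha}\Phi\in G_{\alpha'}$ and $\alpha'>\beta$, which is precisely $(a_{\alpha}\Phi)\psi_{\alpha',\beta}=g_{\beta}$, giving $g_{\beta}\in I_{\alpha',\beta}$ as wanted.

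The only delicate point — and really the whole substance of the argument — is guaranteeing that $\Phi$ interacts correctly with the grading: that $e_{\beta}$ and $g_{\beta}$ are genuinely fixed, and that $a_{\alpha}\Phi$ lands in $G_{\alpha'}$ rather than merely somewhere in the natural order above $e_{\beta}$. The first is exactly why I take $h_{\beta}=g_{\beta}$ in Lemma \ref{iso bit}, forcing $\phi$ (and hence $\Phi$ on that subsemigroup) to fix $\langle g_{\beta}\rangle_I\ni e_{\beta}$; the second is secured by $e_{\alpha}\Phi=e_{\alpha'}$ together with the fact that automorphisms carry maximal subgroups to maximal subgroups. Once these are in place the final computation is immediate, and the reverse inclusion follows by interchanging $\alpha$ and $\alpha'$.
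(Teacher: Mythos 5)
Your proof is correct and takes essentially the same route as the paper's: fix $g_{\beta}\in I_{\alpha,\beta}$, apply Lemma \ref{iso bit} with $h_{\beta}=g_{\beta}$ to get the isomorphism $\langle e_{\alpha},g_{\beta}\rangle_I\rightarrow\langle e_{\alpha'},g_{\beta}\rangle_I$ fixing $g_{\beta}$, extend it to an automorphism of $S$ by homogeneity, and transport the preimage, with symmetry giving the reverse inclusion. The only difference is cosmetic, in the final verification: the paper invokes the commutativity of the diagram $[\alpha,\beta;\alpha',\beta]$ from Theorem \ref{iso clifford}, whereas you use the elementary identity $a_{\alpha}\psi_{\alpha,\beta}=a_{\alpha}e_{\beta}$ together with preservation of $\mathcal{H}$ to see that $a_{\alpha}\Phi\in G_{\alpha'}$ — both are valid.
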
 

\begin{proof} Let $g_{\beta}=g_{\alpha}\psi_{\alpha,\beta} \in I_{\alpha,\beta}$. By the lemma above, there is an isomorphism $\phi:\langle e_{\alpha},g_{\beta} \rangle_I \rightarrow \langle e_{\alpha'},g_{\beta} \rangle_I$ determined by $e_{\alpha}\phi=e_{\alpha'}$ and $g_{\beta}\phi=g_{\beta}$. Extend $\phi$ to an automorphism ${\theta}=[{\theta}_{\alpha}, \pi]_{\alpha\in Y}$ of $S$, so that $\alpha \pi=\alpha'$ and $\beta\pi=\beta$. Then 
\[  g_{\alpha}\theta_{\alpha} \psi_{\alpha',\beta} = g_{\alpha} \psi_{\alpha,\beta} {\theta}_{\beta}  = g_{\beta}{\theta}_{\beta}= g_{\beta}
\] 
since the diagram $[\alpha,\beta;\alpha',\beta]$ commutes. Hence $g_{\beta}\in I_{\alpha',\beta}$ and $I_{\alpha,\beta} \subseteq I_{\alpha',\beta}$. The dual gives equality. 
\end{proof} 

For each $\alpha\in Y$, we let $I_{\alpha}$ denote the subgroup $I_{\delta,\alpha}$ for (any) $\delta>\alpha$. Since $Y$ has no maximal elements, $I_{\alpha}$ is non-empty for all $\alpha\in Y$.  

\begin{lemma}\label{I char} For each $\alpha\in Y$, the subgroups $I_{\alpha}$ and $K^*_{\alpha}$ are characteristic subgroups of $G_{\alpha}$, and are thus homogeneous. Moreover, for each $\alpha>\beta$, the subgroup $K_{\alpha,\beta}$ is homogeneous. 
\end{lemma}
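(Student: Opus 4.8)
The plan is to give order-theoretic descriptions of $I_{\alpha}$ and $K_{\alpha}^*$ inside $S$, use these together with the homogeneity of $S$ to show that each contains \emph{every} element of $G_{\alpha}$ whose order it realises, and then deduce that they are characteristic. The delicate point is the last step: an arbitrary automorphism $\sigma$ of the abstract group $G_{\alpha}$ need not arise from an automorphism of $S$, so we cannot transport along it directly. However, $\sigma$ must preserve orders, so once $I_{\alpha}$ (respectively $K_{\alpha}^*$) is shown to be closed under taking \emph{all} elements of any order it contains, invariance under $\sigma$ is automatic. This bridge between $\mathrm{Aut}(G_{\alpha})$ and $\mathrm{Aut}(S)$ is the main thing to get right.

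First I would record the descriptions. Using the natural order on $S$ (so $a_{\gamma}\geq b_{\delta}$ iff $\gamma\geq\delta$ and $a_{\gamma}\psi_{\gamma,\delta}=b_{\delta}$), one checks that $g_{\alpha}\in I_{\alpha}$ iff $g_{\alpha}$ has a strict upper bound in $S$, and that $a_{\alpha}\in K_{\alpha}^*$ iff every element of $S$ strictly below $a_{\alpha}$ is idempotent. Both conditions are manifestly preserved by any automorphism of $S$ whose induced semilattice automorphism fixes $\alpha$.

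Next, fix $g_{\alpha}\in I_{\alpha}$ and let $h_{\alpha}\in G_{\alpha}$ have the same order. Since $\langle g_{\alpha}\rangle_I$ and $\langle h_{\alpha}\rangle_I$ are cyclic groups of equal order, the assignment $g_{\alpha}\mapsto h_{\alpha}$ is an isomorphism of finitely generated inverse subsemigroups fixing $e_{\alpha}$; by homogeneity it extends to $\phi\in\mathrm{Aut}(S)$, whose induced semilattice automorphism fixes $\alpha$. Transporting a strict upper bound of $g_{\alpha}$ by $\phi$ yields one for $h_{\alpha}$, so $h_{\alpha}\in I_{\alpha}$ by Lemma~\ref{im same}. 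The same device (now transporting the property that everything below is idempotent) shows $K_{\alpha}^*$ contains all elements of $G_{\alpha}$ of any order it realises. As group automorphisms preserve order, both $I_{\alpha}$ and $K_{\alpha}^*$ are therefore characteristic in $G_{\alpha}$, and hence homogeneous, since $G_{\alpha}$ is a homogeneous group and homogeneity passes to characteristic, and so quasi-characteristic, substructures by Lemma~\ref{rel char homog}.

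For $K_{\alpha,\beta}$ I would argue homogeneity directly, the obstruction being that $K_{\alpha,\beta}=\{a\in G_{\alpha}:a\geq e_{\beta}\}$ depends on $\beta$ and need not be characteristic. Given an isomorphism $\phi\colon N\to N'$ between finitely generated inverse subsemigroups of $K_{\alpha,\beta}$ (necessarily subgroups with identity $e_{\alpha}$), I would first enlarge it: as every $a\in N$ satisfies $ae_{\beta}=e_{\beta}a=e_{\beta}$, the set $\langle N\cup\{e_{\beta}\}\rangle_I$ is just $N\cup\{e_{\beta}\}$ with $e_{\beta}$ adjoined as a zero, so $\phi$ extends to an isomorphism of $N\cup\{e_{\beta}\}$ onto $N'\cup\{e_{\beta}\}$ fixing $e_{\beta}$. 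Extending this to $\bar{\phi}\in\mathrm{Aut}(S)$ by homogeneity, the induced automorphism of $Y$ now fixes both $\alpha$ and $\beta$; since $\bar{\phi}$ preserves the natural order it maps $\{a\in G_{\alpha}:a\geq e_{\beta}\}$ onto itself, so $\bar{\phi}|_{K_{\alpha,\beta}}$ is an automorphism of $K_{\alpha,\beta}$ extending $\phi$. The crux throughout is controlling the induced action on $Y$: forcing the extension to fix the relevant idempotents, namely $\alpha$ alone for $I_{\alpha}$ and $K_{\alpha}^*$ and both $\alpha$ and $\beta$ for $K_{\alpha,\beta}$, is precisely what keeps the order-theoretic descriptions invariant.
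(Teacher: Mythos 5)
Your proposal is correct and takes essentially the same route as the paper: both arguments extend isomorphisms between small (monogenic, or monogenic-with-an-adjoined-idempotent) inverse subsemigroups to automorphisms of $S$ via homogeneity, using the fixed idempotents to pin down the induced semilattice automorphism, and then transport membership in $I_{\alpha}$, $K^*_{\alpha}$ and $K_{\alpha,\beta}$. The only difference is bookkeeping: you phrase the transport through the natural order on $S$ (strict upper bounds, idempotent lower sets) together with an intermediate ``order-characteristic'' step before invoking that group automorphisms preserve orders, whereas the paper applies an arbitrary $\varphi\in\mathrm{Aut}(G_{\alpha})$ directly and chases the commuting diagrams of Theorem~\ref{iso clifford} --- equivalent formulations, since the natural order on a strong semilattice of groups is defined exactly by the connecting morphisms.
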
 

\begin{proof} Let $\varphi\in \text{Aut}(G_{\alpha})$ and $g_{\alpha}=g_{\delta}\psi_{\delta,\alpha}\in I_{\alpha}$ for some $\delta>\alpha$. Then, by Lemma \ref{iso bit}, we may take an isomorphism $\phi:\langle e_{\delta},g_{\alpha} \rangle_I \rightarrow \langle e_{\delta},g_{\alpha}\varphi \rangle_I$ fixing $e_{\delta}$ and with $g_{\alpha}\phi=g_{\alpha}\varphi$. Extending $\phi$ to  ${\theta}=[{\theta}_{\alpha}, \pi]_{\alpha\in Y} \in \text{Aut}(S)$, then as $[\delta,\alpha;\delta,\alpha]$ commutes, we have 
\[g_{\alpha}\varphi = g_{\alpha}\phi  = g_{\alpha}{\theta}_{\alpha} = g_{\delta}\psi_{\delta,\alpha}{\theta}_{\alpha} =    g_{\delta}{\theta}_{\delta}\psi_{\delta,\alpha} \in I_{\alpha}, 
\] 
and so $I_{\alpha}$ is characteristic.  
 Now let $k_{\alpha}\in K^*_{\alpha}$, and extend the isomorphism between $\langle k_{\alpha}\varphi \rangle_I$ and $\langle k_{\alpha} \rangle_I$ which sends $k_{\alpha}\varphi$ to $k_{\alpha}$, to $\bar{\theta}=[\bar{\theta}_{\alpha},\bar{\pi}]_{\alpha\in Y}\in \text{Aut}(S)$. Then as $[\alpha,\beta;\alpha,\beta\bar{\pi}]$ commutes for any $\beta<\alpha$, and as $k_{\alpha}\in {K}_{\alpha,\beta\bar{\pi}}$, we have 
 \[ (k_{\alpha}\varphi)\psi_{\alpha,\beta}\theta_{\beta}= (k_{\alpha}\varphi)\theta_{\alpha}\psi_{\alpha,\beta\bar{\pi}}=k_{\alpha}\psi_{\alpha,\beta\bar{\pi}} = e_{\beta\bar{\pi}}.
 \] 
 Hence $k_{\alpha}\varphi\in K_{\alpha,\beta}$ for all $\beta<\alpha$, that is, $k_{\alpha}\varphi\in K^*_{\alpha}$, and so $K_{\alpha}^*$ is characteristic. 
 
Finally, let $\phi$ be an isomorphism between f.g. subgroups $A_{\alpha}$ and $A_{\alpha}'$ of $K_{\alpha,\beta}$. Then the map  $\phi': A_{\alpha}\cup \{e_{\beta}\} \rightarrow A_{\alpha}'\cup \{e_{\beta}\}$ such that $A_{\alpha}\phi'=A_{\alpha}\phi$ and $e_{\beta}\phi'=e_{\beta}$ is an isomorphism between f.g. inverse subsemigroups of $S$. By extending $\phi'$ to an automorphism of $S$, the result follows from Theorem \ref{iso clifford}.  
\end{proof}

\begin{lemma}\label{inj iff inj} If $\alpha>\beta$ and $\alpha'>\beta'$ in $Y$ then there exists a pair of isomorphisms $\theta_{\alpha}:G_{\alpha}\rightarrow G_{\alpha'}$ and $\theta_{\beta}:G_{\beta}\rightarrow G_{\beta'}$ such that $\psi_{\alpha,\beta} = \theta_{\alpha}\psi_{\alpha',\beta'}\theta_{\beta}^{-1}$. In particular, if $\psi_{\alpha,\beta}$ is injective/surjective then so is $\psi_{\alpha',\beta'}$, and $I_{\beta}\cong I_{\beta'}$, $K_{\alpha,\beta}\cong K_{\alpha',\beta'}$ and $K^*_{\alpha} \cong K^*_{\alpha'}$.  
\end{lemma}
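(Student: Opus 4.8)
The plan is to manufacture a single automorphism of $S$ whose induced semilattice map carries $\alpha$ to $\alpha'$ and $\beta$ to $\beta'$; the isomorphism theorem (Theorem \ref{iso clifford}) then hands over the required commuting square, and every ``in particular'' claim follows by chasing images, kernels and intersections through that square.

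First I would note that $\alpha>\beta$ gives $e_\alpha>e_\beta$ in $E(S)$, so the idempotents generate the two-element chain $\langle e_\alpha,e_\beta\rangle_I=\{e_\alpha,e_\beta\}$, and likewise $\langle e_{\alpha'},e_{\beta'}\rangle_I=\{e_{\alpha'},e_{\beta'}\}$. These are finitely generated inverse subsemigroups, and the order-preserving bijection $e_\alpha\mapsto e_{\alpha'}$, $e_\beta\mapsto e_{\beta'}$ is an isomorphism between them. As $S$ is a HIS, I would extend it to an automorphism $\theta=[\theta_\gamma,\pi]_{\gamma\in Y}$ in the form of Theorem \ref{iso clifford}. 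Since each $\theta_\gamma$ is a group isomorphism it fixes identities, so $e_{\alpha\pi}=e_\alpha\theta=e_{\alpha'}$ forces $\alpha\pi=\alpha'$, and similarly $\beta\pi=\beta'$.

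Next, because $\theta$ is an automorphism of $S$ and $\alpha>\beta$, Theorem \ref{iso clifford} says the diagram $[\alpha,\beta;\alpha',\beta']$ commutes, i.e. $\psi_{\alpha,\beta}\theta_\beta=\theta_\alpha\psi_{\alpha',\beta'}$, which rearranges to $\psi_{\alpha,\beta}=\theta_\alpha\psi_{\alpha',\beta'}\theta_\beta^{-1}$; so $\theta_\alpha,\theta_\beta$ are the desired isomorphisms. Injectivity and surjectivity of $\psi_{\alpha',\beta'}=\theta_\alpha^{-1}\psi_{\alpha,\beta}\theta_\beta$ then transfer from $\psi_{\alpha,\beta}$, since pre- and post-composition with the bijections $\theta_\alpha^{-1},\theta_\beta$ preserve both. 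For the images, using $I_\beta=\operatorname{Im}\psi_{\alpha,\beta}$ (Lemma \ref{im same}) I would compute $I_\beta=G_\alpha\psi_{\alpha,\beta}=(G_{\alpha'}\psi_{\alpha',\beta'})\theta_\beta^{-1}=I_{\beta'}\theta_\beta^{-1}$, so $\theta_\beta$ restricts to an isomorphism $I_\beta\to I_{\beta'}$. For the kernels, $g_\alpha\psi_{\alpha,\beta}=e_\beta$ iff $(g_\alpha\theta_\alpha)\psi_{\alpha',\beta'}=e_{\beta'}$ (as $\theta_\beta$ sends $e_\beta$ to $e_{\beta'}$), so $\theta_\alpha$ restricts to an isomorphism $K_{\alpha,\beta}\to K_{\alpha',\beta'}$.

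Finally, for $K^*_\alpha=\bigcap_{\gamma<\alpha}K_{\alpha,\gamma}$ I would exploit that the single automorphism $\theta$ handles every $\gamma<\alpha$ at once: for each such $\gamma$ the diagram $[\alpha,\gamma;\alpha',\gamma\pi]$ commutes, so as in the previous step $K_{\alpha,\gamma}\theta_\alpha=K_{\alpha',\gamma\pi}$. Since $\pi$ is an order-automorphism with $\alpha\pi=\alpha'$, the map $\gamma\mapsto\gamma\pi$ is a bijection of $\{\gamma:\gamma<\alpha\}$ onto $\{\delta:\delta<\alpha'\}$, and as $\theta_\alpha$ is bijective it commutes with intersection; hence
\[ K^*_\alpha\theta_\alpha=\bigcap_{\gamma<\alpha}K_{\alpha,\gamma}\theta_\alpha=\bigcap_{\gamma<\alpha}K_{\alpha',\gamma\pi}=\bigcap_{\delta<\alpha'}K_{\alpha',\delta}=K^*_{\alpha'}, \]
giving $K^*_\alpha\cong K^*_{\alpha'}$. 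The only genuinely delicate point is this last step: unlike the single-edge claims, $K^*_\alpha$ is an intersection over the whole (possibly infinite) down-set of $\alpha$, so it is essential that one and the same extended automorphism $\theta$ --- equivalently, one order-automorphism $\pi$ matching the two down-sets --- realize all the commuting squares simultaneously, rather than choosing a separate automorphism for each $\gamma$.
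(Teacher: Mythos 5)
Your proposal is correct and follows essentially the same route as the paper: extend the isomorphism between the two-element idempotent chains $\{e_\alpha,e_\beta\}$ and $\{e_{\alpha'},e_{\beta'}\}$ to an automorphism $\theta=[\theta_\gamma,\pi]_{\gamma\in Y}$ of $S$, read off the first claim from the commuting square $[\alpha,\beta;\alpha',\beta']$, and transport images, kernels and the intersection $\bigcap_{\gamma<\alpha}K_{\alpha,\gamma}$ through $\theta_\alpha,\theta_\beta$, using that $\pi$ bijects the down-set of $\alpha$ onto that of $\alpha'$. Your closing observation that a single automorphism must handle all $\gamma<\alpha$ simultaneously for the $K^*_\alpha$ claim is exactly the point implicit in the paper's final displayed computation.
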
 

\begin{proof}
Clearly the map $\phi:\{e_{\alpha},e_{\beta}\} \rightarrow \{ e_{\alpha'},e_{\beta'} \} $ given by $e_{\alpha}\phi=e_{\alpha'}$ and $e_{\beta}\phi=e_{\beta'}$ is an isomorphism. By extending $\phi$ to an automorphism $\theta=[\theta_{\alpha},\pi]_{\alpha\in Y}$ of $S$, the first result follows immediately from the commutativity of $[\alpha,\beta;\alpha\pi,\beta\pi]=[\alpha,\beta;\alpha',\beta']$. The injective/surjective properties of the connecting morphisms also follow. We observe that 
\[ I_{\beta}\theta_{\beta}=(G_{\alpha}\psi_{\alpha,\beta})\theta_{\beta} = G_{\alpha}\theta_{\alpha}\psi_{\alpha',\beta'} = I_{\beta'}
\] 
so that $I_{\beta}\cong I_{\beta'}$. If $k_{\alpha}\in K_{\alpha,\beta}$ then 
\[ k_{\alpha}\theta_{\alpha} \psi_{\alpha',\beta'}=k_{\alpha}\psi_{\alpha,\beta}\theta_{\beta}=e_{\beta}\theta_{\beta}=e_{\beta'},
\]
so that $K_{\alpha,\beta}\theta \subseteq K_{\alpha',\beta'}$. If $x_{\alpha'}\in K_{\alpha',\beta'}$, then there exists $y_{\alpha}\in G_{\alpha}$ with $y_{\alpha}\theta_{\alpha}=x_{\alpha'}$, so that
\[y_{\alpha} \psi_{\alpha,\beta} \theta_{\beta}= x_{\alpha'}\psi_{\alpha',\beta'}=e_{\beta'}. 
\]
Hence $y_{\alpha}\psi_{\alpha,\beta}=e_{\beta}$, and so $y_{\alpha}\in K_{\alpha,\beta}$. We have thus shown that $K_{\alpha,\beta}\theta = K_{\alpha',\beta'}$, and so $K_{\alpha,\beta} \cong K_{\alpha',\beta'}$. Finally, 
\[ K^*_{\alpha}\theta_{\alpha}=(\bigcap_{\gamma<\alpha} K_{\alpha,\gamma})\theta_{\alpha} = \bigcap_{\gamma<\alpha} (K_{\alpha,\gamma}\theta_{\alpha})=\bigcap_{\gamma\pi<\alpha} K_{\alpha',\gamma\pi} = K_{\alpha^*}
\] since $\pi$ is an automorphism of $Y$. Thus $K^*_{\alpha}\cong K^*_{\alpha'}$ as required. 

\end{proof}

We say that a subset $A$ of a group $G$ is \textit{closed under prime powers} if, whenever $p\in o(A)$ for some prime $p$, then every power of $p$ in $o(G)$ also lies $o(A)$. 

\begin{lemma}\label{triv int} The subgroups $I_{\alpha}$ and $K^*_{\alpha}$ are closed under prime powers and $I_{\alpha}\cap K^*_{\alpha}=\{e_{\alpha}\}$. Moreover, every element in $G_{\alpha}$ of prime order is contained in either $I_{\alpha}$ or $K^*_{\alpha}$. 
\end{lemma}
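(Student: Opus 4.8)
The plan is to reduce all three clauses to statements about elements of prime order and then exploit the self-similarity of the connecting morphisms recorded in Lemma \ref{inj iff inj}. First I would note that, since $I_\alpha$ and $K^*_\alpha$ are characteristic in $G_\alpha$ (Lemma \ref{I char}), they are order-characteristic by Lemma \ref{order els}; hence each is \emph{order-defined}, meaning $I_\alpha=\{g_\alpha\in G_\alpha:o(g_\alpha)\in o(I_\alpha)\}$ and likewise for $K^*_\alpha$. Consequently $I_\alpha\cap K^*_\alpha=\{e_\alpha\}$ is equivalent to $o(I_\alpha)\cap o(K^*_\alpha)=\{1\}$, and, both being subgroups, this is equivalent to the assertion that no prime $p$ lies in both $o(I_\alpha)$ and $o(K^*_\alpha)$. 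Calling $p$ an \emph{$I$-prime} (resp. \emph{$K$-prime}) when the order-$p$ elements of $G_\alpha$ lie in $I_\alpha$ (resp. $K^*_\alpha$), the three clauses become: every prime of $o(G_\alpha)$ is an $I$-prime or a $K$-prime; no prime is both; and if $p$ is an $I$-prime (resp. $K$-prime) then $p^k\in o(I_\alpha)$ (resp. $o(K^*_\alpha)$) whenever $p^k\in o(G_\alpha)$.

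The dichotomy I would establish first and directly. Given $x_\alpha$ of prime order $p$, either $x_\alpha\psi_{\alpha,\beta}=e_\beta$ for all $\beta<\alpha$, so $x_\alpha\in K^*_\alpha$; or $x_\alpha\psi_{\alpha,\beta_0}$ has order $p$ for some $\beta_0<\alpha$. In the latter case, applying Lemma \ref{inj iff inj} to the pairs $(\delta,\alpha)$ and $(\alpha,\beta_0)$ (with $\delta>\alpha$) yields an isomorphism $\theta_\alpha\colon G_\alpha\to G_{\beta_0}$ with $I_\alpha\theta_\alpha=I_{\alpha,\beta_0}$, so $(x_\alpha\psi_{\alpha,\beta_0})\theta_\alpha^{-1}\in I_\alpha$ has order $p$; since $I_\alpha$ is order-characteristic this forces $x_\alpha\in I_\alpha$. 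Thus every prime-order element lies in $I_\alpha\cup K^*_\alpha$.

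The heart of the proof, and what I expect to be the main obstacle, is that no prime is both an $I$-prime and a $K$-prime. Suppose $p$ is a $K$-prime, so \emph{every} order-$p$ element of every $G_\gamma$ is annihilated by every connecting morphism. The leverage is that, by Lemma \ref{inj iff inj}, any composite $\psi_{\gamma_0,\gamma_1}\cdots\psi_{\gamma_{n-1},\gamma_n}=\psi_{\gamma_0,\gamma_n}$ is isomorphic, as a map, to the single morphism $\psi_{\alpha,\beta}$; in particular all these maps have isomorphic kernels, so the largest $p$-power order occurring in a kernel is one value $p^{N}$, independent of the chain. Because $\langle g_\gamma\rangle\cap\ker\psi$ embeds in the kernel, a connecting morphism lowers the $p$-part of the order of any element by at most $p^{N}$, and the same bound holds across an arbitrarily long composite. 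Hence, pushing a $p$-element of sufficiently large $p$-order down a chain $\gamma_0>\gamma_1>\cdots$ produces a nonincreasing sequence of $p$-orders whose total decrease is at most $N$; it is therefore eventually constant, and at a step where it is constant the corresponding morphism \emph{preserves} the $p$-order. The order-$p$ power of that element then survives this morphism, contradicting that $p$ is a $K$-prime. (When the $p$-orders in the images are bounded one may instead argue inside $I_\alpha$, using $\operatorname{Im}\psi_{\delta,\beta}=I_\alpha\psi_{\alpha,\beta}$ together with $o(I_\alpha)=o(I_{\alpha,\beta})$ to locate directly an element of top $p$-order preserved by $\psi_{\alpha,\beta}$.) The delicate point throughout is the bookkeeping guaranteeing a \emph{positive} preserved $p$-order, which is exactly where the isomorphism of iterated connecting morphisms is indispensable. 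Combined with the dichotomy, this shows each prime is of exactly one type, which is the trivial-intersection clause.

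Finally, closure under prime powers follows quickly. If $p$ is an $I$-prime and $p^k\in o(G_\alpha)$, choose $c_\alpha$ of order $p^k$; then $c_\alpha^{p^{k-1}}$ has order $p$ and lies in $I_\alpha$, but not in $K^*_\alpha$ by the trivial-intersection clause, so $c_\alpha^{p^{k-1}}\psi_{\alpha,\beta}\neq e_\beta$ for some $\beta$. Then $o(c_\alpha\psi_{\alpha,\beta})$ divides $p^k$ but not $p^{k-1}$, hence equals $p^k$, giving $p^k\in o(I_{\alpha,\beta})=o(I_\alpha)$. Dually, if $p$ is a $K$-prime then $p\notin o(I_\alpha)=o(I_{\alpha,\beta})$, so $I_{\alpha,\beta}$ contains no nontrivial $p$-element; thus $\psi_{\alpha,\beta}$ annihilates every $p$-element, so every order-$p^k$ element lies in $K^*_\alpha$ and $p^k\in o(K^*_\alpha)$.
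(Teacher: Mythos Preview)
Your dichotomy step and the closure-under-prime-powers step at the end are clean, and the reduction of everything to prime-order statements via order-characteristicness is exactly the right framing. The problem lies in the middle paragraph, the ``no prime is both'' argument.

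As written, your bounded-drop argument assumes only that $p$ is a $K$-prime and derives a contradiction with that very assumption. If this worked it would prove that $K$-primes do not exist at all, i.e.\ that $K^*_\alpha$ is always trivial. But this is false: in the image-trivial case $[Y;G_\alpha;\tau_{\alpha,\beta}]$ every prime of $o(G_\alpha)$ is a $K$-prime. Tracing through your argument in that example shows where it breaks: there $K_{\alpha,\beta}=G_\alpha$, so your bound $p^N$ on $p$-orders in a kernel equals the maximal $p$-order in $G_\alpha$ itself, and there is no element of ``sufficiently large $p$-order'' to push down the chain. More generally you have not justified that $N$ is finite, nor that $G_\alpha$ contains a $p$-element of order strictly greater than $p^N$; both fail already in easy examples. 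The $I$-prime hypothesis, which you never invoke, is precisely what is needed to produce such an element (or to rule out the degenerate situation), and it is not obvious how to graft it onto the bounded-drop strategy: knowing only that some $g_\delta\psi_{\delta,\alpha}$ has order $p$ does not by itself control the $p$-order of $g_\delta$ against $N$. Your parenthetical alternative invokes $o(I_\alpha)=o(I_{\alpha,\beta})$, but that identity, while true, does not supply an element of $I_\alpha$ whose $p$-order is \emph{preserved} by $\psi_{\alpha,\beta}$; the image could still drop.

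The paper organises the proof differently, and this avoids the difficulty. It first proves directly, by induction on $r$ and using the density of $Y$, that $K^*_\alpha$ is closed under prime powers; this step uses no information about $I_\alpha$. With that in hand, if $p\in o(I_\alpha)\cap o(K^*_\alpha)$ then any preimage $g_\delta$ of an order-$p$ element, once reduced to a pure $p$-element $g_\delta^m$, already lies in $K^*_\delta$ by closure under prime powers, forcing $g_\delta^m\psi_{\delta,\alpha}=e_\alpha$ and an immediate contradiction in the finite-order case. The infinite-order-preimage case is then handled by a separate homogeneity argument using two carefully chosen automorphisms. So the paper's key insight is that proving $K^*_\alpha$ closed under prime powers \emph{first} makes the coprimality argument short; your ordering, which postpones closure until after coprimality, leaves you without the tool you need.
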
 

\begin{proof} Let $p\in o(K^*_{\alpha})$. Proceeding by induction, suppose $p,p^2,\dots,p^{r-1}\in o(K^*_{\alpha})$, so that every element of $G_{\alpha}$ of order $p^k$ for $1\leq k \leq r-1$ is in $K^*_{\alpha}$ by Lemma \ref{order els}. Let $g_{\alpha}\in G_{\alpha}$ be of order $p^r$. Then $g_{\alpha}^p$ is of order $p^{r-1}$, so that $g_{\alpha}^p\in K^*_{\alpha}$. In particular, for any $\beta<\alpha$ we have $(g_{\alpha}\psi_{\alpha,\beta})^p=e_{\beta}$. If $o(g_{\alpha}\psi_{\alpha,\beta})=p$ then for any $\alpha'\in Y$ with $\alpha>\alpha'>\beta$ we have $o(g_{\alpha}\psi_{\alpha,\alpha'})=p$ and thus $g_{\alpha}\psi_{\alpha,\alpha'}\in K^*_{\alpha'}$ (as $K_{\alpha'}^* \cong K_{\alpha}^*$). Hence $(g_{\alpha}\psi_{\alpha,\alpha'})\psi_{\alpha',\beta} = g_{\alpha}\psi_{\alpha,\beta}=e_{\beta}$, a contradiction, and so $g_{\alpha}\in K^*_{\alpha}$. The inductive step is thus complete, and so $K^*_{\alpha}$ is closed under prime powers.  

Suppose for contradiction that $p\in o(I_{\alpha})\cap o(K^*_{\alpha})$ for some prime $p$, so that $I_{\alpha}\cup K_{\alpha}^*$ contains every element of $G_{\alpha}$ of order $p$ by Lemma \ref{order els}. Let $g_{\alpha}\in G_{\alpha}$ be of order $p$, so that if $\delta>\alpha$ then there exists $g_{\delta}\in G_{\delta}$ with $g_{\delta}\psi_{\delta,\alpha}=g_{\alpha}$. Suppose first that $o(g_{\delta})=p^nm$ is finite, where hcf$(p^n,m)=1$. Then $g_{\delta}^m\psi_{\delta,\alpha}=g_{\alpha}^m$ has order $p$, while $g_{\delta}^m$ has order $p^n$. Since $K^*_{\delta}$ is closed under prime order we have $g_{\delta}^m\in K_{\delta}^*$, a contradiction. It follows that the pre-images of elements of order $p$ under the connecting morphisms are of infinite order. Let $\delta>\tau>\alpha$ and let $h_{\tau}\in G_{\tau}$ be of order $p$.
 The map from $\langle e_{\delta},g_{\alpha} \rangle_I$ to $\langle e_{\delta},h_{\tau} \rangle_I$ which fixes $e_{\delta}$ and sends $g_{\alpha}$ to $h_{\tau}$ is an isomorphism by Lemma \ref{iso bit}. By extending it to an automorphism $\theta$ of $S$, we have that $g_{\delta}\theta=h_{\delta}\in G_{\delta}$ is such that $h_{\delta}\psi_{\delta,\tau}=h_{\tau}$, so that $o(h_{\delta})=\aleph_0$ and $h_{\delta}\in K_{\delta,\alpha}$. Since $g_{\delta}^p \in K_{\delta,\alpha}$, it follows that the map between $\langle h_{\delta},e_{\alpha} \rangle_I$ and $\langle g_{\delta}^p,e_{\alpha}\rangle_I$ which fixes $e_{\alpha}$ and sends $h_{\delta}$ to $g_{\delta}^p$ is an isomorphism, and we may thus extend it to an automorphism $[\theta_{\alpha},\pi]_{\alpha\in Y}$  of $S$. Then $\delta=\delta\pi>\tau\pi>\alpha\pi=\alpha$ and, by the commutativity of $[\delta,\tau;\delta,\tau\pi]$, 
\[ h_{\tau}\theta_{\tau} = h_{\delta}\psi_{\delta,\tau}\theta_{\tau} = h_{\delta}\theta_{\delta}\psi_{\delta,\tau\pi} = g_{\delta}^p\psi_{\delta,\tau\pi}.
\] 
 Hence $g_{\delta}^p\psi_{\delta,\tau\pi}$ is of order $p$, however $(g_{\delta}\psi_{\delta,\tau\pi})\psi_{\tau\pi,\alpha} = g_{\alpha}$, so that $g_{\delta}\psi_{\delta,\tau\pi}$ is of infinite order, and a contradiction is achieved.  
 
 Now suppose $x_{\alpha}\in I_{\alpha}\cap K^*_{\alpha}$ has order $n\in\mathbb{N}^*$. If $n$ is finite, then there exists a prime $p$ with $p|n$, and so $x_{\alpha}^{n/p}\in I_{\alpha}\cap K^*_{\alpha}$ has order $p$, a contradiction. If $n$ is infinite then there exists $\delta>\alpha$ and $x_{\delta}\in G_{\delta}$ with $x_{\delta}\psi_{\delta,\alpha}=x_{\alpha}$, so that $x_{\delta}$ is of infinite order. Since the absolute kernels are pairwise isomorphic we have $\aleph_0\in o(K_{\sigma}^*)$ for each $\sigma\in Y$. Hence $K^*_{\delta}$ contains every element of infinite order in $G_{\delta}$ by Lemma \ref{order els}, and so $x_{\delta}\psi_{\delta,\alpha}=e_{\alpha}$, a contradiction. Hence $I_{\alpha}$ and $K^*_{\alpha}$ have trivial intersection.
 
 We may now prove that $I_{\alpha}$ is closed under prime powers. Let $p\in o(I_{\alpha})$ for some prime $p$, and let $z_{\alpha}\in G_{\alpha}$ be of order $p^r$. If $o(z_{\alpha}\psi_{\alpha,\beta})<p^r$ for all $\beta<\alpha$ then $z_{\alpha}^{p^{r-1}}\in I_{\alpha} \cap K^*_{\alpha}$, a contradiction. Hence there exists $\beta$ with $z_{\alpha}\psi_{\alpha,\beta}$ of order $p^r$, so that $p^r\in o(I_{\beta})$. By Lemma \ref{inj iff inj} $o(I_{\beta})=o(I_{\alpha})$ and so $I_{\alpha}$ is closed under prime powers. 
 
 Finally, let $a_{\alpha}\in G_{\alpha}$ be of prime order $p$. If $a_{\alpha}\not\in K^*_{\alpha}$ then $a_{\alpha}\psi_{\alpha,\beta}$ has order $p$ for some $\beta<\alpha$, and so by the usual argument $p\in o(I_{\alpha})$, thus completing the final result.  
\end{proof} 

Consequently, by Corollary \ref{coprime char subgroups}, the subgroups $I_{\alpha}$ and $K^*_{\alpha}$ are of coprime order. Hence, as $I_{\alpha}$ and $K^*_{\alpha}$  are characteristic subgroups of $G_{\alpha}$, they are invarient under inner automorphisms of $G_{\alpha}$ and thus are normal, we have $\langle I_{\alpha},K^*_{\alpha}\rangle_I=I_{\alpha}\otimes K^*_{\alpha}$. 

\begin{lemma}\label{coprime} If $G_{\alpha}$ is periodic then $G_{\alpha}=I_{\alpha}\otimes K^*_{\alpha}$. If $G_{\alpha}$ is non-periodic then either $G_{\alpha}=I_{\alpha}$ or $G_{\alpha}=K^*_{\alpha}$ or $I_{\alpha} \otimes K^*_{\alpha}$ is the precisely set of elements of finite order in $G_{\alpha}$. 
\end{lemma}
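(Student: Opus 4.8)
The plan is to reduce everything to the single fact that, in the periodic situation, each element splits across $I_{\alpha}$ and $K^*_{\alpha}$ according to the primes dividing its order. First I would record the containment that the set of finite-order elements of $G_{\alpha}$ lies inside $I_{\alpha}\otimes K^*_{\alpha}$. Given $g_{\alpha}$ of finite order $n=\prod_i p_i^{e_i}$, decompose $g_{\alpha}=\prod_i g_i$ into its commuting primary components $g_i\in\langle g_{\alpha}\rangle$ of prime-power order $p_i^{e_i}$. By Lemma \ref{triv int} every element of prime order lies in $I_{\alpha}$ or in $K^*_{\alpha}$; combining closure under prime powers from the same lemma with the order-characteristic property (Lemma \ref{order els}) upgrades this to the statement that every element of prime-power order lies in $I_{\alpha}$ or in $K^*_{\alpha}$. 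Hence each $g_i$ lies in one of the two subgroups, and collecting the $I_{\alpha}$-factors and the $K^*_{\alpha}$-factors writes $g_{\alpha}$ as a product of an element of $I_{\alpha}$ and an element of $K^*_{\alpha}$, all inside the abelian group $\langle g_{\alpha}\rangle$. Since $I_{\alpha}\otimes K^*_{\alpha}$ is a genuine internal direct product of coprime normal subgroups, this gives $g_{\alpha}\in I_{\alpha}\otimes K^*_{\alpha}$. In the periodic case every element has finite order, so $G_{\alpha}=I_{\alpha}\otimes K^*_{\alpha}$ at once.

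For the non-periodic case I would split according to where the symbol $\aleph_0$ sits. Because $I_{\alpha}$ and $K^*_{\alpha}$ are of coprime order, $\aleph_0$ belongs to at most one of $o(I_{\alpha})$, $o(K^*_{\alpha})$, so exactly one of three situations occurs: (a) both $I_{\alpha}$ and $K^*_{\alpha}$ are periodic; (b) $\aleph_0\in o(I_{\alpha})$; (c) $\aleph_0\in o(K^*_{\alpha})$. In situation (a) the subgroup $I_{\alpha}\otimes K^*_{\alpha}$ is periodic, so the containment of the first paragraph becomes an equality onto the set of finite-order elements of $G_{\alpha}$, giving the third alternative of the statement. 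Situations (b) and (c) are mirror images, so I would treat (b) to deduce $G_{\alpha}=I_{\alpha}$, the argument for (c) yielding $G_{\alpha}=K^*_{\alpha}$ verbatim with the roles of $I_{\alpha}$ and $K^*_{\alpha}$ interchanged.

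The crux is situation (b). Here $\aleph_0\in o(I_{\alpha})$, so by the order-characteristic property every element of $G_{\alpha}$ of infinite order lies in $I_{\alpha}$; together with the first paragraph this already gives $G_{\alpha}=I_{\alpha}\otimes K^*_{\alpha}$, and it remains only to show $K^*_{\alpha}=\{e_{\alpha}\}$. Suppose not; since $\aleph_0\notin o(K^*_{\alpha})$ the subgroup $K^*_{\alpha}$ is periodic, hence contains an element $w_{\alpha}$ of some prime order $p$. Choose $u_{\alpha}\in I_{\alpha}$ of infinite order. As $I_{\alpha}$ and $K^*_{\alpha}$ are normal with trivial intersection they commute elementwise and their cyclic subgroups meet trivially, so $o(u_{\alpha}w_{\alpha})=\mathrm{lcm}(o(u_{\alpha}),o(w_{\alpha}))$ is infinite. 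Thus $u_{\alpha}w_{\alpha}$ has infinite order and so lies in $I_{\alpha}$; but then $w_{\alpha}=u_{\alpha}^{-1}(u_{\alpha}w_{\alpha})\in I_{\alpha}$, forcing $w_{\alpha}\in I_{\alpha}\cap K^*_{\alpha}=\{e_{\alpha}\}$, a contradiction. Hence $K^*_{\alpha}$ is trivial and $G_{\alpha}=I_{\alpha}$.

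I expect the genuine obstacle to be organisational rather than computational: setting up the clean trichotomy on $\aleph_0$ and checking that it is both exhaustive and (in cases (b), (c)) mutually exclusive, which rests entirely on the coprimality of $I_{\alpha}$ and $K^*_{\alpha}$ furnished by Lemma \ref{triv int} and Corollary \ref{coprime char subgroups}, and then making sure the short multiplication trick in situation (b) is legitimate, in particular that $u_{\alpha}$ and $w_{\alpha}$ commute and that the order of their product is the least common multiple. Everything else is bookkeeping on primary decompositions already licensed by the order-characteristic and prime-power-closure properties established earlier.
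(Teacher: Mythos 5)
Your proof is correct and takes essentially the same approach as the paper: the identical primary-decomposition argument (prime-order elements via Lemma~\ref{triv int}, upgraded by prime-power closure and the order-characteristic property of Lemma~\ref{order els}) places all finite-order elements in $I_{\alpha}\otimes K^*_{\alpha}$, and the identical cancellation trick $w_{\alpha}=u_{\alpha}^{-1}(u_{\alpha}w_{\alpha})$ shows the complementary subgroup is trivial once one of $I_{\alpha}$, $K^*_{\alpha}$ contains an element of infinite order. Your trichotomy on where $\aleph_0$ sits in $o(I_{\alpha})$, $o(K^*_{\alpha})$ is merely a reorganization of the paper's case split on whether an infinite-order element lies in $I_{\alpha}\otimes K^*_{\alpha}$, so the two arguments coincide step for step.
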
 

\begin{proof} 
If $g_{\alpha}\in G_{\alpha}$ has finite order $n=p_1^{n_1}\dots p_r^{n_r}$ for some primes $p_i$ then, by the Fundamental Theorem of Finite Abelian Groups, $g_{\alpha}=g_{\alpha,1}g_{\alpha,2}\dots g_{\alpha,r}$ for some $g_{\alpha,i}\in G_{\alpha}$ of order $p_i^{n_i}$. By the previous corollary we have $g_{\alpha,i}\in I_{\alpha}\cup K^*_{\alpha}$, and so $g_{\alpha}\in I_{\alpha}\otimes K^*_{\alpha}$. Consequently, the subgroup $I_{\alpha}\otimes K_{\alpha}^*$ contains every element of $G_{\alpha}$ of finite order, and so if $G_{\alpha}$ is periodic then $G_{\alpha}=I_{\alpha}\otimes K_{\alpha}^*$. 

Now suppose that $G_{\alpha}$ contains an element $x_{\alpha}$ of infinite order.  Suppose first that  $x_{\alpha}\in I_{\alpha}\otimes K^*_{\alpha}$, say $x_{\alpha}=g_{\alpha}k_{\alpha}$. Then either $g_{\alpha}$ or $k_{\alpha}$ has infinite order, as $I_{\alpha}$ and $K^*_{\alpha}$ are of coprime order. Hence, by Lemma \ref{order els}, either $I_{\alpha}$ or $K^*_{\alpha}$ contains all elements of infinite order, and so $G_{\alpha}=I_{\alpha}\otimes K^*_{\alpha}$. If $g_{\alpha}$ is of infinite order, then for any $m_{\alpha}\in K^*_{\alpha}$ we have that $g_{\alpha}m_{\alpha}$ has infinite order. Hence $g_{\alpha}^{-1}$ and $g_{\alpha}m_{\alpha}$, being of infinite order, are in $I_{\alpha}$, and thus so is $g^{-1}_{\alpha}(g_{\alpha}m_{\alpha})=m_{\alpha}$. Thus $m_{\alpha}=e_{\alpha}$, and so $G_{\alpha}=I_{\alpha}$. The case where $k_{\alpha}$ is of infinite order is proven similarly. 

If no such element $x_{\alpha}$ exists, then $I_{\alpha}\otimes K_{\alpha}$ is precisely the elements of finite order as required. 

\end{proof} 

We may say more about the final case in Lemma \ref{coprime}, in particular that each maximal subgroup is the union of its kernel subgroups: 

\begin{lemma} \label{absolute image trivial} If $G_{\alpha}$ is non-periodic and $I_{\alpha}\otimes K^*_{\alpha}$ periodic, then the inverse subsemigroup $[Y;I_{\alpha}\otimes K^*_{\alpha},\psi_{\alpha,\beta}|_{I_{\alpha}\otimes K^*_{\alpha}}]$ of $S$ is a HIS. Moreover, the absolute image $I_{\alpha}^*$ of $G_{\alpha}$ is trivial and  $G_{\alpha}=\bigcup_{\beta<\alpha} K_{\alpha,\beta}$. 
\end{lemma}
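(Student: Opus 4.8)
The plan is to treat the three assertions in turn, reducing the two ``moreover'' statements to a single claim about prime-order elements, which is where the real work lies. For the first assertion I would note that, since we are in the final case of Lemma~\ref{coprime}, the subgroup $T_\alpha:=I_\alpha\otimes K^*_\alpha$ is precisely the set of finite-order elements of $G_\alpha$, i.e. its torsion subgroup. This set is order-characteristic in $G_\alpha$ (it contains all elements of any given finite order and no element of infinite order), and the $T_\alpha$ are pairwise isomorphic because the $G_\alpha$ are and any isomorphism carries torsion to torsion. Lemma~\ref{char sub his} then applies directly: $[Y;T_\alpha;\psi_{\alpha,\beta}|_{T_\alpha}]$ is an order-characteristic inverse subsemigroup of $S$, and hence, as $S$ is a HIS, it is itself a HIS.

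Call $g_\alpha\in G_\alpha$ \emph{immortal} if $g_\alpha\psi_{\alpha,\beta}\neq e_\beta$ for every $\beta<\alpha$; note that the death set $\{\beta<\alpha:g_\alpha\psi_{\alpha,\beta}=e_\beta\}$ is down-closed, and that, by Lemma~\ref{iso clifford}, mortality of one element of a given order spreads to every element of that order via an automorphism fixing $e_\alpha$. I claim both remaining assertions follow from the \textbf{Crux}: \emph{for every prime $p$, no element of $I_\alpha$ of order $p$ is immortal}. Indeed, $G_\alpha=\bigcup_{\beta<\alpha}K_{\alpha,\beta}$ says precisely that no element is immortal. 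Infinite-order elements are handled by the engine below. For finite-order $g_\alpha$, decomposing it into its commuting prime-power parts (as in Lemma~\ref{coprime}), $g_\alpha$ dies exactly where all parts die simultaneously; since $Y$ is a lower semilattice and each part's death set is down-closed, their meet furnishes a common death point, so it suffices to kill prime-power-order elements, and an easy induction on the exponent reduces this to prime order. Each prime-order element lies in $I_\alpha$ or $K^*_\alpha$ by Lemma~\ref{triv int}; those in $K^*_\alpha$ die everywhere below, and those in $I_\alpha$ are mortal by the Crux. Finally, $I^*_\alpha=\{e_\alpha\}$ is equivalent to the Crux, since a non-trivial element of $I^*_\alpha$ has a prime-order power again in $I^*_\alpha$, and an order-$p$ element of $I_\alpha$ lies in $I^*_\alpha$ exactly when it is immortal.

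The engine driving mortality is that \emph{every infinite-order element dies}. Given infinite-order $x_\delta\in G_\delta$ with $\delta>\alpha$, its image $x_\delta\psi_{\delta,\alpha}\in I_\alpha$ has finite order $m$ (as $I_\alpha$ is periodic), so $x_\delta^{m}\in K_{\delta,\alpha}$ has infinite order; thus $K_{\delta,\alpha}$ contains an infinite-order element $w$. For any infinite-order $x_\delta$, extend the isomorphism $\langle w\rangle_I\to\langle x_\delta\rangle_I$, $w\mapsto x_\delta$, to an automorphism $\theta=[\theta_\gamma,\pi]_{\gamma\in Y}$ of $S$. As $w$ and $x_\delta$ share the idempotent $e_\delta$ we have $\delta\pi=\delta$, and commutativity of $[\delta,\alpha;\delta,\alpha\pi]$ gives $x_\delta\psi_{\delta,\alpha\pi}=(w\psi_{\delta,\alpha})\theta_\alpha=e_{\alpha\pi}$, so $x_\delta$ dies at $\alpha\pi<\delta$.

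This leaves the Crux, which I expect to be the main obstacle. Suppose $g_\alpha\in I_\alpha$ of order $p$ is immortal. Since $I^*_\alpha$ is characteristic in $G_\alpha$ (the argument of Lemma~\ref{I char}, using Lemma~\ref{iso bit} to build order-preserving isomorphisms), Lemma~\ref{order els} forces \emph{every} element of $G_\alpha$ of order $p$ — all of which lie in $I_\alpha$ — to be immortal. The key step is then to produce, for some $\delta>\alpha$, an \emph{infinite-order} element $u\in G_\delta$ with $u\psi_{\delta,\alpha}$ of order $p$: granting this, $u\psi_{\delta,\alpha}$ is immortal, so $u$ does not die at any $\beta\leq\alpha$, yet by the engine $u$ dies at some $\gamma<\delta$, and then (death being down-closed and $Y$ a lower semilattice) $u$ dies at $\gamma\wedge\alpha\leq\alpha$, a contradiction. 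Equivalently one needs an infinite-order $x_\delta$ with $p\mid o(x_\delta\psi_{\delta,\alpha})$, after which $u=x_\delta^{\,o(x_\delta\psi_{\delta,\alpha})/p}$ serves. I would obtain this by combining an infinite-order element of $K_{\delta,\alpha}$ with an order-$p$ element of $I_\delta$ mapping onto an order-$p$ element of $I_\alpha$, using homogeneity to arrange that the two commute, so that their product has infinite order and image of order divisible by $p$; taming this commuting (non-abelian) behaviour via homogeneity is the technical heart of the lemma.
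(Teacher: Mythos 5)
Your first assertion and your ``engine'' match the paper's proof exactly: the torsion part $I_\alpha\otimes K^*_\alpha$ is order-characteristic, so Lemma~\ref{char sub his} applies, and the observation that $x_\alpha^{\,n}\in K_{\alpha,\beta}$ (where $n=o(x_\alpha\psi_{\alpha,\beta})$ is finite because the images are periodic), followed by transitivity on infinite-order elements, shows every infinite-order element lies in some kernel. The genuine gap is your Crux: everything you do rests on producing, for $\delta>\alpha$, an infinite-order $u\in G_\delta$ whose image under $\psi_{\delta,\alpha}$ has order $p$, and you do not prove this --- you only say you would combine an infinite-order element of $K_{\delta,\alpha}$ with a finite-order preimage ``using homogeneity to arrange that the two commute'', deferring that to an unspecified ``technical heart''. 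As written this is a plan, not a proof. Moreover the commutativity worry is a red herring and no homogeneity is needed: by Lemma~\ref{coprime}, in this case $I_\delta\otimes K^*_\delta$ is \emph{precisely} the set of finite-order elements of $G_\delta$, i.e.\ a subgroup. So if $w\in K_{\delta,\alpha}$ has infinite order and $h_\delta$ is any preimage of $g_\alpha$, then either $h_\delta$ already has infinite order (take $u=h_\delta$), or $u=wh_\delta$ has infinite order automatically --- otherwise $w=(wh_\delta)h_\delta^{-1}$ would be a product of two torsion elements, hence torsion --- and $u\psi_{\delta,\alpha}=e_\alpha g_\alpha=g_\alpha$. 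With this observation your argument closes.

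You should also know that the paper sidesteps your entire reduction apparatus (prime-power decomposition, induction on exponents, spreading of immortality via Lemma~\ref{order els}) by running the same torsion-subgroup trick at level $\alpha$ itself: for an arbitrary $g_\alpha\in I^*_\alpha$ and an infinite-order $x_\alpha\in G_\alpha$, the product $x_\alpha g_\alpha$ has infinite order (else $x_\alpha=(x_\alpha g_\alpha)g_\alpha^{-1}$ would be torsion), so by the engine $x_\alpha\in K_{\alpha,\beta}$ and $x_\alpha g_\alpha\in K_{\alpha,\gamma}$ for some $\beta,\gamma<\alpha$; applying $\psi_{\alpha,\beta\gamma}$ kills both, whence $g_\alpha\psi_{\alpha,\beta\gamma}=e_{\beta\gamma}$, and the defining property of $I^*_\alpha$ (order preservation under every $\psi_{\alpha,\beta}$) forces $g_\alpha=e_\alpha$. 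The claim $G_\alpha=\bigcup_{\beta<\alpha}K_{\alpha,\beta}$ then follows in one step: a finite-order element $z_\alpha$ lying in no kernel would have an image $z_\alpha\psi_{\alpha,\beta}$ of minimal order $\geq 2$, which is then a non-trivial element of the (trivial) absolute image of $G_\beta$. So the paper's route is shorter and works for arbitrary (non-abelian) maximal subgroups without any of your machinery; your route is correct in outline, but only becomes a proof once the unproven commuting argument is replaced by the torsion-subgroup observation above.
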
   

\begin{proof} The first result is immediate from Lemma \ref{char sub his} since $I_{\alpha}\otimes K^*_{\alpha}$, being the subgroup containing all periodic elements of $G_{\alpha}$, is order-characteristic.  

We claim that each element $x_{\alpha}$ of infinite order in $G_{\alpha}$ is contained in the kernel of some connecting morphism. For any $\beta<\alpha$ we have that $x_{\alpha}\psi_{\alpha,\beta}$ has finite order, say $n$, since $\aleph_0 \not\in o(I_{\alpha})$. Hence $x_{\alpha}^n$ is an element of infinite order with $x_{\alpha}^n\in K_{\alpha,\beta}$. The claim easily follows by homogeneity. 

Now suppose that $g_{\alpha}\in I_{\alpha}^*$. Then $x_{\alpha}g_{\alpha}$ has infinite order, since otherwise $x_{\alpha}g_{\alpha}$ is an element of $I_{\alpha}\otimes K^*_{\alpha}$, and thus so is $x_{\alpha}=(x_{\alpha}g_{\alpha})g_{\alpha}^{-1}$. By the previous claim, $x_{\alpha}\in K_{\alpha,\beta}$ and $x_{\alpha}g_{\alpha}\in K_{\alpha,\gamma}$ for some $\beta,\gamma<\alpha$. Hence  
\[ (x_{\alpha}g_{\alpha})\psi_{\alpha,\beta\gamma}= (x_{\alpha}\psi_{\alpha,\beta}\psi_{\beta,\beta\gamma})(g_{\alpha}\psi_{\alpha,\beta\gamma}) = g_{\alpha}\psi_{\alpha,\beta\gamma} = e_{\beta\gamma} 
\] 
and so $g_{\alpha}=e_{\alpha}$. Hence $I_{\alpha}^*$ is trivial as required. 

Finally, if there exists $z_{\alpha}\notin \bigcup_{\beta<\alpha} K_{\alpha,\beta}$ then $z_{\alpha}\psi_{\alpha,\beta}\in I_{\alpha}^*$  for some $\beta<\alpha$, a contradiction, and the final result is obtained. 
\end{proof}

We call a Clifford semigroup in which each connecting morphism is surjective a \textit{surjective Clifford semigroup}.

\begin{corollary} \label{surj trivial ak} Let $T=[Y;H_{\alpha};\phi_{\alpha,\beta}]$ be a surjective Clifford HIS. Then the absolute kernels of $T$ are trivial.
\end{corollary}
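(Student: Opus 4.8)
The plan is to derive a contradiction from the existence of a nontrivial absolute kernel, using the surjectivity hypothesis to show that a nontrivial element of $K^*_\alpha$ would force incompatible behaviour through the connecting morphisms. First I would recall from Lemma~\ref{inj iff inj} that, since $T$ is surjective, \emph{every} connecting morphism $\phi_{\alpha,\beta}$ is surjective (the surjectivity of one connecting morphism transfers to all of them). Then I would fix $\alpha \in Y$ and suppose for contradiction that $K^*_\alpha \neq \{e_\alpha\}$, so there is some $k_\alpha \in K^*_\alpha$ of prime order $p$ by passing to a suitable power (using that $K^*_\alpha$ is a subgroup and, if necessary, that it is closed under prime powers by Lemma~\ref{triv int}). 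By definition $k_\alpha \psi_{\alpha,\beta}^{\,} = e_\beta$ for all $\beta < \alpha$, i.e.\ $k_\alpha$ dies under every connecting morphism below $\alpha$.

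The key step is to reach a contradiction by exhibiting an element of order $p$ in some $H_\beta$ that \emph{cannot} lie in a kernel, contradicting that all elements of order $p$ behave uniformly (Lemma~\ref{order els}, order-characteristicity). Concretely, since $p \in o(K^*_\alpha)$, Lemma~\ref{triv int} gives $p \notin o(I_\alpha)$, so $I_\alpha$ and $K^*_\alpha$ have coprime order; but in a \emph{surjective} Clifford semigroup we have $I_\beta = H_\beta$ for every $\beta$, since $I_\beta = \mathrm{Im}\,\psi_{\delta,\beta} = H_\beta$ by surjectivity. Thus $I_\alpha = H_\alpha$, and then $K^*_\alpha = I_\alpha \cap K^*_\alpha = \{e_\alpha\}$ by Lemma~\ref{triv int}, which is the desired contradiction. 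This is in fact the cleanest route: the entire statement collapses once one observes that surjectivity forces $I_\alpha = G_\alpha$ (here $H_\alpha$), after which the trivial-intersection result $I_\alpha \cap K^*_\alpha = \{e_\alpha\}$ does all the work.

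I would therefore organize the proof as: (1) invoke Lemma~\ref{inj iff inj} to pass from ``surjective Clifford semigroup'' (one morphism surjective) to ``all connecting morphisms surjective''; (2) observe that surjectivity of the morphisms into $H_\alpha$ yields $I_\alpha = \mathrm{Im}\,\phi_{\delta,\alpha} = H_\alpha$ for any $\delta > \alpha$, where $I_\alpha$ is the common image subgroup defined just before Lemma~\ref{inj iff inj} (well-defined by Lemma~\ref{im same}); (3) apply Lemma~\ref{triv int}, which gives $I_\alpha \cap K^*_\alpha = \{e_\alpha\}$, and conclude $K^*_\alpha = H_\alpha \cap K^*_\alpha = I_\alpha \cap K^*_\alpha = \{e_\alpha\}$.

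The main obstacle, though a mild one, is making sure the definition of $I_\alpha$ is being used correctly: $I_\alpha$ was defined as $I_{\delta,\alpha}$ for any $\delta > \alpha$, which requires $Y$ to have no maximal elements above $\alpha$ --- this is guaranteed since $Y$ is a nontrivial homogeneous semilattice and hence has no maximal elements (so that such a $\delta$ always exists). Granting that, surjectivity of $\phi_{\delta,\alpha}$ immediately gives $I_\alpha = H_\alpha$, and the argument goes through with essentially no computation; the substance of the result is entirely carried by the trivial-intersection conclusion of Lemma~\ref{triv int}.
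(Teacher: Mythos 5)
Your proof is correct and is essentially the paper's own argument: the paper's proof reads simply ``Immediate from Lemma \ref{triv int}'', the intended unpacking being exactly your steps (2)--(3), namely that surjectivity of the connecting morphisms forces $I_{\alpha}=H_{\alpha}$, whence $K^*_{\alpha}=I_{\alpha}\cap K^*_{\alpha}=\{e_{\alpha}\}$. The only superfluous part is your step (1): the paper defines a surjective Clifford semigroup as one in which \emph{every} connecting morphism is surjective, so no appeal to Lemma \ref{inj iff inj} is needed, and your opening contradiction sketch via elements of prime order is a detour you yourself discard.
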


\begin{proof}
Immediate from Lemma \ref{triv int}. 
\end{proof}

\begin{lemma} The inverse subsemigroup $[Y;I_{\alpha};\psi_{\alpha,\beta}^I]$ of $S$ is a surjective Clifford semigroup, where $\psi_{\alpha,\beta}^I=\psi_{\alpha,\beta}|_{I_{\alpha}}$.
\end{lemma}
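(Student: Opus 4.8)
The plan is to establish two things: that $T:=[Y;I_{\alpha};\psi_{\alpha,\beta}^I]$ genuinely forms an inverse subsemigroup of $S$ (so that the notation is meaningful), and that each restricted connecting morphism $\psi_{\alpha,\beta}^I:I_{\alpha}\to I_{\beta}$ is surjective. Both will drop out of a single computation exploiting the composition law $\psi_{\delta,\alpha}\psi_{\alpha,\beta}=\psi_{\delta,\beta}$ for the connecting morphisms, together with the fact that $Y$, being a non-trivial homogeneous semilattice, is dense and hence has no maximal elements.

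First I would recall that each $I_{\alpha}$ is a genuine subgroup of $G_{\alpha}$: by definition $I_{\alpha}=I_{\delta,\alpha}=\mathrm{Im}\,\psi_{\delta,\alpha}$ for any $\delta>\alpha$ (well defined by Lemma \ref{im same}), and this is the homomorphic image of a group, hence a subgroup; this is in any case recorded in Lemma \ref{I char}. Since $Y$ has no maximal elements, such a $\delta$ always exists, so each $I_{\alpha}$ is non-empty. I would also note at the outset that, since the Clifford semigroups form a variety of inverse semigroups, any inverse subsemigroup of $S$ is automatically Clifford; thus once $T$ is shown to be an inverse subsemigroup, it is Clifford without further argument.

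The key step is as follows. Fix $\alpha>\beta$ in $Y$ and choose $\delta>\alpha$ (possible as $Y$ has no maximal element), so that $\delta>\alpha>\beta$. Then, using $\psi_{\delta,\beta}=\psi_{\delta,\alpha}\psi_{\alpha,\beta}$,
\[ I_{\beta}=I_{\delta,\beta}=G_{\delta}\psi_{\delta,\beta}=(G_{\delta}\psi_{\delta,\alpha})\psi_{\alpha,\beta}=I_{\delta,\alpha}\psi_{\alpha,\beta}=I_{\alpha}\psi_{\alpha,\beta}. \]
This identity does all the work. The inclusion $I_{\alpha}\psi_{\alpha,\beta}\subseteq I_{\beta}$ shows, via Lemma \ref{sub clifford}, that $T$ is indeed an inverse subsemigroup of $S$ of the stated form (taking the subsemilattice to be all of $Y$ and the subgroups to be the $I_{\alpha}$); and the reverse inclusion $I_{\beta}\subseteq I_{\alpha}\psi_{\alpha,\beta}$ is precisely the assertion that $\psi_{\alpha,\beta}^I:I_{\alpha}\to I_{\beta}$ is onto. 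Hence every connecting morphism of $T$ is surjective, and $T$ is a surjective Clifford semigroup.

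There is no serious obstacle here: all the content lies in observing that the common image $I_{\alpha}$ \emph{propagates} through the connecting maps because of the transitivity relation $\psi_{\delta,\beta}=\psi_{\delta,\alpha}\psi_{\alpha,\beta}$. The only point demanding care is ensuring an element $\delta$ strictly above $\alpha$ exists, so that $I_{\alpha}$ is defined and the factorisation can be applied; this is guaranteed by the absence of maximal elements in a non-trivial homogeneous semilattice. Everything else is the routine bookkeeping already licensed by Lemmas \ref{im same} and \ref{sub clifford}.
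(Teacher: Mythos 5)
Your proof is correct, but it is not the paper's argument --- it is a genuinely different and appreciably simpler one. The paper proves surjectivity of $\psi^I_{\alpha,\beta}$ by cases on periodicity: when $G_{\alpha}$ is periodic it invokes the decomposition $G_{\alpha}=I_{\alpha}\otimes K^*_{\alpha}$ of Lemma \ref{coprime}, writes a preimage of $x_{\beta}\in I_{\beta}=\mathrm{Im}\,\psi_{\alpha,\beta}$ as $g_{\alpha}k_{\alpha}$ with $g_{\alpha}\in I_{\alpha}$ and $k_{\alpha}\in K^*_{\alpha}$, and uses the fact that the absolute kernel is killed by $\psi_{\alpha,\beta}$ to conclude $x_{\beta}=g_{\alpha}\psi_{\alpha,\beta}$; when $G_{\alpha}$ is non-periodic it disposes of the cases $G_{\alpha}=I_{\alpha}$ and $G_{\alpha}=K^*_{\alpha}$ by hand and otherwise reduces to the periodic case through Lemma \ref{absolute image trivial}. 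Your single computation
\[
I_{\beta}=I_{\delta,\beta}=G_{\delta}\psi_{\delta,\beta}=(G_{\delta}\psi_{\delta,\alpha})\psi_{\alpha,\beta}=I_{\delta,\alpha}\psi_{\alpha,\beta}=I_{\alpha}\psi_{\alpha,\beta}
\]
(for any $\delta>\alpha$, which exists since $Y$ has no maximal elements) bypasses all of that: it uses only the strong-semilattice axiom $\psi_{\delta,\alpha}\psi_{\alpha,\beta}=\psi_{\delta,\beta}$ and Lemma \ref{im same}, which is exactly what makes the notation $I_{\beta}$ independent of the choice of upper element. It treats the periodic and non-periodic cases uniformly and delivers equality, hence both the well-definedness of the restricted morphisms and their surjectivity in one stroke. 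What the paper's route buys is only thematic continuity with the $I_{\alpha}\otimes K^*_{\alpha}$ decomposition being developed in that subsection; logically your argument depends on strictly less of the preceding machinery, and the paper itself implicitly uses Lemma \ref{im same} anyway (in the parenthetical identification $I_{\beta}=\mathrm{Im}\,\psi_{\alpha,\beta}$).

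One small citation slip: Lemma \ref{sub clifford} points the wrong way for your purposes --- it says that every inverse subsemigroup of a Clifford semigroup has the strong-semilattice form, not that a family of subgroups closed under the connecting morphisms yields an inverse subsemigroup. What you need is the latter, which is a routine check: closure under products holds because $g_{\alpha}h_{\beta}=(g_{\alpha}\psi_{\alpha,\alpha\beta})(h_{\beta}\psi_{\beta,\alpha\beta})\in I_{\alpha\beta}$ once $I_{\alpha}\psi_{\alpha,\beta}\subseteq I_{\beta}$ is known, and closure under inversion holds because each $I_{\alpha}$ is a subgroup. The paper makes the same move without comment (compare the opening of the proof of Lemma \ref{subsemigroups}), so this is cosmetic rather than a gap.
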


\begin{proof} By definition $I_{\alpha}\psi_{\alpha,\beta}\subseteq I_{\beta}$. Let $x_{\beta}\in I_{\beta}=$. If $G_{\alpha}$ is periodic then by Lemma \ref{coprime} (and as $I_{\beta}=$ Im $\psi_{\alpha,\beta}$) there exists $g_{\alpha}\in I_{\alpha}$ and $k_{\alpha}\in K_{\alpha}^*$ such that $(g_{\alpha}k_{\alpha})\psi_{\alpha,\beta}=x_{\beta}$. Hence $g_{\alpha}\psi_{\alpha,\beta}=x_{\beta}$ and  $\psi^I_{\alpha,\beta}$ is surjective. 
If $G_{\alpha}$ is non-periodic, then the result is trivially true when $G_{\alpha}=I_{\alpha}$, or when $G_{\alpha}=K^*_{\alpha}$, since in this case $I_{\alpha}=\{e_{\alpha}\}$, so we suppose $\aleph_0 \not\in o(I_{\alpha})\cup o(K^*_{\alpha})$. Then as $[Y;I_{\alpha}\otimes K^*_{\alpha};\psi_{\alpha,\beta}|_{I_{\alpha}\otimes K^*_{\alpha}}]$ forms a periodic HIS by the previous lemma, the result follows by the periodic case. 
\end{proof}

For each $\alpha>\beta$ in $Y$, we let $\tau_{\alpha,\beta}$ denote the trivial morphism from $K^*_{\alpha}$ to $K^*_{\beta}$, and let $\tau_{\alpha,\alpha}=1_{K^*_{\alpha}}$. We shall call a Clifford semigroup in which each connecting morphism is trivial an \textit{image-trivial Clifford semigroup}. 

We observe that, for each $\alpha\geq \beta$,  $g_{\alpha}\in I_{\alpha}$ and $k_{\alpha}\in K^*_{\alpha}$ then $(g_{\alpha}k_{\alpha})\psi_{\alpha,\beta}= (g_{\alpha}\psi_{\alpha,\beta}^I)(k_{\alpha}\tau_{\alpha,\beta})$. 
Hence  $S$ has two important inverse subsemigroups, 
\[ I(S):=[Y;I_{\alpha};\psi_{\alpha,\beta}^I] \text{  and  } K(S):=[Y;K^*_{\alpha};\tau_{\alpha,\beta}], 
\] 
which are HIS by Lemma \ref{char sub his}. Moreover, if $S$ is periodic then $S=[Y;I_{\alpha}\otimes K_{\alpha};\psi_{\alpha,\beta}^I \otimes \tau_{\alpha,\beta}]$.

We summarise the findings thus far in this section as follows:

\begin{theorem}\label{first thm} Let $S$ be a periodic Clifford HIS. Then $S=[Y;I_{\alpha}\otimes K_{\alpha}^*;\psi_{\alpha,\beta}^I \otimes \tau_{\alpha,\beta}]$, where: 
\begin{enumerate}[label=(\roman*), font=\normalfont]
\item $Y$ is a homogeneous semilattice; 
\item $I(S)=[Y;I_{\alpha};\psi^I_{\alpha,\beta}]$ is a surjective Clifford HIS; 
\item $K(S)=[Y;K_{\alpha}^*;\tau_{\alpha,\beta}]$ is an image-trivial Clifford HIS; 
\item there exists a homogeneous group $G=I\otimes K^*$ where $I$ and $K^*$ are of coprime order,  such that $G\cong G_{\alpha}$, $I_{\alpha}\cong I$ and $K_{\alpha}^*\cong K^*$ for all $\alpha\in Y$.
  \end{enumerate}  
A non-periodic Clifford semigroup is a HIS if and only if it is isomorphic to either a surjective Clifford HIS, or an image-trivial Clifford HIS, or a Clifford HIS with no elements of infinite order lying in the images or absolute kernels. 
\end{theorem}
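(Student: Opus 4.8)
The plan is to treat this as a collation theorem: almost all of the substantive content has been established in the preceding lemmas, so the proof should reduce to assembling them and verifying that the three named classes in the non-periodic part are genuinely exhaustive.

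For the periodic case I would first invoke Corollary~\ref{structure clifford} to obtain (i), that $Y$ is a homogeneous semilattice, together with the fact that the $G_\alpha$ are pairwise isomorphic homogeneous groups. For each $\alpha$, Lemma~\ref{coprime} supplies the internal decomposition $G_\alpha = I_\alpha \otimes K^*_\alpha$ (periodic case), the coprimality and normality needed to read this as an internal direct product coming from Lemma~\ref{triv int} and Corollary~\ref{coprime char subgroups}. To see that the connecting morphisms split compatibly, I would quote the displayed identity $(g_\alpha k_\alpha)\psi_{\alpha,\beta} = (g_\alpha \psi^I_{\alpha,\beta})(k_\alpha \tau_{\alpha,\beta})$ recorded just before the statement, which is precisely the assertion $\psi_{\alpha,\beta} = \psi^I_{\alpha,\beta} \otimes \tau_{\alpha,\beta}$ and hence yields the global form $S=[Y;I_\alpha\otimes K^*_\alpha;\psi^I_{\alpha,\beta}\otimes\tau_{\alpha,\beta}]$. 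Parts (ii) and (iii) then follow from the (unnumbered) lemma showing $I(S)=[Y;I_\alpha;\psi^I_{\alpha,\beta}]$ is a surjective Clifford semigroup and from the definition of $\tau_{\alpha,\beta}$ showing $K(S)$ is image-trivial, each being a HIS via Lemma~\ref{char sub his} since $I_\alpha$ and $K^*_\alpha$ are characteristic by Lemma~\ref{I char}. Finally (iv) is immediate from Corollary~\ref{structure clifford} (a common isomorphism type $G$ of the $G_\alpha$, which is homogeneous) combined with Lemma~\ref{inj iff inj}, which transports the isomorphism types $I_\alpha\cong I$ and $K^*_\alpha\cong K^*$ uniformly across $Y$; coprimality is again from Lemma~\ref{triv int} and Corollary~\ref{coprime char subgroups}.

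For the non-periodic statement the backward direction is trivial: each listed class is by name a HIS, and homogeneity passes across isomorphisms, so anything isomorphic to one of them is itself a HIS. For the forward direction I would take a non-periodic Clifford HIS $S$; since the $G_\alpha$ are pairwise isomorphic (Corollary~\ref{structure clifford}) each is non-periodic, and Lemma~\ref{coprime} leaves exactly three possibilities for $G_\alpha$: either $G_\alpha=I_\alpha$, or $G_\alpha=K^*_\alpha$, or $I_\alpha\otimes K^*_\alpha$ is exactly the set of finite-order elements of $G_\alpha$. In the first case $I_\alpha=\mathrm{Im}\,\psi_{\delta,\alpha}=G_\alpha$ for any $\delta>\alpha$, so $\psi_{\delta,\alpha}$ is surjective, and by Lemma~\ref{inj iff inj} every connecting morphism is surjective, so $S$ is a surjective Clifford HIS. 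In the second case $I_\alpha\cap K^*_\alpha=\{e_\alpha\}$ (Lemma~\ref{triv int}) forces $I_\alpha=\{e_\alpha\}$, hence each $\psi_{\delta,\alpha}$ is trivial and, transporting triviality of the images uniformly via Lemma~\ref{inj iff inj}, $S$ is image-trivial. The third case is literally the stated description of having no elements of infinite order lying in the images or absolute kernels.

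The main obstacle, such as it is, lies not in any single deduction but in making the case analysis watertight: one must confirm that the trichotomy of Lemma~\ref{coprime}, which is phrased for a single maximal subgroup, holds uniformly for the whole semigroup, and that the resulting connecting morphisms are correctly identified with the three named classes. This uniformity is exactly what Lemma~\ref{inj iff inj} delivers, since surjectivity, injectivity or triviality of one connecting morphism forces the same for all; beyond this, the argument is bookkeeping over the characteristic decomposition $G_\alpha=I_\alpha\otimes K^*_\alpha$.
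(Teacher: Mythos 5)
Your proposal is correct and follows essentially the same route as the paper: the theorem is introduced there with ``We summarise the findings thus far in this section as follows,'' so its proof is precisely the collation you give of Corollary \ref{structure clifford}, Lemmas \ref{char sub his}, \ref{I char}, \ref{inj iff inj}, \ref{triv int}, \ref{coprime} and the surrounding observations on $\psi_{\alpha,\beta}=\psi^I_{\alpha,\beta}\otimes\tau_{\alpha,\beta}$. Your additional care in using Lemma \ref{inj iff inj} to make the non-periodic trichotomy uniform across all maximal subgroups is exactly the implicit step the paper relies on.
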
 

In the next subsection we shall prove a converse to the first result of Theorem \ref{first thm}. This relies on proving the stronger property of homogeneity for image-trivial Clifford semigroups. 

\begin{lemma}\label{homo iso} Let $G$ and $H$ be isomorphic homogeneous groups. Then any isomorphism between f.g. subgroups $A$ and $B$ of $G$ and $H$, respectively, can be extended to an isomorphism between $G$ and $H$. 
\end{lemma}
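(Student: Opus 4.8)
The plan is to transport the problem into a single group, where the hypothesis of homogeneity applies directly. Since $G$ and $H$ are isomorphic, I would first fix once and for all an isomorphism $\sigma\colon G\rightarrow H$. The point of $\sigma$ is to let me pull the target subgroup $B\leq H$ back to a subgroup of $G$, so that the given isomorphism $\theta\colon A\rightarrow B$ becomes an isomorphism between two finitely generated subgroups of the \emph{same} group $G$; homogeneity of $G$ then supplies an automorphism, which I can push back into $H$ via $\sigma$.

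Concretely, I would form the composite $\theta\sigma^{-1}\colon A\rightarrow B\sigma^{-1}$. Here $A$ is a f.g. subgroup of $G$ by hypothesis, and $B\sigma^{-1}$ is a f.g. subgroup of $G$ since $\sigma^{-1}$ is an isomorphism and $B$ is f.g.; moreover $\theta\sigma^{-1}$ is an isomorphism, being a composite of isomorphisms. Because $G$ is homogeneous (recall that, for groups, homogeneity as an inverse semigroup coincides with homogeneity as a group, and f.g. inverse subsemigroups of a group are subgroups), this isomorphism between f.g. subgroups of $G$ extends to an automorphism $\phi\in\text{Aut}(G)$. I then propose $\phi\sigma\colon G\rightarrow H$ as the required extension.

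It remains to check two routine points. First, $\phi\sigma$ is an isomorphism from $G$ onto $H$, being the composite of the automorphism $\phi$ with the isomorphism $\sigma$. Second, $\phi\sigma$ extends $\theta$: for $a\in A$ we have $a\phi=a(\theta\sigma^{-1})=(a\theta)\sigma^{-1}$ since $\phi$ extends $\theta\sigma^{-1}$ on $A$, and hence $a(\phi\sigma)=\big((a\theta)\sigma^{-1}\big)\sigma=a\theta$, as desired.

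There is no serious obstacle in this argument; it is a short composition-and-transport argument. The only places requiring a little care are bookkeeping ones: confirming that $B\sigma^{-1}$ is genuinely a finitely generated subgroup \emph{of $G$} (so that the homogeneity hypothesis of $G$ is legitimately invoked), and keeping the composition on the correct side throughout, given that maps are written on the right. Once these are observed, the verification $a(\phi\sigma)=a\theta$ closes the proof.
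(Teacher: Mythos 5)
Your proof is correct and is essentially identical to the paper's: the paper also fixes an isomorphism $\phi\colon G\rightarrow H$, extends $\theta\,\phi^{-1}|_B$ to an automorphism $\chi$ of $G$ by homogeneity, and takes $\chi\phi$ as the extension. Only the choice of letters differs.
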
 

\begin{proof} Let $\theta:A\rightarrow B$ be an isomorphism and consider any isomorphism $\phi:G\rightarrow H$. Then $\theta \, \phi^{-1}|_{B}:A\rightarrow B \phi^{-1}$ is an isomorphism between f.g. subgroups of $G$, which can thus be extended to an automorphism $\chi$ of $G$. The isomorphism $\chi \phi:G\rightarrow H$ extends $\theta$, since if $g\in A$ then
\[ g \chi \phi = g (\theta \, \phi^{-1}|_{B}) \, \phi = g\theta.
\] 
\end{proof}

\begin{lemma}\label{trivial mor} Let $T=[Y;G_{\alpha};\tau_{\alpha,\beta}]$ be an image-trivial Clifford semigroup. Then the following are equivalent: 
\begin{enumerate}[label=(\roman*), font=\normalfont]
\item  $T$ \text{is a structure-HIS}; 
\item  $T$ \text{is a HIS}; 
\item  $Y$ is homogeneous and there exists a homogeneous group $G$  such that $G\cong G_{\alpha}$ for all $\alpha\in Y$.
  \end{enumerate} 

\end{lemma}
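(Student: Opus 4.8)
The plan is to prove the cycle (ii) $\Rightarrow$ (iii) $\Rightarrow$ (i) $\Rightarrow$ (ii), and the whole argument turns on a single simplifying observation about image-trivial semigroups: the connecting morphisms impose \emph{no} constraint on isomorphisms. Indeed, for any $\alpha>\beta$ in $Y$, any semilattice isomorphism $\pi$, and any group isomorphisms $\theta_\alpha,\theta_\beta$, the diagram $[\alpha,\beta;\alpha\pi,\beta\pi]$ of Theorem \ref{iso clifford} commutes automatically, since in an image-trivial semigroup both composites send every element of $G_\alpha$ to the identity $e_{\beta\pi}$. Hence, by Theorem \ref{iso clifford}, an isomorphism between two f.g.\ inverse subsemigroups of $T$ is nothing more than a semilattice isomorphism $\pi$ of their idempotent semilattices together with an \emph{arbitrary} family of group isomorphisms between the corresponding maximal subgroups: there are no commutativity obligations to discharge. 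This reduces the construction of automorphisms of $T$ to the two independent tasks of extending a semilattice map (via homogeneity of $Y$) and extending the group pieces (via Lemma \ref{homo iso}).

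The two easy edges come first. The implication (ii) $\Rightarrow$ (iii) is immediate from Corollary \ref{structure clifford}, since any Clifford HIS automatically has homogeneous idempotent semilattice and pairwise isomorphic homogeneous maximal subgroups. For (i) $\Rightarrow$ (ii), recall that structure-homogeneity is by design a strengthening of homogeneity: given an isomorphism $\theta=[\theta_\alpha,\pi]$ between f.g.\ inverse subsemigroups, one takes an automorphism $\hat\pi$ of $Y$ extending $\pi$ and applies the structure-HIS property to obtain an automorphism $\hat\theta=[\hat\theta_\alpha,\hat\pi]$ of $T$ extending $\theta$, whence $T$ is a HIS.

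The substance lies in (iii) $\Rightarrow$ (i). Assume $Y$ is homogeneous and fix a homogeneous group $G$ with $G\cong G_\alpha$ for all $\alpha$. Let $A=[Z;H_\alpha;\tau]$ and $A'=[Z';H'_{\alpha'};\tau]$ be f.g.\ inverse subsemigroups of $T$; by Lemma \ref{sub clifford} together with finite generation, $Z,Z'$ are finite subsemilattices of $Y$ and each $H_\alpha$ is a f.g.\ subgroup of $G_\alpha$. Let $\theta=[\theta_\alpha,\pi]\colon A\to A'$ be an isomorphism, and let $\hat\pi$ be any automorphism of $Y$ extending $\pi$. I would build $\hat\theta=[\hat\theta_\alpha,\hat\pi]$ componentwise. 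For $\alpha\in Z$ we have $\alpha\hat\pi=\alpha\pi$, and $\theta_\alpha\colon H_\alpha\to H'_{\alpha\pi}$ is an isomorphism between f.g.\ subgroups of the isomorphic homogeneous groups $G_\alpha$ and $G_{\alpha\pi}$; by Lemma \ref{homo iso} it extends to an isomorphism $\hat\theta_\alpha\colon G_\alpha\to G_{\alpha\pi}$. For $\alpha\in Y\setminus Z$, I simply pick any isomorphism $\hat\theta_\alpha\colon G_\alpha\to G_{\alpha\hat\pi}$, which exists as both groups are isomorphic to $G$. By the opening observation every diagram $[\alpha,\beta;\alpha\hat\pi,\beta\hat\pi]$ commutes, so Theorem \ref{iso clifford} guarantees that $\hat\theta=[\hat\theta_\alpha,\hat\pi]_{\alpha\in Y}$ is a homomorphism; it is bijective because $\hat\pi$ permutes $Y$ and each $\hat\theta_\alpha$ maps $G_\alpha$ onto $G_{\alpha\hat\pi}$. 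Thus $\hat\theta$ is an automorphism of $T$, and since $\hat\theta_\alpha|_{H_\alpha}=\theta_\alpha$ for each $\alpha\in Z$, it extends $\theta$, establishing structure-homogeneity.

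I expect the main obstacle to be conceptual rather than computational: it is recognising the image-trivial simplification, namely the automatic commutativity of the connecting diagrams, after which the construction is essentially forced. Once this is in hand, the only genuine input is Lemma \ref{homo iso}, which is exactly where homogeneity of the maximal subgroups is used; verifying that the pieces $\hat\theta_\alpha$ assemble into a global automorphism compatible with the prescribed $\hat\pi$, together with bijectivity and the extension property, is then routine bookkeeping.
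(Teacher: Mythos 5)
Your proposal is correct and follows essentially the same route as the paper: (ii) $\Rightarrow$ (iii) via Corollary \ref{structure clifford}, (i) $\Rightarrow$ (ii) by noting structure-homogeneity strengthens homogeneity, and for (iii) $\Rightarrow$ (i) extending the group components of $\theta$ by Lemma \ref{homo iso}, choosing arbitrary isomorphisms $G_\alpha \to G_{\alpha\hat{\pi}}$ off the finite part, and observing that the connecting diagrams commute automatically because both composites factor through the trivial morphisms. Your explicit framing of this last observation as the single simplification that makes image-trivial semigroups tractable is exactly the computation $g_{\alpha}\bar{\theta}_{\alpha}\tau_{\alpha\bar{\pi},\beta\bar{\pi}} = e_{\beta\bar{\pi}} = g_{\alpha}\tau_{\alpha,\beta}\bar{\theta}_{\beta}$ appearing in the paper's proof.
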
 

\begin{proof} $\mathrm{i}) \Rightarrow \mathrm{ii})$ Immediate, as every structure-HIS is a HIS. 

$\mathrm{ii}) \Rightarrow \mathrm{iii})$ Immediate from Corollary \ref{structure clifford}.

$\mathrm{iii}) \Rightarrow \mathrm{i})$ Let $A_1=[Z';A_{\gamma}';\tau'_{\gamma,\delta}]$ and $A_2=[Z'';A_{\eta}'';\tau''_{\eta,\sigma}]$ be a pair of f.g. inverse subsemigroups of $T$, where the maps $\tau'_{\gamma,\delta}$ and $\tau''_{\eta,\sigma}$, being restrictions of  trivial morphisms, are trivial.
 Let $\theta=[\theta_{\gamma},\pi]_{\gamma\in Z_1}$ be an isomorphism from $A_1$ to $A_2$ and let $\bar{\pi}$ be an automorphism of $Y$ which extends $\pi$. By Lemma \ref{homo iso} we may extend each $\theta_{\gamma}:A_{\gamma}' \rightarrow A_{\gamma\pi}''$ to an isomorphism $\bar{\theta}_{\gamma}:G_{\gamma}\rightarrow G_{\gamma\pi}$. For each $\alpha\not\in Z_1$, let $\bar{\theta}_{\alpha}$ be any isomorphism from $G_{\alpha}$ to $G_{\alpha\bar{\pi}}$. We claim that $\bar{\theta}=[\bar{\theta}_{\alpha},\bar{\pi}]_{\alpha\in Y}$ is an automorphism of $S$. Indeed, for any $g_{\alpha}\in G_{\alpha}$ and $\alpha> \beta$ we have 
\[ g_{\alpha} \bar{\theta}_{\alpha}\tau_{\alpha\bar{\pi},\beta\bar{\pi}} = e_{\beta \bar{\pi}}= e_{\beta}\bar{\theta}_{\beta} = g_{\alpha}\tau_{\alpha,\beta} \bar{\theta}_{\beta}
\] 
and so the diagram $[\alpha,\beta;\alpha\bar{\pi},\beta\bar{\pi}]$ commutes and thus $\bar{\theta}$ is indeed an automorphism. Moreover, by construction $\bar{\pi}$ extends $\pi$, and so $\bar{\theta}$ extends $\theta$. Hence $T$ is a structure-HIS. 
\end{proof} 

 Let $T=[Y;G_{\alpha};\tau_{\alpha,\beta}]$ be an image-trivial Clifford semigroup such that $G_{\alpha}\cong G$ for each $\alpha\in Y$.  Let $\theta_{\alpha}:G_{\alpha}\rightarrow G$ be an isomorphism for each  $\alpha\in Y$, and define a bijection $\theta:T \rightarrow Y\times G$ by 
\[ g_{\alpha}\theta= (\alpha,g_{\alpha}\theta_{\alpha}), 
\] 
for each $g_{\alpha}\in G_{\alpha}$, $\alpha\in Y$. Then we may use $\theta$ to endow the set $Y\times G$ with a multiplication
\begin{align*}
 (\alpha,g)*(\beta,h) =
  \begin{cases}
    (\alpha, gh) & \text{if } \alpha=\beta,\\ 
   (\beta,h) & \text{if } \alpha>\beta, \\ 
   (\alpha,g) & \text{if } \alpha<\beta, \\
    (\alpha\beta,1) & \text{if } \alpha \bot \beta.   \end{cases} 
\end{align*}
We denote the resulting semigroup $(Y\times G,*)$ as $[Y;G]$. We have thus shown that: 

\begin{lemma}\label{trivial iso} Let $T=[Y;G_{\alpha};\tau_{\alpha,\beta}]$ be an image-trivial Clifford semigroup such that $G_{\alpha}\cong G$ for each $\alpha\in Y$. Then $T\cong [Y;G]$.  
\end{lemma}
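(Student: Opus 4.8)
The statement asserts that any image‑trivial Clifford semigroup $T=[Y;G_{\alpha};\tau_{\alpha,\beta}]$ with all maximal subgroups isomorphic to a fixed group $G$ is isomorphic to the explicitly constructed semigroup $[Y;G]=(Y\times G,*)$. The remark immediately preceding the statement essentially does all the work: it defines a bijection $\theta:T\to Y\times G$ via $g_{\alpha}\theta=(\alpha,g_{\alpha}\theta_{\alpha})$, where each $\theta_{\alpha}:G_{\alpha}\to G$ is a chosen isomorphism, and it then \emph{uses} $\theta$ to transport the multiplication of $T$ onto $Y\times G$, calling the result $[Y;G]$. So the plan is simply to verify that the multiplication transported in this way is exactly the four‑case formula displayed for $*$, at which point $\theta$ is by construction an isomorphism.

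**The verification.** First I would fix $g_{\alpha}\in G_{\alpha}$ and $h_{\beta}\in G_{\beta}$ and compute $(g_{\alpha}h_{\beta})$ in $T$ using the strong‑semilattice multiplication $a*b=(a\psi_{\alpha,\alpha\beta})(b\psi_{\beta,\alpha\beta})$, specialised to the trivial connecting morphisms $\tau_{\alpha,\beta}$. I would split into the four cases according to the relationship between $\alpha$ and $\beta$ in $Y$. When $\alpha=\beta$ the product stays in $G_{\alpha}$ and is the group product $g_{\alpha}h_{\alpha}$, giving $(\alpha,(g_{\alpha}h_{\alpha})\theta_{\alpha})=(\alpha,(g_{\alpha}\theta_{\alpha})(h_{\alpha}\theta_{\alpha}))$ since $\theta_{\alpha}$ is a group isomorphism, matching $(\alpha,gh)$. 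When $\alpha>\beta$ we have $\alpha\beta=\beta$, so the product is $(g_{\alpha}\tau_{\alpha,\beta})(h_{\beta}\tau_{\beta,\beta})=e_{\beta}h_{\beta}=h_{\beta}$, which under $\theta$ becomes $(\beta,h_{\beta}\theta_{\beta})=(\beta,h)$; the case $\alpha<\beta$ is symmetric. When $\alpha\perp\beta$ we have $\alpha\beta<\alpha,\beta$, and both factors are killed by their trivial connecting morphisms, so the product is $e_{\alpha\beta}$, giving $(\alpha\beta,e_{\alpha\beta}\theta_{\alpha\beta})=(\alpha\beta,1)$. These four outcomes are precisely the defining cases of $*$.

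**Conclusion.** Since $\theta$ is a bijection whose induced multiplication on $Y\times G$ coincides with $*$, it is an isomorphism from $T$ onto $[Y;G]$, which is the claim. Indeed this is exactly what the sentence \emph{``We have thus shown that''} signals: the remark already constructed $\theta$ and used it to \emph{define} $[Y;G]$ so that $\theta$ is an isomorphism by fiat, and the lemma merely records this. A proof therefore need only exhibit the bijection and confirm the multiplication table, so it can be kept to a few lines.

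**Main obstacle.** There is essentially no deep obstacle here, as the construction is rigged to make $\theta$ an isomorphism. The only point demanding a modicum of care is bookkeeping the identities $e_{\alpha\beta}\theta_{\alpha\beta}=1$ (each $\theta_{\alpha}$ must send the identity of $G_{\alpha}$ to the identity $1$ of $G$, which holds automatically for a group isomorphism) and checking that the roles of $\tau_{\alpha,\alpha\beta}$ and $\tau_{\beta,\alpha\beta}$ are correctly trivial exactly when $\alpha\beta$ is strictly below the respective index. The subtlety, if any, lies only in confirming the meet $\alpha\beta$ is used consistently as the index of the output group in the incomparable case.
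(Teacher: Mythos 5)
Your proposal is correct and follows exactly the paper's route: the paper's ``proof'' is precisely the construction preceding the lemma (choose isomorphisms $\theta_{\alpha}:G_{\alpha}\to G$, form the bijection $g_{\alpha}\theta=(\alpha,g_{\alpha}\theta_{\alpha})$, and transport the multiplication to $Y\times G$), with the four-case verification left implicit. Your case-by-case check that the transported product agrees with the displayed formula for $*$ is exactly the bookkeeping the paper's phrase ``we have thus shown that'' is relying on, so nothing is missing.
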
 

\subsection{Spined product}

We now consider a more succinct form of a periodic Clifford HIS semigroup $[Y;I_{\alpha}\otimes K^*_{\alpha};\psi_{\alpha,\beta}^I \otimes \tau_{\alpha,\beta}]$ using \textit{spined products}.  

\begin{definition}\cite{Ciric} {\em Given a pair of semigroups $P$ and $Q$, with common morphic image $M$, then the \textit{spined product of $P$ and $Q$ w.r.t. $M$} is the subsemigroup 
\[ \{ (a,b): a\in P, b\in Q, a\sigma=b\sigma'\} 
\] 
of $P\times Q$, where $\sigma:P\rightarrow M$ and $\sigma':Q\rightarrow M$ are morphisms onto $M$.}
\end{definition} 

Let $S_i=[Y;G_{\alpha}^{(i)};\psi_{\alpha,\beta}^{(i)}]$ ($i=1,2$) be a pair of Clifford semigroups. Then the map $\sigma_i:S_i\rightarrow Y$ such that $G_\alpha^{(i)}\sigma_i=\alpha$ for each $\alpha\in Y$ is a morphism (and is the natural map associated with the congruence $\mathcal{H}$), and the spined product of $S_1$ and $S_2$ with respect to $Y$ is then given by 
\[   \{(a_{\alpha},b_{\alpha}): a_{\alpha} \in S_1, b_{\alpha} \in S_2, \alpha\in Y\},
\] 
which we denote as $S_1 \bowtie S_2$.

\begin{lemma}\label{spined to cliff} Let $S_1$ and $S_2$ be defined as above. 
Then $ S_1 \bowtie S_2$ is isomorphic to the Clifford semigroup  $S=[Y;G_{\alpha}^{(1)} \otimes G_{\alpha}^{(2)};\psi_{\alpha,\beta}^{(1)}\otimes \psi_{\alpha,\beta}^{(2)}]$. 
\end{lemma}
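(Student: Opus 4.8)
The plan is to recognise that $S$ and $S_1\bowtie S_2$ have literally the same underlying set, and then to verify by comparing the two multiplication formulas that the identity map between them is a homomorphism. An element of $S$ lying in the block $G_\alpha^{(1)}\otimes G_\alpha^{(2)}$ is a pair $(a_\alpha, b_\alpha)$ with $a_\alpha\in G_\alpha^{(1)}$ and $b_\alpha\in G_\alpha^{(2)}$; an element of $S_1\bowtie S_2$ is a pair $(a_\alpha, b_\alpha)$ with $a_\alpha\in S_1$, $b_\alpha\in S_2$ and $a_\alpha\sigma_1=b_\alpha\sigma_2$, that is, with $a_\alpha\in G_\alpha^{(1)}$ and $b_\alpha\in G_\alpha^{(2)}$ for a common $\alpha\in Y$. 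Thus both sets equal $\{(a_\alpha, b_\alpha):\alpha\in Y,\ a_\alpha\in G_\alpha^{(1)},\ b_\alpha\in G_\alpha^{(2)}\}$, and I would take $\Phi\colon S\to S_1\bowtie S_2$ to be the identity on this common set, so that bijectivity is immediate and all the content lies in checking that $\Phi$ preserves products.

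First I would confirm that $S_1\bowtie S_2$ is a subsemigroup of $S_1\times S_2$: since $\sigma_1$ and $\sigma_2$ are morphisms onto $Y$, the coordinatewise product $(a_\alpha c_\beta,\, b_\alpha d_\beta)$ of two elements in levels $\alpha$ and $\beta$ satisfies $(a_\alpha c_\beta)\sigma_1=\alpha\beta=(b_\alpha d_\beta)\sigma_2$, and so lies again in $S_1\bowtie S_2$; its multiplication is therefore coordinatewise. The decisive computation is then to compare both products. Using the strong semilattice of groups multiplication on $S$, whose connecting morphism $\psi_{\alpha,\beta}^{(1)}\otimes\psi_{\alpha,\beta}^{(2)}$ acts coordinatewise,
\begin{align*}
(a_\alpha, b_\alpha)*(c_\beta, d_\beta)
&= \bigl((a_\alpha, b_\alpha)(\psi_{\alpha,\alpha\beta}^{(1)}\otimes\psi_{\alpha,\alpha\beta}^{(2)})\bigr)\bigl((c_\beta, d_\beta)(\psi_{\beta,\alpha\beta}^{(1)}\otimes\psi_{\beta,\alpha\beta}^{(2)})\bigr) \\
&= \bigl(a_\alpha\psi_{\alpha,\alpha\beta}^{(1)}\cdot c_\beta\psi_{\beta,\alpha\beta}^{(1)},\ b_\alpha\psi_{\alpha,\alpha\beta}^{(2)}\cdot d_\beta\psi_{\beta,\alpha\beta}^{(2)}\bigr),
\end{align*}
whereas the coordinatewise product in $S_1\bowtie S_2$ is $(a_\alpha c_\beta,\, b_\alpha d_\beta)$, with $a_\alpha c_\beta=(a_\alpha\psi_{\alpha,\alpha\beta}^{(1)})(c_\beta\psi_{\beta,\alpha\beta}^{(1)})$ computed in $S_1$ and $b_\alpha d_\beta=(b_\alpha\psi_{\alpha,\alpha\beta}^{(2)})(d_\beta\psi_{\beta,\alpha\beta}^{(2)})$ computed in $S_2$. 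These two expressions coincide, so $\Phi$ is an isomorphism.

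I do not expect any genuine obstacle here: this is a bookkeeping verification rather than a substantive argument. The one point worth flagging is that $G_\alpha^{(1)}\otimes G_\alpha^{(2)}$ must be read as the \emph{external} direct product of the two maximal subgroups, as licensed by Remark \ref{remark 1}, with no coprimality or periodicity assumed---unlike the uses of $\otimes$ in the coprime-decomposition lemmas. The equality of the two products then reduces to the single observation that a coordinatewise connecting morphism turns the strong-semilattice product on $S$ into the coordinatewise combination of the strong-semilattice products on $S_1$ and $S_2$, since both multiplications first apply the relevant connecting morphisms coordinate by coordinate and then multiply within the common group $G_{\alpha\beta}^{(1)}\otimes G_{\alpha\beta}^{(2)}$.
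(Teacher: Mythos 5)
Your proof is correct and is essentially the paper's argument: the paper simply defines the natural map $\phi\colon S_1\bowtie S_2\to S$, $(g_\alpha,h_\alpha)\mapsto g_\alpha h_\alpha$ (reading $\otimes$ internally, per Remark \ref{remark 1}), and states it is easily shown to be an isomorphism. You read $\otimes$ externally so that this same canonical map becomes the identity on the common underlying set of pairs, and then you carry out explicitly the "easily shown" verification that the two multiplication formulas coincide --- a presentational difference only, and your flag that no coprimality or periodicity is needed here is accurate.
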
 
\begin{proof}  The map $\phi: S_1 \bowtie S_2  \rightarrow  S$ given by $(g_{\alpha},h_{\alpha})\phi=g_{\alpha}h_{\alpha}$ for each $(g_{\alpha},h_{\alpha})\in G^{(1)}_{\alpha}\otimes G^{(2)}_{\alpha}$ is easily shown to be an isomorphism.
\end{proof} 

\begin{note} {\em Let $S_i=[Y;G^{(i)}_{\alpha};\psi^{(i)}_{\alpha,\beta}]$ and $S_i'=[Y';H^{(i)}_{\alpha'};\phi^{(i)}_{\alpha',\beta'}]$ be Clifford semigroups ($i=1,2$) 
 and consider a pair of isomorphisms 
\[ \theta^{(i)}=[\theta^{(i)}_{\alpha},\pi]_{\alpha\in Y}:S_i \rightarrow S_i'  \quad (i=1,2).
\] 
 Then the map $\theta:S_1\bowtie S_2 \rightarrow S_1'\bowtie S_2'$ given by $(g_{\alpha},h_{\alpha})\theta=(g_{\alpha}\theta_{\alpha}^{(1)},h_{\alpha}\theta_{\alpha}^{(2)})$ is clearly an isomorphism by Theorem \ref{iso clifford}, which we denote as $\theta^{(1)}\bowtie \theta^{(2)}$. Note that the induced semilattice isomorphisms of $\theta^{(1)}$ and $\theta^{(2)}$ are required to be equal in this construction. }
\end{note} 

\begin{corollary}\label{iso spined} Let $S=S_1\bowtie S_2$ and $S'=S_1'\bowtie S_2'$ be a pair of spined products of Clifford semigroups such that $S_2$ and $S_2'$ are structure-HIS's. Then $S\cong S'$ if $S_1\cong S_1'$ and $S_2\cong S_2'$. 
\end{corollary}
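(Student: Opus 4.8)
The plan is to realise the isomorphism $S \cong S'$ as a spined product $\theta^{(1)} \bowtie \tilde\theta^{(2)}$ of isomorphisms between the respective factors, as described in the Note preceding this corollary. Recall that this construction requires the two factor-isomorphisms to induce the \emph{same} semilattice isomorphism. By hypothesis we may fix isomorphisms $\theta^{(1)} : S_1 \to S_1'$ and $\theta^{(2)} : S_2 \to S_2'$, and by Theorem \ref{iso clifford} each carries an induced isomorphism between the underlying semilattices, say $\pi_1 : Y \to Y'$ and $\pi_2 : Y \to Y'$ respectively (in particular $Y \cong Y'$, as it must be for $S_1' \bowtie S_2'$ to be formed over a single semilattice).

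In general $\pi_1 \neq \pi_2$, and this is the only obstacle to applying the Note directly. To remove it I would exploit the structure-homogeneity of the second factor. The composite $\pi_1 \pi_2^{-1}$ is an automorphism of $Y$, so by the observation immediately following the definition of a structure-HIS there is an automorphism $\zeta = [\zeta_\alpha, \pi_1\pi_2^{-1}]_{\alpha \in Y}$ of $S_2$ whose induced automorphism of $Y$ is $\pi_1\pi_2^{-1}$. Setting $\tilde\theta^{(2)} := \zeta\,\theta^{(2)}$ then gives an isomorphism $S_2 \to S_2'$ whose induced semilattice isomorphism is $(\pi_1\pi_2^{-1})\pi_2 = \pi_1$.

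Now $\theta^{(1)}$ and $\tilde\theta^{(2)}$ share the induced semilattice isomorphism $\pi_1$, so the Note yields an isomorphism
\[ \theta^{(1)} \bowtie \tilde\theta^{(2)} : S_1 \bowtie S_2 \longrightarrow S_1' \bowtie S_2', \]
that is, $S \cong S'$, completing the argument. The substantive content is concentrated entirely in the previous paragraph: the spined-product isomorphism of the Note is essentially formal once the induced semilattice maps agree, and reconciling those maps is exactly what structure-homogeneity provides. I note that only the structure-HIS hypothesis on $S_2$ is used here; the symmetric hypothesis on $S_2'$ would serve equally well, by instead post-composing $\theta^{(2)}$ with a suitable automorphism of $S_2'$ realising $\pi_2^{-1}\pi_1$.
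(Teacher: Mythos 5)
Your proposal is correct and follows essentially the same route as the paper: fix isomorphisms of the two factors, use structure-homogeneity of $S_2$ to produce an automorphism realising the discrepancy $\pi_1\pi_2^{-1}$ of the induced semilattice maps, pre-compose it with $\theta^{(2)}$ so both factor isomorphisms induce $\pi_1$, and then apply the Note. The paper's proof is word-for-word this argument (with $\pi$, $\hat\pi$ in place of $\pi_1$, $\pi_2$), so nothing further is needed.
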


\begin{proof} Let $S$ have structure semilattice $Y$, and $\theta^{(1)}=[\theta_{\alpha}^{(1)},\pi]_{\alpha\in Y}$ be an isomorphism from $S_1$ to $S_1'$ and $\theta^{(2)}=[\theta_{\alpha}^{(2)},\hat{\pi}]_{\alpha\in Y}$ an isomorphism from $S_2$ to $S_2'$. Then $\pi\hat{\pi}^{-1}$ is an automorphism of $Y$, and so as $S_2$ is a structure-HIS there exists an automorphism $\phi$ of $S_2$ with induced automorphism $\pi\hat{\pi}^{-1}$. Hence $\phi\theta^{(2)}:S_2\rightarrow S_2'$ is an isomorphism, with induced isomorphism $\pi\hat{\pi}^{-1}\hat{\pi}=\pi$, and so from the note above $\theta^{(1)}\bowtie (\phi\theta^{(2)})$ is an isomorphism from $S$ to $S'$ as required. 
\end{proof}

If $S=[Y;I_{\alpha}\otimes K^*_{\alpha};\psi_{\alpha,\beta}^I \otimes \tau_{\alpha,\beta}]$ is a periodic HIS then by Lemma \ref{spined to cliff} $S$ is isomorphic to $[Y;I_{\alpha};\psi_{\alpha,\beta}^I] \bowtie [Y; K^*_{\alpha}; \tau_{\alpha,\beta}]$ and thus, by Corollary \ref{iso spined} and the structure-homogeneity of $[Y;K^*]$, to $[Y;I_{\alpha};\psi_{\alpha,\beta}^I] \bowtie [Y;K^*]$,
where $K^*\cong K^*_{\alpha}$.

We have thus proven the forward half of the periodic case of the following: 

\begin{theorem}\label{homog Thm} A periodic Clifford semigroup $S$ is a HIS if and only if there exists a homogeneous group $G=I\otimes K^*$, where $I$ and $K^*$ are of coprime order, and a surjective Clifford HIS $[Y;I_{\alpha};\psi_{\alpha,\beta}^I]$ with $I_{\alpha}\cong I$ such that 
\[ S\cong [Y;I_{\alpha};\psi_{\alpha,\beta}^I]\bowtie [Y;K^*]. 
\] 
 A non-periodic Clifford semigroup $S$ is a HIS if and only if $S$ is isomorphic to either a  surjective Clifford HIS, or $[Y;G]$ for some homogeneous semilattice $Y$ and group $G$, or a Clifford HIS with no elements of infinite order lying in the images or absolute kernels. 
\end{theorem}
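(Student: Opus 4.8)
The plan is to assemble the theorem from the structural machinery already developed, splitting into the periodic and non-periodic cases and, within each, into the two directions. Since the forward half of the periodic case is already established (through Theorem \ref{first thm}, Lemma \ref{spined to cliff} and Corollary \ref{iso spined}), I would begin with its converse.

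For the periodic converse, suppose $G = I \otimes K^*$ is homogeneous with $I$ and $K^*$ of coprime order and that $[Y; I_\alpha; \psi^I_{\alpha,\beta}]$ is a surjective Clifford HIS with $I_\alpha \cong I$; put $S = [Y; I_\alpha; \psi^I_{\alpha,\beta}] \bowtie [Y; K^*]$. First I would use Lemma \ref{spined to cliff} to rewrite $S$ as the periodic Clifford semigroup $[Y; I_\alpha \otimes K^*_\alpha; \psi^I_{\alpha,\beta} \otimes \tau_{\alpha,\beta}]$, with $K^*_\alpha \cong K^*$, so that its two coprime factors are the surjective HIS $[Y; I_\alpha; \psi^I_{\alpha,\beta}]$ and the image-trivial semigroup $[Y; K^*_\alpha; \tau_{\alpha,\beta}] = [Y; K^*]$. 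Since the surjective Clifford HIS has homogeneous semilattice $Y$ (Corollary \ref{structure clifford}) and $K^*$ is homogeneous (Corollary \ref{coprime homog}), Lemma \ref{trivial mor} shows that $[Y; K^*]$ is a structure-HIS. Taking the roles $H_\alpha = K^*_\alpha$ and $K_\alpha = I_\alpha$ in Proposition \ref{structure sums 2}, I then conclude that $S$ is a HIS if and only if $[Y; I_\alpha; \psi^I_{\alpha,\beta}]$ is a HIS, which holds by hypothesis. This closes the periodic case.

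For the non-periodic case the content lies in the forward direction. Given a non-periodic Clifford HIS $S$, Lemma \ref{coprime} supplies, for each $\alpha$, a trichotomy: $G_\alpha = I_\alpha$, or $G_\alpha = K^*_\alpha$, or $I_\alpha \otimes K^*_\alpha$ is exactly the set of finite-order elements of $G_\alpha$. The crucial step is to show this trichotomy is uniform in $\alpha$: using Lemma \ref{inj iff inj}, surjectivity of a connecting morphism (equivalently $G_\alpha = I_\alpha$) is all-or-nothing, while $G_\alpha = K^*_\alpha$ amounts to $I_\beta = \{e_\beta\}$ for all $\beta < \alpha$ (via $I_{\alpha,\beta} = I_\beta$ from Lemma \ref{im same}), which is again all-or-nothing since $I_\beta \cong I_{\beta'}$; and for a nontrivial group the two cannot coincide. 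In the first alternative every connecting morphism is onto, so $S$ is a surjective Clifford HIS; in the second every connecting morphism is trivial, so $S$ is image-trivial and $S \cong [Y; G]$ by Lemma \ref{trivial iso}, with $Y$ and $G \cong G_\alpha$ homogeneous by Corollary \ref{structure clifford}; in the third, no image $I_{\alpha,\beta}$ and no absolute kernel $K^*_\alpha$ contains an element of infinite order, placing $S$ in the last family. The reverse direction is then immediate, since the first and third families are HIS by description, while $[Y; G]$ with $Y$ and $G$ homogeneous is a HIS by Lemma \ref{trivial mor}.

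I expect the main obstacle to be this uniformity argument in the non-periodic forward direction: one must rule out a ``mixed'' semigroup in which the alternatives of Lemma \ref{coprime} vary with $\alpha$. I would handle this by leaning on the isomorphisms of connecting morphisms and the relations $I_\beta \cong I_{\beta'}$, $K^*_\alpha \cong K^*_{\alpha'}$ from Lemma \ref{inj iff inj}, together with the observation that surjectivity and triviality of a connecting morphism are incompatible for a nontrivial maximal subgroup.
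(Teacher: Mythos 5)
Your proposal is correct and follows essentially the same route as the paper: your periodic backward direction via Lemma \ref{spined to cliff}, Lemma \ref{trivial mor} (with $Y$ and $K^*$ homogeneous by Corollary \ref{structure clifford} and Corollary \ref{coprime homog}) and Proposition \ref{structure sums 2} is exactly the paper's proof, and the periodic forward direction is, as you note, already established before the theorem. For the non-periodic case the paper simply cites Theorem \ref{first thm} together with Lemma \ref{trivial mor}, so your trichotomy-and-uniformity argument via Lemmas \ref{coprime}, \ref{im same} and \ref{inj iff inj} is just a correct unwinding of that summary theorem rather than a genuinely different approach.
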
 

\begin{proof} We first prove the backwards half of the periodic case. Let $G$ and $Y$ be as in the hypothesis of the theorem. 
Then as $[Y;K^*]$ is structure-HIS semigroup by Lemma \ref{trivial mor}, the semigroup $[Y;I_{\alpha};\psi_{\alpha,\beta}^I]\bowtie [Y;K^*]$ is a HIS if and only if $[Y;I_{\alpha};\psi_{\alpha,\beta}^I]$ is a HIS by Proposition \ref{structure sums 2}. The non-periodic case follows immediately from Theorem \ref{first thm} and Lemma \ref{trivial mor}. 

\end{proof}

It thus suffices to consider the homogeneity of both surjective Clifford semigroups with trivial absolute kernels, and the case where there exists elements of infinite order lying outside the images and absolute kernels.

%

\subsection{Connecting isomorphisms} 
 
In this subsection we consider the homogeneity of a surjective Clifford semigroup $S$ such that a non-trivial connecting morphism is an isomorphism. By Lemma \ref{inj iff inj} this focces all connecting morphisms to be isomorphisms. In \cite{Quinn}, strong semilattices of rectangular bands with connecting morphisms being isomorphisms were considered. This work relied only on the fact that, for strong semilattices of rectangular bands, an isomorphism theorem exists of the form in Theorem \ref{iso clifford}, and not on the behaviour of the rectangular bands. As such, the proofs of Lemmas \ref{iso surj} and \ref{iso state} and Proposition \ref{isos structure hom}  are mirrored in Lemmas 4.16 and 4.18 and Proposition 4.17 of \cite{Quinn}, and as such are omitted.

\begin{lemma} \label{iso surj} Let $S=[Y;G_{\alpha};\psi_{\alpha,\beta}]$ be a Clifford semigroup such that each $\psi_{\alpha,\beta}$ is an isomorphism. Let $\pi \in \text{Aut }(Y)$ and, for a fixed $\alpha^*\in Y$, let $\theta_{\alpha^*}:G_{\alpha^*}\rightarrow G_{\alpha^*\pi}$ be an isomorphism. For each $\delta \in Y$, let $\theta_{\delta}:G_{\delta}\rightarrow G_{\delta\pi}$ be given by 
\begin{equation*} \label{iso iso} {\theta}_{\delta}= {\psi}_{\delta,\alpha^*} \, {\theta}_{\alpha^*} \, {\psi}_{\alpha^*\pi,\delta \pi}. 
\end{equation*}  
Then $\theta=[\theta_{\alpha},\pi]_{\alpha\in Y}$ is an automorphism of $S$. Conversely, every automorphism of $S$ can be so constructed. 
\end{lemma}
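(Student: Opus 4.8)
The plan is to deduce everything from the isomorphism theorem for Clifford semigroups, Theorem \ref{iso clifford}, which identifies automorphisms of $S$ precisely with families $[\theta_\alpha,\pi]_{\alpha\in Y}$ consisting of a semilattice automorphism $\pi$, isomorphisms $\theta_\alpha\colon G_\alpha\to G_{\alpha\pi}$, and the commuting of each diagram $[\alpha,\beta;\alpha\pi,\beta\pi]$ for $\alpha>\beta$. Before anything else I would make sense of the displayed formula, since as written it involves symbols $\psi_{\delta,\alpha^*}$ and $\psi_{\alpha^*\pi,\delta\pi}$ whose subscripts need not be comparable. Because every connecting morphism is an isomorphism (and, by Lemma \ref{inj iff inj}, uniformly so), I would extend the connecting maps to a \emph{generalized} system: set $\psi_{\beta,\alpha}:=\psi_{\alpha,\beta}^{-1}$ for $\alpha>\beta$, and for arbitrary $\alpha,\beta\in Y$ put $\psi_{\alpha,\beta}:=\psi_{\alpha,\alpha\beta}\,\psi_{\beta,\alpha\beta}^{-1}\colon G_\alpha\to G_\beta$, routed through the meet $\alpha\beta$. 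Each such map is then an isomorphism, and the formula for $\theta_\delta$ is to be read in this extended sense.

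For the forward direction, each $\theta_\delta$ is a composite of isomorphisms and hence an isomorphism $G_\delta\to G_{\delta\pi}$, so by Theorem \ref{iso clifford} it remains only to check that $\psi_{\alpha,\beta}\theta_\beta=\theta_\alpha\psi_{\alpha\pi,\beta\pi}$ for every $\alpha>\beta$. Substituting the definitions, both sides should collapse to $\psi_{\alpha,\alpha^*}\theta_{\alpha^*}\psi_{\alpha^*\pi,\beta\pi}$: on the left using a mixed cocycle identity $\psi_{\alpha,\beta}\psi_{\beta,\alpha^*}=\psi_{\alpha,\alpha^*}$ (a genuine downward map followed by a generalized one), and on the right using $\psi_{\alpha^*\pi,\alpha\pi}\psi_{\alpha\pi,\beta\pi}=\psi_{\alpha^*\pi,\beta\pi}$ (a generalized map followed by a genuine downward one). \textbf{The main obstacle is precisely establishing these mixed cocycle identities}, since the generalized maps are defined through different meets and one must guard against path-dependence. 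I would prove the needed instances directly from the genuine cocycle law $\psi_{\alpha,\beta}\psi_{\beta,\gamma}=\psi_{\alpha,\gamma}$ on comparable chains: for $\alpha>\beta$ one has $\beta\wedge\alpha^*\le\alpha\wedge\alpha^*$, and expanding $\psi_{\beta,\alpha^*}$ and $\psi_{\alpha,\alpha^*}$ through these two meets and cancelling the common tail $\psi_{\alpha\wedge\alpha^*,\,\beta\wedge\alpha^*}$ against its inverse yields the identity. The second identity follows symmetrically, using that $\pi$ is a semilattice automorphism so that $(\beta\wedge\alpha^*)\pi=\beta\pi\wedge\alpha^*\pi$.

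For the converse, let $\theta=[\theta_\alpha,\pi]_{\alpha\in Y}$ be an arbitrary automorphism of $S$. The commuting square for a comparable pair shows that the value of $\theta$ on one of two comparable components is determined by its value on the other, since the connecting maps are invertible. Given an arbitrary $\delta$, I would route from $\alpha^*$ to $\delta$ through the meet $\mu=\delta\wedge\alpha^*$: the square for $\alpha^*\ge\mu$ gives $\theta_\mu=\psi_{\alpha^*,\mu}^{-1}\theta_{\alpha^*}\psi_{\alpha^*\pi,\mu\pi}$, and the square for $\delta\ge\mu$ then gives $\theta_\delta=\psi_{\delta,\mu}\theta_\mu\psi_{\delta\pi,\mu\pi}^{-1}$. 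Composing these and recognising $\psi_{\delta,\mu}\psi_{\alpha^*,\mu}^{-1}=\psi_{\delta,\alpha^*}$ and $\psi_{\alpha^*\pi,\mu\pi}\psi_{\delta\pi,\mu\pi}^{-1}=\psi_{\alpha^*\pi,\delta\pi}$ (again using $\mu\pi=\delta\pi\wedge\alpha^*\pi$) recovers exactly the prescribed formula for $\theta_\delta$. Thus every automorphism is of the stated form, with $\theta_{\alpha^*}$ the restriction of $\theta$ to $G_{\alpha^*}$. I expect the routine verifications — that the extended maps are well-defined isomorphisms and that the cocycle manipulations balance — to parallel the strong-semilattice-of-rectangular-bands computations of \cite{Quinn}.
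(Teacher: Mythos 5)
Your proof is correct and takes essentially the approach the paper intends: the paper omits this proof entirely, stating that it relies only on the isomorphism theorem (Theorem \ref{iso clifford}) and mirrors Lemma 4.16 of \cite{Quinn}, and your argument is exactly such a verification. In particular, your reading of $\psi_{\delta,\alpha^*}$ for non-comparable pairs (routing through the meet $\delta\wedge\alpha^*$), the two mixed cocycle identities proved by cancelling the common tail $\psi_{\alpha\wedge\alpha^*,\,\beta\wedge\alpha^*}$, and the converse obtained by composing the commuting squares for $\alpha^*\geq\delta\wedge\alpha^*$ and $\delta\geq\delta\wedge\alpha^*$ constitute precisely the omitted computation, so your write-up is a valid filling-in of the paper's proof.
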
 

%

\begin{proposition}\label{isos structure hom} Let $S=[Y;G_{\alpha};\psi_{\alpha,\beta}]$ be a Clifford semigroup such that each connecting morphism is an isomorphism. Then $S$ is a structure-HIS if and only if $Y$ and $G_{\alpha}$ are homogeneous for (any) $\alpha\in Y$. 
\end{proposition}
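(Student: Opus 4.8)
The forward implication is immediate: a structure-HIS is in particular a HIS, so by Corollary \ref{structure clifford} its structure semilattice $Y$ is homogeneous and its maximal subgroups $G_\alpha$ are pairwise isomorphic homogeneous groups. For the converse, assume $Y$ and the $G_\alpha$ are homogeneous, and let $\theta=[\theta_\gamma,\pi]_{\gamma\in Z}\colon A\to A'$ be an isomorphism between f.g.\ inverse subsemigroups of $S$, where by Lemma \ref{sub clifford} we may take $A=[Z;H_\gamma;\psi_{\gamma,\delta}|_{H_\gamma}]$ and $A'=[Z';H'_{\gamma'};\dots]$; let $\hat\pi\in\text{Aut}(Y)$ extend $\pi$. (The case $Z=\emptyset$ is trivial, using Lemma \ref{iso surj} to produce any automorphism inducing $\hat\pi$, so assume $Z\neq\emptyset$.) The plan is to build the required automorphism $\hat\theta=[\hat\theta_\alpha,\hat\pi]_{\alpha\in Y}$ of $S$ from a single group isomorphism via Lemma \ref{iso surj}, choosing the base point with care.

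Since $A$ is a f.g.\ Clifford semigroup, $Z$ is a finite semilattice and hence has a least element $\gamma^*=\prod_{\gamma\in Z}\gamma$; moreover $H_{\gamma^*}=Ae_{\gamma^*}$ is a homomorphic image of $A$ and so is a f.g.\ group. As $G_{\gamma^*}\cong G_{\gamma^*\hat\pi}$ are isomorphic homogeneous groups, Lemma \ref{homo iso} lets me extend $\theta_{\gamma^*}\colon H_{\gamma^*}\to H'_{\gamma^*\pi}$ to an isomorphism $\hat\theta_{\gamma^*}\colon G_{\gamma^*}\to G_{\gamma^*\hat\pi}$. Feeding $\hat\pi$, the base point $\gamma^*$ and $\hat\theta_{\gamma^*}$ into Lemma \ref{iso surj} yields an automorphism $\hat\theta=[\hat\theta_\alpha,\hat\pi]_{\alpha\in Y}$ of $S$ whose induced automorphism is $\hat\pi$ and whose $\gamma^*$-component is $\hat\theta_{\gamma^*}$.

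It remains to verify that $\hat\theta$ extends $\theta$, that is, $\hat\theta_\gamma|_{H_\gamma}=\theta_\gamma$ for every $\gamma\in Z$; this is the crux. The idea is that both $\theta$ (as an isomorphism of $A$) and $\hat\theta$ (as an automorphism of $S$) make the relevant squares of Theorem \ref{iso clifford} commute: for $\gamma\geq\gamma^*$ in $Z$ and $h\in H_\gamma$ one has $(h\theta_\gamma)\psi_{\gamma\pi,\gamma^*\pi}=(h\psi_{\gamma,\gamma^*})\theta_{\gamma^*}$ and $(h\hat\theta_\gamma)\psi_{\gamma\pi,\gamma^*\pi}=(h\psi_{\gamma,\gamma^*})\hat\theta_{\gamma^*}$. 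Since $h\psi_{\gamma,\gamma^*}\in H_{\gamma^*}$ and $\hat\theta_{\gamma^*}$ agrees with $\theta_{\gamma^*}$ there, the two right-hand sides coincide; as $\psi_{\gamma\pi,\gamma^*\pi}$ is an isomorphism (all connecting morphisms are), it is injective, and cancelling it gives $h\hat\theta_\gamma=h\theta_\gamma$. Because $\gamma^*$ is the least element of $Z$, every $\gamma\in Z$ satisfies $\gamma\geq\gamma^*$, so this covers all components and $\hat\theta$ extends $\theta$, proving $S$ is a structure-HIS.

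I expect the main obstacle to be exactly this last verification, and in particular the fact that the single-base-point extension of Lemma \ref{iso surj} propagates agreement \emph{upward} but not downward through $Z$: to push agreement from $\gamma$ down to a smaller $\delta$ one would need to cancel $\psi_{\gamma,\delta}$, which would require it to be surjective onto $H_\delta$, and the restricted connecting map inside $A$ need not be. This is precisely why the base point must be taken to be the minimum of $Z$ rather than an arbitrary element, so that agreement at $\gamma^*$ forces agreement at every idempotent above it.
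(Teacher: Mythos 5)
Your proof is correct and takes exactly the route the paper intends: the paper omits this proof, deferring to Proposition 4.17 of \cite{Quinn}, and the omitted argument is precisely yours — anchor the extension at the minimum $\gamma^*$ of $Z$ (where $H_{\gamma^*}=Ae_{\gamma^*}$ is a f.g.\ subgroup) via Lemma \ref{homo iso}, propagate to all of $Y$ with Lemma \ref{iso surj}, and recover agreement with $\theta$ on every $H_\gamma$ from the commuting squares of Theorem \ref{iso clifford} together with injectivity of the connecting morphisms. Your closing remark about why the base point must be the least element of $Z$ — agreement propagates upward by cancelling an injective $\psi_{\gamma\pi,\gamma^*\pi}$, but cannot be pushed downward since the restricted connecting maps of $A$ need not be surjective — is also the correct diagnosis of the one genuinely delicate point.
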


\begin{lemma}\label{iso state} Let $S=[Y;G_{\alpha};\psi_{\alpha,\beta}]$ be a Clifford semigroup such that each connecting morphism is an isomorphism. Then $S\cong Y \times G$ for any group $G$ isomorphic to $G_{\alpha}$. 
\end{lemma}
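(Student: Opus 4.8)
The plan is to invoke the isomorphism theorem for Clifford semigroups, Theorem \ref{iso clifford}, with induced semilattice isomorphism equal to $1_Y$. Writing $Y\times G$ as the Clifford semigroup $[Y;G;\mathrm{id}_G]$ in which every group is a copy of $G$ and every connecting morphism is the identity (one checks directly that the componentwise product on $Y\times G$ agrees with the strong-semilattice product when all the $\psi$ are identities), it suffices to construct a family of isomorphisms $\theta_\alpha\colon G_\alpha\to G$ such that the diagram $[\alpha,\beta;\alpha,\beta]$ commutes for every $\alpha>\beta$, that is, $\psi_{\alpha,\beta}\theta_\beta=\theta_\alpha$. Were $Y$ to possess a least element $\bot$ one could simply take $\theta_\alpha=\psi_{\alpha,\bot}$ and read off the required relation from the composition law $\psi_{\alpha,\beta}\psi_{\beta,\bot}=\psi_{\alpha,\bot}$. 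The difficulty is that a homogeneous semilattice (for instance $(\mathbb{Q},\leq)$ or a semilinear order) need not have a least element, so there is no canonical group to which everything can be transported at once.

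To circumvent this I would build a coherent system of transition isomorphisms using meets. For $\alpha,\beta\in Y$ set $\Theta_{\alpha\to\beta}=\psi_{\alpha,\alpha\beta}\,\psi_{\beta,\alpha\beta}^{-1}\colon G_\alpha\to G_\beta$, which makes sense because each connecting morphism is an isomorphism. The key computation, and the heart of the proof, is the meet-independence formula: for any common lower bound $m\leq\alpha,\beta$ (note $m\leq\alpha\wedge\beta=\alpha\beta$) one has $\Theta_{\alpha\to\beta}=\psi_{\alpha,m}\psi_{\beta,m}^{-1}$. This follows from the composition law, since $\alpha\geq\alpha\beta\geq m$ gives $\psi_{\alpha,\alpha\beta}=\psi_{\alpha,m}\psi_{\alpha\beta,m}^{-1}$ and likewise $\psi_{\beta,\alpha\beta}^{-1}=\psi_{\alpha\beta,m}\psi_{\beta,m}^{-1}$, whereupon the two inner factors $\psi_{\alpha\beta,m}^{-1}$ and $\psi_{\alpha\beta,m}$ cancel.

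From this formula coherence is immediate: given $\alpha,\beta,\gamma$, take the common lower bound $m=\alpha\beta\gamma$ of all three pairs, so that $\Theta_{\alpha\to\beta}\Theta_{\beta\to\gamma}=\psi_{\alpha,m}\psi_{\beta,m}^{-1}\psi_{\beta,m}\psi_{\gamma,m}^{-1}=\psi_{\alpha,m}\psi_{\gamma,m}^{-1}=\Theta_{\alpha\to\gamma}$, and moreover $\Theta_{\alpha\to\beta}=\psi_{\alpha,\beta}$ whenever $\alpha\geq\beta$ (as then $\alpha\beta=\beta$ and $\psi_{\beta,\beta}=1_{G_\beta}$). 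Now fix any $\alpha_0\in Y$ and any isomorphism $\lambda\colon G_{\alpha_0}\to G$, and set $\theta_\alpha=\Theta_{\alpha\to\alpha_0}\lambda$. For $\alpha>\beta$ coherence gives $\psi_{\alpha,\beta}\theta_\beta=\Theta_{\alpha\to\beta}\Theta_{\beta\to\alpha_0}\lambda=\Theta_{\alpha\to\alpha_0}\lambda=\theta_\alpha$, which is exactly the commutativity of $[\alpha,\beta;\alpha,\beta]$. Hence $\theta=[\theta_\alpha,1_Y]_{\alpha\in Y}$ is, by Theorem \ref{iso clifford}, an isomorphism from $S$ onto $[Y;G;\mathrm{id}_G]\cong Y\times G$. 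The main obstacle is the coherence of the transition maps, which the meet-independence identity reduces to a one-line telescoping cancellation; the appeal to Theorem \ref{iso clifford} and the identification $[Y;G;\mathrm{id}_G]\cong Y\times G$ are then routine.
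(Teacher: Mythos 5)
Your proof is correct, and it is essentially the argument the paper intends: the paper omits its own proof of Lemma \ref{iso state}, deferring to the analogous result for bands (Lemma 4.18 of \cite{Quinn}), and your meet-defined transition isomorphisms $\Theta_{\alpha\to\beta}=\psi_{\alpha,\alpha\beta}\,\psi_{\beta,\alpha\beta}^{-1}$ are precisely the extension of the connecting morphisms to arbitrary pairs on which that machinery rests --- indeed the statement of Lemma \ref{iso surj} already writes $\psi_{\delta,\alpha^*}$ for possibly incomparable $\delta,\alpha^*$, which presupposes exactly this construction. Your telescoping verification of coherence and the appeal to Theorem \ref{iso clifford} with $\pi=1_Y$ correctly supply the details the paper leaves to the reference.
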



Hence by Proposition \ref{isos structure hom} and Corollary \ref{structure clifford}, we have that $S=Y\times G$ is a structure-HIS if and only if $S$ is a HIS if and only if $G$ and $Y$ are homogeneous.

Since all surjective morphisms between finite groups are isomorphisms, we thus have by Proposition \ref{bisimple inf} and Theorem \ref{homog Thm}: 

\begin{theorem} Given a homogeneous semilattice $Y$ and a pair of finite homogeneous groups $I$ and $K^*$ of coprime orders, the Clifford semigroup $(Y\times I)\bowtie [Y;K^*]$ is a HIS.
 Conversely, every HIS with finite maximal subgroups is isomorphic to an inverse semigroup constructed in this way. 
\end{theorem}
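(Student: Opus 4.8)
The plan is to read off both directions from the periodic classification already established in Theorem \ref{homog Thm}, together with the elementary observation that a surjective endomorphism of a finite group of fixed order is an isomorphism, which lets me collapse the image factor to a direct product via Lemma \ref{iso state}.

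For the construction direction, suppose $Y$ is a homogeneous semilattice and $I$, $K^*$ are finite homogeneous groups of coprime order. First I would note that $G := I\otimes K^*$ is a homogeneous group: it is finite, hence periodic, and $I$, $K^*$ are homogeneous of coprime order, so Corollary \ref{coprime homog} applies. Next I would observe that $Y\times I$ is exactly the surjective Clifford semigroup $[Y;I_\alpha;\psi^I_{\alpha,\beta}]$ in which every $I_\alpha = I$ and every connecting morphism is the identity; by the remark following Lemma \ref{iso state} (equivalently Proposition \ref{isos structure hom}), it is a HIS because both $Y$ and $I$ are homogeneous. Thus $(Y\times I)\bowtie[Y;K^*]$ has precisely the shape required by the backward half of the periodic case of Theorem \ref{homog Thm}, and is therefore a HIS.

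For the converse, let $S$ be a HIS whose maximal subgroups are finite. I would first show that $S$ is a periodic Clifford semigroup: by Proposition \ref{bisimple inf} a bisimple HIS has infinite maximal subgroups, so $S$ is not bisimple and hence is Clifford by Theorem \ref{cliff of bisimple}; finiteness of the maximal subgroups forces every element to have finite order, so $S$ is periodic. Theorem \ref{homog Thm} then supplies a homogeneous group $G = I\otimes K^*$ with $I$, $K^*$ of coprime order and a surjective Clifford HIS $[Y;I_\alpha;\psi^I_{\alpha,\beta}]$ with $I_\alpha\cong I$, such that $S\cong [Y;I_\alpha;\psi^I_{\alpha,\beta}]\bowtie[Y;K^*]$. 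Since the maximal subgroups $G_\alpha\cong G$ are finite, $I$ and $K^*$ are finite, and they are homogeneous by Corollary \ref{coprime homog}.

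The key step, which I expect to be the engine of the argument, is the collapse of the image factor. In the surjective Clifford HIS $[Y;I_\alpha;\psi^I_{\alpha,\beta}]$ the groups $I_\alpha$ are pairwise isomorphic (Corollary \ref{structure clifford}) and finite of common order $|I|$, so each $\psi^I_{\alpha,\beta}$ is a surjection between finite groups of equal cardinality, hence a bijection, hence an isomorphism. Lemma \ref{iso state} then gives $[Y;I_\alpha;\psi^I_{\alpha,\beta}]\cong Y\times I$, whence $S\cong (Y\times I)\bowtie[Y;K^*]$, which is the asserted form. The only point requiring care is this passage from ``surjective'' to ``isomorphism'', and it is secured purely by the finiteness hypothesis; everything else is assembly of the preceding structure theory.
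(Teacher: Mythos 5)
Your proposal is correct and takes essentially the same route as the paper, whose own (very terse) proof likewise combines Theorem \ref{homog Thm} with Proposition \ref{bisimple inf} to rule out the bisimple case, and then uses the observation that surjective morphisms between the finite, pairwise isomorphic groups $I_{\alpha}$ are isomorphisms, so that Lemma \ref{iso state} and Proposition \ref{isos structure hom} collapse the image factor to $Y\times I$. The one step you leave tacit --- replacing $[Y;I_{\alpha};\psi^{I}_{\alpha,\beta}]$ by $Y\times I$ \emph{inside} the spined product, which formally requires Corollary \ref{iso spined} together with the structure-homogeneity of $[Y;K^*]$ --- is equally tacit in the paper, so this is not a gap relative to the source.
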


\subsection{Non-injective surjective Clifford semigroups} 

Throughout this subsection we let $S=[Y;G_{\alpha};\psi_{\alpha,\beta}]$ be a surjective Clifford HIS such that each $\psi_{\alpha,\beta}$ is non-injective. Recall that  the absolute kernels of $S$ are trivial by Corollary \ref{surj trivial ak}. Following in line with the general case, we shall attempt to decompose the maximal subgroups into direct products of characteristic subgroups. 

The group $G_{\alpha}$ contains two key subsets: the absolute image $I_{\alpha}^*$ and 
\begin{align*}
& T_{\alpha} = \{g_{\alpha}\in G_{\alpha}: \exists \beta< \alpha \text{ such that } g_{\alpha}\in K_{\alpha,\beta}\}= \bigcup_{\beta<\alpha}K_{\alpha,\beta}. 
\end{align*}
The set $T_{\alpha}$ forms a subgroup of $G_{\alpha}$, since if $k_{\alpha}\in K_{\alpha,\beta}$ and $m_{\alpha}\in K_{\alpha,\beta'}$ then $k_{\alpha}m_{\alpha}\in K_{\alpha,\beta\beta'}$. While $I^*_{\alpha}$ may not form a subgroup, it is closed under powers, since if $g_{\alpha}\in I^*_{\alpha}$ then for each $r\in \mathbb{N}$ and $\beta\leq \alpha$, 
\[ o(g_{\alpha})=o(g_{\alpha}\psi_{\alpha,\beta}) \Rightarrow o(g_{\alpha}^r)=o((g_{\alpha}\psi_{\alpha,\beta})^r)=o(g_{\alpha}^r\psi_{\alpha,\beta}) 
\] 
so that $g_{\alpha}^r\in I^*_{\alpha}$. 

By the usual arguments we have:  

\begin{lemma} \label{small stuff} For each $\alpha\in Y$,  $T_{\alpha}$ is a characteristic subgroup of $G_{\alpha}$ and $I^*_{\alpha}$ is a characteristic subset of $G_{\alpha}$ with $I^*_{\alpha}\cap T_{\alpha}=\{e_{\alpha}\}$.  Moreover, $T_{\alpha}\cong T_{\beta}$ and $\langle I^*_{\alpha} \rangle_I \cong \langle I^*_{\beta} \rangle_I $  for each $\alpha,\beta\in Y$.  
\end{lemma}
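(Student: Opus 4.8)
The plan is to mirror the proofs of Lemmas \ref{I char} and \ref{inj iff inj} almost verbatim, exploiting the fact that any isomorphism between cyclic inverse subsemigroups lying inside a single maximal subgroup $G_{\alpha}$ extends, by the homogeneity of $S$ and Theorem \ref{iso clifford}, to an automorphism $\bar{\theta}=[\bar{\theta}_{\gamma},\bar{\pi}]_{\gamma\in Y}$ whose induced semilattice automorphism $\bar{\pi}$ fixes $\alpha$. Since it is already noted that $T_{\alpha}$ is a subgroup and $I^*_{\alpha}$ is closed under powers, the real content is the four assertions of characteristicity, trivial intersection, and transport between distinct indices.

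For $T_{\alpha}$ characteristic, given $\varphi\in\text{Aut}(G_{\alpha})$ and $g_{\alpha}\in T_{\alpha}$, so $g_{\alpha}\in K_{\alpha,\beta_0}$ for some $\beta_0<\alpha$, I would extend the isomorphism $\langle g_{\alpha}\varphi\rangle_I\to\langle g_{\alpha}\rangle_I$ sending $g_{\alpha}\varphi\mapsto g_{\alpha}$ to $\bar{\theta}=[\bar{\theta}_{\gamma},\bar{\pi}]$ with $\alpha\bar{\pi}=\alpha$ and $(g_{\alpha}\varphi)\bar{\theta}_{\alpha}=g_{\alpha}$. Setting $\beta=\beta_0\bar{\pi}^{-1}<\alpha$ and using commutativity of $[\alpha,\beta;\alpha,\beta\bar{\pi}]$ exactly as in Lemma \ref{I char}, I obtain $(g_{\alpha}\varphi)\psi_{\alpha,\beta}\bar{\theta}_{\beta}=g_{\alpha}\psi_{\alpha,\beta_0}=e_{\beta_0}$, whence $(g_{\alpha}\varphi)\psi_{\alpha,\beta}=e_{\beta}$ and $g_{\alpha}\varphi\in K_{\alpha,\beta}\subseteq T_{\alpha}$. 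The proof that $I^*_{\alpha}$ is a characteristic subset is the same argument with orders tracked in place of triviality: as $I_{\alpha}$ is characteristic, $g_{\alpha}\in I^*_{\alpha}$ gives $g_{\alpha}\varphi\in I_{\alpha}$, and for each $\beta<\alpha$, writing $\beta'=\beta\bar{\pi}<\alpha$, commutativity of $[\alpha,\beta;\alpha,\beta']$ yields $(g_{\alpha}\varphi)\psi_{\alpha,\beta}\bar{\theta}_{\beta}=g_{\alpha}\psi_{\alpha,\beta'}$, so $o((g_{\alpha}\varphi)\psi_{\alpha,\beta})=o(g_{\alpha}\psi_{\alpha,\beta'})=o(g_{\alpha})=o(g_{\alpha}\varphi)$ using $g_{\alpha}\in I^*_{\alpha}$. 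The trivial intersection then falls out immediately: any $g_{\alpha}\in I^*_{\alpha}\cap T_{\alpha}$ lies in some $K_{\alpha,\beta_0}$, so $o(g_{\alpha})=o(g_{\alpha}\psi_{\alpha,\beta_0})=1$.

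For the two isomorphism statements I would invoke transitivity of $\text{Aut}(S)$ on $E(S)$ (Lemma \ref{trans on Y}): pick $\phi=[\phi_{\gamma},\pi]\in\text{Aut}(S)$ with $\alpha\pi=\beta$, so that $\phi_{\alpha}:G_{\alpha}\to G_{\beta}$ is an isomorphism. Arguing as in Lemma \ref{inj iff inj}, the commuting diagram $[\alpha,\gamma;\beta,\gamma\pi]$ sends $g_{\alpha}\in K_{\alpha,\gamma}$ to $g_{\alpha}\phi_{\alpha}\in K_{\beta,\gamma\pi}$, giving $T_{\alpha}\phi_{\alpha}=T_{\beta}$ and hence $T_{\alpha}\cong T_{\beta}$; and the same order-preservation computation (after first checking $I_{\alpha}\phi_{\alpha}=I_{\beta}$ via a $\delta>\alpha$ and the diagram $[\delta,\alpha;\delta\pi,\beta]$) gives $I^*_{\alpha}\phi_{\alpha}=I^*_{\beta}$, whence $\langle I^*_{\alpha}\rangle_I\cong\langle I^*_{\beta}\rangle_I$.

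The main obstacle throughout is purely bookkeeping. Because the extending automorphism's induced map $\bar{\pi}$ fixes $\alpha$ but merely permutes the lower cone $\{\gamma:\gamma<\alpha\}$, I must re-index each subscript $\beta$ by $\bar{\pi}$ or $\bar{\pi}^{-1}$ and repeatedly use that $\bar{\pi}$ restricts to an order-isomorphism of this cone, so that every diagram $[\alpha,\beta;\alpha,\beta\bar{\pi}]$ remains well-formed. This is easy to state but must be applied consistently, and it is the only place where an oversight could break the argument; once the re-indexing is handled, each step reduces to commutativity of a single connecting-morphism square together with the fact that automorphisms of $G_{\alpha}$ and the restrictions $\bar{\theta}_{\beta}$ preserve order, which is precisely why the paper can label these ``the usual arguments.''
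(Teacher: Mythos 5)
Your proof is correct and is precisely the argument the paper intends: the paper gives no written proof of Lemma \ref{small stuff}, dismissing it with ``by the usual arguments,'' and those arguments are exactly the ones you reconstruct --- extend an isomorphism of cyclic inverse subsemigroups inside $G_{\alpha}$ (resp.\ the idempotent isomorphism from Lemma \ref{trans on Y}) to an automorphism $[\bar{\theta}_{\gamma},\bar{\pi}]_{\gamma\in Y}$ via Theorem \ref{iso clifford}, then chase the commuting squares as in Lemmas \ref{I char} and \ref{inj iff inj}, re-indexing the lower cone of $\alpha$ by $\bar{\pi}^{\pm 1}$. Your bookkeeping is handled correctly throughout (in particular taking $\beta=\beta_0\bar{\pi}^{-1}$ for the $T_{\alpha}$ case and transporting kernels along $[\alpha,\gamma;\beta,\gamma\pi]$ for the isomorphism claims), so there is nothing to add.
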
 

Consequently, $o(I_{\alpha}^*)=o(I_{\beta}^*)$ for each $\alpha,\beta\in Y$ and $\langle I_{\alpha}^* \rangle_I$ is a characteristic subgroup of $G_{\alpha}$. Moreover, $I^*_{\alpha}$ and $T_{\alpha}$ are coprime by Corollary \ref{coprime char subgroups}. We fix the following subsets of $S$. 
\[ A(S):=[Y;\langle I^*_{\alpha} \rangle_I;\psi_{\alpha,\beta}^I] \text{ and } T(S):=[Y;T_{\alpha};\psi_{\alpha,\beta}^T], 
\] 
where $\psi_{\alpha,\beta}^I=\psi_{\alpha,\beta}|_{\langle I^*_{\alpha} \rangle_I}$ and $\psi_{\alpha,\beta}^T=\psi_{\alpha,\beta}|_{T_{\alpha}}$. 

\begin{lemma} \label{subsemigroups} For each $\alpha\in Y$, the subsets $A(S)$ and  $T(S)$ of $S$ are Clifford HIS's. Moreover, if $S$ is periodic then $A(S)$ and $T(S)$ are surjective. 
\end{lemma}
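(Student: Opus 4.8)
The plan is to treat the two assertions separately. The homogeneity of $A(S)$ and $T(S)$ will follow as a routine application of Lemma~\ref{char sub his}, while the surjectivity statement requires two quite different arguments, the one for $A(S)$ being the delicate point.

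First I would establish that $A(S)$ and $T(S)$ are Clifford HIS's. By Lemma~\ref{small stuff} the subgroups $T_\alpha$ and $\langle I^*_\alpha\rangle_I$ are characteristic subgroups of $G_\alpha$ and are pairwise isomorphic as $\alpha$ ranges over $Y$. Since each $G_\alpha$ is a homogeneous group, a characteristic subgroup of $G_\alpha$ is automatically order-characteristic as a subset of $G_\alpha$: if it meets the set of elements of order $n$, then any two elements of that order generate isomorphic cyclic subgroups, so homogeneity of $G_\alpha$ supplies an automorphism carrying one to the other, and invariance forces the subgroup to contain every element of order $n$ (this is the group-theoretic counterpart of Lemma~\ref{order els}, as noted after Lemma~\ref{char sub his}). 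Lemma~\ref{char sub his} now applies to each of $T_\alpha$ and $\langle I^*_\alpha\rangle_I$, showing that $T(S)$ and $A(S)$ are order-characteristic inverse subsemigroups of $S$ and, $S$ being a HIS, are themselves HIS's; as inverse subsemigroups of a Clifford semigroup they are moreover Clifford.

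For the surjectivity statement, assume $S$ periodic. For $T(S)$ I would argue directly. Let $x_\beta\in T_\beta$, so that $x_\beta\in K_{\beta,\gamma}$ for some $\gamma<\beta$. Since $S$ is surjective there is $y_\alpha\in G_\alpha$ with $y_\alpha\psi_{\alpha,\beta}=x_\beta$, and then
\[ y_\alpha\psi_{\alpha,\gamma}=(y_\alpha\psi_{\alpha,\beta})\psi_{\beta,\gamma}=x_\beta\psi_{\beta,\gamma}=e_\gamma, \]
so $y_\alpha\in K_{\alpha,\gamma}\subseteq T_\alpha$ is a preimage of $x_\beta$ lying in $T_\alpha$; hence $\psi^T_{\alpha,\beta}$ is onto (this part in fact does not use periodicity).

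The case of $A(S)$ is the main obstacle. Here a preimage of an element of $\langle I^*_\beta\rangle_I$ furnished by the surjectivity of $S$ need not lie in $\langle I^*_\alpha\rangle_I$, so the direct preimage argument above is unavailable; instead I would exploit the structure already developed. As $S$ is a surjective Clifford HIS its absolute kernels are trivial by Corollary~\ref{surj trivial ak}, and consequently so are those of $A(S)$: any $g_\alpha\in\langle I^*_\alpha\rangle_I$ annihilated by every $\psi^I_{\alpha,\beta}$ lies in $K^*_\alpha=\{e_\alpha\}$. Since $A(S)$ is a periodic Clifford HIS, Lemma~\ref{coprime}, applied to $A(S)$, expresses $\langle I^*_\alpha\rangle_I$ as the internal direct product of its own image subgroup and its own (trivial) absolute kernel; thus $\langle I^*_\alpha\rangle_I$ coincides with its image subgroup, which is exactly the assertion that the connecting morphisms $\psi^I_{\delta,\alpha}$ are surjective. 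The only genuine difficulty is this replacement of the unavailable direct preimage argument by the triviality of absolute kernels combined with the periodic decomposition of Lemma~\ref{coprime}.
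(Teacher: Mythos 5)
Your proof is correct, and most of it coincides with the paper's own argument. The homogeneity of $A(S)$ and $T(S)$ is obtained there exactly as you do it: the characteristic subgroups $T_{\alpha}$ and $\langle I^*_{\alpha}\rangle_I$ of Lemma \ref{small stuff} are fed into Lemma \ref{char sub his} (the paper verifies the inclusions $T_{\alpha}\psi_{\alpha,\beta}\subseteq T_{\beta}$ and $\langle I^*_{\alpha}\rangle_I\psi_{\alpha,\beta}\subseteq \langle I^*_{\beta}\rangle_I$ by hand, whereas you let the order-characteristic mechanism inside Lemma \ref{char sub his} do this; a negligible difference). Your treatment of $A(S)$ --- absolute kernels inherited as trivial from Corollary \ref{surj trivial ak}, then the periodic decomposition of Lemma \ref{coprime} forcing the maximal subgroup to equal its image subgroup --- is also the paper's argument, which cites Theorem \ref{homog Thm}, whose periodic case encapsulates precisely Lemma \ref{coprime} and Theorem \ref{first thm}. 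Where you genuinely differ is $T(S)$: the paper runs the same absolute-kernel argument uniformly for both subsemigroups, while you give a direct preimage-chasing proof, pulling $x_{\beta}\in K_{\beta,\gamma}$ back along the surjection $\psi_{\alpha,\beta}$ and observing that any preimage lies in $K_{\alpha,\gamma}\subseteq T_{\alpha}$. This is more elementary and proves slightly more: $T(S)$ is surjective whenever $S$ is, with no periodicity hypothesis, so periodicity is only truly needed for $A(S)$, where, as you correctly point out, a preimage of an element of $\langle I^*_{\beta}\rangle_I$ need not lie in $\langle I^*_{\alpha}\rangle_I$ and the absolute-kernel detour is unavoidable. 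The paper's uniform route is shorter on the page but obscures this asymmetry.
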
   

\begin{proof} To prove that $A(S)$ and $T(S)$ are inverse subsemigroups, it suffices to show that $\psi^T_{\alpha,\beta}$ and $\psi^I_{\alpha,\beta}$ are maps to $T_{\beta}$ and $\langle I_{\beta}^* \rangle_I$, respectively. If $k_{\alpha}\in T_{\alpha}$, say, $k_{\alpha}\in K_{\alpha,\gamma}$, then $k_{\alpha}\psi_{\alpha,\beta}\in K_{\beta,\beta\gamma}\subseteq T_{\beta}$. If $g_{\alpha}\in I^*_{\alpha}$ then as $o(g_{\alpha})=o(g_{\alpha}\psi_{\alpha,\beta})$ we have $g_{\alpha}\psi_{\alpha,\beta}\in I^*_{\beta}$ by Lemma \ref{order els}, and so $\langle I_{\alpha}^* \rangle_I \psi_{\alpha,\beta}^I\subseteq \langle I_{\beta}^* \rangle_I$ as required.
 Hence $A(S)$ and $T(S)$ are inverse subsemigroups and, by Lemma \ref{char sub his}, are HISs. 

Finally, as $G_{\alpha}$ has trivial absolute kernel, so do $\langle I_{\alpha}^* \rangle_I$ and $T_{\alpha}$. Hence as $S$ is periodic then it follows from Theorem \ref{homog Thm} that $A(S)$ and $T(S)$ are surjective.
\end{proof} 

\begin{lemma}\label{kernel neq} If $\alpha>\beta>\gamma$ in $Y$ then $K_{\alpha,\beta}\subsetneq K_{\alpha,\gamma}$.
\end{lemma}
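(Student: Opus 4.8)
The plan is to observe that one inclusion comes for free from the cocycle identity, and then to manufacture a witness for strictness using the two standing hypotheses on the connecting morphisms, namely surjectivity and non-injectivity.

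First I would recall that the containment $K_{\alpha,\beta}\subseteq K_{\alpha,\gamma}$ holds for any chain $\alpha>\beta>\gamma$: if $k_{\alpha}\in K_{\alpha,\beta}$ then
$k_{\alpha}\psi_{\alpha,\gamma}=(k_{\alpha}\psi_{\alpha,\beta})\psi_{\beta,\gamma}=e_{\beta}\psi_{\beta,\gamma}=e_{\gamma}$, so $k_{\alpha}\in K_{\alpha,\gamma}$. This is exactly the inclusion noted at the start of the section (with the roles of the lower two labels interchanged), and it uses nothing beyond the identity $\psi_{\alpha,\beta}\psi_{\beta,\gamma}=\psi_{\alpha,\gamma}$.

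All the content therefore lies in showing the inclusion is strict, that is, in producing an element of $K_{\alpha,\gamma}\setminus K_{\alpha,\beta}$. Since $\beta>\gamma$ and every connecting morphism is non-injective by the standing assumption of this subsection, the morphism $\psi_{\beta,\gamma}$ has non-trivial kernel, so I may choose $m_{\beta}\in K_{\beta,\gamma}$ with $m_{\beta}\neq e_{\beta}$. Since $\psi_{\alpha,\beta}$ is surjective, I may lift $m_{\beta}$ to some $m_{\alpha}\in G_{\alpha}$ with $m_{\alpha}\psi_{\alpha,\beta}=m_{\beta}$. Then $m_{\alpha}\psi_{\alpha,\gamma}=(m_{\alpha}\psi_{\alpha,\beta})\psi_{\beta,\gamma}=m_{\beta}\psi_{\beta,\gamma}=e_{\gamma}$, so $m_{\alpha}\in K_{\alpha,\gamma}$, whereas $m_{\alpha}\psi_{\alpha,\beta}=m_{\beta}\neq e_{\beta}$ shows $m_{\alpha}\notin K_{\alpha,\beta}$. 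This yields the required strict containment.

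There is essentially no obstacle here; the only point to get right is to invoke the two hypotheses in the correct places. Non-injectivity of $\psi_{\beta,\gamma}$ supplies a non-trivial kernel element in the \emph{middle} group $G_{\beta}$, and surjectivity of $\psi_{\alpha,\beta}$ is precisely what allows that element to be realised as the image of something in $G_{\alpha}$ while remaining outside $K_{\alpha,\beta}$.
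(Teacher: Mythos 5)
Your proof is correct, but it takes a genuinely different route from the paper's. The paper argues by contradiction using homogeneity: if $K_{\alpha,\beta}=K_{\alpha,\gamma}$, then a simple application of homogeneity forces $K_{\alpha,\delta}=K_{\alpha,\delta'}$ for all $\delta,\delta'<\alpha$, so the common kernel coincides with the absolute kernel $K^*_{\alpha}$, which is trivial by Corollary \ref{surj trivial ak}; this would make every connecting morphism out of $G_{\alpha}$ injective, contradicting the standing hypothesis. Your argument is instead a direct element-chase: take a non-trivial $m_{\beta}\in K_{\beta,\gamma}$ (non-injectivity of $\psi_{\beta,\gamma}$), lift it through the surjection $\psi_{\alpha,\beta}$, and observe that the lift lies in $K_{\alpha,\gamma}\setminus K_{\alpha,\beta}$. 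This buys you two things: the proof is purely algebraic, using only the identity $\psi_{\alpha,\beta}\psi_{\beta,\gamma}=\psi_{\alpha,\gamma}$ together with the surjective and non-injective hypotheses, so it invokes neither homogeneity nor the triviality of absolute kernels; consequently it establishes the strictly stronger statement that the conclusion holds in \emph{any} strong semilattice of groups whose connecting morphisms are surjective and non-injective, homogeneous or not. What the paper's argument buys is economy within its own development: having already proved Corollary \ref{surj trivial ak}, it dispatches the lemma in two lines at the level of whole kernels rather than individual elements. Both are sound; yours is arguably the more robust and self-contained of the two.
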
 

\begin{proof} If $K_{\alpha,\beta} =  K_{\alpha,\gamma}$, then it follows by a simple application of homogeneity that $K_{\alpha,\beta}=K_{\alpha,\beta'}$ for all $\beta,\beta'<\alpha$. Hence $T_{\alpha}=K_{\alpha,\beta}$ is the absolute kernel of $G_{\alpha}$ which, being trivial, implies that each connecting morphism is injective, a contradiction. 
\end{proof} 


\begin{lemma}\label{surj splits} For each $\alpha\in Y$ we have $G_{\alpha}=T_{\alpha}I^*_{\alpha}$. Consequently, if $I^*_{\alpha}$ forms a subgroup then $G_{\alpha}=T_{\alpha}\otimes I_{\alpha}^*$ and, if in addition $G_{\alpha}$ is non-periodic, then $I_{\alpha}^*$ is trivial, so that $G_{\alpha}=T_{\alpha}$.
\end{lemma}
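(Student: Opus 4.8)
The plan is to prove the set identity $G_{\alpha}=T_{\alpha}I_{\alpha}^*$ one element at a time, after reducing to elements of prime-power order and of infinite order. For $g_{\alpha}$ of finite order the cyclic group $\langle g_{\alpha}\rangle_I$ splits as a product $g_{\alpha}=g_{\alpha,1}\cdots g_{\alpha,r}$ of commuting powers of $g_{\alpha}$ of prime-power orders $p_i^{n_i}$; for $g_{\alpha}$ of infinite order there is nothing to split. So it is enough to place each prime-power-order element and each infinite-order element in $T_{\alpha}\cup I_{\alpha}^*$, and then to gather the $T_{\alpha}$-constituents into one factor $t\in T_{\alpha}$ and the $I_{\alpha}^*$-constituents into a second factor $i\in I_{\alpha}^*$ with $g_{\alpha}=ti$. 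The one delicate point in gathering is that a product of commuting elements of $I_{\alpha}^*$ of pairwise coprime orders is again in $I_{\alpha}^*$: each $\psi_{\alpha,\beta}$ preserves the order of such a factor, and a product of commuting images of coprime orders has order the product of those orders, so the whole product again has preserved order.

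First I would record the prime-order dichotomy. If $x_{\alpha}$ has prime order $p$ then $o(x_{\alpha}\psi_{\alpha,\beta})\in\{1,p\}$ for each $\beta<\alpha$, so either some image is trivial, giving $x_{\alpha}\in K_{\alpha,\beta}\subseteq T_{\alpha}$, or every image has order $p$, giving $x_{\alpha}\in I_{\alpha}^*$ (here $I_{\alpha}=G_{\alpha}$ by surjectivity). Since $T_{\alpha}$ and $I_{\alpha}^*$ are order-characteristic subsets of the homogeneous group $G_{\alpha}$ (Lemma \ref{order els}) of coprime order, each prime $p\in o(G_{\alpha})$ lies in exactly one of $o(T_{\alpha})$, $o(I_{\alpha}^*)$, and all elements of order $p$ lie in the corresponding set.

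Then I would extend this to prime powers. For $I_{\alpha}^*$: if $p\in o(I_{\alpha}^*)$ and $h$ has order $p^k$, then $h^{p^{k-1}}$ has order $p$ and hence lies in $I_{\alpha}^*$, so $(h\psi_{\alpha,\beta})^{p^{k-1}}$ has order $p$ for every $\beta<\alpha$; this forces $o(h\psi_{\alpha,\beta})=p^k=o(h)$ and so $h\in I_{\alpha}^*$. For $T_{\alpha}$ I would induct on $k$: if $p\in o(T_{\alpha})$ and $h$ has order $p^k$, then $h^p$ has order $p^{k-1}$ and lies in $T_{\alpha}$ by induction, so for a suitable $\beta<\alpha$ the image $h\psi_{\alpha,\beta}$ has order dividing $p$; if this order is $1$ then $h\in T_{\alpha}$, and if it is $p$ then, since $T_{\beta}\cong T_{\alpha}$ (Lemma \ref{small stuff}) gives $p\in o(T_{\beta})$, the dichotomy applied at level $\beta$ places $h\psi_{\alpha,\beta}$ in some $K_{\beta,\gamma}$, whence $h\psi_{\alpha,\gamma}=e_{\gamma}$ and $h\in T_{\alpha}$. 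For infinite order, an element $g_{\alpha}$ of infinite order either has $o(g_{\alpha}\psi_{\alpha,\beta})=\aleph_0$ for all $\beta$, so $g_{\alpha}\in I_{\alpha}^*$, or has a finite image at some $\beta$, in which case a suitable power $g_{\alpha}^n$ is an infinite-order element of $K_{\alpha,\beta}\subseteq T_{\alpha}$; as $\aleph_0$ lies in at most one of $o(T_{\alpha}),o(I_{\alpha}^*)$ by coprimality, order-characteristicity places every infinite-order element in the indicated one of the two sets. This proves $G_{\alpha}=T_{\alpha}I_{\alpha}^*$.

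For the consequences, if $I_{\alpha}^*$ is a subgroup then, being a characteristic subset which is a subgroup, it is normal, so $T_{\alpha}\cap I_{\alpha}^*=\{e_{\alpha}\}$ together with $G_{\alpha}=T_{\alpha}I_{\alpha}^*$ yields the internal direct product $G_{\alpha}=T_{\alpha}\otimes I_{\alpha}^*$. If moreover $G_{\alpha}$ is non-periodic, I would first show every infinite-order element lies in $T_{\alpha}$: if instead they all lay in $I_{\alpha}^*$, then for a non-trivial $t\in T_{\alpha}$ (which exists as each $\psi_{\alpha,\beta}$ is non-injective) and an infinite-order $x_{\alpha}\in I_{\alpha}^*$ the element $x_{\alpha}t$ would have infinite order and hence lie in $I_{\alpha}^*$, forcing $t=x_{\alpha}^{-1}(x_{\alpha}t)\in I_{\alpha}^*\cap T_{\alpha}=\{e_{\alpha}\}$, a contradiction. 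Adapting the proof of Lemma \ref{absolute image trivial}, for non-trivial $g_{\alpha}\in I_{\alpha}^*$ (necessarily of finite order) the product $x_{\alpha}g_{\alpha}$ has infinite order, so lies in some $K_{\alpha,\gamma}$ while $x_{\alpha}\in K_{\alpha,\beta}$; as $K_{\alpha,\beta\gamma}$ contains both, we get $g_{\alpha}\in K_{\alpha,\beta\gamma}\subseteq T_{\alpha}$ and so $g_{\alpha}\in T_{\alpha}\cap I_{\alpha}^*=\{e_{\alpha}\}$, giving $I_{\alpha}^*=\{e_{\alpha}\}$ and $G_{\alpha}=T_{\alpha}$. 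I expect the main obstacle to be the prime-power closure of $T_{\alpha}$, where a direct argument only shows image orders drop rather than vanish; the fix is the induction together with transporting $p$ down via $T_{\alpha}\cong T_{\beta}$ and reusing the prime-order dichotomy at the lower level.
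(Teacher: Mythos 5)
Your proposal is correct, but it reaches the identity $G_{\alpha}=T_{\alpha}I_{\alpha}^*$ by a genuinely different route than the paper. The paper handles an arbitrary finite-order element $a_{\alpha}\notin T_{\alpha}\cup I_{\alpha}^*$ in a single stroke: it chooses $\beta<\alpha$ so that $m=o(a_{\alpha}\psi_{\alpha,\beta})$ is \emph{minimal}, whence $a_{\alpha}\psi_{\alpha,\beta}\in I_{\beta}^*$ and, by $o(I_{\beta}^*)=o(I_{\alpha}^*)$ together with order-characteristicity (Lemma~\ref{order els}), $a_{\alpha}^{k}\in I_{\alpha}^*$ where $o(a_{\alpha})=mk$, while $a_{\alpha}^{m}\in K_{\alpha,\beta}\subseteq T_{\alpha}$; coprimality of $o(T_{\alpha})$ and $o(I_{\alpha}^*)$ then makes $m$ and $k$ coprime, and B\'ezout gives $a_{\alpha}=(a_{\alpha}^{m})^{r}(a_{\alpha}^{k})^{s}\in T_{\alpha}I_{\alpha}^*$. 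You instead split $a_{\alpha}$ into its primary constituents and prove two closure statements: prime-power closure of $I_{\alpha}^*$, which is exactly (and is proved exactly as) the first assertion of the paper's later Lemma~\ref{prime closure}, and prime-power closure of $T_{\alpha}$, which the paper never states; your induction for the latter, descending to level $\beta$ via $T_{\alpha}\cong T_{\beta}$ (Lemma~\ref{small stuff}) and re-applying the prime dichotomy there, is sound, as is the gathering step that a product of commuting, pairwise-coprime-order elements of $I_{\alpha}^*$ remains in $I_{\alpha}^*$. As for what each approach buys: the paper's minimality-plus-B\'ezout argument is shorter and needs no closure lemmas at all, while your route is longer but modular, pre-proves half of Lemma~\ref{prime closure}, and records the closure of $T_{\alpha}$ under prime powers as a reusable by-product. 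For the infinite-order elements and the two consequences the proofs essentially coincide --- where the paper merely says ``by a similar argument to the proof of Lemma~\ref{coprime}'' you spell that argument out, correctly exploiting the already-established decomposition $G_{\alpha}=T_{\alpha}\otimes I_{\alpha}^*$ to obtain the commutation needed to see that the relevant products have infinite order.
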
 

\begin{proof} Let $a_{\alpha}\in G_{\alpha} \setminus (T_{\alpha}\cup I^*_{\alpha})$ have finite order $n$. Then there exists $\beta<\alpha$ such that $a_{\alpha}\psi_{\alpha,\beta}=a_{\beta}$ has order $m<n$, say, $n=mk$. We choose $\beta$ so that  $a_{\beta}$ is of minimal order, noting that $a_{\beta}\neq e_{\beta}$ as $a_{\alpha}\not\in T_{\alpha}$. Then $a_{\beta}\in I^*_{\beta}$, since if $o(a_{\beta}\psi_{\beta,\gamma})<m$ for some $\gamma<\beta$ then $o(a_{\alpha}\psi_{\alpha,\gamma})<m$, contradicting the minimality of $m$. Since $o(I_{\alpha}^*)=o(I_{\beta}^*)$, it follows from Lemma \ref{order els} that $a_{\alpha}^k\in I^*_{\alpha}$.
 Moreover $a_{\alpha}^m\psi_{\alpha,\beta}=a_{\beta}^m=e_{\beta}$, so $a_{\alpha}^m\in T_{\alpha}$. Hence as $T_{\alpha}$ is characteristic , contains all elements of order $k$.  Hence as $T_{\alpha}$ is characteristic by Lemma \ref{small stuff}, $T_{\alpha}$ contains all elements of order $k$ by Lemma \ref{order els}. Since $I^*_{\alpha}$ and $T_{\alpha}$ are coprime, there exists $r,s\in \mathbb{Z}$ such that $rm+sk=1$, and so 
\[ a_{\alpha}=a_{\alpha}^{rm+sk}=(a_{\alpha}^m)^r(a_{\alpha}^k)^s\in T_{\alpha}I^*_{\alpha}. 
\] 
Now let $b_{\alpha}$ be an element of infinite order. If there exists  $\beta$ such that $a_{\alpha}\psi_{\alpha,\beta}$ has finite order $n$ then $a_{\alpha}^n\in T_{\alpha}$ and so $T_{\alpha}$ contains all elements of infinite order. Otherwise, no such $\beta$ exists, so that $b_{\alpha}\in I^*_{\alpha}$ and $I^*_{\alpha}$ contains all elements of infinite order. Hence $G_{\alpha}=T_{\alpha}I^*_{\alpha}$. 

Now suppose $I^*_{\alpha}$ forms a subgroup. Then as $I^*_{\alpha}$ and $T_{\alpha}$ are trivial intersecting characteristic, and thus normal, subgroups of $G_{\alpha}$ and $G_{\alpha}=T_{\alpha}I^*_{\alpha}$, it follows that $G_{\alpha}=T_{\alpha}\otimes I^*_{\alpha}$. 
If $G_{\alpha}$ is non-periodic, then we have shown that every element of $G_\alpha$ of infinite order lies in $T_{\alpha}\cup I_{\alpha}^*$. It follows by a similar argument  to the proof of Lemma \ref{coprime} that $G_{\alpha}$ equals $T_{\alpha}$ or $I^*_{\alpha}$. 
Since the connecting morphisms are non-injective we thus have $G_{\alpha}=T_{\alpha}$ and $I^*_\alpha$ is trivial. 
\end{proof}

\begin{lemma}\label{prime closure} The subset $I^*_{\alpha}$ is closed under prime powers. Moreover, $I^*_{\alpha}$ forms a subgroup if and only if $\langle I^*_{\alpha} \rangle_I$ and $T_{\alpha}$ intersect trivially.
\end{lemma}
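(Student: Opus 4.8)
The plan is to treat the two assertions separately, handling the prime-power closure by induction and the subgroup criterion by a direct factorisation argument. For the first assertion I would show by induction on $k\ge 1$ that if $p\in o(I^*_\alpha)$ and $p^k\in o(G_\alpha)$ then $p^k\in o(I^*_\alpha)$; the base case $k=1$ is the hypothesis. For the inductive step, take $z_\alpha\in G_\alpha$ of order $p^k$, so that $z_\alpha^p$ has order $p^{k-1}$. By the inductive hypothesis $p^{k-1}\in o(I^*_\alpha)$, and since $I^*_\alpha$ is a characteristic subset of the homogeneous group $G_\alpha$ it is order-characteristic (Lemmas \ref{small stuff} and \ref{order els}), whence $z_\alpha^p\in I^*_\alpha$. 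Membership in $I^*_\alpha$ then gives, for every $\beta<\alpha$, $o\bigl((z_\alpha\psi_{\alpha,\beta})^p\bigr)=o(z_\alpha^p\psi_{\alpha,\beta})=o(z_\alpha^p)=p^{k-1}$, which forces $o(z_\alpha\psi_{\alpha,\beta})=p^k=o(z_\alpha)$. As the connecting morphisms are surjective we have $I_\alpha=G_\alpha$, so this says precisely that $z_\alpha\in I^*_\alpha$, closing the induction.

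For the equivalence, the forward direction is immediate: if $I^*_\alpha$ is a subgroup then $\langle I^*_\alpha\rangle_I=I^*_\alpha$, and Lemma \ref{small stuff} gives $I^*_\alpha\cap T_\alpha=\{e_\alpha\}$. For the converse I would assume $\langle I^*_\alpha\rangle_I\cap T_\alpha=\{e_\alpha\}$ and verify that $I^*_\alpha$ is a subgroup. It contains $e_\alpha$ and is closed under inverses, since for $g_\alpha\in I^*_\alpha$ and each $\beta<\alpha$ one has $o(g_\alpha^{-1}\psi_{\alpha,\beta})=o\bigl((g_\alpha\psi_{\alpha,\beta})^{-1}\bigr)=o(g_\alpha\psi_{\alpha,\beta})=o(g_\alpha)=o(g_\alpha^{-1})$. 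The only real content is closure under products: given $g_\alpha,h_\alpha\in I^*_\alpha$, write $g_\alpha h_\alpha=tk$ with $t\in T_\alpha$ and $k\in I^*_\alpha$ using $G_\alpha=T_\alpha I^*_\alpha$ (Lemma \ref{surj splits}); then $t=(g_\alpha h_\alpha)k^{-1}$ lies in the subgroup $\langle I^*_\alpha\rangle_I$, so $t\in T_\alpha\cap\langle I^*_\alpha\rangle_I=\{e_\alpha\}$, and hence $g_\alpha h_\alpha=k\in I^*_\alpha$. Thus $I^*_\alpha$ is a subgroup, and consequently $\langle I^*_\alpha\rangle_I=I^*_\alpha$.

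The step requiring the most care is the product-closure argument, which is exactly where the trivial-intersection hypothesis does all the work: one must extract the $T_\alpha$-part of a product of two elements of $I^*_\alpha$ and recognise it as lying in $\langle I^*_\alpha\rangle_I$. Since the maximal subgroups $G_\alpha$ need not be abelian, the factorisation $g_\alpha h_\alpha=tk$ coming from $G_\alpha=T_\alpha I^*_\alpha$ must be used without assuming that $t$ and $k$ commute, though fortunately the conclusion $t\in T_\alpha\cap\langle I^*_\alpha\rangle_I$ is insensitive to the order of the factors. The prime-power induction, by contrast, is routine once the surjectivity of the connecting morphisms is invoked to identify $I_\alpha$ with $G_\alpha$.
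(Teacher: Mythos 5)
Your proof is correct, and its second half takes a genuinely different route from the paper's. For the prime-power closure your induction is essentially the paper's argument: the paper avoids induction by noting that for $g_{\alpha}$ of order $p^r$ the element $g_{\alpha}^{p^{r-1}}$ has order $p$, hence lies in $I^*_{\alpha}$ by order-characteristicity (Lemmas \ref{small stuff} and \ref{order els}), and then runs exactly your computation to conclude $o(g_{\alpha}\psi_{\alpha,\beta})=p^r$; climbing one power at a time instead changes nothing of substance. The real divergence is in the subgroup criterion. The paper proves closure of $I^*_{\alpha}$ under $g_{\alpha}h_{\alpha}^{-1}$ directly, by a case analysis on images: if $(g_{\alpha}h_{\alpha}^{-1})\psi_{\alpha,\beta}$ has finite order $m$ for some $\beta<\alpha$, then $(g_{\alpha}h_{\alpha}^{-1})^m$ lies in $K_{\alpha,\beta}\cap\langle I^*_{\alpha}\rangle_I$, which is trivial by hypothesis, forcing $o(g_{\alpha}h_{\alpha}^{-1})=m$ and hence membership in $I^*_{\alpha}$; if all images have infinite order, membership is immediate. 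You instead invoke the factorisation $G_{\alpha}=T_{\alpha}I^*_{\alpha}$ of Lemma \ref{surj splits} (which precedes this lemma in the paper, so there is no circularity) and extract the $T_{\alpha}$-part of a product as $(g_{\alpha}h_{\alpha})k^{-1}\in T_{\alpha}\cap\langle I^*_{\alpha}\rangle_I=\{e_{\alpha}\}$, having checked closure under inverses separately. Your argument is shorter and avoids all discussion of orders of images, at the cost of leaning on the earlier structural lemma; the paper's argument is self-contained modulo the definition of $T_{\alpha}$ as a union of kernels, and incidentally records the exact order of $g_{\alpha}h_{\alpha}^{-1}$. Your attention to non-commutativity of $G_{\alpha}$ is warranted and correctly handled, since the step $t=(g_{\alpha}h_{\alpha})k^{-1}$ never needs $t$ and $k$ to commute.
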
 

\begin{proof} Suppose $p\in o(I_{\alpha}^*)$ for some prime $p$, and let $g_{\alpha}\in G_{\alpha}$ be of order $p^r$. Then $g_{\alpha}^{p^{r-1}}$ has order $p$, and is therefore an element of $I_{\alpha}^*$ by Lemma \ref{order els}. Hence $(g_{\alpha}\psi_{\alpha,\beta})^{p^{r-1}}=g_{\alpha}^{p^{r-1}}\psi_{\alpha,\beta}$ is of order $p$ for any $\beta<\alpha$, and $(g_{\alpha}\psi_{\alpha,\beta})^{p^r}=e_\beta$, so that $o(g_{\alpha}\psi_{\alpha,\beta})=p^r=o(g_{\alpha})$. We thus have that $g_{\alpha}\in I_{\alpha}^*$, and so $I_\alpha^*$ is closed under prime powers
 
 Now suppose $\langle I^*_{\alpha} \rangle_I \cap T_{\alpha}=\{e_{\alpha}\}$ and let $g_{\alpha},h_{\alpha}\in I^*_{\alpha}$. If $(g_{\alpha}h_{\alpha}^{-1})\psi_{\alpha,\beta}$ has finite order $m$ for some $\beta<\alpha$ then $(g_{\alpha}h_{\alpha}^{-1})^m\in K_{\alpha,\beta}\subseteq T_{\alpha}$. However, $(g_{\alpha}h_{\alpha}^{-1})^m\in \langle I^*_{\alpha} \rangle_I$ and so $(g_{\alpha}h_{\alpha}^{-1})^m=e_{\alpha}$. It follows that $g_{\alpha}h_{\alpha}^{-1}$ has order $m$, since its order is at least the order of its image, and so $g_{\alpha}h_{\alpha}^{-1}\in I^*_{\alpha}$. Otherwise, $(g_{\alpha}h_{\alpha}^{-1})\psi_{\alpha,\beta}$ has infinite order for all $\beta<\alpha$, so that $g_{\alpha}h_{\alpha}^{-1}$ is also of infinite order, and so $g_{\alpha}h_{\alpha}^{-1}\in I^*_{\alpha}$. The converse is immediate from Lemma \ref{small stuff}.
 \end{proof}  


This lemma points towards a positive answer to the following question. 

\begin{open} Are the absolute images of a surjective Clifford HIS with non-injective connecting morphisms necessarily subgroups? 
\end{open} 

\begin{lemma}\label{ab image trivial} If the absolute images of $S$ form subgroups isomorphic to $I^*$ then
\[ S\cong T(S) \bowtie (Y\times I^*), 
\]
where if $S$ is non-periodic then $I^*$ is trivial. 
\end{lemma}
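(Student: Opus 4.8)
The plan is to split into the non-periodic and periodic cases, treating the former as a quick degeneration and the latter via a spined-product decomposition followed by an application of Corollary~\ref{iso spined}.

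First I would dispose of the non-periodic case. Here the hypothesis that the absolute images form subgroups lets me invoke the last part of Lemma~\ref{surj splits}, which forces $I^*_{\alpha}$ to be trivial; hence $G_{\alpha}=T_{\alpha}$ for every $\alpha$ and so $S=T(S)$. Since $I^*$ is then trivial, $Y\times I^*\cong Y$ and the spined product $T(S)\bowtie(Y\times I^*)$ collapses to $T(S)$, giving the claimed isomorphism with $I^*$ trivial. So from now on I assume $S$ is periodic.

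For the periodic case, the first step is to realise $S$ as a spined product $T(S)\bowtie A(S)$. By Lemma~\ref{surj splits} the subgroup hypothesis gives $G_{\alpha}=T_{\alpha}\otimes I^*_{\alpha}$, and by Lemma~\ref{small stuff} (together with Corollary~\ref{coprime char subgroups}) the factors $T_{\alpha}$ and $I^*_{\alpha}$ are periodic of coprime order. Since each connecting morphism restricts to maps $T_{\alpha}\to T_{\beta}$ and $I^*_{\alpha}\to I^*_{\beta}$ by Lemma~\ref{subsemigroups}, Lemma~\ref{iso of direct prod} shows $\psi_{\alpha,\beta}=\psi^T_{\alpha,\beta}\otimes\psi^I_{\alpha,\beta}$. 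Feeding this into Lemma~\ref{spined to cliff} yields $S\cong T(S)\bowtie A(S)$, where, as $\langle I^*_{\alpha}\rangle_I=I^*_{\alpha}$ under our hypothesis, $A(S)=[Y;I^*_{\alpha};\psi^I_{\alpha,\beta}]$.

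The key step --- and where I expect the only real work to lie --- is to identify $A(S)$ with $Y\times I^*$, which amounts to showing each $\psi^I_{\alpha,\beta}$ is an isomorphism. Injectivity is immediate from the defining property of the absolute image: a nontrivial $g_{\alpha}\in I^*_{\alpha}$ satisfies $o(g_{\alpha}\psi_{\alpha,\beta})=o(g_{\alpha})>1$, so its image is nontrivial. Surjectivity is precisely the assertion that $A(S)$ is a surjective HIS, which holds in the periodic case by Lemma~\ref{subsemigroups}. With each $\psi^I_{\alpha,\beta}$ an isomorphism, Lemma~\ref{iso state} gives $A(S)\cong Y\times I^*$ with $I^*\cong I^*_{\alpha}$. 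Finally I would swap $A(S)$ for $Y\times I^*$ inside the spined product using Corollary~\ref{iso spined}. To apply it I need both second factors to be structure-HIS's: $Y$ is homogeneous by Corollary~\ref{structure clifford}, while $I^*\cong\langle I^*_{\alpha}\rangle_I$ is a characteristic, hence (by Lemma~\ref{rel char homog}) homogeneous, subgroup of the homogeneous group $G_{\alpha}$. Thus $Y\times I^*$ is a structure-HIS by the remark following Lemma~\ref{iso state}, and $A(S)$ is a structure-HIS by Proposition~\ref{isos structure hom}. Since the first factors agree and $A(S)\cong Y\times I^*$, Corollary~\ref{iso spined} gives $T(S)\bowtie A(S)\cong T(S)\bowtie(Y\times I^*)$, and combining with the decomposition of the previous step completes the proof.
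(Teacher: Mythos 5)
Your proof is correct and takes essentially the same route as the paper: Lemma~\ref{surj splits} together with Lemmas~\ref{iso of direct prod} and~\ref{spined to cliff} give $S\cong T(S)\bowtie A(S)$, the connecting morphisms of $A(S)$ are shown to be isomorphisms (surjectivity from Lemma~\ref{subsemigroups} in both cases; for injectivity you argue directly from the defining order-preservation property of $I^*_{\alpha}$, while the paper uses $I^*_{\alpha}\cap T_{\alpha}=\{e_{\alpha}\}$ and $\mathrm{Ker}\,\psi^I_{\alpha,\beta}\subseteq T_{\alpha}$ --- a negligible difference), and then Lemma~\ref{iso state} and Corollary~\ref{iso spined} finish the periodic case exactly as in the paper. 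The non-periodic case is likewise read off from Lemma~\ref{surj splits} in both arguments.
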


\begin{proof} 
 Suppose first that $S$ is periodic. Since $I^*_{\alpha}$ is a subgroup, $A(S)=[Y;I^*_{\alpha};\psi_{\alpha,\beta}^I]$ is a surjective Clifford subsemigroup of $S$ by Lemma \ref{subsemigroups}. By Lemma \ref{surj splits} we have $G_{\alpha}=T_{\alpha}\otimes I_{\alpha}^*$, where $T_{\alpha}$ and $I_{\alpha}^*$ are coprime, so that $\psi_{\alpha,\beta}=\psi^T_{\alpha,\beta}\otimes \psi_{\alpha,\beta}^I$ by Lemma \ref{iso of direct prod}. 
 Hence $S\cong T(S) \bowtie I(S)$ by Lemma \ref{spined to cliff}.
   Moreover, we have $I^*_{\alpha}\cap T_{\alpha}=\{e_{\alpha}\}$ by Lemma \ref{small stuff}, so in particular $I^*_{\alpha}\cap \text{Ker }\psi_{\alpha,\beta}^I=\{e_{\alpha}\}$, and so each connecting morphism $\psi_{\alpha,\beta}^I$ is injective, and thus an isomorphism. Hence $[Y;I_{\alpha};\psi_{\alpha,\beta}^I]\cong Y\times I^*$ by Lemma \ref{iso state}, and the first result then follows from Corollary \ref{iso spined} and the fact that $Y\times I^*$ is a structure-HIS by Proposition \ref{isos structure hom}. 

The final result is immediate from Lemma \ref{surj splits}. 
\end{proof} 

We  have proven the forward direction direction of the subsequent Theorem. The converse holds by Proposition \ref{structure sums 2}, since the inverse semigroup $Y\times I^*$ is a structure-HIS when $Y$ and $I^*$ are homogeneous.  

\begin{theorem}\label{surj main result} Let $S$ be a surjective Clifford semigroup such that each absolute image forms a subgroup and the connecting morphisms are surjective but not injective. Then $S$ is a HIS if and only if there exists a homogeneous semilattice $Y$, a homogeneous group $G=T \otimes I^*$ where $T$ and $I^*$ are of coprime order if $G$ is periodic, or  $I^*$ is trivial otherwise, such that $S$ is isomorphic to 
\[ [Y;T_{\alpha};\psi_{\alpha,\beta}^T]\bowtie (Y\times I^*)
\] 
where $T_{\alpha}\cong T$ for each $\alpha\in Y$ and $[Y;T_{\alpha};\psi_{\alpha,\beta}^T]$ is a surjective Clifford HIS with  $T_{\alpha}$ being the union of the kernels, none of which are equal. 
\end{theorem}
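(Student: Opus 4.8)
The plan is to read off the forward direction from the structural lemmas of this subsection and to obtain the converse from Proposition~\ref{structure sums 2}, so that the bulk of the argument is assembly rather than fresh computation.

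For the forward direction I would start by invoking Corollary~\ref{structure clifford} to record that $Y$ is homogeneous and the maximal subgroups $G_\alpha$ are pairwise isomorphic homogeneous groups; fix $G \cong G_\alpha$. Since by hypothesis each absolute image $I^*_\alpha$ is a subgroup, Lemma~\ref{surj splits} supplies the internal decomposition $G_\alpha = T_\alpha \otimes I^*_\alpha$, with $T_\alpha$ and $I^*_\alpha$ of coprime order in the periodic case and $I^*_\alpha$ trivial in the non-periodic case (the latter forced by the non-injectivity of the connecting morphisms). Transporting this decomposition along an isomorphism $G_\alpha \cong G$ gives $G = T \otimes I^*$ with the coprimality (resp.\ triviality) asserted, and Corollary~\ref{coprime homog} then yields that $T$ and $I^*$ are themselves homogeneous. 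The decomposition of $S$ is exactly Lemma~\ref{ab image trivial}, namely $S \cong T(S)\bowtie(Y\times I^*)$; Lemma~\ref{subsemigroups} identifies $T(S) = [Y;T_\alpha;\psi^T_{\alpha,\beta}]$ as a surjective Clifford HIS, the equality $T_\alpha = \bigcup_{\beta<\alpha}K_{\alpha,\beta}$ is the definition of $T_\alpha$, and Lemma~\ref{kernel neq} shows that no two of these kernels coincide.

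For the converse, suppose $S$ is presented as $[Y;T_\alpha;\psi^T_{\alpha,\beta}]\bowtie(Y\times I^*)$ with the stated data. In the non-periodic case $I^*$ is trivial, so $S \cong [Y;T_\alpha;\psi^T_{\alpha,\beta}]$ is a surjective Clifford HIS by hypothesis and there is nothing further to prove. In the periodic case I would rewrite the spined product, via Lemma~\ref{spined to cliff}, as the Clifford semigroup $[Y;T_\alpha\otimes I^*_\alpha;\psi^T_{\alpha,\beta}\otimes 1]$ with $T_\alpha$ and $I^*_\alpha$ of coprime order; here the factor $Y\times I^*$ is a structure-HIS by Proposition~\ref{isos structure hom} because $Y$ and $I^*$ are homogeneous. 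Proposition~\ref{structure sums 2}, taken with $S^H = Y\times I^*$ and $S^K = [Y;T_\alpha;\psi^T_{\alpha,\beta}]$, then gives that $S$ is a HIS precisely when $S^K$ is; the latter holds by hypothesis.

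The only genuine subtlety, and hence the step I would watch most carefully, is keeping the two decompositions compatible: the coprime splitting $G_\alpha = T_\alpha\otimes I^*_\alpha$ must be the same one used both when quoting Lemma~\ref{ab image trivial} and when applying Proposition~\ref{structure sums 2}, and I would check that the induced semilattice isomorphisms on the two spined factors agree (as flagged in the Note preceding Corollary~\ref{iso spined}) so that the spined product really is the object to which Proposition~\ref{structure sums 2} applies. Everything else reduces to bookkeeping with results already in hand.
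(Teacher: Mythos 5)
Your proposal is correct and takes essentially the same route as the paper: the forward direction is precisely the assembly of Corollary \ref{structure clifford}, Lemma \ref{surj splits}, Lemma \ref{ab image trivial}, Lemma \ref{subsemigroups} and Lemma \ref{kernel neq}, and the converse is the paper's own one-line appeal to Proposition \ref{structure sums 2} together with the fact that $Y\times I^*$ is a structure-HIS (via Proposition \ref{isos structure hom} and Corollary \ref{coprime homog}). The only microscopic imprecision is your citation of Lemma \ref{subsemigroups} for surjectivity of $T(S)$ in the non-periodic case, where that lemma only asserts surjectivity for periodic $S$; there $I^*_{\alpha}$ is trivial by Lemma \ref{surj splits}, so $T(S)=S$ and surjectivity is just the standing hypothesis.
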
 

In the case when the absolute images forms subgroups, it consequently suffices to consider the homogeneity of a surjective Clifford semigroup $[Y;T_{\alpha};\psi_{\alpha,\beta}]$, with $Y$ and $T_{\alpha}$ homogeneous and  $T_{\alpha}$ being a (dense) union of the kernels of the connecting morphisms, none of which are equal. 

\begin{open}  Which homogeneous groups are a dense union of isomorphic normal subgroups? 
\end{open} 

A group is \textit{co-Hopfian} if it is not isomorphic to a proper subgroup\footnote{Non co-Hopfian groups are also known as $I$-groups.}. This is equivalent to every injective morphism being an automorphism. Dually, a group is \textit{Hopfian} if it is not isomorphic a proper quotient or, equivalently, if every surjective morphism is an automorphism. An immediate consequence of the following lemma is that $T_{\alpha}$ is both non Hopfian and non co-Hopfian:  

\begin{lemma}\label{T is kernel} Let  $[Z;H_{\alpha};\phi_{\alpha,\beta}]$ be a HIS with each connecting morphism surjective but not injective and such that $H_{\alpha}=\bigcup_{\beta<\alpha}$ Ker $\phi_{\alpha,\beta}$ for each $\alpha\in Z$. Then $H_{\alpha}$ is non hopfian and non co-Hopfian, with $H_{\alpha}\cong$  Ker $\phi_{\alpha,\beta}$.
\end{lemma}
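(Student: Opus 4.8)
The plan is to prove the central isomorphism $H_{\alpha}\cong \mathrm{Ker}\,\phi_{\alpha,\beta}$ first and to read off the two (co-)Hopfian statements from it. Write $K_{\alpha,\beta}=\mathrm{Ker}\,\phi_{\alpha,\beta}$. Since $[Z;H_{\alpha};\phi_{\alpha,\beta}]$ is a Clifford HIS, Corollary \ref{structure clifford} tells us that the $H_{\alpha}$ are pairwise isomorphic homogeneous groups, Lemma \ref{I char} that each $K_{\alpha,\beta}$ is a homogeneous group, and Lemma \ref{inj iff inj} that all kernels $K_{\gamma,\delta}$ taken over comparable pairs $\gamma>\delta$ in $Z$ are pairwise isomorphic. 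Note also that $H_{\alpha}$ is non-trivial, since otherwise every $H_{\gamma}$ would be trivial and every connecting morphism injective, contrary to hypothesis.

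The key step is to show that $H_{\alpha}$ and $K_{\alpha,\beta}$ have the same age, after which the result follows from the uniqueness part of Fra\"iss\'e's Theorem: both are countable groups, hence HIS (a group being a HIS precisely when it is a homogeneous group), and both are homogeneous, so any two such with equal age must be isomorphic. The inclusion of the age of $K_{\alpha,\beta}$ into that of $H_{\alpha}$ is immediate, as $K_{\alpha,\beta}$ is a subgroup of $H_{\alpha}$. For the reverse inclusion I would first observe that the family $\{K_{\alpha,\gamma}:\gamma<\alpha\}$ is directed under inclusion: for $\gamma_{1},\gamma_{2}<\alpha$ the meet satisfies $\gamma_{1}\gamma_{2}\leq\gamma_{1}<\alpha$, and $K_{\alpha,\gamma_{i}}\subseteq K_{\alpha,\gamma_{1}\gamma_{2}}$ since the kernels satisfy $K_{\alpha,\gamma}\subseteq K_{\alpha,\delta}$ whenever $\alpha>\gamma\geq\delta$ (established earlier). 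Hence the hypothesis $H_{\alpha}=\bigcup_{\gamma<\alpha}K_{\alpha,\gamma}$ exhibits $H_{\alpha}$ as a directed union of subgroups, so any finitely generated subgroup $F\leq H_{\alpha}$ is contained in a single $K_{\alpha,\gamma}$. As $K_{\alpha,\gamma}\cong K_{\alpha,\beta}$ by Lemma \ref{inj iff inj}, $F$ embeds in $K_{\alpha,\beta}$; this gives the required inclusion of ages, and therefore $H_{\alpha}\cong K_{\alpha,\beta}$.

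With this in hand the remaining assertions are quick. As $\phi_{\alpha,\beta}$ is surjective with $H_{\beta}\cong H_{\alpha}$ non-trivial, the kernel $K_{\alpha,\beta}$ is a proper subgroup of $H_{\alpha}$; being isomorphic to $H_{\alpha}$, it exhibits $H_{\alpha}$ as isomorphic to a proper subgroup of itself, so $H_{\alpha}$ is non co-Hopfian. Dually, as $\phi_{\alpha,\beta}$ is not injective we have $K_{\alpha,\beta}\neq\{e_{\alpha}\}$, while $H_{\alpha}/K_{\alpha,\beta}\cong H_{\beta}\cong H_{\alpha}$, so $H_{\alpha}$ is isomorphic to a proper quotient and hence non Hopfian. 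The main obstacle is the isomorphism $H_{\alpha}\cong K_{\alpha,\beta}$: rather than trying to build it explicitly, the essential idea is to pass to ages and invoke homogeneity, the one technical point being the verification that the kernels form a directed system, so that every finitely generated subgroup is captured by a single kernel.
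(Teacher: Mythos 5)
Your proposal is correct and follows essentially the same route as the paper: both establish $\mathrm{age}(K_{\alpha,\beta})=\mathrm{age}(H_{\alpha})$ by using the meet of finitely many elements of $Z$ to capture any finitely generated subgroup inside a single kernel (your ``directed union'' phrasing is exactly the paper's generator-by-generator argument), then invoke Lemma \ref{inj iff inj} and the uniqueness part of Fra\"iss\'e's Theorem to conclude $H_{\alpha}\cong K_{\alpha,\beta}$, from which non co-Hopfian follows. Your non-Hopfian argument via the proper quotient $H_{\alpha}/K_{\alpha,\beta}\cong H_{\beta}\cong H_{\alpha}$ is equivalent, given the paper's definition, to its composition $\phi_{\alpha,\beta}\theta^{-1}$ being a surjective non-automorphism.
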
 

\begin{proof} For each $\alpha>\beta$, let $K_{\alpha,\beta}=$ Ker $\phi_{\alpha,\beta}$, noting that $K_{\alpha,\beta}$ is homogeneous by Lemma \ref{I char}. We claim that age($K_{\alpha,\beta}$)=age($H_{\alpha}$), so that  $K_{\alpha,\beta}\cong H_{\alpha}$ by Fra\"iss\'e's Theorem.
 Indeed, as $K_{\alpha,\beta}$ is a subgroup of $H_{\alpha}$ we have that age($K_{\alpha,\beta}$) is a subclass of age($H_{\alpha}$). Let $A\in$ age($H_{\alpha}$). Then there exists a f.g. inverse subsemigroup $A'=\langle g_{\alpha,1},\dots, g_{\alpha,n} \rangle_I$ of $H_{\alpha}$ isomorphic to $A$. 
For each $1\leq i \leq n$ there exists $\beta_i<\alpha$ such that $g_{\alpha,i}\in K_{\alpha,\beta_i}$. Letting $\gamma=\beta_1\beta_2\cdots \beta_n$, then $g_{\alpha,i}\in K_{\alpha,\gamma}$ for all $i$ and so $A'\subseteq K_{\alpha,\gamma}$. Hence, as $K_{\alpha,\beta}\cong K_{\alpha,\gamma}$ by Lemma \ref{inj iff inj} we have $A'\in$ age($K_{\alpha,\beta})$. 
Since the age of a structure is closed under isomorphism, we have $A\in$ age($K_{\alpha,\beta})$, thus completing the proof of the claim, and so $H_{\alpha}$ is non co-Hopfian.  

By Corollary \ref{structure clifford} we may pick an isomorphism $\theta:H_{\alpha}\rightarrow H_{\beta}$. The endomorphism of $H_{\alpha}$ given by  $\phi_{\alpha,\beta} \theta^{-1}$ is a surjective non-automorphism, and thus $H_{\alpha}$ is non Hopfian.
\end{proof} 

\section{Homogeneity of commutative inverse semigroups} 

Given that a full classification of homogeneous abelian groups is known, it is natural to consider an extension of this to commutative inverse semigroups. As an immediate consequence of \cite[Theorem 4.2.1]{Howie94}, commutative inverse semigroups are Clifford, and as such we may use the results of the previous sections to attempt to classify commutative HIS. For consistency with earlier work, we continue with the multiplicative notation, so that the operation is denoted by juxtaposition. 

 By Theorem \ref{homog Thm} it suffices to consider the homogeneity of either surjective Clifford semigroups or non-periodic Clifford semigroups with elements of infinite order not lying in the images or absolute-kernels of the maximal subgroups. We first give an overview of homogeneous abelian groups, and consider when such groups are (co-)Hopfian.

Given a prime $p$, the \text{Pr\"ufer} group $\mathbb{Z}[p^{\infty}]$ is an abelian $p$-group with presentation 
\[ \langle g_1,g_2,\cdots : g_1^p=1, g_2^p=g_1,g_3^p=g_2,\cdots \rangle. 
\] 
Alternatively, $\mathbb{Z}[p^{\infty}]$ can be thought of as a union of a chain of cyclic $p$-groups of orders $p,p^2,p^3,\dots$, so that $o(\mathbb{Z}[p^{\infty}])$ is the set of all powers of $p$. 
 Each Pr\"ufer group is divisible, that is, for each $g\in \mathbb{Z}[p^{\infty}]$ and $n\in \mathbb{N}$, there exists $h\in \mathbb{Z}[p^{\infty}]$ such that $h^n=g$. The Pr\"ufer groups, along with $\mathbb{Q}$, form the building blocks for all divisible abelian groups (see \cite{Robinson} for an in depth study of divisible groups).
 
  By \cite[Theorem 2]{Cherlin91}, an abelian group is homogeneous if and only if its isomorphic to some
 \begin{equation} \label{G form}
 G = \begin{cases} (\bigoplus_{p\in P_1} \mathbb{Z}_{p^{m_p}}^{n_p}) \oplus (\bigoplus_{p\in P_2} \mathbb{Z}[p^{\infty}]^{n_p}) &\mbox{if } G \text{ is periodic,} \\
(\bigoplus_{p\in \mathbb{P}} \mathbb{Z}[p^{\infty}]^{n_p}) \oplus (\mathbb{Q}^{n}) & \mbox{otherwise, }  \end{cases} 
\end{equation}  
where $P_1$ and $P_2$ partition the set $\mathbb{P}$ of primes, $n_p,n\in \mathbb{N}^*\cup \{0\}$ and $m_p\in \mathbb{N}$. For example, if $n=\aleph_0=n_p$ for each $p\in \mathbb{P}$ then $G$ is the \textit{universal abelian group}, that is the homogeneous abelian group in which every f.g. abelian group embeds. 

 Note that the groups $\mathbb{Z}_{p^{m_p}}, \mathbb{Z}[p^{\infty}]$ and $\mathbb{Q}$ are \textit{indecomposable}, that is, they are not isomorphic to a direct sum of two non-trivial groups (again see \cite{Robinson}). 

It follows by the work in \cite{Beaumont} that the group $G$ is co-Hopfian if and only if $n$ and $n_p$ are finite, for all $p\in\mathbb{P}$. We shall call $G$ \textit{component-wise non co-Hopfian} if $n,n_p\in \{0,\aleph_0\}$ for each $ p$. That is, $G$ is component-wise non co-Hopfian if and only if each of its non-trivial $p$-components are non co-Hopfian and $n\in \{0,\aleph_0\}$.

Let $H$ be an abelian group with subset $A=\{h_i:i\in I\}$ for some index set $I$. We call $A$ a \textit{disjoint subset} if $\langle A \rangle_I =\bigoplus_{i\in I} \langle h_i \rangle$, or equivalently, if $\langle h_i \rangle_I$ and $\langle A\setminus \{h_i\} \rangle_I$ have trivial intersection for each $i\in I$. Note that if $\{g,h\}$ form a disjoint subset of $H$ then $o(gh)=$ lcm $(o(g),o(h))$, where we define lcm$(\aleph_0,n)=\aleph_0$ for all $n\in \mathbb{N}^*$.  

For example, if $H=Z^{n}$, where $n\in \mathbb{N}^*$ and $Z$ is either a finite cyclic $p$-group, a Pr\"ufer group or $\mathbb{Q}$, then a maximal disjoint subset of $H$ is of size $n$ since $Z$ is indecomposable. 

\subsection{Surjective commutative inverse semigroups} 

Throughout this subsection we let $S=[Y;G_{\alpha};\psi_{\alpha,\beta}]$ be a surjective commutative HIS with each $G_{\alpha}$ isomorphic to the group $G$ in \eqref{G form} and connecting morphisms non-injective. Recall that as $S$ is surjective, each absolute kernel is trivial by Corollary \ref{surj trivial ak}.

\begin{lemma}\label{comm ab} For each $\alpha\in Y$, the absolute image of $G_{\alpha}$ is a subgroup. 
\end{lemma}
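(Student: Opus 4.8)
The plan is to use the criterion of Lemma \ref{prime closure}: since $I^*_\alpha$ and $T_\alpha$ already intersect trivially (Lemma \ref{small stuff}), it suffices to prove that the subgroup $\langle I^*_\alpha\rangle_I$ generated by the absolute image still meets $T_\alpha$ only in $e_\alpha$. The whole argument rests on the primary decomposition of the abelian group $G_\alpha$, which commutativity makes available: writing $G_\alpha=\bigoplus_p (G_\alpha)_p\oplus F$ with $F$ the torsion-free part coming from \eqref{G form}, every connecting morphism respects each torsion primary component $(G_\alpha)_p$, and for a torsion element the order is the least common multiple of the orders of its primary components.

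First I would analyse the finite-order elements of $I^*_\alpha$. Because each $\psi_{\alpha,\beta}$ acts componentwise on the torsion primary parts and $o(g\psi_{\alpha,\beta})=\mathrm{lcm}_p\,o(g_p\psi_{\alpha,\beta})$ with $o(g_p\psi_{\alpha,\beta})\le o(g_p)$, a torsion element lies in $I^*_\alpha$ precisely when each of its primary components does. Now $I^*_\alpha$ is a characteristic subset of the homogeneous group $G_\alpha$ (Lemma \ref{small stuff}), hence order-characteristic (Lemma \ref{order els}) and closed under prime powers (Lemma \ref{prime closure}); so if $p$ divides the order of some element of $I^*_\alpha$ then the entire primary component $(G_\alpha)_p$ lies in $I^*_\alpha$, and otherwise $I^*_\alpha$ meets $(G_\alpha)_p$ trivially. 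Hence the set of finite-order elements of $I^*_\alpha$ is exactly $\bigoplus_{p\in P}(G_\alpha)_p$ for $P=\{p:p\in o(I^*_\alpha)\}$, a subgroup whose element orders are coprime to those of $T_\alpha$ (recall $I^*_\alpha$ and $T_\alpha$ are of coprime order by Lemma \ref{small stuff} and Corollary \ref{coprime char subgroups}).

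It remains to rule out elements of infinite order in $I^*_\alpha$; granting this, $I^*_\alpha=\bigoplus_{p\in P}(G_\alpha)_p$ and, its orders being coprime to $T_\alpha$, we get $\langle I^*_\alpha\rangle_I\cap T_\alpha=\{e_\alpha\}$, so Lemma \ref{prime closure} finishes the proof. Suppose instead that some $g\in I^*_\alpha$ has infinite order. Since $I^*_\alpha$ is order-characteristic, every element of infinite order then lies in $I^*_\alpha$; choosing any nontrivial $t\in T_\alpha$ (which exists, as the morphisms are non-injective, so some kernel $K_{\alpha,\beta}$ is nontrivial) and writing $t=g\,(g^{-1}t)$ with both factors of infinite order, we would obtain $t\in\langle I^*_\alpha\rangle_I\cap T_\alpha$. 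This shows exactly why the infinite-order elements must be excluded.

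The main obstacle is therefore to prove that no infinite-order element can be order-preserved by every connecting morphism. I would exploit the uniformity furnished by Lemma \ref{inj iff inj}, applied to the characteristic torsion-free subsemigroup of $S$: the induced morphisms on the torsion-free quotients $F\cong\mathbb{Q}^n$ are either all injective or all non-injective. If they are all non-injective, some infinite-order element is sent to a finite-order one, so by order-characteristicity none survive and we are done. The hard case is when they are all injective, hence isomorphisms by surjectivity: here the torsion-free factor is rigid, isomorphic to $Y\times\mathbb{Q}^n$ by Lemma \ref{iso state}, while the non-injectivity must live in a torsion prime. I expect the contradiction to come from homogeneity of $S$, namely that the coexistence of this rigid torsion-free factor with a genuinely collapsing torsion factor produces an isomorphism between finitely generated inverse subsemigroups — pairing a purely torsion-free generator with one carrying a kernel-torsion tail — that cannot be extended to an automorphism compatibly with the connecting morphisms. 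Reconciling the rigidity of the torsion-free part with the strict descent of the kernels (Lemma \ref{kernel neq}) over the dense semilattice $Y$ is where the real work lies.
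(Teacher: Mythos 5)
Your analysis of the torsion elements is correct, and it is a genuinely different route from the paper's. You use the primary decomposition to show a torsion element lies in $I^*_\alpha$ exactly when all its primary components do, and then prime-power closure (Lemma \ref{prime closure}) plus order-characteristicity (Lemma \ref{order els}, via Lemma \ref{small stuff}) to conclude that the finite-order part of $I^*_\alpha$ is exactly $\bigoplus_{p\in P}(G_\alpha)_p$ with $P=\{p : p\in o(I^*_\alpha)\}$; this settles the periodic case and gives an explicit description. The paper argues more briefly: if $a_\alpha,b_\alpha\in I^*_\alpha$ have finite orders $n,m$ and $a_\alpha b_\alpha\notin I^*_\alpha$, then some power $(a_\alpha b_\alpha)^k$ with $k<o(a_\alpha b_\alpha)$ is a nontrivial element of a kernel $K_{\alpha,\beta}$, and since $o((a_\alpha b_\alpha)^k)$ divides $nm$, the sets $o(I^*_\alpha)$ and $o(K_{\alpha,\beta})$ share a prime, contradicting coprimality (Lemma \ref{small stuff}, Corollary \ref{coprime char subgroups}). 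Both arguments use commutativity at the same point, namely to bound $o(a_\alpha b_\alpha)$ by $nm$; yours buys a sharper structural conclusion at the cost of more machinery.

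The genuine gap is the one you flag yourself: the non-periodic case is not proved. Your reduction is fine up to the dichotomy furnished by Lemma \ref{inj iff inj} (all connecting morphisms injective on torsion-free quotients, or none), and the non-injective horn is handled correctly by order-characteristicity. But in the remaining horn --- every kernel $K_{\alpha,\beta}$ torsion while $G_\alpha$ is non-periodic --- you offer only the expectation that homogeneity yields a contradiction; observing that this configuration would make $I^*_\alpha$ fail to be a subgroup restates what must be excluded rather than excluding it, and the sketch is not even soundly set up, since the ``torsion-free factor'' is a quotient of $S$, not an inverse subsemigroup, so Lemma \ref{iso state} does not apply to it. For comparison, the paper's proof dispatches the non-periodic case by citing Lemma \ref{surj splits}: from (the proof of) that lemma, either $T_\alpha$ or $I^*_\alpha$ contains every element of infinite order. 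In the first alternative every $i\in I^*_\alpha$ has finite order, so taking $g\in T_\alpha$ of infinite order, $gi$ has infinite order, hence $gi\in T_\alpha$, hence $i=g^{-1}(gi)\in T_\alpha\cap I^*_\alpha=\{e_\alpha\}$ (this uses that $T_\alpha$ is a subgroup), so $I^*_\alpha$ is trivial. The second alternative is exactly your hard sub-case, and there the symmetric argument is unavailable, since it would need $I^*_\alpha$ to be closed under products --- the very statement being proved --- while the non-periodic conclusion of Lemma \ref{surj splits} is stated under the hypothesis that $I^*_\alpha$ already forms a subgroup. So you have located a real delicacy that the paper passes over in one line; but locating it is not resolving it, and as submitted your proposal leaves the lemma unproved precisely there.
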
 

\begin{proof} By Lemma \ref{surj splits} the result is immediate if $G_{\alpha}$ is non-periodic, and so we may assume $G_{\alpha}$ is periodic.
 Let $a_{\alpha},b_{\alpha}\in I_{\alpha}^*$ be of orders $n$ and $m$, respectively. Suppose for contradiction that $a_{\alpha}b_{\alpha}\notin I_{\alpha}^*$, so that there exists $\beta<\alpha$ and $k<o(a_{\alpha}b_{\alpha})$ such that $(a_{\alpha}b_{\alpha})^k\in K_{\alpha,\beta}\subseteq T_{\alpha}$. However, $o((a_{\alpha}b_{\alpha})^k)$ divides $o(a_{\alpha}b_{\alpha})$, which in turn divides $nm$ since $G_{\alpha}$ is abelian. It easily follows that $I_{\alpha}^*$ and $K_{\alpha,\beta}$ are not of coprime orders, contradicting the work after Lemma \ref{small stuff}, and so $I_{\alpha}^*$ is indeed a subgroup. 
\end{proof}


By Theorem \ref{surj main result}, it suffices to consider the case where the absolute image of each maximal subgroup is trivial, so that $G_{\alpha}=\bigcup_{\beta<\alpha}K_{\alpha,\beta}$ for each $\alpha\in Y$.  
Hence, by Lemma \ref{T is kernel}, $G_{\alpha}$ is non Hopfian and non co-Hopfian.
 
\begin{lemma}\label{inf prod sum} The group $G$ is component-wise non co-Hopfian. 
\end{lemma}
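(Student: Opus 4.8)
The plan is to show that every multiplicity in the normal form \eqref{G form} of $G$ lies in $\{0,\aleph_0\}$, splitting into the periodic and non-periodic cases, which will require genuinely different arguments. Throughout I would use the two facts from the reduction preceding the statement, namely that $G_\alpha=\bigcup_{\beta<\alpha}K_{\alpha,\beta}$ and that $G_\alpha\cong K_{\alpha,\beta}$ for all $\alpha>\beta$ (Lemma \ref{T is kernel}), together with the fact that co-Hopficity of a homogeneous abelian group, and of each of its isotypic components, is equivalent to finiteness of the relevant multiplicity (\cite{Beaumont}).

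In the periodic case I would exploit the primary decomposition $G_\alpha=\bigoplus_pG_\alpha^{(p)}$. Since a homomorphism of abelian groups preserves primary components and there is no torsion-free part to produce cross terms, each $\psi_{\alpha,\beta}$ splits as $\bigoplus_p\psi^{(p)}_{\alpha,\beta}$ with every $\psi^{(p)}_{\alpha,\beta}$ surjective; as $G_\alpha^{(p)}$ is order-characteristic with $G_\alpha^{(p)}\cong G_\beta^{(p)}$, Lemma \ref{char sub his} makes $S^{(p)}=[Y;G_\alpha^{(p)};\psi^{(p)}_{\alpha,\beta}]$ a surjective Clifford HIS whose absolute images are trivial (being contained in those of $S$). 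By Lemma \ref{inj iff inj} the $\psi^{(p)}_{\alpha,\beta}$ are uniformly injective or non-injective. If injective they are isomorphisms, so every element of $G^{(p)}_\alpha$ has its order preserved and hence lies in the trivial absolute image, giving $n_p=0$; if non-injective then $G_\alpha^{(p)}=\bigcup_{\beta<\alpha}\ker\psi^{(p)}_{\alpha,\beta}$ (intersect $G_\alpha=\bigcup_\beta K_{\alpha,\beta}$ with $G_\alpha^{(p)}$), so Lemma \ref{T is kernel} renders $G_\alpha^{(p)}$ non co-Hopfian and thus $n_p=\aleph_0$. Either way $n_p\in\{0,\aleph_0\}$, and $n=0$ as $G$ is periodic.

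In the non-periodic case \eqref{G form} has no finite cyclic summands, so $G$ is divisible. Then each kernel $K_{\alpha,\beta}\cong G$ is a divisible, hence direct, summand of $G_\alpha$, and since $G_\alpha/K_{\alpha,\beta}\cong G_\beta$ (surjectivity of the connecting morphism) I obtain $G_\alpha\cong K_{\alpha,\beta}\oplus G_\beta$, that is $G\cong G\oplus G$. Comparing the isomorphism-invariant ranks of the indecomposable summands then gives $n_p+n_p=n_p$ and $n+n=n$ as cardinals, forcing every $n_p$ and $n$ into $\{0,\aleph_0\}$, as wanted.

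The hard part is recognising that the periodic argument cannot be run in the non-periodic case: there the torsion-free summand $\mathbb{Q}^n$ may be carried by the connecting morphisms onto torsion, so the restriction of a surjective $\psi_{\alpha,\beta}$ to a primary component need not be surjective and the subsemigroups $S^{(p)}$ need not be surjective Clifford HIS's. The resolution is to observe that non-periodicity of a homogeneous abelian group forces divisibility, which trades the delicate component analysis for the clean splitting $G\cong G\oplus G$; dually this splitting fails in the periodic case, where the $\mathbb{Z}_{p^{m_p}}$-summands are not divisible, so there one is obliged to argue through the primary components and the co-Hopficity criterion of \cite{Beaumont}.
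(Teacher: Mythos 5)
Your proof is correct, and it splits into a half that mirrors the paper and a half that is genuinely different. For the $p$-components your argument is essentially the paper's: the paper also passes to the order-characteristic sub-HIS $S_p$ of elements of $p$-power order via Lemma \ref{char sub his}, shows it is a surjective Clifford HIS with each $G_{\alpha}(p)$ a union of kernels, and concludes non co-Hopficity (Lemma \ref{T is kernel}) and $n_p=\aleph_0$ from \cite{Beaumont}. The one difference there is how surjectivity of the component morphisms is obtained: you get it elementarily from the primary decomposition, which as you correctly note is only available for torsion groups, whereas the paper deduces it from Theorems \ref{homog Thm} and \ref{surj main result}, a route that works for the $p$-components of a non-periodic $G$ as well and is why the paper needs no periodic/non-periodic case split for this part. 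Where you genuinely diverge is the torsion-free rank: the paper rules out $0<n<\aleph_0$ by a disjoint-subset count --- pull a maximal disjoint family of $n$ infinite-order elements of $G_{\beta}$ back through $\psi_{\alpha,\beta}$, adjoin an infinite-order element of $K_{\alpha,\beta}$, and derive a relation that collapses to a contradiction in $G_{\beta}$ --- whereas you observe that a non-periodic homogeneous abelian group is divisible, so $K_{\alpha,\beta}\cong G$ is a divisible subgroup and hence a direct summand, giving $G\cong K_{\alpha,\beta}\oplus (G_{\alpha}/K_{\alpha,\beta})\cong G\oplus G$, after which uniqueness of the invariants of divisible groups forces $n_p+n_p=n_p$ and $n+n=n$. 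Your divisibility argument is attractive: it settles the Pr\"ufer multiplicities and the rank $n$ simultaneously, and replaces both the combinatorics and, in the non-periodic case, the appeal to the structure theorems by one standard fact about divisible groups. What the paper's approach buys in exchange is uniformity (a single argument for all $p$-components, with only the rank treated separately) and self-containedness within the machinery already developed in Section 4.
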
 

\begin{proof} For each $\alpha\in Y$, let $G_{\alpha}(p)$ denote the $p$-component of $G_{\alpha}$, and let $G_\alpha (p)$ be non-trivial for some $p$. 
 Then $G_{\alpha}(p)$ is an order-characteristic subgroup of $G_{\alpha}$, so that the set $S_p$ of elements of $S$ of order some power of $p$ forms a HIS by Lemma \ref{char sub his}.
  Since $S_p$ is periodic with trivial absolute kernel and absolute image, it follows from Theorem \ref{homog Thm} and Theorem \ref{surj main result} that $S_p$ is a surjective Clifford semigroup with each $G_{\alpha}(p)$ a union of kernel subgroups. In particular, $G_{\alpha}(p)$ is non co-Hopfian, and thus so is the $p$-component of $G$, forcing $n_p=\aleph_0$ by \cite{Beaumont}.

 Suppose for contradiction that $0<n<\aleph_0$, so that $G$ is non-periodic, and let $\alpha>\beta$ in $Y$. Pick a disjoint subset $A=\{g_{\beta,i}:1\leq i \leq n\}$ of $G_{\beta}$ with $o(g_{\beta,i})=\aleph_0$, and let $g_{\alpha,i}\psi_{\alpha,\beta}=g_{\beta,i}$ for each $i$, so that $\{g_{\alpha,i}:1\leq i \leq n\}$ forms a disjoint subset of $G_{\alpha}$. Then for any $x_{\alpha}\in K_{\alpha,\beta}$ of infinite order we have $x_{\alpha}^m=g_{\alpha,1}^{m_1}\cdots g_{\alpha,n}^{m_n}$ for some large enough $m,m_i\in \mathbb{N}$, since otherwise $\{x_{\alpha},g_{\alpha,i}:1\leq i \leq n\}$ forms a disjoint subset of $G_{\alpha}$ of size $n+1$. Hence
\[ e_{\beta}=g_{\beta,1}^{m_1}\cdots g_{\beta,n}^{m_n}
\] 
contradicting $A$ being a disjoint subset. Hence $n$ is infinite as required. 
\end{proof} 

In particular, age$(G)$ is precisely the class of all f.g. abelian groups with elements of order from $o(G)$. Note also that if $G$ is divisible, that is either periodic with $n_p=0$ for each $p\in\mathbb{P}_1$ or non-periodic, then $G$ is a characteristic subgroup of the universal abelian group. 

\begin{lemma}\label{Y uni} The semilattice $Y$ is the universal semilattice.  
\end{lemma}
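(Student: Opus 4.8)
The plan is to invoke the classification of homogeneous semilattices. By Corollary \ref{structure clifford} the semilattice $Y$ is homogeneous, and since the connecting morphisms are non-injective $Y$ is non-trivial; hence $Y$ is isomorphic to $(\mathbb{Q},\leq)$, to the universal semilattice, or to a semilinear order. As the universal semilattice is the unique homogeneous semilattice containing a diamond, it suffices to show that $Y$ contains a diamond. Now $(\mathbb{Q},\leq)$ is a chain and, in a semilinear order, the elements below any fixed element form a chain; so both unwanted cases have the property that $\{\beta\in Y:\beta\leq\delta\}$ is linearly ordered for every $\delta$. Consequently it is enough to produce a single $\delta\in Y$ admitting two incomparable elements $\mu,\nu<\delta$: for then $\mu\nu<\mu,\nu<\delta$ with $\mu\perp\nu$, which is a diamond.

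To locate incomparable elements below a common point I would translate the kernel structure of $S$ into order data on $Y$. Fix $\delta\in Y$ and, for each $x\in G_{\delta}$, set $I_x=\{\beta<\delta:x\in K_{\delta,\beta}\}$, the set of levels at which $x$ is killed. Since $K_{\delta,\beta'}\subseteq K_{\delta,\beta}$ whenever $\delta>\beta'>\beta$ (the inclusion recorded before Lemma \ref{kernel neq}), each $I_x$ is a down-set of $\{\beta<\delta\}$. The key observation is that if $\mu,\nu<\delta$ satisfy $K_{\delta,\mu}\not\subseteq K_{\delta,\nu}$ and $K_{\delta,\nu}\not\subseteq K_{\delta,\mu}$, then $\mu$ and $\nu$ are incomparable, and we obtain the desired diamond. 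Thus the whole problem reduces to showing that the kernels below $\delta$ cannot all be nested.

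I would argue this by contradiction. Suppose $\{\beta<\delta\}$ is a chain, so that the kernels, and hence the down-sets $I_x$, are linearly ordered by inclusion. Choose $\beta_0<\delta$; then $K_{\delta,\beta_0}$ is a proper subgroup of $G_{\delta}$, and using that $G_{\delta}\cong G$ together with $G_{\delta}/K_{\delta,\beta_0}\cong G_{\beta_0}\cong G$, one can pick a disjoint pair $g,h\in G_{\delta}$ of equal order with $g\in K_{\delta,\beta_0}$ and $h\notin K_{\delta,\beta_0}$ (taking the common order to be $\aleph_0$ when $G$ is torsion-free). Then $\beta_0\in I_g\setminus I_h$, so $I_g\neq I_h$ and, by nestedness, $I_h\subsetneq I_g$. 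Since $\{g,h\}$ is disjoint and $o(g)=o(h)$, the transposition $g\leftrightarrow h$ is an automorphism of $\langle g,h\rangle_I\cong\langle g\rangle\oplus\langle h\rangle$ fixing $e_{\delta}$; extend it to $\phi\in\text{Aut}(S)$ with induced automorphism $\pi$ of $Y$, so that $\delta\pi=\delta$. Commutativity of the diagrams $[\delta,\beta;\delta,\beta\pi]$ then forces $I_g\pi=I_h$ and $I_h\pi=I_g$. But $\pi$ is a bijection, so $I_h\subsetneq I_g$ gives $I_h\pi\subsetneq I_g\pi$, that is $I_g\subsetneq I_h$, contradicting $I_h\subsetneq I_g$. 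Hence $\{\beta<\delta\}$ is not a chain, $Y$ contains a diamond, and $Y$ is the universal semilattice.

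The main obstacle I anticipate is the existence of the pair $g,h$: in the divisible cases a proper kernel may already contain the entire socle of $G$, so one cannot simply take $g,h$ of prime order with one inside and one outside $K_{\delta,\beta_0}$. I expect to resolve this by matching orders through the quotient, lifting an element of order $p$ in $G_{\delta}/K_{\delta,\beta_0}\cong G$ to some $h\in G_{\delta}\setminus K_{\delta,\beta_0}$ of order $p^{k}$, and then choosing $g\in K_{\delta,\beta_0}$ of the same order $p^{k}$ disjoint from $h$, which is possible because $K_{\delta,\beta_0}\cong G$ carries infinitely many independent elements of that order. Once the pair is in hand, the swap-and-order-preservation step is the crux of the argument and is essentially formal.
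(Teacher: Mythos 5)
Your proof has the same skeleton as the paper's: assume $Y$ is linear or semilinear, so that the elements below a fixed $\delta$ form a chain; produce two disjoint elements $g,h\in G_{\delta}$ of equal order whose kernel-membership patterns differ; extend the transposition of $\langle g,h\rangle_I\cong\langle g\rangle\oplus\langle h\rangle$ to an automorphism of $S$; and note that the induced automorphism of $Y$ fixes $\delta$ yet must reverse a strict containment among levels below $\delta$, which is impossible in a chain. Your way of finishing is actually cleaner than the paper's: the identities $I_g\pi=I_h$ and $I_h\pi=I_g$ (which do follow from commutativity of $[\delta,\beta;\delta,\beta\pi]$, as you say) flip the strict inclusion $I_h\subsetneq I_g$ in one stroke, whereas the paper splits into the two cases $\gamma\geq\gamma\pi$ and $\gamma\pi\geq\gamma$.

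The gap is exactly where you predicted it, and your proposed resolution does not close it. When $G$ is non-periodic but has torsion (e.g.\ $G\cong\mathbb{Z}[p^{\infty}]^{\aleph_0}\oplus\mathbb{Q}^{\aleph_0}$, a case that genuinely occurs here), the claim that an element of order $p$ in $G_{\delta}/K_{\delta,\beta_0}\cong G_{\beta_0}$ lifts to an element of finite order $p^{k}$ in $G_{\delta}\setminus K_{\delta,\beta_0}$ is not ``matching orders through the quotient'': it is false for abstract surjections. Indeed there is a surjective endomorphism of $\mathbb{Z}[p^{\infty}]^{\aleph_0}\oplus\mathbb{Q}^{\aleph_0}$ whose kernel contains the whole torsion subgroup (kill the Pr\"ufer summands, map half of the $\mathbb{Q}$-summands onto $\mathbb{Z}[p^{\infty}]^{\aleph_0}$ and the other half isomorphically onto $\mathbb{Q}^{\aleph_0}$), and for such a map every preimage of every nontrivial torsion element has infinite order. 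So the lifting needs homogeneity, not group theory. Two repairs are available: (a) cite the argument inside the proof of Lemma \ref{inf prod sum}, which shows that the set $S_p$ of elements of $p$-power order is a characteristic inverse subsemigroup forming a \emph{surjective} Clifford HIS, so preimages of $p$-power order exist; or (b) use elements of infinite order whenever $G$ is non-periodic, not only when it is torsion-free. Note that the paper sidesteps the issue entirely by choosing \emph{both} elements inside kernels: since the absolute image is trivial, $G_{\delta}=\bigcup_{\beta<\delta}K_{\delta,\beta}$, so an element $g$ of prime (or infinite) order lies in some $K_{\delta,\beta}$; triviality of the absolute kernel gives $\gamma<\delta$ with $g\notin K_{\delta,\gamma}$; an element $h\in K_{\delta,\gamma}$ of the same order exists because kernels are pairwise isomorphic (Lemma \ref{inj iff inj}); and disjointness of $g$ and $h$ is then automatic from the kernel structure, with no appeal to independent families in $K_{\delta,\beta_0}$. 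With either repair your argument goes through; in the purely periodic case your construction is already sound, since there every preimage has finite order and a suitable power isolates the $p$-part.
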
 

\begin{proof} Suppose for contradiction that $Y$ is a linear or semilinear order and let $\alpha>\beta$ in $Y$. Let $g_{\alpha}\in K_{\alpha,\beta}$ be of order $n\in \mathbb{N}^*$. Since the absolute kernel is trivial, there exists $\gamma<\alpha$ such that $g_{\alpha}\not\in K_{\alpha,\gamma}$. Then $\beta\ngtr \gamma$, since otherwise $K_{\alpha,\beta}\subseteq K_{\alpha,\gamma}$. Hence as $Y\cdot \alpha$ forms a chain, we have $\alpha>\gamma>\beta$, so that $K_{\alpha,\gamma}\subsetneq K_{\alpha,\beta}$ by Lemma \ref{kernel neq}. 
Since $K_{\alpha,\beta}\cong K_{\alpha,\gamma}$ by Lemma \ref{inj iff inj} there exists an element $h_{\alpha}\in K_{\alpha,\gamma}$ of order $n$. 

Suppose first that $n=p$ for some prime $p$. 
If there exists $0<k,\ell\leq p$ such that $g_{\alpha}^k=h_{\alpha}^{\ell}$ then 
\[ g_{\alpha}^k\psi_{\alpha,\gamma}=h_{\alpha}^{\ell}\psi_{\alpha,\gamma} = e_{\gamma} 
\] 
and so $g_{\alpha}^k\in K_{\alpha,\gamma}$. However $g_{\alpha}\psi_{\alpha,\gamma}$ has order $p$, and thus $g_{\alpha}^k=e_{\alpha}$, so that $k=p$. Hence $\ell=p$ as $h_{\alpha}$ is of order $p$, and it follows that $\langle g_{\alpha},h_{\alpha}\rangle_I \cong \mathbb{Z}_p\oplus \mathbb{Z}_p$. In particular, we may extend the isomorphism swapping $g_{\alpha}$ and $h_{\alpha}$ to an automorphism $\theta=[\theta_{\alpha},\pi]_{\alpha\in Y}$ of $S$. Since $Y$ is linear or semilinear either $\gamma\geq \gamma\pi$ or $\gamma\pi\geq \gamma$. By the commutativity of the diagram $[\alpha,\gamma;\alpha,\gamma\pi]$  we have 
\[ 
h_{\alpha}\psi_{\alpha,\gamma\pi}=g_{\alpha}\theta_{\alpha}\psi_{\alpha,\gamma\pi} = g_{\alpha}\psi_{\alpha,\gamma}\theta_{\gamma}\neq e_{\gamma\pi}
\] 
as $g_{\alpha}\notin K_{\alpha,\gamma}$. Hence $h_{\alpha}\not\in K_{\alpha,\gamma\pi}$, and similarly as $h_{\alpha}\theta_\alpha=g_{\alpha}$ we attain $g_{\alpha}\in K_{\alpha,\gamma\pi}$. If $\gamma \geq \gamma\pi$ then $h_{\alpha}\in K_{\alpha,\gamma}\subseteq K_{\alpha,\gamma\pi}$, while if $\gamma\pi \geq \gamma$ then $g_{\alpha}\in K_{\alpha,\gamma\pi}\subseteq K_{\alpha,\gamma}$, both giving contradictions. Consequently no element of $G$ can have prime order, and so $G$ is torsion free with $n=\aleph_0$ by Lemma \ref{inf prod sum}. 

 If $g_{\alpha}^k=h_{\alpha}^{\ell}$ for some $k,l\in \mathbb{N}$ then $g_{\alpha}^k\in K_{\alpha,\gamma}$, and so $(g_{\alpha}\psi_{\alpha,\gamma})^k=e_{\gamma}$, contradicting $G$ being torsion free.
  Thus $\langle g_{\alpha},h_{\alpha}\rangle_I \cong \mathbb{Z}\oplus \mathbb{Z}$, and we  argue in much the same way as above to arrive at a contradiction. 
\end{proof} 

 Let $\mathcal{K}(G)$ denote the age of the group $G$, which is  a Fra\"iss\'e class by the homogeneity of $G$. Let $\mathcal{K}[G]$ denote the class of all f.g. commutative inverse semigroups with maximal subgroups from $\mathcal{K}(G)$. That is, $\mathcal{K}[G]$ is the class of all f.g. commutative inverse semigroups with elements of order from $o(G)$. 
 
Given a semigroup $T$, then we may adjoin an extra element $1$ to $T$ to form a monoid $T^1$, with multiplication extending that of $T$ and with $1s=s1=s$ for all $s\in T$ and $11=1$.

\begin{proposition}\label{first fraisse} The class $\mathcal{K}[G]$ forms a Fra\"iss\'e class. 
\end{proposition}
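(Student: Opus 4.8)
The plan is to verify the five defining conditions of a Fra\"iss\'e class for $\mathcal{K}[G]$ --- that it is countable, closed under isomorphism, and satisfies HP, JEP and AP --- treating JEP and especially AP as the substantive points. Countability and closure under isomorphism are immediate: a finitely generated commutative inverse semigroup is Clifford with a finite semilattice of idempotents and finitely generated abelian maximal subgroups, of which there are only countably many isomorphism types, and membership of $\mathcal{K}[G]$ depends only on the isomorphism type (equivalently, only on the set of orders occurring). For HP, if $B$ is a finitely generated inverse subsemigroup of $A\in\mathcal{K}[G]$ then by Lemma \ref{sub clifford} we have $B=[Y';H_\alpha;\psi|_{H_\alpha}]$ with each $H_\alpha$ a subgroup of a maximal subgroup of $A$; every order occurring in $B$ already occurs in $A$, hence lies in $o(G)$, so $B\in\mathcal{K}[G]$.

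A small but crucial preliminary is that $o(G)$ is closed under least common multiples, with the convention $\mathrm{lcm}(\aleph_0,n)=\aleph_0$. This reads off directly from the form \eqref{G form}: for each prime $p$ the $p$-part of any order is bounded by $p^{m_p}$ when $p\in P_1$ and unbounded when $p\in P_2$, while $\aleph_0\in o(G)$ exactly when $G$ is non-periodic. This closure is what allows products to remain in the class, and it is here that the monoid construction $T^1$ earns its place. Given $B_1,B_2\in\mathcal{K}[G]$, the direct product $B_1^1\times B_2^1$ is again a finitely generated commutative inverse monoid, the order of $(b_1,b_2)$ being $\mathrm{lcm}(o(b_1),o(b_2))\in o(G)$; hence $B_1^1\times B_2^1\in\mathcal{K}[G]$, and the maps $b\mapsto(b,1)$ and $b\mapsto(1,b)$ embed $B_1$ and $B_2$ into it, giving JEP. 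Adjoining the identities is essential here, since a semigroup need not embed into a direct product without them.

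The real work is AP. Identifying $A$ with a common inverse subsemigroup of $B_1=[Y_1;G^{(1)}_\alpha;\psi^{(1)}]$ and $B_2=[Y_2;G^{(2)}_\alpha;\psi^{(2)}]$, with $Z=Y_1\cap Y_2$ and $A_\gamma\leq G^{(1)}_\gamma,G^{(2)}_\gamma$, I would build $D=[Y;D_\alpha;\chi_{\alpha,\beta}]$ as follows. First amalgamate the underlying finite semilattices: since the class of finite semilattices is a Fra\"iss\'e class (cf.\ the universal semilattice and Lemma \ref{U fact}), and working with the monoids to supply a common top, take a finite strong amalgam $Y$ of $Y_1,Y_2$ over $Z$ with $Y_1\cap Y_2=Z$. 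Next fix the fibres: set $D_\alpha=G^{(i)}_\alpha$ for $\alpha\in Y_i\setminus Z$, and for $\alpha\in Z$ let $D_\alpha=P_\alpha$ be an amalgam of $G^{(1)}_\alpha$ and $G^{(2)}_\alpha$ over $A_\alpha$ \emph{chosen inside $\mathcal{K}(G)$}, which exists because $\mathcal{K}(G)$, being the age of the homogeneous group $G$, is itself a Fra\"iss\'e class and so has AP. The connecting morphisms between points of a single $Y_i$ are the original $\psi^{(i)}$ (post-composed with $G^{(i)}_\alpha\hookrightarrow P_\alpha$ where the target lies in $Z$), and for $\alpha\geq\beta$ both in $Z$ the universal property of the pushout $P_\alpha$ yields a well-defined $\chi_{\alpha,\beta}\colon P_\alpha\to P_\beta$, since $\psi^{(1)}_{\alpha,\beta}$ and $\psi^{(2)}_{\alpha,\beta}$ agree with the $A$-morphism on $A_\alpha$ and the two copies of $A_\beta$ are identified in $P_\beta$. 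With these choices each $g_i$ is an embedding of $B_i$ into $D$ by Theorem \ref{iso clifford}, and $g_1,g_2$ agree on $A$.

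The main obstacle, and the point demanding genuine care, is the behaviour at the \emph{new} semilattice elements, the mixed meets $w=a\wedge b$ with $a\in Y_1\setminus Z$ and $b\in Y_2\setminus Z$. Such a $w$ lies below $a$ and $b$, but it may sit strictly between two given points of the amalgam, thereby forcing the composition law $\chi_{c,\gamma}=\chi_{c,w}\,\chi_{w,\gamma}$ to hold whenever $c>w>\gamma$. One must therefore assign to each such $w$ a group $D_w\in\mathcal{K}(G)$ together with connecting morphisms chosen so that every triangle of the resulting data commutes --- so that $D$ is genuinely a strong semilattice of groups --- while keeping all element orders within $o(G)$. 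Taking $D_w$ to be an appropriate amalgam (a fibre product of the relevant maximal subgroups over their common image lower down) and defining $\chi_{c,w},\chi_{w,\gamma}$ so as to factor the morphisms already fixed handles this; the technical heart of the proof is verifying that the whole system of connecting maps is associative and that each $D_w\in\mathcal{K}(G)$, for which the closure of $o(G)$ under $\mathrm{lcm}$ and the amalgamation available in $\mathcal{K}(G)$ are exactly the tools needed. Once $D\in\mathcal{K}[G]$ is produced with the two embeddings agreeing on $A$, AP follows, and $\mathcal{K}[G]$ is a Fra\"iss\'e class.
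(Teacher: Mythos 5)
Your handling of countability, closure under isomorphism, HP and JEP is correct and close in spirit to the paper (which gets countability from R\'edei's theorem \cite{Redei} and JEP from closure under direct products, using the same adjoin-an-identity device). The divergence is at AP, and there your construction has a concrete error, occurring \emph{before} the mixed-meet issue you flag as the technical heart. You keep the fibres over $Y_i\setminus Z$ unchanged, $D_\alpha=G^{(i)}_\alpha$, and enlarge only the fibres over $Z$ to amalgams $P_z$. But if $\alpha\in Y_2\setminus Z$ lies \emph{below} some $z\in Z$, the connecting morphism $\chi_{z,\alpha}$ of $D$ must be defined on all of $D_z=P_z$, not merely on $G^{(2)}_z$, and must restrict on $G^{(2)}_z$ to $\psi^{(2)}_{z,\alpha}$ (otherwise $g_2$ is not a homomorphism). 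Your prescription never defines $\chi_{z,\alpha}$ off $G^{(2)}_z$, and in general no legal extension exists. Concretely, choose a prime $p$ with $p^2\in o(G)$ and take $A=\mathbb{Z}_p$, a group whose sole idempotent is $z$; $B_1=\mathbb{Z}_{p^2}$, containing $A$ as its unique subgroup of order $p$; and $B_2=[\,\{z>\alpha\};\,\mathbb{Z}_p,\mathbb{Z}_p;\,\mathrm{id}\,]$, containing $A$ as the fibre at $z$. All three lie in $\mathcal{K}[G]$, and every semilattice amalgam has $z>\alpha$. Whatever amalgam $P_z\in\mathcal{K}(G)$ you choose, the image of $G^{(2)}_z$ in $P_z$ coincides with the image of $A_z$, which is the set of $p$-th powers of the copy of $\mathbb{Z}_{p^2}$ coming from $G^{(1)}_z$; but every homomorphism from $P_z$ to $D_\alpha=G^{(2)}_\alpha=\mathbb{Z}_p$ annihilates all $p$-th powers, since $\mathbb{Z}_p$ has exponent $p$. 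So no $\chi_{z,\alpha}$ can restrict to the identity on $G^{(2)}_z$: with your fibre assignment the amalgam simply does not exist. Any genuine amalgam must enlarge $D_\alpha$ as well (here to $\mathbb{Z}_{p^2}$), and such enlargements propagate down the whole semilattice and interact with the new mixed meets, so the coherence problem you defer is unavoidable and strictly harder than your sketch suggests. That it cannot be waved through is underlined by the paper's Section 3 example: for finitely generated (non-commutative) Clifford semigroups the analogous amalgamation genuinely fails \cite{Hall75}.

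The paper sidesteps all of this by not building the amalgam fibrewise: it invokes Imaoka's theorem \cite{Imaoka75} that free products with amalgamation exist in the variety of commutative inverse semigroups and embed both factors, realising $T*_U T'$ as a quotient $W/\rho$ of $W=T^1\times T'^1\setminus\{(1,1)\}$; the necessary fibre enlargements happen automatically inside the quotient, and the only remaining check is that element orders of $W/\rho$ stay in $o(G)$, which holds because $W/\rho$ is generated by the images of $T$ and $T'$ and $o(G)$ is closed under least common multiples and divisors (your lcm observation is exactly the right ingredient here). If you want a hands-on construction instead, the repair is to define \emph{every} fibre of $D$ as a colimit (in abelian groups) of all the fibres above it and then prove that the two factors embed; but that injectivity assertion is precisely the content of Imaoka's theorem, so at that point one should cite it rather than reprove it.
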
 
\begin{proof} Since $\mathcal{K}(G)$  is closed under substructure and direct product, so is $\mathcal{K}[G]$. It is then immediate that $\mathcal{K}[G]$ has HP and JEP. To show closure under amalgamation, we follow the construction by Imaoka in \cite[Section 2]{Imaoka75}. Given $T,T' \in \mathcal{K}[G]$ with common inverse subsemigroup $U$, we may assume w.l.o.g. that $T$ and $T'$ are strong semilattice of groups, say, $T=[Z;H_{\alpha};\phi_{\alpha,\beta}]$ and $T'=[Z';H_{\alpha'}';\phi_{\alpha',\beta'}']$. We may also assume w.l.o.g. that $T\cap T'=U$. Note that $T^1$ has maximal subgroups $\{1\}$ and $H_{\alpha}$ ($\alpha\in Z$), which are members of $\mathcal{K}(G)$, and so $T^1$ is a member of $\mathcal{K}[G]$, and similarly so is $T'^1$. Hence $W=T^1\times  T'^1\setminus \{(1,1)\}$, as an inverse subsemigroup of $T^1\times T'^1$ is a member of $\mathcal{K}[G]$, and is isomorphic to the   free  commutative  inverse  semigroup  product of $T$ and $T'$. Imaoka then showed that there exists a congruence $\rho$ on $W$ such that $W/\rho$ is isomorphic to the free product of $T$ and $T'$ amalgamating $U$ in the variety of commutative inverse semigroups, denoted $T *_U T'$. Moreover, the maps $\theta:T\rightarrow W/\rho$ and $\theta':T'\rightarrow W/\rho$ given by $x\theta=(x,1)\rho$ and $x'\theta'=(1,x')\rho$ ($x\in T,x'\in T'$) are embeddings such that $U\theta=U\theta'=T\theta\cap T'\theta'$. Hence $W/\rho$ is generated by the elements $x\theta$ and $x\theta'$, which are of orders from $o(T)\cup o(T')\subseteq o(G)$, and so since $o(G)$ is closed under product we have $o(W/\rho)\subseteq o(G)$. Consequently $W/\rho$ is a member of $\mathcal{K}[G]$, and AP holds.

Finally, it is somewhat folklore that the class $\mathcal{K}$ of all f.g. commutative semigroups is countable. In fact this result can be deduced from R\'edei's Theorem \cite{Redei}, which states that every f.g. commutative semigroup is finitely presented (see also \cite[Theorem 9.28]{Clif&Pres67}). In particular the subclass $\mathcal{K}[G]$ of $\mathcal{K}$ is countable.  
\end{proof}

We shall denote the Fra\"iss\'e limit of $\mathcal{K}[G]$ as $\mathcal{C}[G]$, noting that $\mathcal{C}[G]\cong \mathcal{C}[G']$ if and only if $G\cong G'$. 
We now prove that $\mathcal{C}[G]$ is isomorphic to $S$. 

    
\begin{lemma}\label{bad facts} Let $m,n\in o(G_{\alpha})$ be such that either $m|n$ or $n=\aleph_0$. Then:
\begin{enumerate}[label=(\roman*), font=\normalfont]
\item If $\alpha>\beta$ then for every $x_{\beta}\in G_{\beta}$ of order $m$  there exists an infinite  disjoint subset of $G_{\alpha}$ of elements of order $n$ which are the pre-image of $x_{\beta}$ under $ \psi_{\alpha,\beta}$;
\item If $\alpha>\{\beta,\gamma\}>\tau$ forms a diamond in $Y$, then for any $x_{\gamma}\in K_{\gamma,\tau}$ of order $m$,  there exists $x_{\alpha}\in K_{\alpha,\beta}$ of order $n$ such that $x_{\alpha}\psi_{\alpha,\gamma}=x_{\gamma}$.
\end{enumerate}
\end{lemma}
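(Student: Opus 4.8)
Throughout I will use that, by Lemma \ref{T is kernel}, every kernel $K_{\alpha,\beta}=\mathrm{Ker}\,\psi_{\alpha,\beta}$ is isomorphic to $G$, hence is itself component-wise non co-Hopfian; in particular, for each $\ell\in o(G)$ it contains an infinite disjoint subset of elements of order $\ell$. For (i) the plan is to first produce a single pre-image of $x_\beta$ of order exactly $n$ and then to spread it into an infinite disjoint family. Since $\psi_{\alpha,\beta}$ is surjective I may take any pre-image $y_\alpha$ of $x_\beta$; its order is a multiple of $m$. Working in the primary components of $G_\alpha$ (available since $S$ is commutative and $G$ has the form \eqref{G form}), I would correct the order to $n$ by multiplying $y_\alpha$ by a suitable element of $K_{\alpha,\beta}$: in each $p$-component a power argument yields a pre-image of minimal order, after which one pads up to the required $p$-part of $n$ with a disjoint kernel element, while for $n=\aleph_0$ one simply multiplies by an infinite-order kernel element. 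This gives $z_\alpha$ with $z_\alpha\psi_{\alpha,\beta}=x_\beta$ and $o(z_\alpha)=n$. For the family, choose an infinite disjoint subset $\{k_i\}$ of $K_{\alpha,\beta}$ of order-$n$ elements whose span meets $\langle z_\alpha\rangle$ trivially (possible by the richness of $K_{\alpha,\beta}\cong G$), and set $h_i=z_\alpha k_i$. Each $h_i$ is a pre-image of $x_\beta$ of order $n$, and a short computation from $\langle z_\alpha\rangle\cap\langle\{k_i\}\rangle=\{e_\alpha\}$ together with disjointness of $\{k_i\}$ shows $\{h_i\}$ is itself disjoint.

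For (ii) I would first reduce to a pure existence statement. The set of images
\[ M^*:=\{\,x_\alpha\psi_{\alpha,\gamma}: x_\alpha\in K_{\alpha,\beta},\ o(x_\alpha)=n,\ o(x_\alpha\psi_{\alpha,\gamma})=m\,\}\subseteq K_{\gamma,\tau} \]
is invariant under every automorphism of $S$ whose induced semilattice automorphism fixes the diamond $\{\alpha,\beta,\gamma,\tau\}$ pointwise, since such an automorphism preserves $K_{\alpha,\beta}$ and commutes with $\psi_{\alpha,\gamma}$. By the homogeneity of $S$ these automorphisms act transitively on the order-$m$ elements of $K_{\gamma,\tau}$ (extend the isomorphism $\{e_\alpha,e_\beta,e_\gamma,y\}\to\{e_\alpha,e_\beta,e_\gamma,y'\}$ fixing the idempotents). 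Hence $M^*$ is either empty or is the whole set of order-$m$ elements of $K_{\gamma,\tau}$; in the latter case it contains $x_\gamma$, and any witnessing $x_\alpha$ is the required element. Thus (ii) reduces to the single claim
\[ (\star)\qquad \text{there exists } x_\alpha\in K_{\alpha,\beta} \text{ with } o(x_\alpha)=n \text{ and } o(x_\alpha\psi_{\alpha,\gamma})=m. \]

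To establish $(\star)$ I would prove that $\psi_{\alpha,\gamma}(K_{\alpha,\beta})=K_{\gamma,\tau}$; granting this, a pre-image in $K_{\alpha,\beta}$ of an order-$m$ element of $K_{\gamma,\tau}$ exists and its order is corrected to $n$ exactly as in (i), multiplying by order-$n$ elements of $K_{\alpha,\beta}\cap K_{\alpha,\gamma}$ so as to preserve the $\psi_{\alpha,\gamma}$-image. The one clean ingredient is that the restrictions $\psi_{\alpha,\beta}|_{K_{\alpha,\tau}}\colon K_{\alpha,\tau}\to K_{\beta,\tau}$ and $\psi_{\alpha,\gamma}|_{K_{\alpha,\tau}}\colon K_{\alpha,\tau}\to K_{\gamma,\tau}$ are both surjective (lift through the surjective $\psi_{\alpha,\beta},\psi_{\alpha,\gamma}$ and note the lift lands in $K_{\alpha,\tau}$ because $x_\gamma\in K_{\gamma,\tau}$ forces images at $\tau$ to be trivial). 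Using $K_{\alpha,\beta},K_{\alpha,\gamma}\subseteq K_{\alpha,\tau}$, the desired surjectivity is then equivalent, writing the subgroups additively, to the ``general position'' equality $K_{\alpha,\beta}+K_{\alpha,\gamma}=K_{\alpha,\tau}$.

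The main obstacle is precisely this equality. The naive attempts — lift an order-$m$ element of $K_{\gamma,\tau}$ into $K_{\alpha,\tau}$ and then cancel the unwanted $\psi_{\alpha,\beta}$-component — are circular, each reducing to the symmetric instance of the same claim; moreover the equality genuinely fails in the degenerate case $K_{\alpha,\beta}=K_{\alpha,\gamma}$, which would make (ii) false, so homogeneity must be used essentially to force the two kernels apart. My plan is to argue prime-by-prime and invoke part (i): take an infinite disjoint family of order-$p^a$ pre-images, under $\psi_{\alpha,\gamma}$, of a fixed order-$p^a$ element of $K_{\gamma,\tau}$, so that the pairwise differences lie in $K_{\alpha,\gamma}$ and have order $p^a$; then the diamond symmetry (an automorphism of $S$ swapping $\beta$ and $\gamma$), combined with the spread of the $\psi_{\alpha,\beta}$-images of this family, should yield a difference whose $\psi_{\alpha,\beta}$-image still has order $p^a$, hence an order-$p^a$ element of the cross-image. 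Together with the order-characteristic property established above this would give the full equality. Making the ``spread'' step rigorous — ruling out that all these images collapse into a single coset of the $p^{a-1}$-torsion subgroup of $K_{\beta,\tau}$ — is the delicate point on which the argument hinges.
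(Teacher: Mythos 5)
Your part (ii) is where the proposal breaks down, and you say so yourself: after a correct (and valid) reduction via transitivity to the single claim $(\star)$, everything hinges on the ``general position'' equality $K_{\alpha,\beta}+K_{\alpha,\gamma}=K_{\alpha,\tau}$, and the prime-by-prime ``spread'' argument you sketch for it is left open at exactly the point you identify as delicate. The missing idea in the paper is that one should never compare the two prescribed kernels at all. The paper's proof runs: by part (i) there is $y_{\alpha}\in G_{\alpha}$ of order $n$ with $y_{\alpha}\psi_{\alpha,\gamma}=x_{\gamma}$; since $x_{\gamma}\in K_{\gamma,\tau}$, automatically $y_{\alpha}\in K_{\alpha,\tau}$ and $y_{\beta}:=y_{\alpha}\psi_{\alpha,\beta}\in K_{\beta,\tau}$. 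Next, $y_{\beta}$ must already lie in $K_{\beta,\beta'}$ for some $\beta'$ with $\beta>\beta'>\tau$: otherwise $\tau$ would be a maximal element of the down-closed subsemilattice $\{\rho<\beta : y_{\beta}\in K_{\beta,\rho}\}$, which is impossible since an automorphism of $S$ fixing $y_{\beta}$ and sending $e_{\tau}$ to $e_{\tau'}$ for some $\tau'<\tau$ in that set would have to preserve maximality in it. Hence $y_{\alpha}\in K_{\alpha,\beta'}$, and $\alpha>\{\beta',\gamma\}>\tau$ is again a diamond. Finally one extends the isomorphism between $\{e_{\alpha}\}\cup\{e_{\beta'}\}\cup\langle x_{\gamma}\rangle_I\cup\{e_{\tau}\}$ and $\{e_{\alpha}\}\cup\{e_{\beta}\}\cup\langle x_{\gamma}\rangle_I\cup\{e_{\tau}\}$ (sending $e_{\beta'}\mapsto e_{\beta}$, fixing everything else) to an automorphism $\theta'$ of $S$; commutativity of $[\alpha,\beta';\alpha,\beta]$ and of $[\alpha,\gamma;\alpha,\gamma]$ then shows $y_{\alpha}\theta'$ is the required element of $K_{\alpha,\beta}$. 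In short: the kernel containing the lift is found first, and homogeneity of the semilattice positions renames $\beta'$ as $\beta$, fixing $x_{\gamma}$ throughout; no additivity statement about kernels is ever needed.

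Part (i) has a lesser but still real gap. The step ``a power argument yields a pre-image of minimal order'' is precisely the non-trivial content of (i): for a general surjection of abelian groups an element of order $m$ need not have any pre-image of order $m$ (e.g.\ $\mathbb{Z}_{p^2}\rightarrow\mathbb{Z}_p$), and multiplying by kernel elements cannot lower the order below that minimum, so your ``padding'' step only makes sense once a lift of order exactly $m$ (or at least of order dividing $n$) is secured. The paper secures it by homogeneity: for $m=p^r$, triviality of the absolute kernel gives some $\gamma<\beta$ with $o(x_{\beta}\psi_{\beta,\gamma})=p^r$, and an automorphism extending the isomorphism of Lemma \ref{iso bit} between $\{e_{\beta}\}\cup\langle x_{\beta}\psi_{\beta,\gamma}\rangle_I$ and $\{e_{\alpha}\}\cup\langle x_{\beta}\rangle_I$ transfers this one-step order preservation from the pair $(\beta,\gamma)$ to the pair $(\alpha,\beta)$. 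Your route could be repaired purely group-theoretically — since $K_{\alpha,\beta}\cong G$ by Lemma \ref{T is kernel} and each primary component of $G$ is homocyclic or divisible, $K_{\alpha,\beta}$ is pure in $G_{\alpha}$, and pure subgroups admit order-preserving lifts — but some such argument must actually be given. The rest of your (i), translating one good lift by an infinite disjoint family in the kernel, matches the paper's proof.
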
 

\begin{proof}
$(\mathrm{i})$ Let $\alpha>\beta$ and $x_{\beta}\in G_{\beta}$ be of order $m$. We first claim that there exists $x_{\alpha}\in G_{\alpha}$ of order $m$ with $x_{\alpha}\psi_{\alpha,\beta}=x_{\beta}$. If $m=\aleph_0$ then the result is immediate as $\psi_{\alpha,\beta}$ is surjective. Let $m=p^r$ for some prime $p$ and $r>0$ (the case $r=0$ being trivial). Suppose for contradiction that for all $\gamma<\beta$ we have $o(x_{\beta}\psi_{\beta,\gamma})<p^r$. Then $x_{\beta}^{p^{r-1}}\in K_{\beta,\gamma}$ for all $\gamma<\beta$, contradicting the absolute kernel being trivial. Hence there exists $x_{\gamma}=x_{\beta}\psi_{\beta,\gamma}$ of order $p^r$. By Lemma \ref{iso bit} we may extend the isomorphism $\phi$ from $\{e_{\beta}\}\cup \langle x_{\gamma} \rangle_I$ to $\{e_{\alpha}\}\cup \langle x_{\beta} \rangle_I$, determined by $e_{\beta}\phi =e_{\alpha}$  and $x_{\gamma}\phi=x_{\beta}$,  to an automorphism $[\theta_{\alpha},\pi]_{\alpha\in Y}$ of $S$.
 Then $[\beta,\gamma;\alpha,\beta]$ commutes and so 
\[ x_{\beta}\theta_{\beta}\psi_{\alpha,\beta}=x_{\beta}\psi_{\beta,\gamma}\theta_{\gamma}=x_{\gamma}\theta_{\gamma}=x_{\beta}.
\] 
Since $x_\beta \theta_\beta\in G_{\alpha}$ has order $p^r$, the claim holds in this case. Now suppose $m=p_1^{r_1}p_2^{r_2}\cdots p_s^{r_s}$ for some primes $p_i$ and $r_i\in \mathbb{N}$. By the Fundamental Theorem of Finite Abelian Groups, $x_\beta=x_{\beta,1}x_{\beta,2}\cdots x_{\beta,s}$ for some $x_{\beta,i}\in G_{\beta}$ of order $p_i^{r_i}$ and so, by the previous case, there exists $x_{\alpha,i}\in G_{\alpha}$ of order $p_i^{r_i}$ with $x_{\alpha,i}\psi_{\alpha,\beta}=x_{\beta,i}$ for each $i$. Then $x_{\alpha}=x_{\alpha,1}x_{\alpha,2}\cdots x_{\alpha,s}$ has order $m$ and is such that $x_{\alpha}\psi_{\alpha,\beta} = x_{\beta}$, and the claim holds in all cases.  

Notice that the set $x_{\alpha}K_{\alpha,\beta}$ is precisely the elements of $G_{\alpha}$ mapped to $x_{\beta}$. Since $K_{\alpha,\beta}\cong G_{\alpha}$ by Lemma \ref{T is kernel}, $K_{\alpha,\beta}$ is component-wise non co-Hopfian. 
 By Lemma \ref{inf prod sum} there exists an infinite disjoint subset $\{g_{\alpha,i}: i\in \mathbb{N}\}$ of $K_{\alpha,\beta}$ of elements of order $n$.  If $g_{\alpha,i}^k=x_{\alpha}^l$ for some $0<k<n$ and $0<l<m$ then $e_{\beta}=e_{\beta}^k=x_{\beta}^l$, a contradiction. It follows that each $x_{\alpha}g_{\alpha,i}$ has order $n$. We claim that $\{x_{\alpha}g_{\alpha,i}:i\in \mathbb{N}\}$ forms an infinite disjoint subset of $G_{\alpha}$. If 
\[ (x_{\alpha}g_{\alpha,i})^k=(x_{\alpha}g_{\alpha,j_1})^{k_1}\cdots (x_{\alpha}g_{\alpha,j_t})^{k_t}
\] 
for some $0<k,k_1,\dots,k_t<n$, then $g_{\alpha,i}^k g_{\alpha,j_1}^{-k_1}\cdots g_{\alpha,j_t}^{-k_t}= x_{\alpha}^{k_1k_2\cdots k_t -k}$ by commutativity, and so $e_{\beta}= x_{\beta}^{k_1k_2\cdots k_t -k}$. Hence $m|(k_1k_2\cdots k_t -k)$, so that $x_{\alpha}^{k_1k_2\cdots k_t-k}=e_{\alpha}$, and we thus have 
\[ g_{\alpha,i}^k=g_{\alpha,j_1}^{k_1}\dots g_{\alpha,j_t}^{k_t}
\] 
contradicting $\{g_{\alpha,i}:i\in \mathbb{N}\}$ being disjoint, and the claim holds. The result then follows as $x_{\alpha}g_{\alpha,i}\in x_{\alpha}K_{\alpha,\beta}$. 

$(\mathrm{ii})$ Let  $\alpha>\{\beta,\gamma\}>\tau$ form a diamond in $Y$ and $x_{\gamma}\in K_{\gamma,\tau}$ be of order $m$. By part $(\mathrm{i})$ there exists $y_{\alpha}\in G_{\alpha}$ of order $n$ such that $y_{\alpha}\psi_{\alpha,\gamma}=x_{\gamma}$, so that $y_{\alpha}\in K_{\alpha,\tau}$. Let $y_{\alpha}\psi_{\alpha,\beta}=y_{\beta}$, so $y_{\beta}\in K_{\beta,\tau}$. Then there exists $\beta'\in Y$ with $\beta>\beta'>\tau$ and such that $y_{\beta}\in K_{\beta,\beta'}$, else $\tau$ is a maximal element in the subsemilattice $\{\rho\in Y:y_{\beta}\in K_{\beta,\rho}\}$ of $Y$, which would clearly contradict homogeneity. Extend the isomorphism between $\{e_{\alpha} \} \cup \{ e_{\beta'}\} \cup \langle x_{\gamma} \rangle_I \cup \{e_{\tau}\}$ and $\{e_{\alpha} \} \cup \{ e_{\beta}\} \cup \langle x_{\gamma} \rangle_I \cup \{e_{\tau}\}$ which sends $e_{\beta'}$ to $e_{\beta}$ and fixes all other elements, to an automorphism $\theta'=[\theta'_{\alpha},\pi']_{\alpha\in Y}$ of $S$.  Then $y_{\alpha}\theta'\in G_{\alpha}$ is of order $n$ with $y_{\alpha}\theta'\in K_{\alpha,\beta}$  by the commutativity of $[\alpha,\beta';\alpha,\beta]$ (as $y_{\alpha}\in K_{\alpha,\beta'}$),  and $y_{\alpha}\theta' \psi_{\alpha,\gamma}=x_{\gamma}$ by the commutativity of $[\alpha,\gamma;\alpha,\gamma]$. 
%
\end{proof}

\begin{lemma} Let $\alpha>\{\beta,\gamma\}>\tau$ be a diamond in $Y$ and let $x_{\beta}\in G_{\beta}$ and $x_{\gamma}\in G_{\gamma}$ be of orders $m_1,m_2\in \mathbb{N}^*$, respectively, such that $x_{\beta}\psi_{\beta,\tau}=x_{\tau}=x_\gamma\psi_{\gamma,\tau}$. Then, for any $n\in o(G_{\alpha})$ such that either $m_i|n$ ($i=1,2$) or $n=\aleph_0$, there exists $x_{\alpha}\in G_{\alpha}$ of order $n$ such that $x_{\alpha}\psi_{\alpha,\beta}=x_{\beta}$ and $x_{\alpha} \psi_{\alpha,\gamma} = x_{\gamma}$.
\end{lemma}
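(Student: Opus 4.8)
The plan is to build one common lift of $x_\beta$ and $x_\gamma$ whose order divides $n$, and then to raise its order to exactly $n$ by multiplying by a well-chosen element of $K_{\alpha,\beta}\cap K_{\alpha,\gamma}$, which alters neither projection.

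First I would use Lemma \ref{bad facts}(i) to choose $y_\alpha\in G_\alpha$ of order $n$ with $y_\alpha\psi_{\alpha,\beta}=x_\beta$, and set $w_\gamma:=x_\gamma\,(y_\alpha\psi_{\alpha,\gamma})^{-1}$. The compatibility hypothesis gives $(y_\alpha\psi_{\alpha,\gamma})\psi_{\gamma,\tau}=y_\alpha\psi_{\alpha,\tau}=x_\beta\psi_{\beta,\tau}=x_\tau=x_\gamma\psi_{\gamma,\tau}$, so that $w_\gamma\in K_{\gamma,\tau}$; moreover $o(w_\gamma)$ divides $\mathrm{lcm}(m_2,o(y_\alpha\psi_{\alpha,\gamma}))$, and hence divides $n$. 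Lemma \ref{bad facts}(ii) then provides $u_\alpha\in K_{\alpha,\beta}$ with $u_\alpha\psi_{\alpha,\gamma}=w_\gamma$, which I may take of order $o(w_\gamma)$. Using commutativity, $v_\alpha:=y_\alpha u_\alpha$ satisfies $v_\alpha\psi_{\alpha,\beta}=x_\beta$ and $v_\alpha\psi_{\alpha,\gamma}=(y_\alpha\psi_{\alpha,\gamma})\,w_\gamma=x_\gamma$, while $o(v_\alpha)$ divides $n$.

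Every element carrying these two projections lies in the coset $v_\alpha N$, where $N:=K_{\alpha,\beta}\cap K_{\alpha,\gamma}$, so it remains to locate an order-$n$ element of this coset. Applying Lemma \ref{bad facts}(ii) with $x_\gamma$ replaced by $e_\gamma$ furnishes elements of $N$ of order $n$; since that construction is ultimately driven by the disjoint families of Lemma \ref{bad facts}(i), I would extract from it an infinite disjoint subset $\{t_{\alpha,i}:i\in\mathbb{N}\}\subseteq N$ of elements of order $n$. For each prime $p\mid n$ the product $v_\alpha t_{\alpha,i}$ can fail to attain the full power of $p$ dividing $n$ only when the $p$-parts of $v_\alpha$ and $t_{\alpha,i}$ both have maximal order and their top layers cancel; disjointness of the $t_{\alpha,i}$ forces at most one index to be excluded per prime, so only finitely many indices are excluded overall and some $t_{\alpha,i}$ yields $o(v_\alpha t_{\alpha,i})=n$. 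Setting $x_\alpha:=v_\alpha t_{\alpha,i}$ then completes the construction, the case $n=\aleph_0$ being identical once the at most one index for which the product is of finite order is discarded.

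The main obstacle is precisely the existence of an infinite disjoint family of order-$n$ elements inside $N=K_{\alpha,\beta}\cap K_{\alpha,\gamma}$, and this is the step I would write out most carefully. It should follow by combining the component-wise non co-Hopfian structure of the maximal subgroups (Lemma \ref{inf prod sum}) with the disjoint-family mechanism underlying Lemma \ref{bad facts}, noting that Lemma \ref{bad facts}(ii) (with trivial target) already shows that $N$ realises every order in $o(G_\alpha)$. Everything else reduces to routine verification in the commutative (Clifford) setting.
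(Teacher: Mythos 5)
Your construction of the common lift $v_\alpha$ is correct and is fully covered by the paper's tools: $w_\gamma=x_\gamma(y_\alpha\psi_{\alpha,\gamma})^{-1}$ does lie in $K_{\gamma,\tau}$ with $o(w_\gamma)$ dividing $n$, Lemma \ref{bad facts}(ii) lifts it to $u_\alpha\in K_{\alpha,\beta}$, and $v_\alpha=y_\alpha u_\alpha$ has the two required projections with $o(v_\alpha)$ dividing $n$; likewise your coset reduction and the per-prime exclusion argument (for each prime $p\mid n$ at most one index can fail, by disjointness, so only finitely many indices are excluded) are sound. This is genuinely different from the paper's proof, which first reduces to $m_1=m_2=n$, corrects the $\gamma$-projection at an intermediate level $\gamma'$ with $\gamma>\gamma'>\tau$, and finishes by extending an isomorphism of finite inverse subsemigroups (sending $z_{\gamma'}g_{\gamma'}$ to $x_\gamma$ and fixing $\langle x_\beta \rangle_I$ and $x_\tau$) to an automorphism of $S$: the paper repairs the projection by homogeneity, whereas you repair the order by disjointness.

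However, the step you yourself flag as the main obstacle is a genuine gap, and the route you sketch does not close it. Lemma \ref{bad facts}(ii) with target $e_\gamma$ produces \emph{single} elements of $N=K_{\alpha,\beta}\cap K_{\alpha,\gamma}$ of order $n$, but its proof moves one element by an automorphism that depends on that element, namely on a level $\beta'$ at which its image in $G_\beta$ dies. Applying this to the members of a disjoint family from part (i) uses a different automorphism for each member, and the images of a disjoint family under \emph{different} automorphisms need not remain disjoint; nor is a common level $\beta'$ guaranteed for infinitely many elements at once (the meet of infinitely many levels above $\tau$ need not exist, or may equal $\tau$). Fortunately the gap can be filled with material already in the paper: since $Y$ is the universal semilattice (Lemma \ref{Y uni}), Lemma \ref{U fact} yields $\delta\in Y$ with $\alpha>\delta>\beta$ and $\delta>\gamma$; then $K_{\alpha,\delta}\subseteq K_{\alpha,\beta}\cap K_{\alpha,\gamma}=N$, and Lemma \ref{bad facts}(i) applied to the pair $\alpha>\delta$ with target $e_\delta$ gives exactly an infinite disjoint subset of $K_{\alpha,\delta}$ consisting of elements of order $n$ (and for $n=\aleph_0$ a single element of infinite order in $N$ suffices). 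With that substitution your argument is complete.
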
 

\begin{proof} We may assume that $m_1=m_2=n$. Indeed, as $Y$ is the universal semilattice we may pick $\beta',\gamma' \in Y$ with $\alpha>\beta'> \beta$, $\alpha>\gamma'>\gamma$ and $\beta'\gamma'=\tau$ by Lemma \ref{U fact}. By the previous lemma there exists $x_{\beta'}\in G_{\beta'}$ and $x_{\gamma'}\in G_{\gamma'}$ of order $n$ with $x_{\beta'}\psi_{\beta',\beta}=x_{\beta}$ and $x_{\gamma'}\psi_{\gamma',\gamma}=x_{\gamma}$, and so it would suffice to consider $x_{\beta'}$ and $x_{\gamma'}$ instead. 

By Lemma \ref{bad facts} $(\mathrm{i})$ there exists $z_{\alpha}\in G_{\alpha}$ of order $n$ such that $z_\alpha \psi_{\alpha,\beta}=x_{\beta}$, so that $z_{\alpha}\psi_{\alpha,\tau}=x_{\tau}$. Let $z_\alpha \psi_{\alpha,\gamma}=z_{\gamma}$. Then there exists $\gamma'$ such that $\gamma>\gamma'>\tau$ and $o(z_{\gamma}\psi_{\gamma,\gamma'})=o(x_{\tau})$, else $\tau$ would be a maximal element in the set $\{\rho:o(z_\gamma\psi_{\gamma,\rho})=o(x_{\tau})\}$, thus contradicting the homogeneity of $S$. Let $z_{\gamma}\psi_{\gamma,\gamma'}=z_{\gamma'}$, and pick $g_{\gamma'}\in K_{\gamma',\tau}$ of order $n$, noting that such an element exists as $o(G)=o(K_{\gamma',\tau})$. Arguing in much the same way as in the proof of Lemma \ref{bad facts} $(\mathrm{i})$, the element $z_{\gamma'}g_{\gamma'}$ has order $n$. By Lemma \ref{bad facts} $(\mathrm{ii})$ there exists $g_{\alpha}\in K_{\alpha,\beta}$ of order $n$ with $g_{\alpha}\psi_{\alpha,\gamma'}=g_{\gamma'}$.
Then as $(z_{\alpha}g_{\alpha})\psi_{\alpha,\beta}= x_{\beta}$ has order $n$, it easily follows that $z_{\alpha}g_{\alpha}$ has order $n$, and is such that 
 \[ (z_{\alpha}g_{\alpha})\psi_{\alpha,\beta} =  z_{\alpha}\psi_{\alpha,\beta} = x_{\beta} \quad \text{and} \quad (z_{\alpha}g_{\alpha})\psi_{\alpha,\gamma'} = z_{\gamma'}g_{\gamma'}. 
 \] 
The map between the f.g. inverse subsemigroups $\{e_{\alpha}\}\cup \langle x_{\beta} \rangle_I \cup \langle  z_{\gamma'}g_{\gamma'} \rangle_I \cup \langle x_{\tau} \rangle_I$ and $\{e_{\alpha}\}\cup \langle x_{\beta} \rangle_I \cup \langle  x_{\gamma} \rangle_I \cup \langle x_{\tau} \rangle_I$ which sends $z_{\gamma'}g_{\gamma'}$ to $x_{\gamma}$ and fixes all other elements, is clearly an isomorphism.
 Extend the isomorphism to an automorphism $\theta$ of $S$. Then $(z_{\alpha}g_{\alpha})\theta\in G_{\alpha}$ gives the required element.
\end{proof}

\begin{corollary}\label{bad n} Let $\beta_1,\beta_2,\dots,\beta_r\in Y$ be such that $\beta_i \perp \beta_j$ for each $i\neq j$, and $x_{\beta_i}\in G_{\beta_i}$  be such that if $\gamma<\beta_i,\beta_j$ then $x_{\beta_i}\psi_{\beta_i,\gamma} = x_{\beta_j}\psi_{\beta_j,\gamma}$. Then for any $\alpha\in Y$ with $\alpha>\beta_i$ for all $i$, and any $n\in o(G_{\alpha})$ with either $o(x_{\beta_i})|n$ for all $i$ or $n=\aleph_0$, there exists an infinite disjoint subset of $G_{\alpha}$ of elements of order $n$ which are the pre-image of $x_{\beta_i}$ under $\psi_{\alpha,\beta_i}$.
\end{corollary}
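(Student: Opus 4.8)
The plan is to induct on $r$, the base case $r=1$ being exactly Lemma \ref{bad facts}(i). It is convenient to produce first a single common preimage: it suffices to find one $y_\alpha\in G_\alpha$ of order $n$ with $y_\alpha\psi_{\alpha,\beta_i}=x_{\beta_i}$ for every $i$, since the full infinite disjoint subset is then recovered exactly as in the proof of Lemma \ref{bad facts}(i). Indeed, the set of all preimages of the tuple $(x_{\beta_i})_i$ is the coset $y_\alpha N$ of $N:=\bigcap_i K_{\alpha,\beta_i}$, and one checks, as in Lemma \ref{bad facts}(i), that $N$ again has all its element orders from $o(G)$ with non co-Hopfian $p$-components, so it contains an infinite disjoint family $\{g_{\alpha,k}:k\in\mathbb{N}\}$ of elements of order $n$. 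Since each $g_{\alpha,k}\in N$ satisfies $g_{\alpha,k}\psi_{\alpha,\beta_i}=e_{\beta_i}$, each product $y_\alpha g_{\alpha,k}$ has order $n$ and maps to $x_{\beta_i}$ under every $\psi_{\alpha,\beta_i}$, while the family $\{y_\alpha g_{\alpha,k}\}$ is disjoint by the same lcm/independence computation used there.

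For the single preimage I would first reduce to the \emph{star} configuration in which all pairwise meets coincide with the global meet $\tau_0:=\beta_1\beta_2\cdots\beta_r$. Using the universality of $Y$ (Lemma \ref{U fact}) one interpolates elements $\beta_i'$ with $\tau_0<\beta_i'<\beta_i$ and $\beta_i'\wedge\beta_j'=\tau_0=\beta_i'\wedge\beta_j$ for $i\neq j$, and pushes the data down via $x_{\beta_i'}:=x_{\beta_i}\psi_{\beta_i,\beta_i'}$; the compatibility hypothesis guarantees $x_{\beta_i'}\psi_{\beta_i',\tau_0}=x_{\tau_0}$ for all $i$. In this star case the induction closes cleanly: choosing (again by Lemma \ref{U fact}) a common upper bound $\delta$ of $\beta_1',\dots,\beta_{r-1}'$ with $\alpha>\delta$ and $\delta\wedge\beta_r'=\tau_0$, the inductive hypothesis over $\delta$ produces $y_\delta$ with $y_\delta\psi_{\delta,\beta_i'}=x_{\beta_i'}$ for $i\le r-1$. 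Because $\tau_0<\beta_1'<\delta$, the value $y_\delta\psi_{\delta,\tau_0}=x_{\beta_1'}\psi_{\beta_1',\tau_0}=x_{\tau_0}=x_{\beta_r'}\psi_{\beta_r',\tau_0}$ is forced, so the agreement-at-the-meet hypothesis of the preceding lemma holds for the diamond $\alpha>\{\delta,\beta_r'\}>\tau_0$, and that lemma yields $y_\alpha$ of order $n$ with $y_\alpha\psi_{\alpha,\delta}=y_\delta$ and $y_\alpha\psi_{\alpha,\beta_r'}=x_{\beta_r'}$, hence the correct images at all $\beta_i'$.

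The main obstacle is precisely this compatibility at the merge-meet, and the star reduction is designed to neutralise it: for a general antichain the inductively built $y_\delta$ agrees with $x_{\beta_r}$ only below the smaller meets $\beta_i\beta_r$, not at $\delta\wedge\beta_r$, so the preceding lemma cannot be applied directly; forcing all pairwise meets down to $\tau_0$ makes $\delta\wedge\beta_r'=\tau_0$ lie below $\beta_1'$ and so pins both sides to $x_{\tau_0}$. The one delicate point that remains is to transfer the images from the auxiliary $\beta_i'$ back up to the original $\beta_i$: writing $z_{\beta_i}:=y_\alpha\psi_{\alpha,\beta_i}$, the discrepancy $d_i:=z_{\beta_i}^{-1}x_{\beta_i}$ lies in the kernel $K_{\beta_i,\beta_i'}$ and the $d_i$ are compatible on the meets $\beta_i\beta_j$, so the correction amounts to finding $c\in G_\alpha$ with $c\psi_{\alpha,\beta_i}=d_i$ and then replacing $y_\alpha$ by $y_\alpha c$. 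Since each $d_i$ lies in a kernel, I expect this to be the genuinely hard step, to be carried out by the kernel-lifting of Lemma \ref{bad facts}(ii) together with the homogeneity argument used in the preceding lemma (correcting at an intermediate level and mapping across by an automorphism), exploiting that $Y$ is dense and that the kernels are non co-Hopfian (Lemma \ref{T is kernel}, Lemma \ref{inf prod sum}) to supply the required intermediate elements and order-$n$ kernel elements. Throughout one keeps track of orders---every order produced divides $n$ or equals $\aleph_0$, and $o(G)$ is closed under divisors---so that all groups and elements constructed remain inside $S$.
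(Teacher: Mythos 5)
Your star case is proved correctly: with all pairwise meets pinned to $\tau_0$ the agreement hypothesis of the preceding (diamond) lemma is automatic, so that induction closes, and your recovery of the infinite disjoint family from a single common preimage is essentially sound (modulo taking the base preimage of order $\mathrm{lcm}_i\, o(x_{\beta_i})$ rather than $n$ so that the disjointness computation from Lemma \ref{bad facts}(i) goes through, or simply pushing down to an interpolated $\delta$ with $\alpha>\delta>\beta_i$ and citing Lemma \ref{bad facts}(i) there, as the paper does). The genuine gap is exactly the step you flag and postpone: transferring the images from the auxiliary $\beta_i'$ back to the $\beta_i$. That correction step is not a smaller problem --- it is the full statement again. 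The discrepancies $d_i=z_{\beta_i}^{-1}x_{\beta_i}$ form a compatible family on the \emph{same} antichain $\beta_1,\dots,\beta_r$, and membership in $K_{\beta_i,\beta_i'}$ gives no control whatsoever over their values at the true pairwise meets $\beta_i\beta_j$: since $\beta_i\beta_j\wedge\beta_i'=\tau_0$, the element $\beta_i\beta_j$ does not lie below $\beta_i'$ (unless $\beta_i\beta_j=\tau_0$), so $d_i\psi_{\beta_i,\beta_i\beta_j}$ is in general non-trivial. These meet-values are precisely the data your star reduction discards, and every compatible family on $\{\beta_i\}$ arises as such a correction family, so ``find $c\in G_\alpha$ with $c\psi_{\alpha,\beta_i}=d_i$ for all $i$'' is equivalent to the statement being proved; applying your own reduction to it merely produces another correction family of the same kind, and the process never terminates. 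Nor can $c$ be assembled from the one-at-a-time kernel lifts of Lemma \ref{bad facts}(ii): to kill the cross terms one needs each lift $c_j$ to lie in $K_{\alpha,\eta_j}$ with $\eta_j\geq\beta_i$ for all $i\neq j$ and $\eta_j\wedge\beta_j=\beta_j'$, which forces $\beta_i\beta_j\leq\beta_j'$ and hence $\beta_i\beta_j=\tau_0$ --- the star case again.

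For comparison, the paper's proof runs the induction directly on the original antichain: it interpolates $\alpha'$ with $\alpha>\alpha'>\beta_2,\dots,\beta_r$ and $\alpha'\ngeq\beta_1$, obtains the inductive lift $x_{\alpha'}$, merges it with $x_{\beta_1}$ by applying the preceding lemma to the diamond $\alpha>\{\alpha',\beta_1\}>\alpha'\beta_1$, and only then manufactures the infinite disjoint family via $\delta$ and Lemma \ref{bad facts}(i). Note, however, that the paper never verifies the agreement hypothesis of that lemma at the meet $\alpha'\beta_1$: the inductive lift is controlled only at $\beta_2,\dots,\beta_r$, and $\alpha'\beta_1$ need not lie below any $\beta_i$ (it is forced to lie \emph{above} all the meets $\beta_1\beta_i$), so this is an unjustified step in the paper as written. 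In other words, your diagnosis of where the difficulty sits --- compatibility at the merge-meet --- is sharper than the paper's treatment of it, but your star detour relocates that difficulty into the back-transfer rather than resolving it; as it stands the proposal is incomplete at its essential step.
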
 

\begin{proof}
By Lemma \ref{bad facts} ($\mathrm{i}$) the result holds when $r=1$. We proceed by induction by supposing that the result holds when $r=k-1$, and letting $x_{\beta_1},x_{\beta_2}, \dots, x_{\beta_{k}}$ and $n\in o(G_{\alpha})$ satisfy the conditions of the corollary.
Since $Y$ is the universal semilattice there exists $\alpha'\in Y$ with $\alpha>\alpha'>\beta_2,\dots, \beta_k$ but $\alpha'\ngeq \beta_1$ by Lemma \ref{U fact}.
 By the induction hypothesis there exists $x_{\alpha'}$ of order $n$ such that $x_{\alpha'}\psi_{\alpha',\beta_i}=x_{\beta_i}$ for all $2\leq i \leq k$. Since $\alpha>\{\alpha',\beta_1\}>\alpha'\beta_1$ forms a diamond in $Y$, there exists $x_{\alpha}\in G_{\alpha}$ of order $n$ such that $x_{\alpha}\psi_{\alpha,\alpha'}=x_{\alpha'}$ and $x_{\alpha}\psi_{\alpha,\beta_1}=x_{\beta_1}$ by the previous lemma. Hence $x_{\alpha}\psi_{\alpha,\beta_i}=x_{\beta_i}$ for all $i$. Let $\delta\in Y$ be such that $\alpha>\delta>\beta_i$ for each $i$ (again such an element exists by Lemma \ref{U fact}), and let $x_{\alpha}\psi_{\alpha,\delta}=x_{\delta}$. By Lemma \ref{bad facts} $(\mathrm{i})$ there exists an infinite disjoint subset of $G_{\alpha}$ of elements of order $n$ which are mapped to $x_{\delta}$, and thus to each $x_{\beta_i}$.  
This completes the inductive step. 
\end{proof}

\begin{proposition}\label{age 1} The age of $S$ is $\mathcal{K}[G]$. 
\end{proposition}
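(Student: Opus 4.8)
The plan is to establish the two inclusions $\mathrm{age}(S)\subseteq\mathcal{K}[G]$ and $\mathcal{K}[G]\subseteq\mathrm{age}(S)$ separately. The first is immediate: since $S$ is commutative, every f.g. inverse subsemigroup $A$ of $S$ is a f.g. commutative inverse semigroup, and by Lemma \ref{sub clifford} it has the form $[Y';H_{\alpha};\psi_{\alpha,\beta}|_{H_{\alpha}}]$ with each $H_{\alpha}$ a subgroup of $G_{\alpha}\cong G$. Every element of $A$ then lies in some $G_{\alpha}$, so its order belongs to $o(G_{\alpha})=o(G)$; thus the maximal subgroups of $A$ lie in $\mathcal{K}(G)$ and $A\in\mathcal{K}[G]$.

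For the converse I would take $A=[Z;H_{\gamma};\phi_{\gamma,\delta}]\in\mathcal{K}[G]$ with $Z$ a finite semilattice and each $H_{\gamma}$ a f.g. abelian group whose element orders lie in $o(G)$. Since $Y$ is the universal semilattice by Lemma \ref{Y uni}, fix a semilattice embedding $\iota:Z\hookrightarrow Y$; being an embedding, $\iota$ preserves meets, so $\iota(\delta\delta')=\iota(\delta)\iota(\delta')$. The aim is to extend $\iota$ to an embedding $f:A\to S$ with $f(H_{\gamma})\subseteq G_{\iota(\gamma)}$, constructed by induction up $Z$: enumerate $Z$ as $\gamma_1,\dots,\gamma_N$ refining $\leq$, so that $\gamma_i<\gamma_j$ forces $i<j$, and at stage $j$ define $f$ on $H_{\gamma_j}$, assuming $f$ is already an embedding respecting all connecting diagrams on $\bigcup_{\delta<\gamma_j}H_{\delta}$. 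If $\gamma_j$ is minimal in $Z$ there are no constraints and we embed $H_{\gamma_j}$ directly into $G_{\iota(\gamma_j)}\cong G$, which is possible because $H_{\gamma_j}\in\mathcal{K}(G)=\mathrm{age}(G)$.

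At a non-minimal stage, write $H_{\gamma_j}=\bigoplus_{l=1}^{t}\langle h_l\rangle$ with $o(h_l)\in o(G)$, and let $\delta_1,\dots,\delta_k$ be the maximal elements of $\{\delta\in Z:\delta<\gamma_j\}$; these are pairwise incomparable with $\iota(\gamma_j)>\iota(\delta_i)$ for all $i$. For each $h_l$ the targets $f(\phi_{\gamma_j,\delta_i}(h_l))\in G_{\iota(\delta_i)}$ are already defined and, by the composition law for the $\phi$'s together with meet-preservation of $\iota$, agree after being pushed down to $\iota(\delta_i\delta_j)$, hence on every common lower bound in $Y$; this is exactly the consistency hypothesis of Corollary \ref{bad n}. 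Applying Corollary \ref{bad n} with $n=o(h_l)$, noting $o(\phi_{\gamma_j,\delta_i}(h_l))\mid o(h_l)$ since $\phi$ is a homomorphism, produces an infinite disjoint subset of $G_{\iota(\gamma_j)}$ consisting of order-$n$ preimages of the prescribed targets. I would then choose $f(h_1),\dots,f(h_t)$ one at a time from these disjoint sets, at each step picking an element independent of the finitely many already chosen; this is possible because each preimage set is infinite and disjoint, so only finitely many of its members can meet the span of the earlier choices non-trivially. Setting $f(H_{\gamma_j})=\langle f(h_1),\dots,f(h_t)\rangle$ makes $f|_{H_{\gamma_j}}$ an embedding with $o(f(h_l))=o(h_l)$, and all squares $[\iota(\gamma_j),\iota(\delta_i);\iota(\gamma_j),\iota(\delta_i)]$ commute by construction.

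Finally, distinct $\gamma$ map to distinct idempotents $\iota(\gamma)$ and $f$ is injective on each component, so the assembled $f$ is an injective morphism; hence $A\hookrightarrow S$ and $A\in\mathrm{age}(S)$, giving $\mathcal{K}[G]\subseteq\mathrm{age}(S)$ and thus equality. The main obstacle is precisely the inductive lifting step: one must realise all generators of $H_{\gamma_j}$ simultaneously as preimages that (i) respect every connecting morphism down to the already-built part of $S$ and (ii) are jointly independent, so that $\langle f(h_1),\dots,f(h_t)\rangle\cong H_{\gamma_j}$. Both demands are exactly what Lemma \ref{bad facts} and Corollary \ref{bad n} were engineered to supply, so the residual work is the verification of the consistency hypothesis and the extraction of a jointly disjoint family from the infinite disjoint preimage sets.
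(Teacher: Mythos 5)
Your proof is correct, and it rests on exactly the machinery the paper's own proof uses --- Lemma \ref{U fact} to realise the finite semilattice $Z$ inside the universal semilattice $Y$, Corollary \ref{bad n} to produce infinite disjoint sets of preimages of prescribed order, and the fact that a f.g.\ abelian group contains no infinite direct sum of non-trivial subgroups in order to make independent choices --- but it organises the induction differently. The paper inducts on the number of generators of $A$: it deletes a single generator $a_{\alpha,1}$ of the group $A_\alpha$ at a maximal $\alpha\in Z$, embeds the resulting $n$-generated subsemigroup $A'$ by the inductive hypothesis (growing the induced semilattice embedding by one point via Lemma \ref{U fact}), and then needs only one application of Corollary \ref{bad n} and one independence choice per step. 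You instead fix the embedding $\iota:Z\hookrightarrow Y$ at the outset and induct bottom-up over $Z$, realising an entire maximal subgroup $H_{\gamma_j}=\bigoplus_{l}\langle h_l\rangle$ at each stage. This buys cleaner bookkeeping (no auxiliary $A'$, no re-embedding of the semilattice at every step), at the price of needing a jointly independent family rather than a single new element; your sequential extraction is sound, because adjoining a cyclic subgroup that meets the span of the previously chosen ones trivially preserves internal direct sums in an abelian group, so you are simply iterating, $t$ times per stage, the same Noetherian argument the paper invokes once per inductive step. Two steps you leave implicit (as, to be fair, does the paper): commutativity of the squares for \emph{non-maximal} $\delta<\gamma_j$ follows by factoring $\psi_{\iota(\gamma_j),\iota(\delta)}$ through some maximal $\iota(\delta_i)$ and using that $f$ is already a morphism on $\bigcup_{\delta<\gamma_j}H_{\delta}$; and commutativity on all of $H_{\gamma_j}$, not just on its generators, holds because both sides of each square are homomorphisms agreeing on generators. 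With those two observations spelled out, your argument is a complete and valid alternative organisation of the paper's proof.
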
 
\begin{proof} Let $\mathcal{K}$ denote the age of $S$, noting that clearly $\mathcal{K}$ is a subclass of $\mathcal{K}[G]$. Since a 1-generated Clifford semigroup is a cyclic group, each 1-generated member of $\mathcal{K}[G]$ is a member of $\mathcal{K}(G)$, and thus of $\mathcal{K}$. Proceeding by induction, assume that every $n$-generated member of $\mathcal{K}[G]$ is contained in $\mathcal{K}$, for some $n\in \mathbb{N}$. Let $A=[Z;A_{\alpha};\phi_{\alpha,\beta}]$ be an $n+1$-generated member of $\mathcal{K}[G]$. To avoid $A$ trivially being a member of $\mathcal{K}(G)$ we may assume that $Z$ is non-trivial. Let $\alpha$ be maximal in $Z$ and suppose $A_{\alpha}=\langle a_{\alpha,1} \rangle_I \oplus \langle a_{\alpha,2} \rangle_I \oplus \cdots \oplus \langle a_{\alpha,r} \rangle_I$ is an $r$-generated abelian group, where each $\langle a_{\alpha,i} \rangle_I$ is a cyclic subgroup. Let $A'$ be the inverse subsemigroup of $A$ given by 
\begin{align*}
 A' =
  \begin{cases}
    A\setminus A_{\alpha} & \text{if } r=1,\\ 
   (A \setminus A_{\alpha}) \cup \langle a_{\alpha,2} \rangle_I \oplus \langle a_{\alpha,3} \rangle_I \oplus \cdots \oplus  \langle a_{\alpha,r} \rangle_I & \text{if } r>1.   \end{cases} 
\end{align*}
Then $A'$ is $n$-generated and with structure semilattice $\bar{Z}$, where $\bar{Z}=Z\setminus \{\alpha\}$ if $r=1$, and $\bar{Z}=Z$ else. By the inductive hypothesis there exists an embedding $\theta:A'\rightarrow S$, with induced embedding $\pi:\bar{Z} \rightarrow Y$. Since $Y$ is the universal semilattice there exists $\delta\in Y$ such that $\bar{Z}\pi\cup \{\delta\}\cong Z$ by Lemma \ref{U fact}, where we take $\delta=\alpha\pi$ if $r>1$. Let $\alpha$ be an upper cover of $\beta_1,\dots,\beta_r$ in $Z$, and $a_{\alpha,1}\phi_{\alpha,\beta_i}=a_{\beta_i}$ for each $i$. 
By Corollary \ref{bad n}, there exists a infinite disjoint subset $\{g_{\delta,k}:k\in \mathbb{N}\}$ of $G_{\delta}$ with $o(g_{\delta,k})=o(a_{\alpha,1})$ which are the pre-image of each $a_{\beta_i}\theta$ under $\psi_{\delta,\beta_i\pi}$ ($1\leq i \leq r$). Note that $A'_{\alpha}$ is f.g. since it is either empty or equal to $\langle a_{\alpha,2} \rangle_I \oplus \langle a_{\alpha,3} \rangle_I \oplus \cdots \oplus \langle a_{\alpha,r} \rangle_I$. On the other hand, $\bigoplus_{k\in \mathbb{N}} \langle g_{\delta,k} \rangle_I$ is infinitely generated, and it follows that there exists only finitely many $g_{\delta,k}$ with $\langle g_{\delta,k} \rangle_I \cap A_{\alpha}' \theta \neq \{e_{\alpha}\}$. Hence, for some $k\in \mathbb{N}$, we have that $\langle g_{\delta,k} \rangle_I \oplus A_{\alpha}'\theta$ is isomorphic to $A_{\alpha}$, and its easily shown that the map $\theta': A \rightarrow A'\theta \cup \langle g_{\delta,k}\rangle_I$ given by $A'\theta'=A'\theta$ and $a_{\alpha,1}\theta'=g_{\delta,k}$ is an embedding, thus completing the inductive step.
\end{proof}

A full classification of surjective commutative HIS is now achieved. In particular we may describe all periodic commutative HIS as follows (the non-periodic case will be considered separately in the next subsection). 

\begin{theorem}\label{Periodic com} Let $I^*,K^*$ and $T$ be periodic homogeneous abelian groups of pairwise coprime orders and $T$ component-wise non co-Hopfian. Let $Y$ be a homogeneous semilattice, and let $U$ denote the universal semilattice. Then the following inverse semigroups are HIS:  
\begin{enumerate}[label=(\roman*), font=\normalfont]
\item  $(Y \times I^*) \bowtie [Y;K^*]$; 
\item $(U \times I^*) \bowtie \mathcal{C}[T] \bowtie [U;K^*]$; 
\end{enumerate} 
Conversely, every periodic commutative HIS is isomorphic to an inverse semigroup constructed in this way. 
\end{theorem}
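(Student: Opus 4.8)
The plan is to assemble the structural results of Sections~4 and~5, since the substantive work has already been carried out in the preceding lemmas; what remains is to organise the decomposition of a periodic commutative HIS into its three coprime pieces and to recognise each piece. I would split the argument into the forward implication (both families are HIS) and the converse (every periodic commutative HIS takes one of the two shapes).

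For the forward direction I would exploit the ``peeling'' results for coprime direct factors. In case (i) the factor $Y\times I^*$ is a structure-HIS by Proposition~\ref{isos structure hom}, since its connecting morphisms are isomorphisms and $Y,I^*$ are homogeneous, while $[Y;K^*]$ is a structure-HIS by Lemma~\ref{trivial mor}. As $I^*$ and $K^*$ are of coprime order, Lemma~\ref{spined to cliff} identifies the spined product with $[Y;I^*_\alpha\otimes K^*_\alpha;\,\cdot\,]$, and Proposition~\ref{structure sums} then shows this is a structure-HIS, hence a HIS. For case (ii) I would first note that $\mathcal{C}[T]$ is a HIS (a Fra\"iss\'e limit, via Proposition~\ref{first fraisse} and Fra\"iss\'e's Theorem) whose semilattice of idempotents is the universal semilattice $U$: indeed $\mathrm{age}(\mathcal{C}[T])=\mathcal{K}[T]$ contains every finite semilattice, and the idempotents form a homogeneous characteristic subsemilattice, which is therefore $U$. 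Combining the two structure-HIS factors $U\times I^*$ and $[U;K^*]$ (coprime) into a single structure-HIS $[U;I^*_\alpha\otimes K^*_\alpha;\,\cdot\,]$ by Proposition~\ref{structure sums}, Proposition~\ref{structure sums 2} reduces homogeneity of the full threefold spined product to that of the remaining coprime factor $\mathcal{C}[T]$, which is already a HIS.

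For the converse I would run the decomposition machine. A periodic commutative HIS $S$ is Clifford with homogeneous abelian maximal subgroup $G$ and homogeneous semilattice $Y$ by Corollary~\ref{structure clifford}. Theorem~\ref{homog Thm} writes $S\cong I(S)\bowtie[Y;K^*]$ with $G=I\otimes K^*$ coprime and $I(S)$ a surjective Clifford HIS. Since $S$ is commutative, Lemma~\ref{comm ab} guarantees that each absolute image of $I(S)$ is a subgroup, so Theorem~\ref{surj main result} applies and yields $I(S)\cong[Y;T_\alpha;\psi^T_{\alpha,\beta}]\bowtie(Y\times I^*)$ with $I=T\otimes I^*$ coprime and the $T$-part a surjective Clifford HIS whose maximal subgroup is a union of pairwise distinct kernels. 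If $T$ is trivial we land on form (i). Otherwise Lemma~\ref{inf prod sum} makes $T$ component-wise non co-Hopfian, Lemma~\ref{Y uni} forces $Y=U$, and Proposition~\ref{age 1} identifies the age of $[U;T_\alpha;\psi^T_{\alpha,\beta}]$ as $\mathcal{K}[T]$; uniqueness of the Fra\"iss\'e limit then gives $[U;T_\alpha;\psi^T_{\alpha,\beta}]\cong\mathcal{C}[T]$, producing form (ii). Pairwise coprimality and periodicity of $I^*,T,K^*$ are inherited directly from the two coprime splittings $G=I\otimes K^*$ and $I=T\otimes I^*$.

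I expect the main obstacle to be bookkeeping rather than any single hard step: tracking the three-way coprime decomposition $G=I^*\otimes T\otimes K^*$ coherently through the nested spined products, and, in the converse, justifying that the isomorphism $[U;T_\alpha;\psi^T_{\alpha,\beta}]\cong\mathcal{C}[T]$ genuinely follows from the age computation of Proposition~\ref{age 1} via Fra\"iss\'e's Theorem, together with the verification that all three factors share the semilattice $U$ so that the iterated spined product is well formed.
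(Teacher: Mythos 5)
Your overall strategy is the paper's own: Theorem \ref{homog Thm} to split off $[Y;K^*]$, Lemma \ref{comm ab} and Theorem \ref{surj main result} to decompose $I(S)$, Lemmas \ref{inf prod sum} and \ref{Y uni} together with Propositions \ref{first fraisse} and \ref{age 1} (plus uniqueness of Fra\"iss\'e limits) to identify the non-injective part with $\mathcal{C}[T]$, and Lemma \ref{trivial mor}, Proposition \ref{isos structure hom} and Propositions \ref{structure sums}, \ref{structure sums 2} for the forward direction. Your forward direction is in fact slightly more careful than the paper's: you verify explicitly that $E(\mathcal{C}[T])\cong U$ (via the diamond and the classification of homogeneous semilattices), which is needed for the threefold spined product in (ii) to be well formed and which the paper leaves implicit.

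There is, however, one concrete flaw in your converse. You apply Theorem \ref{surj main result} to $I(S)$ unconditionally and only afterwards split into the cases ``$T$ trivial'' and ``$T$ non-trivial''. But Theorem \ref{surj main result} (and likewise Lemma \ref{comm ab}, whose subsection carries the same standing hypothesis) requires the connecting morphisms to be surjective \emph{but not injective}; under that hypothesis the kernels are non-trivial, so the case ``$T$ trivial'' can never arise within the scope of the theorem. The case that produces form (i) is exactly the case the theorem excludes, namely when the connecting morphisms of $I(S)$ are isomorphisms, and there your proposal derives the decomposition from a result whose hypotheses fail. The paper avoids this by making the case distinction \emph{first}: by Lemma \ref{inj iff inj}, either every connecting morphism of $I(S)$ is an isomorphism or none is injective; in the former case $I(S)\cong Y\times I^*$ by Lemma \ref{iso state}, which is a structure-HIS by Proposition \ref{isos structure hom}, giving form (i) via Corollary \ref{iso spined}; only in the latter case does it invoke Lemma \ref{comm ab} and Theorem \ref{surj main result} to reach form (ii). Your argument is repaired by reordering it in exactly this way; nothing else needs to change.
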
 

\begin{proof} Let $S$ be a periodic commutative HIS. Then by Theorem \ref{homog Thm} $S\cong I(S) \bowtie [Y;K^*]$, where $I(S)=[Y;I_{\alpha};\psi_{\alpha,\beta}^I]$ is a surjective Clifford HIS and $I_{\alpha}\cong I$ is coprime to the homogeneous group $K^*$. 
By Corollary \ref{surj trivial ak} the absolute kernels of $I(S)$ are trivial.
 If each $\psi_{\alpha,\beta}^I$ is an isomorphism, then $I(S)\cong Y\times I^*$ by Lemma \ref{iso state}, which is structure-HIS by Proposition \ref{isos structure hom}. 
 We then have case $(\mathrm{i})$ by Corollary \ref{iso spined}.
  Otherwise, as the absolute images form subgroups by Lemma \ref{comm ab} we have $I(S)\cong [Y;T_{\alpha};\psi_{\alpha,\beta}^T]\bowtie (Y \times I^*)$ by Theorem \ref{surj main result}, where $T_{\alpha}$ has trivial absolute image and is of coprime order to $I_{\alpha}^*$.
   Each $T_{\alpha}$ is isomorphic to some component-wise non co-Hopfian group $T$ by Lemma \ref{inf prod sum}. By Propositions \ref{first fraisse} and \ref{age 1} we have $[Y;T_{\alpha};\psi_{\alpha,\beta}^T]\cong \mathcal{C}[T]$, and we thus obtain case ($\mathrm{ii}$) again by Corollary \ref{iso spined}.

Conversely, the inverse semigroups $Y\times I^*$ and $[Y;K^*]$ are structure-HIS by Proposition \ref{isos structure hom} and Lemma \ref{trivial mor}, and the semigroup $\mathcal{C}[T]$ is a HIS by Proposition \ref{first fraisse}. 
The result then follows by Proposition \ref{structure sums 2}. 
\end{proof}

\subsection{An open case} 

We now consider the final case, where $S=[Y;G_{\alpha};\psi_{\alpha,\beta}]$  is a commutative HIS such that each $G_{\alpha}$ is isomorphic to the group $G$ in \eqref{G form} and elements of infinite order are not contained in the image $I_{\alpha}$ or absolute kernel $K^*_{\alpha}$ of $G_{\alpha}$. We observe that as $G_{\alpha}\neq K^*_{\alpha}$, the subgroup $I_{\alpha}$ is non-trivial. 

By Lemma \ref{absolute image trivial} the absolute images of $S$ are trivial, so that $G_{\alpha}=\bigcup_{\beta<\alpha}K_{\alpha,\beta}$ and $G_{\alpha}\cong K_{\alpha,\beta}$ by Lemma \ref{T is kernel}.
 By Lemma \ref{coprime}, the elements of $G_{\alpha}$ of finite order form precisely the subgroup $I_{\alpha} \oplus K_{\alpha}^*$, which is clearly a characteristic subgroup of $G_\alpha$. 
 It follows by Lemma \ref{char sub his} that elements of $S$ of finite order forms a HIS 
\[ T= [Y;I_{\alpha}\oplus K^*_{\alpha};\psi_{\alpha,\beta}^I\bowtie \psi_{\alpha,\beta}^K]
\]
where  $\psi_{\alpha,\beta}^I\bowtie \psi_{\alpha,\beta}^K = \psi_{\alpha,\beta}|_{I_{\alpha}\oplus K^*_{\alpha}}$. The inverse subsemigroup $[Y;I_{\alpha};\psi_{\alpha,\beta}^I]$ of $T$ is a periodic  surjective commutative HIS with trivial absolute-images and, by Corollary \ref{surj trivial ak}, trivial absolute kernels.
 It then follows by Theorem \ref{Periodic com} that $[Y;I_{\alpha};\psi_{\alpha,\beta}^I]\cong \mathcal{C}[I]$, where $I\cong I_{\alpha}$ is component-wise non co-Hopfian, and that $Y$ is isomorphic to universal semilattice $U$ by Lemma \ref{Y uni}. 
 Hence $[Y;K^*_{\alpha};\psi_{\alpha,\beta}^K]$ is isomorphic to the structure-HIS $[U;K^*]$ by Lemmas  \ref{trivial mor} and \ref{trivial iso}, where  $K^*\cong K_{\alpha}^*$.   By Corollary \ref{iso spined}, we have that  $T\cong \mathcal{C}[I] \bowtie [U;K^*]$, and it follows that 
\begin{equation} \label{G part 2} G=I \oplus K^* \oplus \mathbb{Q}^n
\end{equation} 
where $I=\bigoplus_{p\in \mathbb{P}_I} \mathbb{Z}[p^{\infty}]^{\aleph_0}$ and $K^*=\bigoplus_{p\in \mathbb{P}_K}\mathbb{Z}[p^{\infty}]^{n_p}$ for some $n,n_p\in \mathbb{N}^*$, where $\mathbb{P}_I$ and $\mathbb{P}_K$ are disjoint subsets of $\mathbb{P}$.


We let $\mathcal{K}^*[I;K;n]$ denote the class of all f.g. commutative inverse semigroups $A$ with maximal subgroups in $\mathcal{K}(G)$, where $G$ is as in \eqref{G part 2}, and satisfying the following properties: 
\begin{enumerate} \item every element of infinite order is maximal in $(A,\leq)$;
\item  for each $p\in \mathbb{P}_K$, every element of order some power of $p$ is maximal in $(T,\leq)$; 
\end{enumerate} 
where $\leq$ is the natural order on $A$. In particular, if $[Z;A_{\alpha};\phi_{\alpha,\beta}]\in \mathcal{K}^*[I;K;n]$ then every element of infinite order is mapped to an element of finite order by non-trivial connecting morphisms by (1) and, for each $p\in \mathbb{P}_K$, every non-trivial element of order some power of $p$ is not contained in an image of any connecting morphism by (2), and so is contained in the absolute kernel of its maximal subgroup. Note also that $\mathcal{K}[I]$ is a subclass of $\mathcal{K}^*[I;K;n]$. 

\begin{open} For which conditions on $K$ and $n$ does $\mathcal{K}^*[I;K;n]$ form a  Fra\"iss\'e class? 
\end{open} 

The problem we face when tackling this is open problem that the method in the proof of Proposition \ref{first fraisse} no longer applies. For example, let $\alpha>\beta$ and $\gamma>\beta$ be a pair of chains, and let $T=\langle x_{\alpha} \rangle_I \cup \{e_{\beta}\}$ and $T'=\{e_{\gamma}\}\cup \{e_{\beta}\}$ be f.g. Clifford semigroups, with $x_{\alpha}$ of infinite order.  Note that $T\cap T'=\{e_{\beta}\}$. Let  $\rho$ be the congruence on $W=T^1\times T'^1\setminus \{(1,1)\}$ as given by Imaoka. Then $(x_{\alpha},1)\rho$ and $(x_{\alpha},e_{\beta}))\rho$ have infinite order and $(x_{\alpha},1)\rho > (x_{\alpha},e_{\beta})\rho$. Hence $W/\rho$ does not satisfy (2), and is thus not a member of $\mathcal{K}^*[I;K;n]$. 

We shall now prove that age($S$) is $\mathcal{K}^*[I;K;n]$ by following the methods of the previous subsection.   

\begin{proposition} Let $\beta_1,\beta_2,\dots,\beta_r\in Y$ be such that $\beta_i \perp \beta_j$ for each $i\neq j$. Let $x_{\beta_i}\in I_{\beta_i}$  be such that if $\gamma<\beta_i,\beta_j$ then $x_{\beta_i}\psi_{\beta_i,\gamma} = x_{\beta_j}\psi_{\beta_j,\gamma}$.
 Then for any $\alpha\in Y$ with $\alpha>\beta_i$ for all $i$, and any $t\in o(G_{\alpha})$ with either $o(x_{\beta_i})|t$ for all $i$ or $t=\aleph_0$,  there exists a disjoint subset of $G_{\alpha}$ of size $\aleph_0$ if $t$ is finite, and size $n$ otherwise, consisting of elements of order $t$ which are the pre-image of $x_{\beta_i}$ under $\psi_{\alpha,\beta_i}$.
\end{proposition}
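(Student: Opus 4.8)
The plan is to mirror, essentially line for line, the development of the preceding (surjective) subsection, proving the statement by induction on $r$ and reducing everything to the case $r=1$. The only genuinely new feature is that the torsion-free rank of $G=I\oplus K^*\oplus\mathbb{Q}^n$ is now the \emph{finite} number $n$ rather than $\aleph_0$, and it is precisely this rank that produces the cardinality $n$ in the infinite-order case. Throughout I would use that $Y$ is the universal semilattice (Lemma \ref{Y uni}), that $G_{\alpha}=\bigcup_{\beta<\alpha}K_{\alpha,\beta}$ with $K_{\alpha,\beta}\cong G_{\alpha}\cong G$ (Lemmas \ref{absolute image trivial} and \ref{T is kernel}), and that by Lemma \ref{coprime} the torsion subgroup of $G_{\alpha}$ is $I_{\alpha}\oplus K_{\alpha}^*$, so that $G_{\alpha}/(I_{\alpha}\oplus K_{\alpha}^*)\cong\mathbb{Q}^n$ has torsion-free rank $n$.

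First I would handle the base case $r=1$. Given $\alpha>\beta$ and $x_{\beta}\in I_{\beta}$ of order $m$, with $m\mid t$ or $t=\aleph_0$, I would produce a single pre-image $x_{\alpha}\in G_{\alpha}$ of order $t$ with $x_{\alpha}\psi_{\alpha,\beta}=x_{\beta}$. For finite $t$ this is the claim proved at the start of Lemma \ref{bad facts}$(\mathrm{i})$, applied inside the order-characteristic sub-HIS (Lemma \ref{char sub his}) of elements whose order divides $t$, whose absolute kernels are trivial by Corollary \ref{surj trivial ak}. For $t=\aleph_0$ one uses that $G_{\alpha}\neq I_{\alpha}$, so elements of infinite order exist, together with Lemma \ref{iso bit} and homogeneity to transport such an element onto the coset $x_{\alpha}K_{\alpha,\beta}$. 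Since the full set of pre-images of $x_{\beta}$ is exactly this coset, it then remains to count a maximal disjoint family of order-$t$ elements inside it.

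The count splits according to $t$, and the infinite case is the main obstacle. When $t$ is finite, the relevant prime divisors are supported on $\mathbb{P}_I$ (as $o(x_{\beta})\mid t$ and $x_{\beta}\in I_{\beta}$), whose Pr\"ufer components in $K_{\alpha,\beta}\cong G$ have rank $\aleph_0$; choosing an infinite disjoint family $\{g_{\alpha,i}\}$ of order-$t$ elements of $K_{\alpha,\beta}$ and translating by $x_{\alpha}$ reproduces the disjointness computation of Lemma \ref{bad facts}$(\mathrm{i})$ verbatim, since there $x_{\alpha}$ has finite order $m$ and $m\mid(\textstyle\sum_l k_l-k)$ forces $x_{\alpha}^{\sum_l k_l-k}=e_{\alpha}$. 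For $t=\aleph_0$ this last implication fails, so the perturbation-and-order argument no longer certifies disjointness; instead I would argue directly with torsion-free rank. The upper bound $n$ is immediate, as any disjoint family of infinite-order elements has size at most $\dim_{\mathbb{Q}}\bigl(G_{\alpha}/(I_{\alpha}\oplus K_{\alpha}^*)\bigr)=n$. For the lower bound, the copy $K_{\alpha,\beta}\cong G$ contributes a rank-$n$ torsion-free part, so one may choose $h_1,\dots,h_n\in K_{\alpha,\beta}$ whose images in $\mathbb{Q}^n$ are independent and, after a generic adjustment within their cosets, such that the images of $x_{\alpha}h_1,\dots,x_{\alpha}h_n$ remain independent; these are $n$ infinite-order elements, each mapping to $x_{\beta}$, forming a disjoint family of the required size $n$.

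Finally, the inductive step on $r$ follows Corollary \ref{bad n} closely. Using Lemma \ref{U fact} and the universality of $Y$, I would pick $\alpha'$ with $\alpha>\alpha'>\beta_2,\dots,\beta_r$ but $\alpha'\not\geq\beta_1$, obtain by the induction hypothesis a single $x_{\alpha'}$ of order $t$ lying above all of $x_{\beta_2},\dots,x_{\beta_r}$, and then fill the diamond $\alpha>\{\alpha',\beta_1\}>\alpha'\beta_1$ — by the analogue for the present $S$ of the diamond lemma of the surjective subsection, whose proof transcribes once the base case above is available — to get $x_{\alpha}$ of order $t$ mapping to both $x_{\alpha'}$ and $x_{\beta_1}$, hence to every $x_{\beta_i}$. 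Pushing $x_{\alpha}$ down to some $\delta$ with $\alpha>\delta>\beta_i$ for all $i$ and applying the $r=1$ case to the pair $\alpha>\delta$ then yields a disjoint subset of $G_{\alpha}$ of the correct cardinality ($\aleph_0$ for finite $t$, and $n$ for $t=\aleph_0$) consisting of order-$t$ pre-images of each $x_{\beta_i}$, which completes the induction.
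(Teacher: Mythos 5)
Your base case $r=1$ is essentially sound, and for finite $t$ your plan, though more laborious than necessary, can be pushed through (the paper simply applies Corollary \ref{bad n} inside the subsemigroup $[Y;I_{\alpha};\psi_{\alpha,\beta}^I]\cong\mathcal{C}[I]$, which yields the finite case for \emph{all} $r$ at once). The genuine gap is your inductive step when $t=\aleph_0$. There you propose to obtain, from the induction hypothesis, an element $x_{\alpha'}$ of order $t=\aleph_0$ lying above $x_{\beta_2},\dots,x_{\beta_r}$, and then to fill the diamond with an element $x_{\alpha}$ of infinite order satisfying $x_{\alpha}\psi_{\alpha,\alpha'}=x_{\alpha'}$. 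This is impossible in the present setting: such an equation would place the infinite-order element $x_{\alpha'}$ in the image $I_{\alpha'}=\mathrm{Im}\,\psi_{\alpha,\alpha'}$, whereas the standing hypothesis of this subsection (via Lemmas \ref{coprime} and \ref{absolute image trivial}) is precisely that no element of infinite order lies in any $I_{\alpha}$ --- indeed $I_{\alpha}\oplus K^*_{\alpha}$ is the torsion subgroup of $G_{\alpha}$. So the ``analogue of the diamond lemma'' you invoke is false for $t=\aleph_0$: its proof does not transcribe, because in the surjective subsection infinite-order elements do lie in images, and here they do not. The induction on $r$ therefore collapses exactly in the case that matters.

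This is why the paper does not induct at all in the infinite case. It first fixes a \emph{finite}-order element $x_{\alpha}\in I_{\alpha}$ with $x_{\alpha}\psi_{\alpha,\beta_i}=x_{\beta_i}$ for all $i$ simultaneously (this is the finite-$t$ case, i.e.\ Corollary \ref{bad n} in $\mathcal{C}[I]$); then, using Lemma \ref{U fact}, it interpolates a single $\alpha'$ with $\alpha>\alpha'>\beta_1,\dots,\beta_r$ and takes a disjoint family $\{z_i\}$ of size $n$ of infinite-order elements inside $K_{\alpha,\alpha'}\subseteq\bigcap_i K_{\alpha,\beta_i}$, which exists since $K_{\alpha,\alpha'}\cong G_{\alpha}$ by Lemma \ref{T is kernel} and $G_{\alpha}$ has torsion-free rank $n$. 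The translates $x_{\alpha}z_i$ are then infinite-order, disjoint, and map onto each $x_{\beta_i}$ because the perturbation dies under every $\psi_{\alpha,\beta_i}$. Your own $r=1$ computation --- reading off disjointness in the quotient $G_{\alpha}/(I_{\alpha}\oplus K^*_{\alpha})\cong\mathbb{Q}^n$, where the torsion element $x_{\alpha}$ vanishes --- is exactly the right verification for this construction; the repair is to apply it at $\alpha$ directly, with the common finite-order preimage supplied by the finite case, rather than trying to thread an infinite-order element through an intermediate level of the semilattice.
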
 

\begin{proof}
Recall that $S$ contains the inverse subsemigroup $[Y;I_{\alpha};\psi_{\alpha,\beta}^I]$ isomorphic to $\mathcal{C}[I]$, where $I\cong I_{\alpha}$.  Hence if $t$ is finite, then the result easily follows by Corollary \ref{bad n}, where the required disjoint subset of $G_{\alpha}$ is contained in $I_{\alpha}$. 
 
  Suppose instead that $t=\aleph_0$. By the previous case we may fix $x_{\alpha}\in I_{\alpha}$ of finite order with $x_{\alpha}\psi_{\alpha,\beta_i}=x_{\beta_i}$ for all $i$. Since $Y$ is the universal semilattice, we may pick $\alpha'\in Y$ with $\alpha>\alpha'>\beta_1,\dots, \beta_r$ by Lemma \ref{U fact}.  Let $\{z_i:i\in \mathbb{N}\}$ be a disjoint set of size $n$ consisting of elements of infinite order and such that $z_i\in K_{\alpha,\alpha'}\subseteq K_{\alpha,\beta_i}$. Note that such a set exists as $K_{\alpha,\alpha'}\cong G_{\alpha}$. Then $x_{\alpha}z_i$ is  a disjoint set of size $n$ consisting of elements of infinite order, and $ (x_{\alpha}z_i)\psi_{\alpha,\beta_i}=x_{\alpha}\psi_{\alpha,\beta_i}=x_{\beta_i}$    for each $i\in \mathbb{N}$.   
\end{proof}

By a simple adaptation of Proposition \ref{age 1} we thus obtain: 

\begin{corollary}\label{age 2} The age of $S$ is  $\mathcal{K}^*[I;K;n]$. 
\end{corollary}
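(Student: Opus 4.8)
The plan is to establish the two inclusions separately, with the reverse inclusion obtained by mirroring the induction of Proposition \ref{age 1}, replacing Corollary \ref{bad n} by the preceding Proposition.

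First I would verify the easy inclusion $\mathrm{age}(S)\subseteq \mathcal{K}^*[I;K;n]$. Any f.g.\ inverse subsemigroup $B$ of $S$ is, by Lemma \ref{sub clifford}, of the form $[Y';H_\alpha;\psi_{\alpha,\beta}|_{H_\alpha}]$ with each $H_\alpha$ a f.g.\ subgroup of $G_\alpha\cong G$, hence lying in $\mathcal{K}(G)$. For the two defining constraints, recall that in a Clifford semigroup an element $a_\alpha$ is maximal in the natural order precisely when $a_\alpha\notin I_\alpha$. Since by the standing hypothesis no element of infinite order lies in any $I_\alpha$, every infinite-order element of $S$, and hence of $B$, is maximal, giving (1). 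For (2), the finite-order elements of $G_\alpha$ form the subgroup $I_\alpha\oplus K^*_\alpha$ with $I_\alpha,K^*_\alpha$ of coprime order (Lemma \ref{triv int}), the $\mathbb{P}_K$-primary elements lying in $K^*_\alpha$; as $I_\alpha\cap K^*_\alpha=\{e_\alpha\}$ such elements avoid $I_\alpha$ and are thus maximal, giving (2). Both properties are inherited by $B$, so $B\in\mathcal{K}^*[I;K;n]$.

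For the reverse inclusion I would induct on the number of generators exactly as in Proposition \ref{age 1}. The base case is a single cyclic group, lying in $\mathcal{K}(G)\subseteq\mathrm{age}(S)$. For the inductive step, given an $(n+1)$-generated $A=[Z;A_\alpha;\phi_{\alpha,\beta}]\in\mathcal{K}^*[I;K;n]$ with $Z$ non-trivial, pick $\alpha$ maximal in $Z$, write $A_\alpha=\bigoplus_{i=1}^r\langle a_{\alpha,i}\rangle_I$, and delete one summand to obtain an $n$-generated subsemigroup $A'$, embedded by the inductive hypothesis via $\theta$ with induced $\pi$. Using that $Y$ is the universal semilattice (Lemma \ref{Y uni}) and Lemma \ref{U fact}, choose $\delta\in Y$ with $\bar{Z}\pi\cup\{\delta\}\cong Z$. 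It then remains to attach the deleted generator $a_{\alpha,1}$, mapping it to a suitable element of $G_\delta$ of order $o(a_{\alpha,1})$ that projects to each $a_{\beta_i}\theta$ under the relevant connecting morphism and is disjoint from the f.g.\ group $A_\alpha'\theta$. Since only finitely many candidates can fail disjointness, it suffices to produce an infinite, or in the infinite-order case a size-$n$, disjoint family of correctly-projecting elements of the right order.

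The production of this family is where the preceding Proposition does the work, and identifying the correct case is the main subtlety. Because $\alpha$ is maximal in $Z$, $a_{\alpha,1}$ is automatically maximal in $A$, so constraints (1) and (2) hold at $\alpha$ regardless of $\delta$; the content lies entirely in the projections $a_{\beta_i}=a_{\alpha,1}\phi_{\alpha,\beta_i}$. Splitting $\langle a_{\alpha,1}\rangle_I$ into primary components, I would treat three cases. If $a_{\alpha,1}$ has $\mathbb{P}_I$-primary order then its projections lie in the images $I_{\beta_i}$ and the finite case of the preceding Proposition supplies the required disjoint family inside $I_\delta$; if $a_{\alpha,1}$ has infinite order then, by (1), its projections are finite of $\mathbb{P}_I$-order and lie in $I_{\beta_i}$, so the $t=\aleph_0$ case supplies a size-$n$ disjoint family of infinite-order elements. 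The genuinely extra case is $\mathbb{P}_K$-primary order: here (2) forces the projections to be trivial, so $a_{\alpha,1}$ must map into the absolute kernel $K^*_\delta$; this falls outside the preceding Proposition, whose finite families sit in $I_\delta$, and I would instead dispatch it using the structure-homogeneity of the characteristic factor $[U;K^*]$ (Lemma \ref{trivial mor}) together with the membership $A_\alpha\in\mathcal{K}(G)$, which guarantees that the $\mathbb{P}_K$-part of $A_\alpha$ embeds in $K^*\cong K^*_\delta$ with room for one further disjoint summand. Checking that the resulting extension $\theta'$ is an embedding is routine, as in Proposition \ref{age 1}. I expect this $\mathbb{P}_K$-primary case, and the bookkeeping ensuring the added generator respects both membership constraints while remaining disjoint, to be the principal obstacle; the $\mathbb{P}_I$-primary and infinite-order cases are direct transcriptions of Proposition \ref{age 1} with Corollary \ref{bad n} replaced by the preceding Proposition.
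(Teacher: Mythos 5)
Your proposal is correct, and its skeleton is exactly what the paper intends: the paper's entire proof of this corollary is the sentence ``by a simple adaptation of Proposition~\ref{age 1}'', i.e.\ rerun that generator-by-generator induction with Corollary~\ref{bad n} replaced by the unnamed Proposition immediately preceding the corollary, which is precisely your plan; your verification of the easy inclusion $\mathrm{age}(S)\subseteq\mathcal{K}^*[I;K;n]$ is likewise what the paper leaves implicit. Where you genuinely add value is the $\mathbb{P}_K$-primary case, and this addition is not optional: the preceding Proposition requires the projections $x_{\beta_i}$ to lie in $I_{\beta_i}$ and, for finite $t$, its proof manufactures the disjoint family \emph{inside} $I_\delta$, so it cannot serve a generator of $\mathbb{P}_K$-primary order. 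Such elements lie in $K^*_\delta$, which meets $I_\delta$ trivially, and since $n_p$ may be finite there need not exist any infinite disjoint family of that order at all. Your replacement argument --- condition (2) forces the projections of such a generator to be trivial, so its image must be chosen in the absolute kernel $K^*_\delta$; then $A_\alpha\in\mathcal{K}(G)$ embeds its $\mathbb{P}_K$-torsion into $K^*$, and homogeneity of $K^*$ (packaged, if you like, via Lemma~\ref{trivial mor}) extends the already-fixed embedding of the $\mathbb{P}_K$-torsion of $A'_\alpha$ by the one extra cyclic summand --- is correct, and is the real content hiding behind the paper's word ``simple''.

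Two details in your sketch still need to be nailed down. First, a generator of mixed order $t_It_K$ (with $t_I$ a product of $\mathbb{P}_I$-primes and $t_K>1$ a product of $\mathbb{P}_K$-primes) falls under none of your three cases as stated; moreover, you should not handle it by re-decomposing $A_\alpha$ into primary cyclic summands, since that destroys the generator count driving the induction ($\mathbb{Z}_{pq}$ is $1$-generated but splits into two summands, so removing one makes no progress). Instead, build the image of the single generator as a product $g^Ig^K$, where $g^I\in I_\delta$ of order $t_I$ comes from the preceding Proposition and $g^K\in K^*_\delta$ of order $t_K$ comes from your $K^*$ argument; coprimality of $t_I$ and $t_K$ then yields the correct order, the correct projections (the $K^*$-factor dies under every connecting morphism), and disjointness from $A'_\alpha\theta$. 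Second, in the infinite-order case your phrase ``only finitely many candidates can fail disjointness'' proves nothing against a family of finite size $n$: you need that at most $n-1$ members can fail. This does hold, but by a rank count rather than the cardinality argument of Proposition~\ref{age 1}: distinct failures $g_i^{m_i}\in A'_\alpha\theta$ are pairwise disjoint elements of infinite order, whereas the torsion-free rank of $A'_\alpha$ is at most $n-1$, because $A_\alpha=\langle a_{\alpha,1}\rangle_I\oplus A'_\alpha$ embeds in $G$, whose torsion-free rank is $n$.
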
 

\begin{theorem}[Classification Theorem of commutative HIS] Let $G$ be an homogeneous non-periodic abelian group, $Y$ a homogeneous semilattice and $U$ be the universal semilattice. Then the following inverse semigroups are HIS:  
\begin{enumerate}[label=(\roman*), font=\normalfont]
\item $Y\times G$; 
\item $[Y;G]$; 
\item $\mathcal{C}[G]$, with $G$ component-wise non co-Hopfian; 
\item the Fra\"iss\'e limit of a Fra\"iss\'e class $\mathcal{K}^*[I;K^*;n]$, where $G=I\oplus K^*\oplus \mathbb{Q}^n$ and $I$ is component-wise non co-Hopfian. 
\end{enumerate} 
Conversely, every non-periodic commutative HIS is isomorphic to an inverse semigroup constructed in this way.  
\end{theorem}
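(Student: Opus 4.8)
The plan is to derive the classification by combining the structural trichotomy for non-periodic Clifford HIS from Theorem \ref{homog Thm} with the commutative analysis of this section. Since commutative inverse semigroups are Clifford, any non-periodic commutative HIS $S$ is a non-periodic Clifford HIS, and Theorem \ref{homog Thm} shows $S$ is isomorphic to one of three things: a surjective Clifford HIS, the image-trivial semigroup $[Y;G]$, or a Clifford HIS in which no element of infinite order lies in an image or absolute kernel. I would handle the converse by refining each of these three possibilities in the commutative setting, and the forward direction by verifying each of the four listed families is a HIS.

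For the forward direction, family (ii) is immediate from Lemma \ref{trivial mor}, since an image-trivial Clifford semigroup with homogeneous structure semilattice and all maximal subgroups isomorphic to the homogeneous group $G$ is a structure-HIS, hence a HIS. Family (i) is the case of isomorphism connecting morphisms: here $Y\times G$ is a structure-HIS by Proposition \ref{isos structure hom} whenever $Y$ and $G$ are homogeneous. Families (iii) and (iv) are Fra\"iss\'e limits: $\mathcal{C}[G]$ is a HIS because $\mathcal{K}[G]$ is a Fra\"iss\'e class by Proposition \ref{first fraisse}, while the limit of $\mathcal{K}^*[I;K^*;n]$ is a HIS by Fra\"iss\'e's Theorem precisely when that class is Fra\"iss\'e, which is the standing hypothesis of case (iv).

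For the converse I would split $S$ according to Theorem \ref{homog Thm}. If $S\cong [Y;G]$, then Corollary \ref{structure clifford} yields $Y$ and $G$ homogeneous and non-periodicity of $S$ gives case (ii). If $S$ is surjective with all connecting morphisms isomorphisms, then Lemma \ref{iso state} gives $S\cong Y\times G$, which is case (i). If $S$ is surjective with non-injective connecting morphisms, then Lemma \ref{comm ab} makes the absolute images subgroups, so the non-periodic part of Theorem \ref{surj main result} forces the absolute-image factor to be trivial; Lemma \ref{inf prod sum} makes $G$ component-wise non co-Hopfian, Lemma \ref{Y uni} forces $Y=U$, and Propositions \ref{first fraisse} and \ref{age 1} identify $S$ with $\mathcal{C}[G]$, giving case (iii). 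Finally, in the remaining case the open-case analysis produces the decomposition $G=I\oplus K^*\oplus \mathbb{Q}^n$ with $I$ component-wise non co-Hopfian, Lemma \ref{Y uni} gives $Y=U$, and Corollary \ref{age 2} computes $\mathrm{age}(S)=\mathcal{K}^*[I;K^*;n]$; since $S$ is a HIS, Fra\"iss\'e's Theorem makes its age a Fra\"iss\'e class and exhibits $S$ as its limit, which is case (iv).

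The main obstacle is the final case. One must check that the three branches of Theorem \ref{homog Thm} genuinely exhaust the non-periodic commutative HIS and that the ``mixed'' semigroup of the last branch cannot be reabsorbed into (i)--(iii); the decisive tool is that, for any HIS $S$, the converse part of Fra\"iss\'e's Theorem guarantees $\mathrm{age}(S)$ is a Fra\"iss\'e class with $S$ as its Fra\"iss\'e limit. This is exactly what promotes the age computation of Corollary \ref{age 2} into the assertion that $S$ is the Fra\"iss\'e limit of $\mathcal{K}^*[I;K^*;n]$, neatly sidestepping the open question of whether that class is Fra\"iss\'e for arbitrary parameters.
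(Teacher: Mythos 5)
Your proposal is correct and follows essentially the same route as the paper: both reduce via Theorem \ref{homog Thm} to the three non-periodic cases, refine the surjective case using Lemmas \ref{comm ab}, \ref{inf prod sum}, \ref{Y uni} and Propositions \ref{first fraisse}, \ref{age 1}, and settle the final case by combining Corollary \ref{age 2} with the converse direction of Fra\"iss\'e's Theorem to conclude that $\mathrm{age}(S)=\mathcal{K}^*[I;K^*;n]$ is a Fra\"iss\'e class with $S$ as its limit. Your closing observation that this sidesteps the open problem of determining when $\mathcal{K}^*[I;K^*;n]$ is Fra\"iss\'e is exactly the point implicit in the paper's argument.
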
 

\begin{proof} By Theorem \ref{homog Thm}, A non-periodic commutative Clifford semigroup $S$ is a HIS if and only if isomorphic to either a surjective Clifford HIS, or $[Y;G]$ for some homogeneous semilattice $Y$ and group $G$, or a Clifford HIS with no elements of infinite order lying in the images or absolute kernels.
 By the usual argument, a surjective commutative Clifford is a HIS if and only if is isomorphic to either $Y\times G$ for some homogeneous $Y$ and $G$, or a HIS with connecting morphisms non-injective, and $G_{\alpha}$ being a dense union of kernels by Theorem \ref{surj main result} (since the absolute image forms a subgroup by Lemma \ref{comm ab}).
  By Propositions \ref{first fraisse} and \ref{age 1} the second possibility holds if and only if $S$ isomorphic to $\mathcal{C}[G]$, where $G$ is component-wise co-Hopfian by Lemma \ref{inf prod sum}. 
  It thus suffices to consider the case where $S$ has no elements of infinite order lying in the images or absolute kernels. If $S$ is a HIS then age($S$) is $\mathcal{K}^*[I;K;n]$ by Corollary \ref{age 2}, and thus $S$ is isomorphic to the Fra\"iss\'e limit of the Fra\"iss\'e class $\mathcal{K}^*[I;K;n]$. The converse is trivial by Fra\"iss\'e's Theorem. 
\end{proof}
 
\section{Inverse Homogeneous Semigroups} 

In this section we study the differences between the two concepts of homogeneity for inverse semigroups, in particular when a HIS has the stronger property of being an inverse homogenous semigroup (HS).  

\begin{lemma}  Let $S$ be a periodic inverse semigroup. Then $S$ is a HS if and only if it is a HIS. 
\end{lemma}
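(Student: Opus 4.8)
The plan is to prove the two implications separately, observing that one of them needs no periodicity hypothesis. First I would record the easy direction, that a HS is a HIS, which holds for every inverse semigroup and is exactly the remark made in Section 2: any finitely generated inverse subsemigroup $\langle a_1,\dots,a_n\rangle_I=\langle a_1,\dots,a_n,a_1^{-1},\dots,a_n^{-1}\rangle$ is also finitely generated as a plain subsemigroup, and any isomorphism between two inverse subsemigroups of $S$ is in particular a semigroup isomorphism. Hence if every semigroup isomorphism between f.g.\ subsemigroups extends to an automorphism of $S$, then so does every isomorphism between f.g.\ inverse subsemigroups.

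The substance is the converse, that a periodic HIS is a HS, and the key is a structural reduction to the Clifford case. Since the bicyclic monoid is non-periodic (its generator $x$ generates the infinite monogenic inverse subsemigroup $\langle x\rangle_I$), a periodic inverse semigroup contains no copy of the bicyclic monoid and is therefore completely semisimple. By Theorem \ref{cliff of bisimple} a completely semisimple HIS is Clifford, so a periodic HIS $S$ is a strong semilattice of necessarily periodic groups, $S=[Y;G_\alpha;\psi_{\alpha,\beta}]$.

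I would then show that in a periodic Clifford semigroup every subsemigroup is automatically an inverse subsemigroup. If $a\in G_\alpha$ has order $n$, so $a^n=e_\alpha$, then $a^{n-1}$ is the unique semigroup inverse of $a$: indeed $a\,a^{n-1}a=a^{n+1}=a$ and $a^{n-1}a\,a^{n-1}=a^{2n-1}=a^{n-1}$, using $a^n=e_\alpha$. Thus $a^{-1}=a^{n-1}\in\langle a\rangle$, so any subsemigroup $T$ of $S$ is closed under inverses; and if $T=\langle a_1,\dots,a_k\rangle$ then $T=\langle a_1,\dots,a_k\rangle_I$, so $T$ is also finitely generated as an inverse subsemigroup. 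Consequently any semigroup isomorphism $\phi$ between f.g.\ subsemigroups $A,B$ of $S$ is an isomorphism between f.g.\ inverse subsemigroups and, by uniqueness of inverses, automatically preserves $^{-1}$. The HIS property then extends $\phi$ to an automorphism of $S$, which is in particular a semigroup automorphism, so $S$ is a HS.

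The main obstacle is precisely the reduction step: the passage from ``periodic HIS'' to ``Clifford'' is where homogeneity is genuinely used (via Theorem \ref{cliff of bisimple}), since a general periodic inverse semigroup need not have its subsemigroups closed under inverses, and the two notions of homogeneity need not coincide without the Clifford conclusion. Once Clifford is established, the identification $a^{-1}=a^{n-1}$ and the resulting collapse of ``subsemigroup'' to ``inverse subsemigroup'' are routine.
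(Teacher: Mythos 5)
Your proof is correct and follows essentially the same route as the paper's: use Theorem \ref{cliff of bisimple} (plus the fact that the bicyclic monoid is non-periodic) to conclude that a periodic HIS is Clifford, then observe that in a periodic group every element's inverse is a positive power of it, so f.g.\ subsemigroups and f.g.\ inverse subsemigroups of $S$ coincide and the two notions of homogeneity collapse. The only difference is that you spell out details the paper leaves implicit (the complete semisimplicity argument, the computation $a^{-1}=a^{n-1}$, and the automatic preservation of $^{-1}$ by semigroup isomorphisms), all of which are correct.
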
 

\begin{proof} Suppose $S$ is a HIS, so that, being periodic, $S=[Y;G_{\alpha};\psi_{\alpha,\beta}]$ is a Clifford semigroup by Theorem \ref{cliff of bisimple}. Note that a subsemigroup of a periodic group is a subgroup, since some power of each element is its inverse. Hence, for any finitely generated subsemigroup $A=\langle g_{\alpha_1},\dots,g_{\alpha_n} \rangle$ of $S$, we have $A=\langle  g_{\alpha_1},\dots,g_{\alpha_n} \rangle_I$, and the result follows. The converse is trivial.  
\end{proof}

Now consider an arbitrary (not necessarily inverse) semigroup $S$, and let 
\[ \text{Inf}(S)=\{a\in S:|\langle a \rangle|=\aleph_0\}.
\] 
We observe that if $S$ is a HS then Aut($S$) acts transitively on $\text{Inf}(S)$. Indeed, for each $a,b\in \text{Inf}(S)$, we have 
 \[ \langle a \rangle \cong \mathbb{N} \cong \langle b \rangle,
 \] 
 and the result then follows by the homogeneity of $S$. In particular, either all elements of $\text{Inf}(S)$ lie in subgroups of $S$, or none of them do. Indeed, if $a,b\in \text{Inf}(S)$ are such that $a\in H_e$ for some $e\in E(S)$ then by taking an automorphism of $S$ sending $a$ to $b$ we have $b\in H_{e\theta}$ since $\mathcal{H}$ is preserved under automorphisms (noting that $e\theta\in E(S)$). 
 
 The set of idempotents $E(S)$ of an arbitrary semigroup $S$ forms a partially ordered set under the natural order, given by $f\leq e$ if and only if $ef=f=fe$ (so that if $S$ is inverse then $\leq$ is the usual natural order on $E(S)$, discussed in Section 2). We call an idempotent $f$ of $S$ \textit{primitive} if, for all $e\in E(S)$,
 \[ e\geq f \Rightarrow e=f. 
 \] 
 If all idempotents of $S$ are primitive, then we call $S$ a \textit{primitive semigroup}. 
  
 \begin{lemma} \label{non-per prim} Let $S$ be a HS with a non-periodic element contained in a maximal subgroup of $S$. Then $S$ is a primitive semigroup. 
 \end{lemma}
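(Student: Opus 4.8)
The plan is to argue by contradiction. Suppose $S$ is not primitive, so that there are distinct idempotents $e,f\in E(S)$ with $e>f$. Because $S$ contains a non-periodic element lying in a maximal subgroup, the discussion preceding the lemma supplies two facts I shall use throughout: $\text{Aut}(S)$ acts transitively on $E(S)$ (via the trivial isomorphism $\{g\}\to\{h\}$ between one-element subsemigroups, which extends by homogeneity), and \emph{every} element of $\text{Inf}(S)$ lies in a maximal subgroup (this is the ``all'' alternative of the dichotomy, forced by the hypothesis). The goal is then to manufacture an element of $\text{Inf}(S)$ lying in \emph{no} maximal subgroup, contradicting the second fact. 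First I would use transitivity on idempotents to place the given non-periodic element inside $H_e$: if $a_0\in H_g$ is non-periodic, pick $\phi\in\text{Aut}(S)$ with $g\phi=e$ and set $a:=a_0\phi$, a non-periodic element of $H_e$ with $e>f$.

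The natural candidate is $b=af$. Since $a\in H_e$ we have $ea=ae=a$, and since $e>f$ we have $ef=fe=f$; hence $eb=be=b$, so $b$ lies in the local monoid $eSe$, whose group of units is $H_e$. Two points must be checked: that $b\in\text{Inf}(S)$ and that $b$ lies in no maximal subgroup. The second point has a clean form when the ambient structure is inverse, where $bb^{-1}=afa^{-1}$ while $b^{-1}b=fef=f$; thus $b$ lies in a subgroup precisely when $afa^{-1}=f$, i.e.\ precisely when $a$ commutes with $f$. So whenever one can arrange $af\neq fa$ with $b=af$ still of infinite order, the element $b$ belongs to $\text{Inf}(S)$ but to no maximal subgroup, and the contradiction is immediate. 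Confirming that $b$ is non-periodic in this regime (equivalently, that right multiplication by $f$ does not collapse the order of $a$) is a routine but necessary verification.

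The main obstacle is the complementary case, in which every non-periodic $a\in H_e$ commutes with every idempotent $f<e$, so that the relevant idempotents behave centrally, exactly as in a Clifford semigroup. Here $b=af$ falls back into a maximal subgroup --- indeed in a Clifford semigroup \emph{every} element lies in a subgroup --- and the argument above is unavailable, so a different idea is required. The plan is to exploit that homogeneity as a \emph{semigroup} is strictly stronger than homogeneity as an inverse semigroup, the key being that a finitely generated subsemigroup need not be closed under inverses. Concretely, transitivity on $E(S)$ first yields $\sigma\in\text{Aut}(S)$ with $e\sigma=f$; as automorphisms preserve the natural order this produces an infinite strictly descending chain $e>f>f\sigma>f\sigma^{2}>\cdots$ together with non-periodic elements $a\sigma^{i}\in H_{f\sigma^{\,i-1}}$. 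Threading $a$ and these idempotents into a single finitely generated, \emph{non} inverse-closed subsemigroup, one then seeks an isomorphism onto a second finitely generated subsemigroup that \emph{is} inverse-closed (or that sits at an incompatible configuration of idempotent levels): no automorphism of $S$ can realise such a map, since automorphisms carry inverse-closed subsemigroups to inverse-closed ones and preserve the level structure recorded by the commuting connecting maps. Producing this non-extendable isomorphism --- equivalently, showing that the age of $S$ cannot satisfy the amalgamation property unless the semilattice of idempotents is trivial --- is the crux of the proof, and is where the hypotheses (non-periodicity inside a subgroup, and semigroup- rather than inverse-semigroup-homogeneity) are used essentially.
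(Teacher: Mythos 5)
There is a genuine gap here, in fact two. First, your non-commuting case rests on the claim that $b=af$ is non-periodic, which you defer as ``routine''; it is not. In a general inverse semigroup the product of an infinite-order group element with a strictly smaller idempotent can easily be nilpotent: in the symmetric inverse monoid on $\mathbb{Z}$, take $a$ the translation $x\mapsto x+1$ (infinite order in the group of units) and $f$ the partial identity on $\{0\}$; then $(af)^2$ is already the empty map. So any proof that $b\in\text{Inf}(S)$ must itself invoke homogeneity, and you have not supplied it. Second, and more seriously, you openly leave the complementary (commuting, Clifford-like) case unresolved: your proposed route through non-inverse-closed subsemigroups and a failure of amalgamation is only a plan, and you yourself call it ``the crux of the proof.'' A proof with its crux missing is not a proof. (A smaller issue: your case analysis uses $b^{-1}$ and $afa^{-1}$, i.e.\ assumes $S$ is inverse, whereas the lemma is stated for an arbitrary homogeneous semigroup.)

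The paper avoids your case split entirely by one change of viewpoint: place the infinite-order element in the \emph{lower} group $H_f$, not the upper one. Since the maximal subgroups of a HS are pairwise isomorphic, the hypothesis gives $x\in H_f\cap\text{Inf}(S)$ for any $f$. If $e\geq f$ then $ex=e(fx)=(ef)x=fx=x=xe$, so $e$ acts as a two-sided identity on $\langle x\rangle$ just as $f$ does; hence $\langle e,x\rangle=\{e\}\cup\langle x\rangle$ and $\langle f,x\rangle=\{f\}\cup\langle x\rangle$ are both a free monogenic semigroup with an identity adjoined, and the map fixing $x$ and sending $e\mapsto f$ is an isomorphism of finitely generated subsemigroups. (This is exactly where the HS hypothesis, rather than HIS, is essential --- these subsemigroups are not inverse-closed --- so your instinct about where the semigroup/inverse-semigroup distinction enters was right, but it is used far more directly than via amalgamation.) Extending this isomorphism to $\bar\phi\in\text{Aut}(S)$ and using that $\mathcal{H}$ is preserved, one gets $H_e\bar\phi=H_f$ and $H_f\bar\phi=H_x\bar\phi=H_x=H_f$, whence $H_e=H_f$ and $e=f$. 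No dichotomy about where elements of $\text{Inf}(S)$ live is needed at all.
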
 
 
 \begin{proof} Since $\langle e \rangle=\{e\}=\langle e \rangle_I$ for any $e\in E(S)$, the results of Lemma \ref{trans on Y} and Corollary \ref{iso max subgrps} easily extend to homogeneous semigroups, that is, Aut($S$) acts transitively on $E(S)$ and the maximal subgroups of $S$ are pairwise isomorphic. In particular, each maximal subgroup of $S$ is non-periodic. Let $e,f\in E(S)$ be such that $e\geq f$, and let $x\in H_f \cap \text{Inf}(S)$. Then
 \[ ex=e(fx)=(ef)x=fx=x=xf=x(fe)=(xf)e=xe
 \] 
 and so the map
 \[ \phi: \langle e,x \rangle \rightarrow \langle f,x \rangle 
 \]
 determined by $e\phi=f$ and $x\phi=x$ is an isomorphism. By the homogeneity of $S$  we may extend $\phi$ to an automorphism $\bar{\phi}$ of $S$. Since $\mathcal{H}$ is preserved under $\bar{\phi}$ we have $H_e\bar{\phi}=H_f$ and $H_f\bar{\phi}=H_x\bar{\phi}=H_x=H_f$. Hence $H_e=H_f$ and it follows that $e=f$ as required. 
 \end{proof} 

A regular semigroup $S$ which contains a primitive idempotent is called \textit{completely simple}, and we therefore obtain the following corollary to Lemma \ref{non-per prim}

\begin{corollary}\label{non per cr} Let $S$ be a regular homogeneous semigroup. If $S$ contains a non-periodic element in a subgroup of $S$ then $S$ is completely simple. 
\end{corollary}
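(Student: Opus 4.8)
The plan is to read this off almost immediately from Lemma~\ref{non-per prim}. The one point that genuinely needs checking is that the hypothesis of the corollary---a non-periodic element lying in \emph{some} subgroup of $S$---delivers the hypothesis of the lemma, namely a non-periodic element lying in a \emph{maximal} subgroup. So first I would take a non-periodic $x$ contained in a subgroup $G$ of $S$, let $e$ be the identity of $G$, and invoke the standard fact that every subgroup of a semigroup is contained in the $\mathcal{H}$-class of its identity; thus $G \subseteq H_e$ and $x \in H_e \cap \mathrm{Inf}(S)$. Hence $H_e$ is a maximal subgroup containing the non-periodic element $x$, so Lemma~\ref{non-per prim} applies and yields that $S$ is a primitive semigroup, that is, every idempotent of $S$ is primitive.

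Next I would observe that a regular semigroup always possesses idempotents: for any $a \in S$ there is some $b \in S$ with $a = aba$, and then $ab \in E(S)$. Since $S$ is primitive this idempotent is primitive, so $S$ contains a primitive idempotent. Being regular, $S$ is then completely simple by definition, which completes the argument.

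The proof is therefore essentially a bookkeeping exercise, and I do not anticipate a genuine obstacle. The only step requiring care is the identification of ``subgroup'' with ``maximal subgroup'' in the first paragraph; once that is in place, primitivity of $S$ is handed to us by Lemma~\ref{non-per prim}, and the conclusion is immediate from the stated definition of complete simplicity.
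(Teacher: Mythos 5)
Your proof is correct and follows exactly the route the paper intends: the paper treats this as an immediate corollary of Lemma~\ref{non-per prim} together with its stated definition of complete simplicity, and you have simply filled in the two routine details (a subgroup lies inside the maximal subgroup $H_e$ where $e$ is its identity, and a regular semigroup has idempotents). No gap.
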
 

\begin{corollary} A non-periodic inverse HS is a group, homogeneous as a semigroup. 
\end{corollary}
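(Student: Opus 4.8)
The plan is to exploit the dichotomy of Theorem~\ref{cliff of bisimple}: since an inverse HS is in particular a HIS, such an $S$ is either Clifford or bisimple. First I would dispose of the Clifford case, which quickly yields a group. Indeed, if $S$ is Clifford and non-periodic then it contains an element $a$ of infinite order with $aa^{-1}=a^{-1}a=:g$, so $a$ is a non-periodic element of the maximal subgroup $H_g$; Lemma~\ref{non-per prim} then forces $S$ to be primitive. As $S$ is inverse its idempotents commute, and for any $e',f'\in E(S)$ the idempotent $e'f'$ satisfies $e'\geq e'f'$ and $f'\geq e'f'$, so primitivity of $e'f'$ gives $e'=e'f'=f'$. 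Hence $S$ has a unique idempotent and is therefore a group (one could equally invoke Corollary~\ref{non per cr} together with the fact that a completely simple inverse semigroup is a group).

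The real work, and the expected main obstacle, is ruling out the bisimple case, since this is left open for a general HIS (the Open Problem after Proposition~\ref{bisimple inf}). The key is that homogeneity as a \emph{semigroup} is genuinely stronger than homogeneity as an inverse semigroup, because it lets us extend isomorphisms between arbitrary finitely generated subsemigroups. Suppose $S$ is not completely semisimple. Then it contains a copy of the bicyclic monoid, so there is $x\in S$ with $e:=xx^{-1}$ and $f:=x^{-1}x$ distinct idempotents satisfying $e>f$. The monogenic subsemigroups $\langle x\rangle$ and $\langle x^{-1}\rangle$ are both free monogenic, and $x^n\mapsto x^{-n}$ is a semigroup isomorphism between them; using the HS property I would extend $x\mapsto x^{-1}$ to an automorphism $\psi$ of $S$. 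Since a semigroup automorphism of an inverse semigroup preserves inverses, $\psi(x^{-1})=x$, and therefore
\[ \psi(e)=\psi(x)\psi(x^{-1})=x^{-1}x=f, \qquad \psi(f)=\psi(x^{-1})\psi(x)=xx^{-1}=e. \]
As automorphisms preserve the natural partial order, $e>f$ would give $f=\psi(e)>\psi(f)=e$, contradicting antisymmetry. Thus $S$ is completely semisimple, hence Clifford by Theorem~\ref{cliff of bisimple}, and the Clifford analysis above applies.

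I expect the only delicate points to be bookkeeping rather than conceptual: checking that $\langle x\rangle\cong(\mathbb{N},+)\cong\langle x^{-1}\rangle$ inside the bicyclic monoid and that the displayed identities hold, and recalling that semigroup automorphisms of inverse semigroups automatically respect both inverses and the natural order. The crucial and non-routine idea is the order-reversing automorphism $\psi$, which is exactly what the stronger semigroup-homogeneity hypothesis buys us over the inverse-semigroup version and what closes the otherwise open bisimple case.
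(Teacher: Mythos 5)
Your proof is correct and follows essentially the same route as the paper's: the Clifford/bisimple dichotomy of Theorem~\ref{cliff of bisimple}, the order-reversing automorphism extending $x\mapsto x^{-1}$ between the free monogenic subsemigroups $\langle x\rangle\cong\mathbb{N}\cong\langle x^{-1}\rangle$ (the paper packages this as transitivity of $\aut(S)$ on $\mathrm{Inf}(S)$, proved in exactly this way), and then Lemma~\ref{non-per prim} plus triviality of a primitive semilattice in the Clifford case. The only cosmetic differences are that you treat the two cases in the opposite order and phrase the second case as ``not completely semisimple'' rather than ``bisimple'', which if anything is the cleaner reading of the dichotomy.
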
 

\begin{proof} Let $S$ be a non-periodic inverse HS. Then $S$ is a HIS, and is either bisimple or Clifford by Theorem \ref{cliff of bisimple}. 
If $S$ is bisimple, then there exists $x\in S$ such that $\langle x,x^{-1}\rangle$ is isomorphic to the bicyclic monoid with $xx^{-1}>x^{-1}x$. Since Aut($S$) acts transitively on Inf($S$), there exists an automorphism $\theta$ of $S$ with $x\theta=x^{-1}$. Hence $x^{-1}\theta=x$ as $S$ is inverse, and so $(xx^{-1})\theta=x^{-1}x$ and $(x^{-1}x)\theta=xx^{-1}$. This contradicts $\theta$ preserving the natural order on $E(S)$, and so $S$ is Clifford. Since $S$ is a union of groups, elements of infinite order are certainly contained in maximal subgroups, and thus $S$ is primitive by Lemma \ref{non-per prim}. However the semilattice $E(S)$ is primitive if and only if it is trivial, and so $S$ is a group. 
\end{proof} 

However the converse is not known, that is, is a homogeneous group an inverse HS? We give a positive answer for the class of abelian groups, thus completing our study into the homogeneity of commutative inverse semigroups:  

 \begin{proposition} A homogeneous abelian group is a HS. 
 \end{proposition}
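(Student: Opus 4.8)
The plan is to reduce the homogeneity of $G$ as a semigroup to its homogeneity as a group, which we already know holds since a group is a HIS precisely when it is a homogeneous group. The one gap is that when $G$ is non-periodic a finitely generated subsemigroup $A$ of $G$ need not be a subgroup, so a semigroup isomorphism between two such subsemigroups is not immediately an isomorphism of the generated subgroups. The key step is therefore to show that any such isomorphism extends canonically to those subgroups.

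Let $\phi\colon A\to B$ be an isomorphism between finitely generated subsemigroups $A=\langle a_1,\dots,a_k\rangle$ and $B$ of $G$. Since $G$ is abelian and $A$ is a subsemigroup, the subgroup it generates is $\langle A\rangle_I=\{ab^{-1}:a,b\in A\}$, and similarly for $B$. I would define $\bar{\phi}\colon \langle A\rangle_I\to \langle B\rangle_I$ by $\bar{\phi}(ab^{-1})=\phi(a)\phi(b)^{-1}$. Using that $G$ is cancellative and commutative and that $\phi$ is a semigroup homomorphism, one checks that $\bar{\phi}$ is well defined (if $ab^{-1}=cd^{-1}$ then $ad=cb$ in $A$, whence $\phi(a)\phi(d)=\phi(c)\phi(b)$), that it is a group homomorphism, and that it restricts to $\phi$ on $A$ (writing $a=(ab)b^{-1}$ for any $b\in A$). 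Applying the same construction to $\phi^{-1}$ yields a two-sided inverse, so $\bar{\phi}$ is an isomorphism of subgroups extending $\phi$. This is nothing more than the universal property of the group of differences of the cancellative commutative semigroup $A$.

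Since $A$ is finitely generated as a semigroup, $\langle A\rangle_I$ is finitely generated as a group (by $a_1,\dots,a_k$), and likewise $\langle B\rangle_I$ is finitely generated. As $G$ is a homogeneous abelian group, the isomorphism $\bar{\phi}$ between these two finitely generated subgroups extends to an automorphism $\Phi$ of $G$. Then $\Phi|_A=\bar{\phi}|_A=\phi$, so $\phi$ extends to an automorphism of $G$, and $G$ is a HS as claimed.

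The only real obstacle is the extension step to the generated subgroups, whose entire content is the cancellativity of $G$; once $\bar{\phi}$ is in hand, homogeneity of $G$ as a group finishes the argument at once. I note that in the periodic case the obstacle evaporates, since a finitely generated subsemigroup of a periodic group is already a subgroup, so that $A=\langle A\rangle_I$ and $\phi=\bar{\phi}$ and one could instead invoke the earlier lemma equating HS and HIS for periodic inverse semigroups; the construction above simply handles the periodic and non-periodic cases uniformly.
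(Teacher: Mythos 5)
Your proposal is correct and follows essentially the same route as the paper: both extend the semigroup isomorphism $\phi\colon A\to B$ to the finitely generated subgroups $\{ab^{-1}:a,b\in A\}$ via $ab^{-1}\mapsto \phi(a)\phi(b)^{-1}$, verify this is a well-defined isomorphism using commutativity and cancellation, and then invoke homogeneity of $G$ as a group. The only differences are cosmetic: you obtain bijectivity by applying the same construction to $\phi^{-1}$ (where the paper checks injectivity and surjectivity directly), and you treat the periodic and non-periodic cases uniformly rather than disposing of the periodic case first.
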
 
 
 \begin{proof} If $G$ is periodic then the result is clear, and so we assume $G$ is a non-periodic abelian homogeneous group with identity 1. Let $\phi:A \rightarrow B$ be an isomorphism between f.g. subsemigroups of $G$, and let $A_G,B_G$ denote the finitely generated subgroups of $G$ generated by $A$ and $B$, respectively. Since $G$ is abelian, each element of $A_G$ is of the form $uv^{-1}$ for some $u,v\in A$ and so we may take the map $\hat{\phi}:A_G\rightarrow B_G$ given by  
 \[ (uv^{-1})\hat{\phi}=(u\phi)(v\phi)^{-1}.
 \] 
 Then $\hat{\phi}$ is well defined and injective since, for any $uv^{-1},st^{-1}\in A_G$ we have 
\begin{align*} (uv^{-1})\hat{\phi}=(st^{-1})\hat{\phi} & \Leftrightarrow (u\phi)(v\phi)^{-1}=(s\phi) (t\phi)^{-1}  \\
&  \Leftrightarrow u\phi t\phi = s\phi v\phi \\
& \Leftrightarrow ut = sv \\
& \Leftrightarrow uv^{-1}=st^{-1}. 
 \end{align*} 
 If $ab^{-1}\in B_G$, then there exists $u,v\in A_G$ such that $u\phi=a$ and $v\phi=b$ since $\phi$ is surjective. Hence $(uv^{-1})\hat{\phi}=ab^{-1}$ and $\hat{\phi}$ is surjective. Finally, 
 \[ (uv^{-1})\hat{\phi}(st^{-1})\hat{\phi} = (u\phi)(v\phi)^{-1}(s\phi) (t\phi)^{-1} = (us)\phi ((vt)\phi)^{-1}=(us(vt)^{-1})\hat{\phi} = ((uv^{-1})(st^{-1}))\hat{\phi}
 \] 
 and $1\hat{\phi}=(uu^{-1})\hat{\phi}=(u\phi)(u\phi)^{-1}=1$ for any $u\in A$. It follows that $\hat{\phi}$ is an isomorphism, and extends $\phi$ since for all $u\in A$, 
 \[ u\hat{\phi}=u\phi (1\phi)^{-1}= u\phi. 
 \]
 Since any automorphism of $G$ which extends $\hat{\phi}$ additionally extends $\phi$, we have that $G$ is a HS by the homogeneity of $G$. 
 \end{proof}
 
From the proposition above and Theorem \ref{Periodic com} we obtain a complete classification of all commutative inverse HS, as either a periodic commutative HIS or a homogeneous non-periodic abelian group.    
 
\begin{open} Is a non-periodic homogeneous group a HS?
\end{open} 

We have an number of open problems, and we hope to answer these in due course.

\end{document}